\documentclass[12pt]{article}
\usepackage{graphicx}
\usepackage{subcaption}
\usepackage{tikz}
\usetikzlibrary{calc,arrows.meta,positioning}

\usepackage[english]{babel}
\usepackage{amsfonts}
\usepackage{mathtools}
\usepackage[left=2cm,right=2cm,
top=2cm,bottom=2cm,bindingoffset=0cm]{geometry}

\usepackage{amssymb,amsmath,amsthm,amsfonts}
\usepackage[pdftex,colorlinks=true,linkcolor=blue,citecolor=blue,urlcolor=red,unicode=true,hyperfootnotes=false,bookmarksnumbered]{hyperref}

\usepackage{enumerate}

\theoremstyle{plain}
\newtheorem{theorem}{Theorem}[section]
\newtheorem{lemma}[theorem]{Lemma}
\newtheorem{claim}[theorem]{Claim}
\newtheorem{proposition}[theorem]{Proposition}
\newtheorem{corollary}[theorem]{Corollary}

\theoremstyle{definition}
\newtheorem{conjecture}[theorem]{Conjecture}

\newtheorem{remark}[theorem]{Remark}
\newtheorem{definition}[theorem]{Definition}

\makeatletter

\renewcommand{\leq}{\leqslant}
\renewcommand{\geq}{\geqslant}

\author{Georgii Zakharov\footnote{Mathematical Institute, University of Oxford, UK; georgii.zakharov@exeter.ox.ac.uk. This research was conducted while the author was supported by Hill Foundation Scholarship.}}
\date{April 3, 2026}

\title{Sharp threshold for reconstructing points on the line}

\begin{document}

\maketitle

\begin{abstract}
    For a set of $n$ points $V \subseteq \mathbb{R}$ let $G(V, p)$ be the random graph on $V$ where each possible edge is present independently with probability $p$. We call a subset $U \subseteq V$ {\emph {reconstructible}} if every injection $\varphi:V\to \mathbb{R}$ that preserves the distances along the edges of $G(V, p)$ also preserves all pairwise distances in $U$. How large is the size $\mathsf{R}$ of a largest reconstructible subset? Gir\~ao, Illingworth, Michel, Powierski and Scott conjectured that the answer is linear whp when $p = (1+\varepsilon)/n$ for every $\varepsilon > 0$. 
    
    In this paper, we show that for every $\varepsilon>0$ whp there exists a reconstructible subset $U$ of the largest component $\mathcal{C}$ of the 2-core satisfying $|U| = |V(\mathcal{C})|(1-o(1))$, proving a stronger form of the conjecture. The bound is asymptotically best possible, since for $V \subseteq \mathbb{R}$ linearly independent over $\mathbb{Q}$ it is straightforward to verify that $\mathsf{R} \leq \max(2, |V(\mathcal{C})|)$. Furthermore, we extend these results to every $\varepsilon:= \varepsilon(n)$ satisfying $\varepsilon = \omega(1/\ln n)$. 
    
\end{abstract}

\section{Introduction}
\label{sc:1}

For a finite set $V$ with $|V| = n$ and $p\in [0, 1]$, let $\mathcal{G}:=(V, E) =G(V, p)$ be the random graph with vertex set $V$ and each possible edge present independently with probability $p$. Regarding $V \subseteq \mathbb{R}$ as points and $E$ as distances between them, we call an injection $\varphi: V \rightarrow \mathbb{R}$ a {\it $\mathcal{G}$-rigid map} if it preserves the distances of edges, i.e.
$|\varphi(u) -  \varphi(v)| = |u - v|$ holds for every $uv \in E$. Note that every isometry of $V$ (i.e., a transformation that involves adding a constant and/or applying a symmetry) is a $\mathcal{G}$-rigid map. We call these $\mathcal{G}$-rigid maps {\it trivial} and the others {\it non-trivial}. A subset $U \subseteq V$ is {\it reconstructible}\footnote{Following Benjamini and Tzalik~\cite{BeTz}, in this paper we omit the assumption that $V$ is linearly independent over $\mathbb{Q}$,  which is often made in other areas of rigidity theory}  if, for every $\mathcal{G}$-rigid map $\varphi$, the restriction $\varphi|_{U}$ is an isometry. In this paper, we study the size of a largest reconstructible subset.

For simplicity, we write $G(n, p):= G([n], p)$, where $[n] := \{1, \ldots, n\}$. 
We say that an event occurs {\emph{with high probability}} (whp), if its probability goes to 1 as $n \to \infty$.

We study the property of reconstructing a linearly large subset of $V$ in $G(V, p)$. 
Rigidity theory has been extensively studied for some time (see, for example,~\cite{GHH}), but this branch of the subject was initiated relatively recently by Benjamini and Tzalik~\cite{BeTz}, who, among other things, studied when \emph{all} pairwise distances between $V$ can be determined in $G(V, p)$ whp. Later, Gir\~ao, Illingworth, Michel, Powierski and Scott~\cite{GIMPS} proved a hitting time result for the time step when $V$ becomes reconstructible in the Erd\H{o}s--R\'enyi evolution. At the same time, they also showed that in $G(V, p)$ with $p > 42/n$, whp one can reconstruct a linearly large subset of $V$.

Montgomery, Nenadov, Portier and Szab\'o~\cite{MNPS} studied the same problem under the stronger notion of {\it global rigidity}: the exact positions of the vertices $V$ on the real line $\mathbb{R}$ are chosen \emph{after} the exposure of the graph $G(V, p)$ in a way that minimises the size of a largest reconstructible set. We note that their methods can be used to show that a linear-sized subset is reconstructible whp in this stronger `globally rigid' setting for, say, $pn > 12$ (see Theorem 1.3 from~\cite{MNPS}). 

For further historical context, we refer to the paper by Benjamini and Tzalik~\cite{BeTz}, where they discuss developments and related results that have appeared since their original preprint on arXiv.

In~\cite{GIMPS}, Gir\~ao, Illingworth, Michel, Powierski and Scott also conjectured that for every fixed $pn>1$ one can reconstruct a linear-sized subset $U$ of $V$. Note that in a disconnected graph $\mathcal{G}$ every (maximal) reconstructible subset should be contained in a component of $\mathcal{G}$. Indeed, adding a specific constant to a component $\mathcal{S}$ of $\mathcal{G}$ changes all distances from the vertices of $\mathcal S$ to the vertices of $\mathcal{G}\setminus\mathcal{S}$. Also, in the case when the vertices of $V(\mathcal{G})$ are linearly independent over $\mathbb{Q}$, every (maximal) reconstructible subset of size at least $3$ is a subset of a 2-connected subgraph in $\mathcal{G}$. Indeed, if $S \subseteq V(\mathcal{G})$ has just one neighbour $v$ in $V(\mathcal{G}) \setminus S$ then applying a symmetry to $S$ with respect to its only neighbour $v$ changes the distances from $S$ to $V(\mathcal{G}) \setminus (\{v\}\cup S)$. So, in the worst case, one cannot hope to reconstruct a subset larger then the largest connected component of the {\it 2-core} of the graph, where the 2-core $\mathcal{G}^{(2)}$ is the largest subgraph of a graph $\mathcal{G}$ with every degree at least 2. Note that if $pn < 1$ is fixed then the set of edges is so sparse that every connected component is of size $O(\ln n)$, so the largest reconstructible subset is at most logarithmic. Also, in the case of global rigidity, Montgomery, Nenadov, Portier and Szab\'o~\cite{MNPS} showed the corresponding conjecture fails whenever $pn < 1.1$.

In this work, we prove a stronger result that is consistent with the conjecture of Gir\~ao, Illingworth, Michel, Powierski and Scott. Furthermore, we give an asymptotic lower bound on the size of a largest reconstructible subset, which is best possible since it reaches the upper bound when $V$ is linearly independent over $\mathbb{Q}$.

\begin{theorem}
    Let $\lambda > 1$ be constant. Let $V \subseteq \mathbb{R}$ with $|V|=n$ and let the set of known distances be distributed as in $G(V, \lambda/n)$. Then, whp there exists a reconstructible subset containing all but $o(1)$-proportion of vertices from the largest component of the 2-core.
    \label{th:1}
\end{theorem}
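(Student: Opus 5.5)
The plan is to reduce reconstructibility to a combinatorial notion of rigidity on the \emph{kernel} of the giant component. On the real line, a cycle $v_0v_1\dots v_kv_0$ in $\mathcal{G}$ gives a signed relation $\sum_i \sigma_i (v_{i+1}-v_i)=0$ with $\sigma_i\in\{\pm1\}$. Under a $\mathcal{G}$-rigid map $\varphi$, every edge $uv$ gets a sign $s_{uv}\in\{\pm 1\}$ with $\varphi(u)-\varphi(v)=s_{uv}(u-v)$.

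The basic gadget is a collection of cycles that forces all these signs to agree. Consider a subgraph $H$ whose edges are all forced to have the same sign. Then $\varphi|_{V(H)}$ is an isometry, since on a connected $H$ the map is $x\mapsto \pm x + c$.

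For points in general position, the sign on a cycle is forced only if no nontrivial subset-sum coincidence occurs. Since $V$ is arbitrary, I would instead use cycles of two kinds:
\begin{itemize}
\item a pair of cycles sharing a long path;
\item ``theta'' structures with three internally disjoint paths between two vertices.
\end{itemize}
The key local lemma I would aim for concerns a theta graph with paths $P_1,P_2,P_3$. If each path is treated as one ``super-edge'' whose length is preserved up to sign, then any consistent sign pattern on the three super-edges, together with injectivity, forces the union to be reconstructible. The main case is when two of the super-edge displacements coincide or cancel, and there injectivity of $\varphi$ on the interior vertices gives a contradiction. I would then propagate: if $U_1,U_2$ are reconstructible and share at least two distinct points, then $U_1\cup U_2$ is reconstructible. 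Adjoining a path whose endpoints lie in a reconstructible set $U$ yields a reconstructible set, provided the path has two vertices at fixed positions and the internal structure is forced. This last point needs an ear with chords, so instead I would adjoin \emph{pairs} of ears that together form a theta attached to $U$.

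Next comes the random-graph input for $p=\lambda/n$. Whp the 2-core's giant component $\mathcal{C}$ is a subdivision of a kernel $K$, which is a random multigraph with minimum degree $\geq 3$ and $\Theta(n)$ vertices. All but an $o(1)$ fraction of the vertices of $\mathcal{C}$ lie on kernel edges (bare paths) of length $O(1)$ on average. I would show that whp $K$ contains a ``3-connected-like'' core $K'$ covering all but $o(|K|)$ of its edges, weighted by path length. In $K'$, repeated theta-gluing starting from one theta percolates through everything. A natural tool is the sparse random 3-regular-type structure of $K$: it is an expander, so every set of $o(n)$ kernel edges removed leaves a giant 2-connected piece. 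Bootstrap percolation with the rule ``attach a vertex/path whenever it creates a theta with the current set'' should then spread from a seed to all but $o(n)$ kernel edges. The seed can be any short theta, which exists whp, e.g.\ in the giant. To show the percolation does not stall, I would use the expansion of $K$: if the closure $U$ had size between $\delta n$ and $(1-\delta)|K|$, then the many edges leaving $U$ would produce two internally disjoint paths back into $U$ through a common outside vertex, contradicting closure.

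The main obstacle is the local lemma when $V$ is adversarial, i.e.\ not linearly independent. Coincidences such as two distinct paths having equal signed length could allow non-trivial flips. I would handle this through injectivity, or alternatively by demanding more redundancy (several disjoint thetas through each new piece) and using that $\varphi$ preserves pairwise distinctness.

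There is also a risk that the percolation step with only theta-attachments genuinely stalls at long bare paths of $\mathcal{C}$. These are rare, though, and contribute only $o(n)$ vertices, which is exactly what the $1-o(1)$ allowance is for.
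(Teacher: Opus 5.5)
\textbf{There is a genuine gap, in two places.}

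First, your local theta lemma is false when $V$ is arbitrary. Take a path $a x y b$ with $y-x=b-a$, say $a=0$, $x=5$, $y=7$, $b=2$. Fix $a$ and $b$, and set $\varphi(x)=2a-x$, $\varphi(y)=2b-y$. This preserves all three edge lengths and is injective. Three such paths between $a$ and $b$, e.g.\ $(x,y)=(5,7),(10,12),(20,22)$, give a theta that is not reconstructible, even though every super-edge keeps sign $+1$. So neither super-edge lengths nor injectivity settle the coincidence case. Extra redundancy of the sparse kind available in the kernel does not give a statement valid for every placement either, since $V$ may be an arithmetic progression, where such additive coincidences abound.

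The missing idea is probabilistic. The law of $G(V,\lambda/n)$ is invariant under permutations of $V$. So the paper first generates the abstract 2-core (kernel plus subdivided paths) and then places it on $V$ by a uniformly random injection $\pi$ independent of the graph (Claim~\ref{cl:contiguous}). Coincidences then become rare events: by Claim~\ref{cl:3.4}, a path of length at most $\ln n/50$ with prescribed end-images has a non-trivial extension with probability at most $1/\sqrt n$. Every rigidity statement has to be obtained from union bounds over sign patterns against such probabilities.

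Second, even with random placement your percolation does not get started, and the non-stalling step is a non sequitur.
\begin{itemize}
\item \emph{No seed.} By Claim~\ref{cl:kernel2}, whp every ball of radius $\lfloor\ln\ln k\rfloor$ in the kernel is a tree plus at most one edge. So there is no short theta to use as a seed.
\item \emph{Long attachments.} Anything attached to a bounded set must leave this tree-like neighbourhood, so it typically has length of order $\log n$. For $\lambda$ near $1$ (nearly cubic kernel, long subdivisions) this exceeds $\log_2 n$ edges of $\mathcal{C}$. Then the $2^L$ sign patterns make any local coincidence bound of order $2^L/n$ vacuous.
\item \emph{Stalling.} If attachments go along single kernel edges, a set $U$ is closed under your rule precisely when no outside vertex has two neighbours in $U$. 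In a cubic kernel this just says the complement induces minimum degree at least $2$, e.g.\ the vertex set of a long cycle. Expansion gives many edges leaving $U$, but they may all go to distinct outside vertices. If long connecting paths are allowed, a suitable ear always exists, but then it is long and not forced.
\end{itemize}

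The paper avoids local growth altogether.
\begin{itemize}
\item Lemma~\ref{lm:1} (via Theorem~\ref{th:super1}) shows globally that for every rigid map $\varphi$ there is a map-dependent set of $\Omega(k)$ kernel vertices on which $\varphi$ is an isometry. It takes a union bound over the partition of $V(\mathcal{K})$ into $\varphi$-equivalence classes, a spanning tree and a sign vector (at most $k^{5\varepsilon k}$ tuples). The sparsity condition R5 then produces linearly many short kernel paths between distinct classes, and each is unlikely to be realisable.
\item Only then does expansion enter (Lemma~\ref{lm:2}). With $\varphi=\mathrm{id}$ on a linear set $\bar V$, consider a moved connected component $S$ of $\mathcal{K}-\bar V$ with $|N_{\mathcal{K}}(S)|\geq\max(3,\alpha|S|)$. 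It forces $\Omega(|S|)$ vertices onto prescribed midpoints. This beats the number of choices of $S$ and of sign patterns when $\mathbf{E}(S)\leq\beta|S|\ln n$.
\item Lemma~\ref{lm:3} shows almost all kernel vertices have this property, and Lemma~\ref{lm:4} adds the 2-paths.
\end{itemize}
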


A weaker form of this result was recently published Portier~\cite{Po} (see as well the PhD thesis by Portier in joint work with Sahasrabudhe~\cite{PoSa}). More precisely, they show that there is a reconstructible subset containing
a linearly large subset of the kernel, thus confirming the conjecture. In contrast, in Theorem~\ref{th:1} we show that, for every $\lambda>1$, one can reconstruct not just a linear proportion but almost all vertices of the kernel, and even almost all vertices of the 2-core. This result, in particular, confirms a recently stated conjecture in Portier~\cite{Po}.

Furthermore, we establish a similar result for some functions $\lambda \to 1$, $\lambda>1$; this can be viewed as a natural generalisation of the conjecture of Gir\~ao, Illingworth, Michel, Powierski and Scott. As in Theorem~\ref{th:1}, the following theorem gives the best possible asymptotic lower bound on the size of a largest reconstructible subset of the random graph, as it reaches the upper bound when $V$ is linearly independent over $\mathbb{Q}$.

\begin{theorem}
    Let $\lambda := \lambda(n) \to 1$ with $\lambda \geq 1+\omega({\ln^{-1} n})$. Let $V \subseteq \mathbb{R}$ with $|V|=n$ and let the set of known distances be distributed as in $G(V, \lambda/n)$. Then, whp there exists a reconstructible subset containing all but $o(1)$-proportion of vertices from the largest component of the 2-core.
    \label{th:2}
\end{theorem}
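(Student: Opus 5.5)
We prove Theorem~\ref{th:2} through the kernel of the giant component. Write $\lambda = 1+\varepsilon$ with $\varepsilon\to 0$ and $\varepsilon^3 n\to\infty$; this holds since $\varepsilon=\omega(1/\ln n)$.

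\textbf{Step 1 (structure).} We use the standard description of the 2-core of the giant component in this regime (\L uczak, Ding--Kim--Lubetzky--Peres). Its kernel $\mathcal{K}$ is a multigraph on $\Theta(\varepsilon^3 n)$ vertices whose degrees are at least $3$, with a degree distribution having exponential tails. Conditional on the kernel, it is a uniform random multigraph with that degree sequence. Every kernel edge is subdivided into a path of length about $\mathrm{Geom}(\varepsilon)$, so the 2-core $\mathcal{C}$ has $\Theta(\varepsilon^2 n)$ vertices. A $\mathcal{G}$-rigid map restricted to a path of $\mathcal{C}$ is determined by its endpoints up to a choice of sign at each internal vertex. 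Reconstructing $\mathcal{C}$ therefore amounts to a signed-cycle problem. For each cycle $Z$ of $\mathcal{C}$ with edges $e_1,\dots,e_k$ traversed in order, write $d_i=\pm|e_i|$ for the signed increments. The true positions satisfy $\sum_i d_i=0$, and a rigid map $\varphi$ yields another sign vector $\sigma$ with $\sum_i\sigma_i d_i=0$. Call $Z$ \emph{good} if the only sign vectors with $\sum_i\sigma_i d_i=0$ are $\pm$ the true one. On a good cycle, $\varphi$ is an isometry.

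\textbf{Step 2 (most short cycles are good).} The genuinely probabilistic input is this. Fix a cycle $Z$ of length $k$ and expose its edges. A bad sign pattern requires a nontrivial vanishing $\pm$-combination. Given that structure, the path subdivisions make this event unlikely: for a fixed subset $S$ of flipped edges, the constraint $\sum_{i\in S} d_i=0$ is a linear condition on the positions. A counting argument, in the spirit of Benjamini--Tzalik and Gir\~ao et al., uses the randomness of which pairs of $V$ form the edges rather than the positions themselves, since $V$ is arbitrary. It shows that for a uniformly random cycle through a given vertex, the probability of a coincidence is $o(1)$. Concretely, I would bound the expected number of cycles of length at most $L=C\varepsilon^{-1}\ln(\varepsilon^3 n)$ that carry an additional relation. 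The count is $\sum_k (1+\varepsilon)^k k\, 2^k n^{-1}$-type: each extra relation costs a factor $1/n$, because it pins one vertex of the cycle to a set of size $O(2^k)$ determined by the others. We then need $2^k$ to be controlled. This is the main obstacle: $2^L$ is far from $o(n)$. I would instead restrict to \emph{minimal} relations, namely sign patterns whose flipped set is an arc of the cycle. These pin a vertex to $O(k^2)$ values rather than $2^k$. A separate combinatorial reduction is then needed, showing that every bad sign pattern on a cycle yields a shorter cycle with a rigid chord or a bad arc. This reduction is where I expect most of the work to lie.

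\textbf{Step 3 (gluing).} Call a kernel edge good if it lies on a good cycle of length at most $L$ inside $\mathcal{C}$. Two good cycles sharing at least one edge, or at least two vertices, reconstruct jointly. Using the expansion of the random kernel, including the fact that short cycles exist through almost every kernel edge at a length scale of $\varepsilon^{-1}\ln$, I would show that the good cycles form a single "rigidity-connected" family. This family covers all but $o(|E(\mathcal{K})|)$ kernel edges, weighted by path length. The argument is a branching-process exploration from a typical edge, using the fact that the kernel is locally tree-like with branching factor at least $2$.

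\textbf{Step 4 (from kernel to the 2-core).} Take $U$ to be the union of the vertices on the glued good cycles. Covering almost all kernel edges weighted by length gives $|U|=(1-o(1))|V(\mathcal{C})|$. The hypothesis $\varepsilon=\omega(1/\ln n)$ ensures $\varepsilon^3 n\to\infty$ quickly enough that $L$ is $o$ of the kernel diameter scale. It also ensures that the union bounds in Step 2 and the concentration in Step 3 both hold with high probability.
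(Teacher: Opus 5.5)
There is a genuine gap in Step~2, and Steps~3 and~4 depend on it.

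\textbf{The proposed fix is vacuous.} Suppose the cycle visits positions $y_0,y_1,\dots,y_L=y_0$ and you flip an arc $\{p+1,\dots,q\}$. The signed sum becomes $-2(y_q-y_p)$, which is nonzero because the points are distinct. So arc flips are never relations, and every bad sign pattern flips a union of at least two arcs.

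\textbf{The claim itself fails in this regime.} No combinatorial reduction can rescue ``most short cycles are good''. With $\lambda=1+\varepsilon$, the 2-paths have mean length about $\varepsilon^{-1}$. A first-moment count over kernel cycles shows that, for any fixed $c$, $\mathcal{C}$ has only $n^{O(c\varepsilon)}=n^{o(1)}$ cycles of length at most $c\ln n$. (The kernel is nearly cubic, so there are about $2^m$ cycles with $m$ kernel edges. Their paths total at most $\ell$ edges with probability at most $\ell\binom{\ell}{m}\varepsilon^{m}$.) Such cycles therefore cover a negligible part of the $\Theta(\varepsilon^2 n)$ vertices of $\mathcal{C}$. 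The cycles you actually need have $L=\Theta(\varepsilon^{-1}\ln n)=\omega(\ln n)$ edges. For these, your union bound pits $2^{L}=n^{\omega(1)}$ flip sets against a gain of $1/n$ per relation.

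This obstruction is real, not an artefact of the method. For $V=\{1,\dots,n\}$, deciding whether a cycle is good is a number-partitioning problem for $L$ integers of absolute value less than $n$, far above its threshold $L\approx\log_2 n$. The expected number of zero-sum flip sets is of order $2^{L}/(\sqrt{L}\,n)$, so typical such cycles should be bad. Hence there is nothing for Steps~3 and~4 to glue.

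\textbf{The missing idea.} In this regime rigidity is not carried by individual cycles. It comes from a global count in which sign choices are paid for by many pinned vertices at once. The paper proceeds as follows.
\begin{enumerate}
\item Theorem~\ref{th:super1} shows that every rigid map preserves all distances on a map-dependent set of $\Omega(k)$ kernel vertices. It takes a union bound over at most $k^{5\varepsilon k}$ tuples (partitions of $V(\mathcal{K})$ into translation classes, spanning trees, sign patterns). The gain comes from linearly many kernel edges whose 2-paths have fewer than $\ln n/100$ vertices and whose endpoint distances are violated. Each such path extends with probability at most $n^{-1/2}$ by Claim~\ref{cl:3.4}.
\item After normalising this set to be fixed, Lemma~\ref{lm:super2} treats a connected set $S$ of moved kernel vertices. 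Its $2^{\mathbf{E}(S)}\le n^{\beta\ln 2\,|S|}$ sign patterns are beaten by its at least $\max(3,\alpha|S|)$ expansion neighbours, all but one of which is forced onto a single midpoint.
\item For $\lambda\to 1$, showing that almost all kernel vertices satisfy $D_\beta$ requires passing to the maximum-degree-3 subgraph $\tilde{\mathcal{C}}$. There the union over connected $S$ costs only $(2e)^{|S|}$ against tails $\exp(-\omega(|S|))$.
\item Lemma~\ref{lm:super4} then passes from the kernel to almost all of $\mathcal{C}$ via short 2-paths.
\end{enumerate}
The only place where one structure's sign patterns are enumerated against a single $1/n$ factor is Claim~\ref{cl:3.4}, on paths with at most $\ln n/50$ edges, where $2^{\ln n/50}\ll n$. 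Your cycle-based approach cannot stay in that range.
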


    Whp, for constant $\lambda>1$ there is a unique linearly large component $\mathcal{C}$ of the 2-core satisfying $|\mathcal{C}| = (c_{\lambda}+o(1))n$, where $c_\lambda$ is monotone in $\lambda$ and tends to 1 as $\lambda \to \infty$ (see, Lemma~2.16 from~\cite{FrKa}). So, by standard arguments, Theorem~\ref{th:1} can be first extended to every $\lambda:=\lambda(n) \to 1+c$, $c > 0$ and, then, further extended to every
    $\lambda:=\lambda(n) >1$ bounded away from $1$. Combining that with Theorem~\ref{th:2} we conclude the following:

\begin{corollary}
     The conclusion of Theorem~\ref{th:2} can be extended to every $\lambda := \lambda(n) = 1+\omega(\ln^{-1} n)$.
\label{cl:for_theorems}
\end{corollary}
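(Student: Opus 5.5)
The corollary is a statement about the whole range $\lambda(n) = 1+\omega(\ln^{-1} n)$. The plan is to cover this range by combining Theorem~\ref{th:2} (for $\lambda \to 1$) with Theorem~\ref{th:1} (for fixed $\lambda>1$), extended by a subsequence argument. Write $\mathcal{E}_n(\lambda)$ for the event that $G(V,\lambda/n)$ has a reconstructible subset containing all but an $o(1)$-proportion of the largest component of the 2-core. To make ``all but $o(1)$'' quantitative, I fix $\delta>0$ and let $\mathcal{E}_n^\delta(\lambda)$ be the event that some reconstructible set misses at most $\delta|V(\mathcal{C})|$ vertices of $\mathcal{C}$. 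It then suffices to show $\mathbb{P}(\mathcal{E}_n^\delta(\lambda(n)))\to 1$ for every fixed $\delta$; a diagonal argument afterwards yields a $\delta(n)\to 0$.

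Fix $\delta$ and suppose, for contradiction, that along some subsequence $\mathbb{P}(\mathcal{E}_n^\delta(\lambda(n)))\le 1-\eta$. If $\limsup(\lambda(n)-1)=0$ along it, then Theorem~\ref{th:2} applies directly and gives a contradiction. The hypothesis $\lambda\ge 1+\omega(\ln^{-1}n)$ is inherited by subsequences, and the theorem applied to a sequence defined only on a subsequence of $n$ is fine, because we can fill in the other $n$ arbitrarily within the allowed range. Otherwise, pass to a further subsequence with $\lambda(n)\to 1+c$ for some $c\in(0,\infty]$.

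\textbf{Case $c<\infty$.} I use monotone coupling: $G(V,\lambda/n)\subseteq G(V,\lambda'/n)$. Reconstructibility is \emph{not} monotone in the set of vertices we care about, since the 2-core component grows. So instead I use continuity of $c_\lambda$. Take $\lambda_0 = 1+c-\gamma$ with $\gamma$ small, so that $\lambda_0\le\lambda(n)$ eventually. Apply Theorem~\ref{th:1} at $\lambda_0$ to get a reconstructible set $U$ in $G(V,\lambda_0/n)$ covering a $(1-o(1))$-fraction of its 2-core component $\mathcal{C}_0$, which has size $(c_{\lambda_0}+o(1))n$. Adding edges keeps $U$ reconstructible, because every rigid map of the larger graph is also rigid for the smaller one. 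Moreover $\mathcal{C}_0\subseteq\mathcal{C}$, and $|\mathcal{C}|=(c_{1+c}+o(1))n$ whp, using the concentration of the 2-core size uniformly for $\lambda$ in a compact range. Continuity of $c_\lambda$ in $\lambda$ then makes the missed fraction at most $\delta$ once $\gamma$ is small, which contradicts the choice of subsequence.

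\textbf{Case $c=\infty$.} Here $c_\lambda\to 1$, so choose a fixed $\lambda_0$ with $c_{\lambda_0}>1-\delta/2$ and use the same coupling. Then $U$ has size at least $(1-\delta)n\ge(1-\delta)|\mathcal{C}|$.

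The main obstacle is Case $c<\infty$. It needs (i) continuity of $c_\lambda$, which holds since $c_\lambda$ is given by an explicit analytic formula in terms of the survival probability, and (ii) concentration of the 2-core's giant component size at a $\lambda(n)$ that varies with $n$ but converges. For (ii), I would sandwich $G(V,\lambda(n)/n)$ between $G(V,(1+c\mp\gamma)/n)$ and use the monotonicity of the 2-core's giant component under adding edges. For the lower bound, note that the largest 2-core component of the smaller graph lies inside the 2-core of the larger graph, and inside its giant component since it is linear-sized and the linear component is unique.
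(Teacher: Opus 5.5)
Your argument is correct and follows essentially the paper's route. The paper only says that, by ``standard arguments'' using that $c_\lambda$ is monotone with $c_\lambda\to 1$, Theorem~\ref{th:1} extends first to $\lambda(n)\to 1+c$ and then to all $\lambda$ bounded away from $1$, and is then combined with Theorem~\ref{th:2}; your monotone coupling, continuity of $c_\lambda$, and subsequence argument make exactly that precise. The one thing to tidy is that in both coupling cases you should replace $U$ by $U\cap V(\mathcal{C}_0)\subseteq V(\mathcal{C})$, which is still reconstructible, so that your lower bound on $|U|$ really bounds $|V(\mathcal{C})\setminus U|$.
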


Our methods, might be useful for finding a lower bound on the size of the largest reconstructible subset for some $\lambda = 1 + O({\ln^{-1} n})$ as well. 
However, in order to solve the problem completely, new ideas seem to be needed, since our methods most likely break around $p = \frac{1 +c(\ln^{-1}n)}{n}$ for small $c>0$. We suspect that, for such $p$, a linearly large reconstructible subset of the kernel might not exist. For further discussion and other open problems see Section~\ref{sc:7}.

In addition, from our proofs it will follow that Theorem~\ref{th:1} and Theorem~\ref{th:2} might be naturally extended to other graph models: we will show that if a graph sequence $\mathcal{C}$ satisfies several deterministic properties (given below), and $\pi:{V(\mathcal{C})} \to V$ is a uniformly random injection, then the conclusion of Theorem~\ref{th:2} holds for $\pi(\mathcal{C})$. The complete set of the required deterministic properties consists of the fact that $\mathcal{C}$ is a connected graph of order at most $n$, R1---R5 from Theorem~\ref{th:super1}, R1$'$ and R2$'$ from Lemma~\ref{lm:super2}, the conclusion of Lemma~\ref{lm:3}, and R2$^*$---R4$^*$ from Lemma~\ref{lm:super4}.

We hope that Theorem~\ref{th:super1} from Section~\ref{sc:3} may be interesting as a stand-alone statement. Roughly speaking, for a random graph $\mathcal{G}$ satisfying certain properties, it follows that every $\mathcal{G}$-rigid map contains a large (map-dependent) vertex subset on which all pairwise distances are preserved. This observation is useful, since there may be several ways to deduce from that the existence of a linearly large reconstructible subset. (We give one approach in this paper; for a different approach see Theorem~1.3 by Montgomery, Nenadov, Portier and Szab\'o~\cite{MNPS}).

Recall that the 2-core is unique and can be obtained by iteratively deleting every vertex with degree less than 2. Let $\mathcal{C}$ be a largest connected component of the 2-core. We call a {\it 2-path} any path with every inner vertex of degree 2. The {\it kernel} $\mathcal{K}$ of a graph $\mathcal{C}$ is the {\it multigraph} defined as follows
\begin{itemize}
    \item the vertex set $V(\mathcal{K})$ consists of the vertices of $\mathcal{C}$ with degree at least 3;
    \item for every 2-path in $\mathcal{C}$ with end-vertices $u, v \in V(\mathcal{K})$ we include an edge between $u$ and $v$ in $E(\mathcal{K})$.
\end{itemize}

We develop our methods to prove Theorem~\ref{th:1} and Theorem~\ref{th:2} simultaneously. In addition to the ideas used to prove Theorem~\ref{th:1}, the proof of Theorem~\ref{th:2} will require an additional trick --- we will return to that after a general overview of our proofs. 

The proof of the Theorem~\ref{th:1} has three steps and consists of four lemmas. In Lemma~\ref{lm:1} we show that whp for any $\mathcal{C}$-rigid map $\varphi$ there is a {\it map-dependent} linearly large subset $U$ of $V(\mathcal{K})$ such that 
$$|\varphi(u) - \varphi(v)| = |u-v|,\quad \textnormal{for every }u, v \in U.$$ In Lemma~\ref{lm:2} we define a property $D(v)$ for $v \in V(\mathcal{K})$ and show that whp the set $\{v \in V(\mathcal{K}) : D(v) \textnormal{ holds}\}$ is reconstructible. In Lemma~\ref{lm:3} we show that whp almost all vertices from $V(\mathcal{K})$ satisfy the property $D(v)$. This shows that all but $o(1)$-proportion of vertices from the kernel make up a reconstructible subset. 
In Lemma~\ref{lm:4}, a small additional work is needed to show that, for almost all 2-paths $\mathcal{P}$ from $\mathcal{C}$ with endpoints $u, v \in V(\mathcal{K})$, every inclusion-maximal reconstructible set that contains $u$ and $v$ also contains $\mathcal{P}$. This concludes the proof of Theorem~\ref{th:1}.

The same proofs can be generalised immediately to show Theorem~\ref{th:2} for $\lambda = 1 + \Omega(\ln \ln n / \ln n)$: one can apply all the statements verbatim, and only Lemma~\ref{lm:5_ESleq} requires a simple union bound-type argument to show that~\eqref{eq:DS} holds in this case as well. However, to strengthen this to $\lambda = 1 + \omega(\ln^{-1} n)$ we will need a new idea. The main obstacle to extending the methods as they stand happens in the final union bound of Lemma~\ref{lm:5_ESleq}, which fails as the maximal degree in $\mathcal{K}$ is $\omega(1)$. That led us to the following trick: considering large subgraphs of $\mathcal{C}$ and $\mathcal{K}$ that do not contain any vertices of degree greater than 3. Using this idea, we prove Theorem~\ref{th:2} in Section~\ref{sc:ad}.

To simplify the proofs of Theorems~\ref{th:1} and \ref{th:2}, we present in Section~\ref{sc:toy} a toy example. Namely, in Proposition~\ref{pr:toy}, we show that the entire vertex set of a uniformly random 17-regular graph is reconstructible with high probability. The proof of Proposition~\ref{pr:toy} follows the same structure as the proofs of Lemma~\ref{lm:1} and Lemma~\ref{lm:2}, and, hence, may provide additional intuition for understanding their proofs on a larger scale. Even though the proof of Proposition~\ref{pr:toy} makes references to later sections, it is logically independent of them and can be read as a stand-alone statement that illustrates the overall flavour of the paper.

The paper is organized as follows. In Section~\ref{sc:toy} we show a toy example --- whp the whole vertex set of the uniformly random 17-regular graph is reconstructible. Section~\ref{sc:2} contains preliminaries and consists of five subsections. In Subsection~\ref{sc:2.1} we recall several basic inequalities. Subsection~\ref{sc:2.2} explains the contiguous model for Theorem~\ref{th:1} and Theorem~\ref{th:2} we use. Subsection~\ref{sc:2.3} provides properties of fixed degree sequence random graphs. In Subsection~\ref{sc:2.4} we show some properties of the largest component of the 2-core for the random graph. In Subsection~\ref{sc:2.5} we give some properties of $\mathcal{G}$-rigid maps. Section~\ref{sc:3} and Section~\ref{sc:4} are devoted to the proofs of Lemma~\ref{lm:1} and Lemma~\ref{lm:2} respectively. In Section~\ref{sc:5} the proofs of the remaining Lemmas~\ref{lm:3}~and~\ref{lm:4} are given and Theorem~\ref{th:1} is concluded. Theorem~\ref{th:2} is proved in Section~\ref{sc:ad}.
In Section~\ref{sc:7} we discuss our findings and propose some open problems.

\section{Toy example}
\label{sc:toy}

In this section we consider a toy example: we prove that a uniformly random $17$-regular graph on the real line is reconstructible whp. More precisely, we prove the following claim.

\begin{proposition}
    Let $V \subseteq \mathbb{R}$ and let $|V| = n$ be even. Let $\mathcal{\bar K}$ be a uniformly random 17-regular graph with $V\left(\mathcal{\bar K}\right) = V$. Then, whp, $V$ is a reconstructible subset.
    \label{pr:toy}
\end{proposition}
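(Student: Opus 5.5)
The plan is to encode a $\mathcal{\bar K}$-rigid map $\varphi$ by two potentials, $f(v):=\varphi(v)-v$ and $g(v):=\varphi(v)+v$. For every edge $uv$ we have $\varphi(u)-\varphi(v)=\pm(u-v)$, so either $f(u)=f(v)$ (an \emph{$f$-edge}) or $g(u)=g(v)$ (a \emph{$g$-edge}). The level sets of $f$ partition $V$ into classes $A_1,A_2,\dots$, and the level sets of $g$ partition $V$ into classes $B_1,B_2,\dots$. Every edge lies inside some $A_i$ or inside some $B_j$. The key structural fact is that $|A_i\cap B_j|\le 1$, because $f(u)=f(v)$ and $g(u)=g(v)$ together force $u=v$. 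The map $\varphi$ is an isometry on $V$ if and only if one class equals all of $V$: $f$ constant means a translation, and $g$ constant means a reflection composed with a translation. So the goal is to show that whp, for every $\varphi$ simultaneously, some class is all of $V$. All the probabilistic input will be deterministic properties of $\mathcal{\bar K}$ that hold whp, so that the union over the uncountably many $\varphi$ costs nothing.

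\textbf{Step 1: a linearly large class.} I will use the standard sparse-set property of random $17$-regular graphs: there is $\delta>0$ such that whp every $S\subseteq V$ with $|S|\le\delta n$ spans at most $(1+\eta)|S|$ edges, with $\eta$ small (for instance $\eta=1$). This is the analogue of the density properties from Subsection~\ref{sc:2.3}, and is proved by a first-moment computation in the configuration model. Suppose every class had size at most $\delta n$. Since each family of classes partitions $V$, the number of edges would be at most $\sum_i e(A_i)+\sum_j e(B_j)\le 2(1+\eta)n<17n/2$, which is a contradiction. Hence some class, say $A:=A_1$ (the case of a $B$-class is symmetric), has $|A|\ge\delta n$. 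This is the toy version of Lemma~\ref{lm:1}.

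\textbf{Step 2: the large class absorbs everything.} Put $W:=V\setminus A$ and suppose $W\neq\emptyset$.
\begin{itemize}
\item No edge between $A$ and $W$ is an $f$-edge, since $A$ is a whole level set of $f$. So every $A$--$W$ edge is a $g$-edge.
\item All $g$-edges at a vertex $w\in W$ go into the single class $B\ni w$, and $|B\cap A|\le1$. Hence each $w\in W$ has at most one neighbour in $A$, which gives $e(A,W)\le|W|$.
\end{itemize}
Now I invoke edge expansion of the random $17$-regular graph: whp $e(S,V\setminus S)\ge h\min(|S|,n-|S|)$ with $h\ge 17/2-O(\sqrt{17})>2$.
\begin{itemize}
\item If $|W|\le n/2$, then $e(A,W)\ge h|W|>|W|$, which is a contradiction.
\item If $|W|>n/2$, the bound only yields $|A|\le |W|/h$, so the intermediate regime must be excluded separately.
\end{itemize}

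\textbf{The main obstacle} is the case $\delta n\le|A|<n/2$. There I would run the argument on $W$ itself. Every vertex of $W$ has at least $16$ neighbours inside $W$, and $W$ is again partitioned by the restricted $f$- and $g$-classes. The $A$--$W$ edges form a matching-like structure in which each $B$-class meets $A$ at most once. Each $a\in A$ has many neighbours in $W$, and these lie in distinct $B$-classes, one per $g$-value. I would then show that some $B$-class $B^*$ must be large, by repeating Step 1 inside $W$. Such a $B^*$ meets $A$ in at most one vertex. The argument of Step 2 then applies to $B^*$ as well: every vertex outside $B^*$ has at most one neighbour in $B^*$.

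So we would have two disjoint-up-to-one-vertex sets $A$ and $B^*$, each of linear size, and each sending at most one edge into every outside vertex. I expect this configuration to be ruled out by a first-moment bound: in a random $17$-regular graph, whp there are no two sets $X,Y$ of sizes at least $\delta n$ such that every vertex outside $X$ has at most one neighbour in $X$, and similarly for $Y$. The required union bound has to be optimised in the parameters $\delta$ and $d=17$, and I suspect this optimisation is where the specific value $17$ comes from. Once both large classes are excluded unless $W=\emptyset$, we get $A=V$, so every rigid $\varphi$ is trivial and $V$ is reconstructible whp.
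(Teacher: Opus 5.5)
\textbf{There is a genuine gap.} Your Steps 1 and 2 are sound. However, the case you single out, $\delta n\le |A|<n/2$, is never handled, and the lemma you propose for it is false.

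\emph{Why the lemma fails.} In any $17$-regular graph, build greedily a set of vertices at pairwise distance at least $3$. Each chosen vertex blocks at most $1+17+17\cdot 16=290$ vertices, so the set has at least $n/290$ elements. A second such set can be built greedily inside the complement of the first. No vertex outside such a set has two neighbours in it. So for every $\delta<1/291$ there are, deterministically, two disjoint sets of size at least $\delta n$ with your property. This range includes the $\delta$ of about $1/363$ or smaller that the first-moment proof of your sparse-set property gives with $\eta=1$. Hence no union bound can exclude them: two large classes that every outside vertex sees at most once are not contradictory by themselves.

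\emph{Two smaller slips.} All $W$-neighbours of a fixed $a\in A$ lie in the \emph{same} $g$-class as $a$; it is the neighbours of \emph{distinct} vertices of $A$ that lie in distinct $g$-classes. Also, rerunning Step 1 inside $W$ may just return another large $f$-class rather than a $g$-class. As a side remark, the number $17$ in the paper comes from needing $17-\lambda_2>17/2$ in the expander mixing lemma, not from optimising such a union bound.

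\textbf{What is missing} is to use the grid structure globally. Each vertex lies in exactly one $f$-class and one $g$-class, each edge lies in exactly one class, and an $f$-class meets a $g$-class in at most one vertex. The argument then runs as follows.
\begin{enumerate}
\item Your Step 2 observation holds for every class $X\neq V$, so $e(X,\bar X)\le|\bar X|$.
\item Expander mixing gives $e(X,\bar X)\ge(17-\lambda_2)|X||\bar X|/n$. Hence, if $\varphi$ is non-trivial, every class has $|X|\le n/8.99$.
\item For classes of size more than $\delta n$, this gives $2e(X)\le 17|X|^2/n+\lambda_2|X|\le 9.91|X|$. For the remaining classes, your sparse-set property gives $2e(X)\le 4|X|$.
\item There are fewer than $1/\delta$ large classes of each kind, so at most $\delta^{-2}$ vertices lie in two large classes.
\item Summing over all classes gives $17n=\sum_X 2e(X)\le 13.91n+O(1)$, a contradiction.
\end{enumerate}
With this count your deterministic route goes through. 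It even proves more than the proposition, namely reconstructibility for every injective placement of the points.

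\textbf{Comparison with the paper.} The paper extracts only a linear set on which $\varphi$ is an isometry, and then uses the random vertex permutation obtained via the contiguity trick. For a maximal connected set $U$ of moved vertices, all but one vertex of $N(U)$ must land on prescribed midpoints, each with probability $O(1/n)$. This beats the union bound over connected $U$ and sign patterns. That randomness-of-positions mechanism is what survives in the sparse $2$-core of $G(n,p)$. There, adversarially placed $2$-paths can be flexed, so a placement-independent argument like yours cannot give the main theorems.
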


    Actually, Theorem~\ref{th:super1}, Lemma~\ref{lm:super2}, Lemma~\ref{lm:3}, and Lemma~\ref{lm:super4} apply to a broad class of random graphs and, in particular, imply the analogue of Proposition~\ref{pr:toy} for a uniformly random $d$-regular graph with $d \geq 3$. Nevertheless, the proof of Proposition~\ref{pr:toy} loosely follows the structure of the general proof of Theorem~\ref{th:1} and Theorem~\ref{th:2} (further referred as {\it the general proof}). So, the proof of Proposition~\ref{pr:toy} may provide additional intuition for understanding the general proof on large scale.

    The proof of Proposition~\ref{pr:toy} will be simpler than the general proof due to the following three major advantages:

    \begin{itemize}
        \item $\mathcal{\bar K}$ is regular,
        \item $\mathcal{\bar K}$ has a large number of edges, and
        \item the kernel of $\mathcal{\bar K}$ coincides with the graph itself.
    \end{itemize}

    We will deduce Proposition~\ref{pr:toy} from Proposition~\ref{pr:toy_main} given below. In Proposition~\ref{pr:toy_main} we show that for specific random graphs $\mathcal{K}$, whp $V$ is reconstructible in $\mathcal{K}$. In order to deduce Proposition~\ref{pr:toy} from Proposition~\ref{pr:toy_main} we will show that the uniformly random 17-regular graph is contiguous to a properly chosen distribution on the graphs $\mathcal{K}$ from Proposition~\ref{pr:toy_main}.

    \begin{proposition}
    Let $V \subseteq \mathbb{R}$ and let $|V| = n$ be even. Let $\mathcal{K}^*$ be an arbitrary 17-regular graph with $V(\mathcal{K}^*) = V$ satisfying the following:
    \begin{equation}
        \textnormal{The eigenvalues }\lambda_1 \geq \ldots \geq \lambda_{n} \textnormal{ of the adjency matrix of }\mathcal{K^*}\textnormal{ staisfy }|\lambda_2|, \ldots, |\lambda_n| \leq 8.01.
        \label{eq:prob7}
    \end{equation}
    Let $\mathcal{K} = \pi(\mathcal{K}^*)$, where $\pi:V\to V$ is a uniformly random permutation. Then, whp, $V$ is a reconstructible subset in $\mathcal{K}$.
    \label{pr:toy_main}
    \end{proposition}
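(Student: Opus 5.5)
The plan is to encode a $\mathcal{K}$-rigid map $\varphi$ by edge signs. For each edge $uv$ we have either $\varphi(u)-\varphi(v)=u-v$ (a \emph{$+$-edge}) or $\varphi(u)-\varphi(v)=-(u-v)$ (a \emph{$-$-edge}). Equivalently, setting $f:=\varphi-\mathrm{id}$ and $g:=\varphi+\mathrm{id}$, a $+$-edge forces $f(u)=f(v)$ and a $-$-edge forces $g(u)=g(v)$. A set $U$ is mapped isometrically if and only if $f$ is constant on $U$ or $g$ is constant on $U$. So it suffices to show that whp, for every rigid $\varphi$, either $f$ or $g$ is constant on all of $V$.

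\textbf{Step 1 (probabilistic input).} The key probabilistic fact is the following. Consider a closed walk $u_0u_1\dots u_k=u_0$ in $\mathcal{K}$ whose consecutive differences carry signs $\sigma_i\in\{\pm1\}$ that are not all equal. Summing $\varphi(u_{i})-\varphi(u_{i+1})=\sigma_i(u_i-u_{i+1})$ around the walk gives the linear relation $\sum_i \sigma_i(u_i-u_{i+1})=0$ among the positions, and in nondegenerate cases this relation is nontrivial.
\begin{itemize}
\item Because $\pi$ is a uniform random permutation, for a fixed abstract cycle on $k$ vertices in $\mathcal{K}^*$, a fixed sign pattern, and a fixed nontrivial relation, we expose the positions of all but one vertex with nonzero coefficient. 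The remaining position is then determined, so the relation holds with probability at most $1/(n-k)$.
\item This argument uses no assumption (such as $\mathbb{Q}$-independence) on $V$.
\item Eigenvalue bounds on regular graphs give control on the number of cycles of length $k\le c\ln n$ through small sets. A union bound over short cycles, their $2^k$ sign patterns and their placements should then show the following: whp, every short cycle, and more generally every small subgraph with excess at least $1$, in which a mixed-sign relation would be forced, is not realizable.
\end{itemize}

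\textbf{Step 2 (giant monochromatic class).} I would consider the partition of $V$ into components of the $+$-edges and the partition into components of the $-$-edges. Every vertex has $17$ edges, so each vertex has at least $9$ edges of one sign. Suppose every $+$-component and every $-$-component has size at most $\delta n$. Then grouping components yields a set $S$ with $\delta n\le|S|\le n/2$ that is a union of, say, $+$-components. All edges leaving $S$ are then $-$-edges. The expander mixing lemma with $|\lambda_i|\le 8.01$ gives
\[
e(S,S^c)\ \ge\ \Bigl(17-8.01\Bigr)|S|\Bigl(1-\tfrac{|S|}{n}\Bigr)-O(\cdot).
\]
This means the $-$-edges across $S$ form a dense bipartite structure. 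That structure contains many short cycles alternating between $+$-paths inside components and $-$-edges across. Step 1 excludes these because their relations are nontrivial. Hence some $+$-component or $-$-component $W$ has linear size, and by symmetry we may assume $f$ is constant on $W$. Repeating the mixing argument with $S=W$ or $S=V\setminus W$ should upgrade this to $|W|\ge(1-o(1))n$, and in fact to $|V\setminus W|$ small enough that every outside vertex has many neighbours in $W$. This is where the number $17$ versus $2\cdot 8.01$ is used.

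\textbf{Step 3 (absorbing the rest).} Let $x\notin W$ have neighbours $w_1,w_2\in W$. If both edges were $-$-edges, then $\varphi(x)=\varphi(w_j)+(w_j-x)$ for $j=1,2$, where $\varphi(w_j)=w_j+c$. This gives $w_1+w_1-x+c=w_2+w_2-x+c$, so $w_1=w_2$, a contradiction. Hence at most one edge from $x$ to $W$ is a $-$-edge, and any $+$-edge puts $x$ into the class of $W$. Therefore every vertex with at least $2$ neighbours in $W$ joins $W$, and iterating over vertices outside $W$ reaches all of $V$, since the complement is small and, by the expansion in~\eqref{eq:prob7}, sends many edges into $W$.

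\textbf{Main obstacle.} I expect the main obstacle to be Step 2: turning the expansion from~\eqref{eq:prob7} into the existence of a short mixed-sign cycle, uniformly over all $2^{|E|}$ possible sign assignments, with a union bound small enough for Step 1. To keep the union bound manageable, I would first try to restrict to cycles of length $O(\ln n)$ that use at most a bounded number of sign changes. Each $+$-component is then treated as a single contracted vertex, so that the count of configurations stays at $n^{O(1)}17^{O(\ln n)}$, which is beaten by the $1/n$ factor only if the cycle-count bound coming from the spectral gap is sharp enough.
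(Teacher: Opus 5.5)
You have a genuine gap in Step 2, at the passage from a linear isometric class to essentially all of $V$.

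The first half of Step 2 does not need Step 1 or any cycles. One sign class contains at least $17n/4$ edges. By the mixing lemma, a set $U$ with $|U|\le 0.01n$ spans at most $4.09|U|<\frac{17}{4}|U|$ edges. So the components of that sign class cannot all be that small. This deterministic count is what the paper does.

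The upgrade ``by repeating the mixing argument'' does not work. Let $W$ be the full level set of $f$ containing the large class. Your own Step 3 computation shows that every vertex outside $W$ has at most one edge into $W$, so $e(W,W^c)\le |W^c|$. Mixing gives $e(W,W^c)\ge 8.99|W||W^c|/n$. These two bounds contradict each other only when $|W|>n/8.99$. Hence mixing with $S=W$ or $S=V\setminus W$ cannot exclude a linear class of size below $n/8.99$.

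This is exactly where the randomness of $\pi$ must do the work, and your Step 1 cannot supply it. A fixed cycle with a fixed sign pattern yields a single coincidence of probability $O(1/n)$. The alternating cycles you want (through $+$-paths inside components and $-$-edges across) need not be short. In a typical random $17$-regular graph almost no vertex lies on a cycle of length below $c\ln n$. There are about $16^k/(2k)$ cycles of length $k$, each with $2^k$ sign patterns, and this exceeds $n$ once $k$ is somewhat above $\log_{32} n$. Your ``main obstacle'' paragraph sees this, but it is not a matter of sharpening a count: objects that each carry one constraint cannot pay for an entropy exponential in their length.

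The missing idea is to take a union bound over objects whose number of forced coincidences grows linearly with their size. The paper normalises $\varphi$ to be the identity on the linear class. It then takes $U$ to be a maximal set, connected in $\mathcal{K}^*$, of vertices with $\varphi(\pi(u))\neq\pi(u)$. Every vertex of $N_{\mathcal{K}^*}(U)$ is then a fixed point. Since $|U|\le 0.99n$, spectral expansion gives $|N_{\mathcal{K}^*}(U)|\ge\max(0.003|U|,3)$.

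Fix $\pi$ on $U$ and on one boundary vertex $u^*$, and fix one of at most $2^{|U|}$ sign patterns on a spanning tree; this determines $\varphi$ on $\pi(U)$. Every other boundary vertex $w$, adjacent to some $u\in U$, must then sit at the midpoint of $\pi(u)$ and $\varphi(\pi(u))$. This is precisely your Step 3 observation, but used as a probabilistic constraint costing a factor $O(1/n)$ per boundary vertex. It gives $\mathbb{P}(B(U))\le 2^{|U|}(0.007n)^{-\max(0.003|U|-1,2)}$, which beats the at most $n(16e)^{|U|-1}$ connected sets of each size.

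Your Step 3 is a valid deterministic finish once a class of size above $n/8.99$ is available. Producing such a class is exactly what your plan leaves open.
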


    \begin{proof}[Proof that Proposition~\ref{pr:toy_main}$\implies$Proposition~\ref{pr:toy}]

    Notice that the uniformly random 17-regular graph $\mathcal{\bar K}$ satisfies~\eqref{eq:prob7} whp due to Theorem~1 by Friedman~\cite{Fr}. We construct a contiguous model $\mathcal{K}$ for  $\mathcal{\bar K}$ using a distribution-preserving transformation. The uniform distribution over 17-regular graphs is invariant under every vertex permutation $\pi$. Note that the vertex permutations break the 17-regular graphs into equivalence classes in a way that all graphs from the same equivalence class have the same probability. So, we simply set $\mathcal{K}^{*}$ in the following way. We set $\mathcal{K}^* = \mathcal{\bar K}$ when $\mathcal{\bar K}$ satisfies~\eqref{eq:prob7} and we set $K^*$ to be an arbitrary graph otherwise. 
    \begin{remark}
        This probabilistic trick is simple but powerful --- several times in the general proof it allows us to overcome obstacles which would otherwise require entirely new ideas.
    \end{remark}

    \end{proof}

    It remains for us to show Proposition~\ref{pr:toy_main}.

\begin{proof}[Proof of Proposition~\ref{pr:toy_main}]

    Let us prove the following auxiliary result: for every $17$-regular graph $\mathcal{\hat K}$ satisfying~\eqref{eq:prob7} and every $\mathcal{\hat K}$-rigid map $\varphi$, there is a linearly large subset $U \subseteq V$ satisfying 
    \begin{equation}
        |u - v| = |\varphi(u) - \varphi(v)|,\quad \textnormal{for every }u, v \in U.
        \label{eq:repl_dep_eq}
    \end{equation}
    
    This auxiliary statement corresponds to Lemma~\ref{lm:1} in the general proof.

    Consider a $\mathcal{\hat K}$-rigid map $\varphi:V \to \mathbb{R}$ such that there is no set $U \subseteq V$ of size $0.01|V|$ satisfying~\eqref{eq:repl_dep_eq}. Wlog suppose that the set $F :=\{uv \in E(\mathcal{\hat K}) : \varphi(u) - \varphi(v) = u-v\}$ contains at least half of the edges $E(\mathcal{\hat K})$. Let $\mathfrak{P}$ denote the natural partition of the vertex set $V$ into the connected components of the graph $(V, F)$. Note that equation~\eqref{eq:repl_dep_eq} holds for every part $U$ of $\mathfrak{P}$ and, hence, $|U| < 0.01|V|$, by the definition of $\varphi$. In order to find a contradiction, it remains to show that $|F| <  \frac{|E(\mathcal{\hat K})|}{2}$. If $uv \in F$ then $u$ and $v$ lie in the same part $U$ of $\mathfrak{P}$. Due to~\eqref{eq:prob7} and the expander mixing lemma, we have
    \begin{equation}
        \textnormal{For every } U \subseteq V\textnormal{ with } |U| \leq 0.01|V| \textnormal{ it holds that } |\{uv \in E(\mathcal{K}): u \in U, v \notin U\}| > \frac{17}{2}|U|.
        \label{eq:more_than_half}
    \end{equation}
    So, applying~\eqref{eq:more_than_half} to every $U$ from $\mathfrak{P}$, we conclude that $|F| < \frac{|E(\mathcal{\hat K})|}{2}$ --- a contradiction to the initial assumption on the size of $F$.

    \begin{remark}
        Compared to the proof shown above, the general proof of Lemma~\ref{lm:1} will require a more careful analyses of the structure of the graph's partition.
    \end{remark}

    Now, let us prove that, whp, the whole vertex set $V$ is reconstructible. This statement corresponds to Lemma~\ref{lm:2} and  Lemma~\ref{lm:3} in the general proof. 
    
    We will prove that $V$ is reconstructible by taking a union bound. First, for every $U \subseteq V$, we define an event $B(U)$ and show that $\bigcup_{U \subseteq V}B(U)$ covers the event that $V$ is not reconstructible in $\mathcal{K}$. Next, we show a sufficiently strong upper bound on $\mathbb{P}(B(U))$ so that $\sum_{U \subseteq V}\mathbb{P}(B(U)) = o(1).$

    Consider an arbitrary fixed permutation $\pi:V \to V$. Suppose that $V$ is not reconstructible in $\mathcal{K} = \pi(\mathcal{K}^*)$. We claim that there exists a (non-empty) subset $U \subseteq V$ such that the following event $B(U)$ holds: there exists a (not necessarily injective) $\mathcal{K}$-rigid map $\varphi: \pi(V)\to \mathbb{R}$ such that
    \begin{itemize}
        \item[B1] $U$ induces a connected subgraph in $\mathcal{K}^*$,
        \item[B2] $|U| \leq 0.99|V|$,
        \item[B3] $\varphi|_{\pi(V\setminus U)} = id$, and
        \item[B4] $\varphi(u) \neq u$ for every $u \in {\pi(U)}$.
    \end{itemize}
    Let $\varphi: \pi(V)\to \mathbb{R}$ be a $\mathcal{K}$-rigid map certifying that $V$ is not reconstructible in $\mathcal{K}$. As shown at the start of the proof there is a subset $W\subseteq V$ of size at least $0.01|V|$ such that $\pi(W)$ satisfies~\eqref{eq:repl_dep_eq}. Wlog, we can assume that  
    $\varphi|_{\pi(W)} = id$, and then wlog $W = \{x : \varphi(\pi(x))=\pi(x)\}$. Now set $U\subseteq V$ to be an arbitrary inclusion-maximal subset such that 
    \begin{itemize}
        \item $\mathcal{K}^*[U]$ is connected and
        \item $\varphi(\pi(u)) \neq \pi(u)$ for every $u \in U$.
    \end{itemize}
    First, B1 and B4 follow instantly. Next, $U \cap W = \emptyset$ and, thus, B2 holds. Finally, in order to conclude B3 it remains to show that wlog we can set $\varphi|_{\pi(V\setminus (W\sqcup U))} = id$. We need to show that the distance along the edges adjacent to $\pi(V\setminus (W\sqcup U))$ is preserved. Since $U$ is maximal and connected there are no edges between $U$ and  $V\setminus(U \sqcup W)$. So, the distance the distance along the edges adjacent to $\pi(V\setminus (W\sqcup U))$ is preserved as $\varphi|_{V\setminus U} = id$. 

    Now, let us assume that $\pi: V\to V$ is uniformly random. Consider a subset $U \subseteq V$ with $|U| \leq 0.99|V|$ and let us bound from above the probability $\mathbb{P}(B(U))$.
    
    Due to~\eqref{eq:prob7}, the expander mixing lemma and the fact that $\mathcal{K}^*$ is 17-regular imply the following property, which is slightly stronger than being a (0.99,~0.003) vertex expander (see, Subsection~\ref{sc:2.5} for the definition of $(c, \alpha)$ vertex expansion).
    $
        \textnormal{For every } W \subseteq V\textnormal{ with } |W| \leq 0.99|V| \textnormal{ it holds that}$ 
        $$ |N_{\mathcal{K}*}(W)| \geq \max(0.003|W|, 3).
    $$
    Hence, since $|U| \leq 0.99|V|$, for $N:= N_{\mathcal{K}^*}(U)$ it holds that 
    \begin{equation}
    |N|\geq \max(0.003|U|, 3).
        \label{eq:more_than_2}
    \end{equation}

    \begin{figure}[!ht]
    \centering
    \includegraphics[scale=0.2]{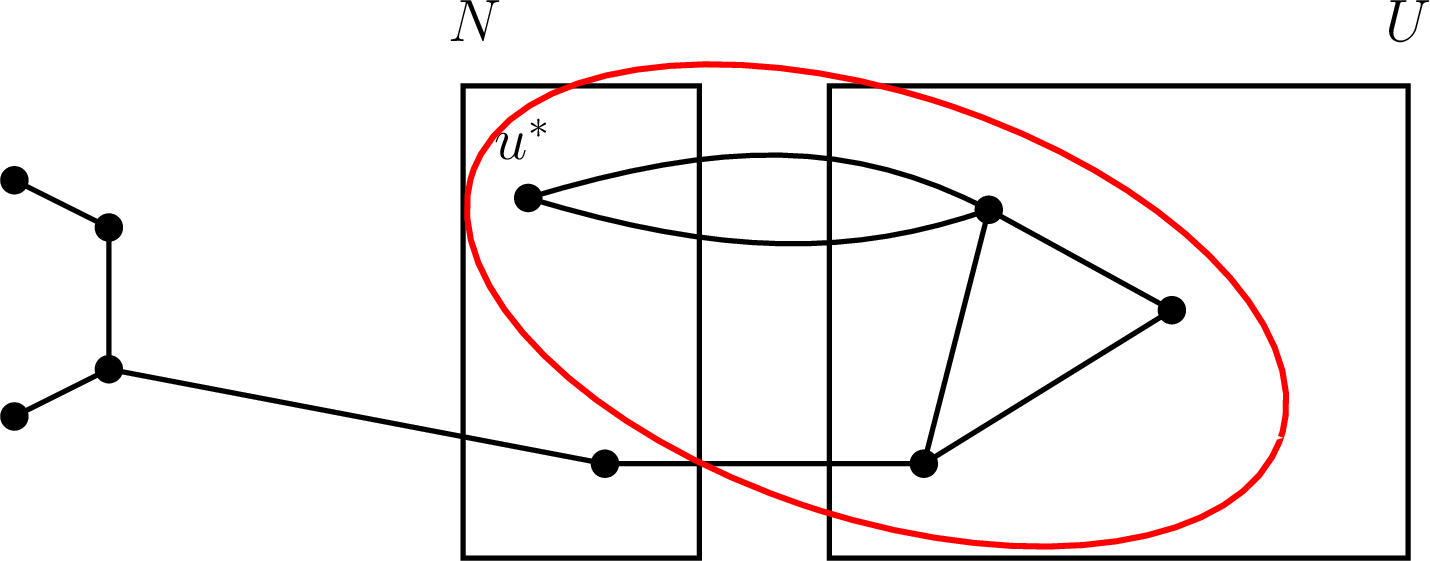}
    \caption{The figure illustrates the  structure of $\mathcal{K^*}, U$, and $N$. The red subset consists of vertices $v \in V(\mathcal{K})$ with fixed random value $\pi(v)$.
}
    \label{fig:2}
\end{figure}

   Let $u^* \in N$ be chosen arbitrary (see, Figure~\ref{fig:2}). Let us fix the values of $\pi|_{\{u^*\} \sqcup U}$. Hence, the remaining probability space is described by the uniformly random bijection 
   $$\pi:{V \setminus (\{u^*\} \sqcup U)} \to V\setminus \pi(\{u^*\} \sqcup U).$$

    Let us observe that if B1 holds then for all extensions of $\pi$ to $V$ the total number of (not necessarily injective) $\mathcal{K}$-rigid maps $\varphi:V \to \mathbb{R}$ satisfying B3 is at most $2^{|U|}$. Indeed, regardless of the choice of the extension of $\pi$ to $V$, by B3 we have $\varphi|_{V\setminus \pi(U)} = id$ and, hence, we need to bound the number of ways to define $\varphi$ on $U$. In  order to do that consider a subtree $\mathcal{T}$ with vertex set $\{u^*\} \sqcup U$, which exists due to B1. Then, 
    for every edge $uv \in E(\mathcal{T})$ we have 
    \begin{equation}
        \varphi(\pi(v)) - \varphi(\pi(u)) = \pm (\pi(v) -\pi(u)).
        \label{eq:decide_sign}
    \end{equation}
    Since by B3 $\varphi(\pi(u^*)) = \pi(u^*)$ deciding the signs in~\eqref{eq:decide_sign} over the edges of $\mathcal{T}$ uniquely determines $\varphi|_{U}$ and, hence, concludes the bound (see Remark~\ref{rm:sigma} for further information).
    
    Fix an arbitrary $\mathcal{K}|_{\pi(\{u^*\} \sqcup U)}$-rigid map $\varphi:\pi(\{u^*\} \sqcup U) \to  \mathbb{R}$ of the graph such that
    \begin{itemize}
        \item $\varphi(\pi(u^*)) = \pi(u^*)$ and
        \item $\varphi(\pi(u)) \neq \pi(u)$ for every $u \in U$.
    \end{itemize}
    Let us bound the probability that this $\mathcal{K}|_{\pi(\{u^*\} \sqcup U)}$-rigid map can be extended to the whole graph $\mathcal{K}$ in a way that it satisfies B3 and B4. Notice that we actually already know the only possible extension --- $\varphi|_{\pi(V) \setminus \pi(\{u^*\} \sqcup U)} = id$ due to B3.
    
    Consider an arbitrary vertex $w \in N \setminus\{u^*\}$ and let $u \in U$ be a neighbour of $w$. Recall that, by B3, 
    $$|\pi(w) - \varphi(\pi(u))| = |\varphi(\pi(w)) - \varphi(\pi(u))|  = |\pi(w) - \pi(u)|.$$ 
    As $\varphi(\pi(u)) \neq \pi(u)$, there is just one solution in $\pi(w)$ to this equations --- that is the midpoint between $\pi(u)$ and $\varphi(\pi(u))$. So, there is at most one position on the real line, where the random value of $\pi(w)$ can land. Let us fix one by one the values $\pi(w)$ for $\min(|N \setminus\{u^*\}|, 0.003|V|)$ of vertices $w \in N \setminus\{u^*\}$. Each of them lands in the desired position with probability at most $(0.007|V|)^{-1}$ conditioned on the whole history of the process. So, by~\eqref{eq:more_than_2}, we get an upper bound
    $$\mathbb{P}(\varphi \textnormal{ can be extended to }\mathcal{K}\textnormal{ in a way that satisfies B3 and B4}) \leq \frac{1}{(0.007|V|)^{\max(0.003|U|-1, 2)}},$$
    which gives us an overall upper bound
    \begin{equation}
        \mathbb{P}(B(U)) \leq \frac{2^{|U|}}{(0.007|V|)^{\max(0.003|U|-1, 2)}}.
        \label{eq:bound9}
    \end{equation}
    
    Recall, that we proved that the event saying $V$ is not reconstructible in $\mathcal{K} = \pi(\mathcal{K}^*)$ implies the event $B(U)$ for some $U\subseteq V$. So, by B1 and B4, it remains for us to show that
    \begin{equation*}
        \sum_{U\subseteq V,\ |U|\leq 0.99|V|,\ U \textnormal{ induces a connected subgraph in }\mathcal{K}^*}\mathbb{P}(B(U)) = o(1).
    \end{equation*}

   By a well-known lemma, the number of sets $U \subseteq V$ of order $t$ inducing a connected subgraph in $\mathcal{K}^*$ is at most $|V|(16e)^{t-1}$ (see Bollob\'as~\cite{Bo} or Claim~\ref{cl:graph_bound} for the full statement). So, it sufficient to show the following upper bound uniformly over $t \in [0.99|V|]$ and $U \subseteq V$ with $|U| = t$
    \begin{equation*}
        \mathbb{P}(B(U)) = o\left(\frac{1}{(50e)^{t}|V|}\right),
    \end{equation*} 
    which is indeed true due to~\eqref{eq:bound9}.

\end{proof}

   \begin{remark}
       In the general proof, by extending these ideas in Lemma~\ref{lm:2} we define a specific event $D$ and show that the set of all kernel vertices satisfying $D$ is reconstructible. Then, in Lemma~\ref{lm:3} an additional technical argument will be needed to conclude that almost all vertices of the kernel satisfy event $D$. 
   \end{remark}

\section{Preliminaries}
\label{sc:2}
\subsection{Basic inequalities}
\label{sc:2.1}

In this subsection we give upper bounds on the following three quantities: the number of connected induced subgraphs with some constraints, the number of partitions of a graph into connected subgraphs, and the number of spanning subtrees.

The following claim can be found, for example, in Bollob\'as~\cite{Bo}, pp130---133.

\begin{claim}
    A (multi-)graph with maximum degree $\Delta \geqslant 3$ has at most $(e(\Delta - 1))^n$ connected induced subgraphs with $n + 1$ vertices that include a given vertex.
    \label{cl:graph_bound} 
\end{claim}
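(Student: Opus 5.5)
We prove Claim~\ref{cl:graph_bound} by injecting connected vertex sets into spanning trees, and then counting trees through the multiplicative structure of $\Delta$-ary rooted trees; the final bound follows from the ratio of consecutive binomial coefficients. Fix the given vertex $v$. Every connected induced subgraph $H$ on $n+1$ vertices containing $v$ has a spanning tree $T$ rooted at $v$. Choose $T$ canonically, for instance the breadth-first search tree of $H$ from $v$ with a fixed ordering of the edges of the multigraph. Then $H\mapsto T$ is injective, since $V(H)=V(T)$ and $H$ is induced. So it suffices to bound the number of subtrees of the (multi)graph with $n+1$ vertices that contain $v$. Multiple edges only increase the count of trees, so the upper bound we obtain also covers the induced subgraphs.

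Next, embed these trees into the infinite rooted tree $\mathbb{T}_\Delta$. In $\mathbb{T}_\Delta$ the root has $\Delta$ children and every other vertex has $\Delta-1$ children. Concretely, run the usual exploration: at $v$ label the at most $\Delta$ incident edges by $1,\dots,\Delta$, and at every other vertex label the at most $\Delta-1$ non-parent edges by $1,\dots,\Delta-1$. A rooted subtree of the graph containing $v$ is determined by the set of labelled edge-choices it uses, which is a rooted subtree of $\mathbb{T}_\Delta$ with $n+1$ vertices. Thus the count is at most the number $t_n$ of subtrees of $\mathbb{T}_\Delta$ with $n+1$ vertices containing the root. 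To simplify, we may bound every branching number by $\Delta$ and consider the $\Delta$-ary tree, although this costs a little. It is cleaner to keep the root at $\Delta$ and count directly.

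The standard count goes as follows. The number of subtrees with $k$ vertices of the infinite rooted $d$-ary tree containing the root equals the Fuss--Catalan number $\frac{1}{(d-1)k+1}\binom{dk}{k}$. For the mixed tree, decompose at the root: choose which of the $\Delta$ children are used, and attach a $(\Delta-1)$-ary tree at each of them. Alternatively, use the cruder bound $t_n\le\binom{\Delta(n+1)}{n}$. This bound comes from encoding the tree by the subset of $n$ edges chosen among the at most $(n+1)\Delta$ edge slots of its vertices, since each non-root vertex is determined by its parent slot. A sharper slot count gives slots $\Delta+(\Delta-1)n$ for the vertices processed in BFS order, hence
\[ t_n\le \binom{(\Delta-1)n+\Delta}{n}\le \frac{((\Delta-1)n+\Delta)^n}{n!}. \]

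The main obstacle is making this last estimate give exactly $(e(\Delta-1))^n$ rather than $(e(\Delta-1))^n$ times a factor like $e^{\Delta/(\Delta-1)}$. The plan is to use the exact Fuss--Catalan-type formula obtained via Lagrange inversion, $t_n=\frac{\Delta}{(\Delta-1)n+\Delta}\binom{(\Delta-1)(n+1)+1}{n}$ or similar. Then bound the binomial coefficient using $n!\ge (n/e)^n$, and absorb the extra terms using the prefactor $\frac{\Delta}{(\Delta-1)n+\Delta}$ together with $(1+\frac{c}{n})^n\le e^{c}$. Here $\Delta\ge3$ gives enough room, and small $n$ can be checked directly.
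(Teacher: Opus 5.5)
Your route is correct and is essentially the standard argument from the Bollob\'as reference, which the paper cites instead of proving the claim: inject connected vertex sets into rooted subtrees of the infinite tree whose root has $\Delta$ children and whose other vertices have $\Delta-1$, count these exactly as $t_n=\frac{\Delta}{M}\binom{M}{n}$ with $M=(\Delta-1)n+\Delta$ (your formula is exact, not merely ``similar''), and finish with a factorial estimate. Two details to tidy: the labels at a non-root vertex must be fixed in advance for every possible incoming edge, since the parent depends on the tree; and $n!\ge(n/e)^n$ with $(1+\frac{c}{n})^{n-1}\le e^{c}$, where $c=\frac{\Delta}{\Delta-1}\le\frac32$, only gives $t_n\le\frac{c\,e^{c}}{n}(e(\Delta-1))^n$, which suffices for $n\ge7$, while for smaller $n$ note that $t_n\le\frac{\Delta M^{n-1}}{n!}$ reduces the claim to $\frac{c(n+c)^{n-1}}{n!}\le e^{n}$, which is increasing in $c$ and hence a finite check at $c=\frac32$.
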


\begin{claim}
    For $c\in(0,1)$ and a (multi-)graph $\mathcal{K}$ of maximal degree at most $|V(\mathcal{K})|^c/10$, the number of partitions $V_1 \sqcup \ldots \sqcup V_s = V(\mathcal{K})$ such that, for every $i \in [s]$, $V_i$ induces a connected (multi-)graph is at most $|V(\mathcal{K})|^{c|V(\mathcal{K})|}$.
    \label{cl:partitions_number}
\end{claim}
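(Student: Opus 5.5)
We bound the number of such partitions by encoding each partition as a small amount of data and then counting the encodings. Write $N := |V(\mathcal{K})|$ and $\Delta \leq N^c/10$ for the maximal degree. The idea is to generate every partition into connected parts by a greedy exploration, in which each vertex contributes a factor of at most roughly $\Delta$ choices. This gives a bound of the form $(C\Delta)^{N}$, and with $\Delta \le N^c/10$ that is at most $N^{cN}$ once the constant $C$ is absorbed by the factor $1/10$.

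Concretely, fix a partition $V_1 \sqcup \ldots \sqcup V_s$ with every $V_i$ inducing a connected subgraph, and fix an ordering $x_1,\ldots,x_N$ of $V(\mathcal{K})$. We encode the partition by a spanning forest $\mathcal{F}$ whose components are spanning trees of the $\mathcal{K}[V_i]$. This forest determines the partition. It then suffices to bound the number of spanning forests of $\mathcal{K}$, that is, acyclic edge sets, since every partition yields at least one forest. For each forest we orient every tree away from its smallest vertex in the fixed order. Each non-root vertex then has a unique parent, which is one of its at most $\Delta$ neighbours; counting parallel edges separately in the multigraph case still gives at most $\Delta$ choices. Each root has no parent. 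The map
\[
v \mapsto \text{parent}(v) \ \text{or a symbol} \ \star
\]
recovers the forest, so the number of forests is at most $(\Delta+1)^N$. Not every such assignment gives an acyclic forest, but that only helps, because we are bounding from above.

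Finally we need $(\Delta+1)^N \leq N^{cN}$. This holds whenever $\Delta+1 \leq N^c$, which follows from $\Delta \leq N^c/10$ as soon as $N^c \geq 10/9$. The only delicate point is the degenerate regime where $N^c/10 < 1$. In that case $\Delta = 0$, so there are no edges, the only partition into connected parts is into singletons, and the bound $1 \leq N^{cN}$ holds trivially. I expect no real obstacle: the single conceptual step is replacing partitions by parent functions, and the rest is arithmetic.
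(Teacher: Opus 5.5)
Your proof is correct, but it takes a different route from the paper.

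The paper proves $B_k \le (10\Delta)^k$ by induction on the number of vertices. It fixes a vertex $v$ and sums over the size $\ell$ of the part containing $v$. The number of connected sets of size $\ell$ through $v$ is bounded by $(e(\Delta-1))^{\ell-1}$ via Claim~\ref{cl:graph_bound} (Bollob\'as), and the count then recurses on the remaining vertices. The resulting geometric series in $e/10$ is absorbed into the factor $10$, and the hypothesis $\Delta \le N^c/10$ is used exactly to get $(10\Delta)^N \le N^{cN}$.

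You instead inject partitions into spanning forests, and spanning forests into parent functions assigning to each vertex either $\star$ or one of its at most $\Delta$ non-loop edge-ends. This gives the sharper bound $(\Delta+1)^N$ in one step. It needs no induction and no appeal to the connected-subgraph counting lemma, which the paper states only for $\Delta \ge 3$, so your version carries no implicit lower bound on the degree. Your case split is complete. If $N^c < 10$ then $\Delta = 0$ and the count is $1$. Otherwise $\Delta + 1 \le N^c/10 + 1 \le N^c$.

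The paper's route has the advantage of reusing a lemma it needs elsewhere anyway. Yours is self-contained and shows that the factor $1/10$ in the hypothesis is far from necessary: $\Delta \le N^c - 1$ would already suffice.
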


\begin{proof}
    For a (multi-)graph $\mathcal{K}$ let $B_\mathcal{K}$ denote the number of partitions of the vertex set $V(\mathcal{K})$ into connected subgraphs. Fix $\Delta > 0$. For $k >  0$ let
    $$B_{k} := \max\{B_{\mathcal{K}}: \Delta(\mathcal{K}) = \Delta, |V(\mathcal{K})| = k\}.$$
    
    It suffices to prove by induction on $k$ that $B_{k} \leqslant (10\Delta)^{k}$. The base of the induction $B_{1} \leqslant 10\Delta$ is trivial. Set $B_{0} = 1$.

    Now, suppose the statement holds for every $k \leqslant t-1$ and consider $k:=t$. Consider a $\textnormal{(multi-)graph}$ $\mathcal{K}$ with $|V(\mathcal{K})|=k$ and consider an arbitrary vertex $v \in V(\mathcal{K})$. Note that every partition of $V(\mathcal{K})$ into connected subgraphs induces a part that contains $v$. Suppose this part has order $\ell > 0$. By Claim~\ref{cl:graph_bound}, the number of vertex sets $S$ of order $\ell$ that induce a connected subgraph and contain $v$ is at most $(e(\Delta-1))^{\ell-1}$. Once we chose the part $S$ containing $v$, the number of partitions of the remaining graph is at most $B_{k-\ell}$. So, we get the overall bound
    $$B_k \leqslant \sum_{\ell =1}^k (e(\Delta-1))^{\ell-1}B_{k-\ell} \leqslant (10\Delta)^{k-1}(1 + e/10 + e^2/100 +\ldots) \leqslant (10\Delta)^k.$$
\end{proof}

\begin{claim}
     A (multi-)graph $\mathcal{G}$ with maximum degree $\Delta > 0$ has at most $e(e\Delta/2)^{|V(\mathcal{G})|-1}$ spanning trees.
     \label{cl:num_of_spanning_trees}
\end{claim}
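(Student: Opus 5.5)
We count spanning trees by their edge sets, using the encoding behind Claim~\ref{cl:graph_bound}. Write $n := |V(\mathcal{G})|$. If $\mathcal{G}$ is disconnected there are no spanning trees, so assume it is connected. In a multigraph, parallel edges give distinct spanning trees. This is harmless: every vertex still has at most $\Delta$ incident edges, and the argument below never uses simplicity.

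The cleanest route is a counting argument via sets of $n-1$ edges. Every spanning tree is a set of $n-1$ edges out of $|E(\mathcal{G})| \leq \Delta n/2$. Hence the number of spanning trees is at most
\[
\binom{\Delta n/2}{n-1} \leq \left(\frac{e\,\Delta n}{2(n-1)}\right)^{n-1} = \left(\frac{e\Delta}{2}\right)^{n-1}\left(1+\frac{1}{n-1}\right)^{n-1} \leq e\left(\frac{e\Delta}{2}\right)^{n-1},
\]
using $\binom{m}{k} \leq (em/k)^k$. When $\Delta n/2$ is not an integer, we replace it by $|E(\mathcal{G})| \leq \lfloor \Delta n/2 \rfloor$, and the same bound holds since $\binom{m}{k}$ is monotone in $m$. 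The case $n=1$ is trivial: there is exactly one spanning tree, and $1 \leq e$.

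The only point needing care is the last step, that $(1+1/(n-1))^{n-1} \leq e$. This is exactly where the leading factor $e$ in the claim comes from.
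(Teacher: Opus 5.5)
Your proof is correct and is essentially the paper's argument. Both bound the number of $(n-1)$-edge subsets of the at most $\Delta n/2$ edges by ${\Delta n/2 \choose n-1}\le \bigl(e\Delta n/(2(n-1))\bigr)^{n-1}$ and absorb the factor $(1+1/(n-1))^{n-1}\le e$. Your handling of non-integer $\Delta n/2$ and of $n=1$ tidies up the paper's one-line proof; the opening reference to the encoding behind Claim~\ref{cl:graph_bound} is superfluous, since you never use it.
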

    
\begin{proof}
    A spanning tree has $n-1$ edges and the number of ways to choose $n-1$ edges from the set of all edges is at most ${n\Delta/2 \choose n-1}\leqslant \left(\frac{en\Delta/2}{n-1}\right)^{n-1}\leq \left(1+\frac{1}{(n-1)}\right)^{n-1}(e\Delta/2)^{n-1} \leqslant e(e\Delta/2)^{n-1}$.
\end{proof}

\subsection{Contiguous model}
\label{sc:2.2}

In this subsection, we present in Claim~\ref{cl:contiguous} a contiguous model for the 2-core of the graph formed by the set of known distances, as described in Theorem~\ref{th:1}.

We use the following claim, that is a corollary of Theorem~1 in Ding, Lubetzky and Peres~\cite{DLP}, for a constant $\lambda$, and of Theorem~2 in Ding, Kim,  Lubetzky and Peres~\cite{DKLP}, for $\lambda \to 1$.

\begin{claim}
    Let $\lambda > 1$ be either a constant or a function tending to $1$ with $(\lambda -1) = \Omega( \frac{1}{\ln n})$. Let $\mu < 1$ be the conjugate of $\lambda$, that is $\mu e^{-\mu} = \lambda e^{-\lambda}$. Then the largest component of the 2-core of $G(n, \lambda/n)$ is contiguous to the following model $\mathcal{L}$:
    \begin{enumerate}
        \item Let $\Lambda$ be Gaussian $\mathcal{N}(\lambda - \mu, 1/n)$ and let $D_u \sim \textnormal{Poisson}(\Lambda)$ for $u \in [n]$ be i.i.d., conditioned that $\sum D_u\mathbf{1}_{D_u \geqslant 3}$ is even. 
        
        Let $N_k := |\{u : D_u = k\}|$ and $N = \sum_{k \geqslant 3}N_k$.

        \item Select a random multigraph $\mathcal{K}$ on $N$ vertices, uniformly among all multigraphs with $N_k$ vertices of degree $k$ for $k \geqslant 3$.

        \item Replace the edges of $\mathcal{K}$ by paths of i.i.d. $\textnormal{Geom}(1 - \mu)$ lengths, where $\textnormal{Geom}(1 - \mu)$ has support $\mathbb{Z}_{>0}$.
    \end{enumerate}
    \label{cl:L-contiguous}
\end{claim}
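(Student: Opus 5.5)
The claim is stated as a corollary of Theorem~1 of Ding, Lubetzky and Peres~\cite{DLP} (constant $\lambda$) and Theorem~2 of Ding, Kim, Lubetzky and Peres~\cite{DKLP} ($\lambda \to 1$). So the plan is not to reprove the structural result. Instead, we translate those theorems into the present notation and check that the hypotheses and the conclusions match.

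For constant $\lambda>1$, Theorem~1 of~\cite{DLP} states that the giant component $\mathcal{C}_1$ of $G(n,\lambda/n)$ is contiguous to a three-step model. First the kernel, then paths replacing its edges, then Poisson$(\mu)$-Galton--Watson trees attached to every vertex. The 2-core of $\mathcal{C}_1$ is exactly what remains after deleting the attached trees.

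\begin{itemize}
\item The largest component of the 2-core of $G(n,\lambda/n)$ coincides whp with the 2-core of the giant component. This holds because the other components of the 2-core live inside small components, which whp have at most one cycle and size $O(\ln n)$.
\item Deleting the attached trees is a deterministic map. Contiguity is preserved under measurable maps, since events pull back.
\item Together these give that the 2-core of $\mathcal{C}_1$ is contiguous to steps 1--3 of $\mathcal{L}$.
\end{itemize}

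We then check the parameters against~\cite{DLP}:

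\begin{itemize}
\item the Gaussian $\Lambda\sim\mathcal{N}(\lambda-\mu,1/n)$;
\item Poisson$(\Lambda)$ degrees conditioned on $\sum D_u\mathbf{1}_{D_u\ge 3}$ being even;
\item the configuration-type uniform multigraph on the degree-$\ge 3$ vertices;
\item Geom$(1-\mu)$ path lengths supported on $\mathbb{Z}_{>0}$, so that an edge of length $1$ is a direct edge.
\end{itemize}

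For $\lambda\to 1$ with $\lambda-1=\Omega(1/\ln n)$, the same argument applies using Theorem~2 of~\cite{DKLP}. That result treats $\varepsilon=\lambda-1$ with $\varepsilon^3 n\to\infty$, and this is implied by $\varepsilon=\Omega(1/\ln n)$. The first step is to confirm this range inclusion. The second is to confirm that the conjugate $\mu$ and the variance of $\Lambda$ appear there in the same form. One caveat: in~\cite{DKLP} the Gaussian variance may be written as $1/(\varepsilon n)$ rather than $1/n$. If so, I would keep their exact form, or argue that the difference does not affect any whp statement used later, since later arguments only use concentration of the $N_k$.

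The main obstacle is this bookkeeping rather than any new argument. We must make sure that ``largest component of the 2-core'' rather than ``2-core of the giant'' is legitimate whp in the near-critical range. That is, whp no small component contains a 2-core piece larger than the giant's 2-core. The giant's 2-core has size $\Theta(\varepsilon^2 n)\to\infty$ faster than $O(\ln n/\varepsilon^2)$-sized small components, which follows from standard bounds in the regime $\varepsilon^3 n\to\infty$.
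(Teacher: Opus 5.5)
Your proposal is correct and follows the paper's route. Like the paper, you cite Theorem~1 of~\cite{DLP} and Theorem~2 of~\cite{DKLP}, discard the attached Poisson$(\mu)$--Galton--Watson trees and relabel the steps; your extra checks (the largest 2-core component is whp the 2-core of the giant, and contiguity passes to deterministic images) fill in details the paper leaves implicit. Your caveat about $\Lambda$ is also well founded: the near-critical model in~\cite{DKLP}, to my recollection, has variance $1/(\varepsilon n)$ rather than $1/n$, but as you note only whp concentration of the $N_k$ (S1--S4) is used later, so this is harmless.
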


The original descriptions of the models~$\mathcal{L}$ have three steps, which differ slightly from the three steps in Claim~\ref{cl:L-contiguous}. We omit the third step of the original models~$\mathcal{L}$, which generates the vertices outside of the 2-core. We also split the first step of the original $\mathcal{L}$ into two, since it will be more convenient for us to use it this way.

Note that the first two steps of the model~$\mathcal{L}$ from Claim~\ref{cl:L-contiguous} generate the kernel $\mathcal{K}$. In particular, the first step gives us the degree sequence of the kernel.

\begin{claim}
    Let $\lambda > 1$ be either a constant or a function tending to $1$ with $(\lambda -1) = \Omega( \frac{1}{\ln n})$. Let $V \subseteq \mathbb{R}$ with $|V| = n$. Define the edge set $E$ according to the following model $\mathcal{U}$:
    \begin{itemize}
        \item let $\mathcal{C}$ be  a random graph distributed according to the model $\mathcal{L}$ from Claim~\ref{cl:L-contiguous};
        \item if $|V(\mathcal{C})| \leqslant n$, sample a uniformly random injection $\pi: V(\mathcal{C}) \rightarrow V$ and define $E  := \pi(E(\mathcal{C}))$;
        \item if $|V(\mathcal{C})| > n$ define $E := V^{(2)}$.
    \end{itemize}
    Then, the largest component of the 2-core of $G(V, \lambda/n)$ is contiguous to $\mathcal{U}$.

    \label{cl:contiguous}
\end{claim}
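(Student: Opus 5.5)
The plan is to transport Claim~\ref{cl:L-contiguous} from the vertex set $[n]$ to the point set $V$. The relabelling is a uniformly random injection, which works because the law of $G(n,\lambda/n)$ is invariant under vertex permutations. Fix an arbitrary bijection $\sigma:[n]\to V$. Then $G(V,\lambda/n)$ has the same law as $\sigma(G(n,\lambda/n))$. Composing with an independent uniformly random permutation $\tau$ of $[n]$ leaves this law unchanged, so $G(V,\lambda/n)\overset{d}{=}\sigma\tau(G(n,\lambda/n))$. Taking the largest component of the 2-core commutes with relabelling vertices. Hence the largest component of the 2-core of $G(V,\lambda/n)$ has the law of $\rho(\mathcal{C}_n)$. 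Here $\mathcal{C}_n$ is the largest 2-core component of $G(n,\lambda/n)$, and $\rho=\sigma\tau$ is a uniformly random bijection $[n]\to V$ independent of $\mathcal{C}_n$.

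The next step is to forget the labels. The graph $\rho(\mathcal{C}_n)$ depends only on the unlabelled isomorphism class of $\mathcal{C}_n$ together with a uniformly random injection of that class into $V$. Concretely, condition on the isomorphism type $H$ of $\mathcal{C}_n$ and fix a representative with vertex set $V(H)$. The restriction of $\rho$ to the actual vertex set of $\mathcal{C}_n$, composed with any fixed isomorphism, is a uniformly random injection $V(H)\to V$. So $\rho(\mathcal{C}_n)$ is obtained by applying a uniform injection $\pi:V(H)\to V$ to $E(H)$. Model $\mathcal{U}$ is exactly this same operation, applied to $\mathcal{C}\sim\mathcal{L}$ instead of $\mathcal{C}_n$, on the event $|V(\mathcal{C})|\le n$. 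Thus both laws are images, under one common Markov kernel, of the laws of the isomorphism classes of $\mathcal{C}_n$ and $\mathcal{C}$ respectively. The kernel sends an unlabelled graph on at most $n$ vertices to a random edge set on $V$, and sends anything larger to $V^{(2)}$.

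It remains to use that Markov kernels preserve contiguity. Let $A_n$ be events with $\mathbb{P}(\mathcal{U}\in A_n)\to0$, and set $f_n(H):=\mathbb{P}(\pi(E(H))\in A_n)\in[0,1]$. Then $\mathbb{E} f_n(\mathcal{C})\to0$, so for each $\delta>0$ we have $\mathbb{P}(f_n(\mathcal{C})>\delta)\to0$. Contiguity of $\mathcal{C}_n$ and $\mathcal{C}$ from Claim~\ref{cl:L-contiguous} gives $\mathbb{P}(f_n(\mathcal{C}_n)>\delta)\to0$ as well. Since $f_n\le1$, this yields $\mathbb{E} f_n(\mathcal{C}_n)\le\delta+o(1)$, and $\delta$ is arbitrary. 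The reverse direction is symmetric.

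One point needs checking: $\mathcal{C}_n$ always has at most $n$ vertices, whereas $\mathcal{C}$ may not. I expect this to be the only slightly delicate step. Applying contiguity to the event $\{|V(\cdot)|>n\}$, which has probability $0$ under $\mathcal{C}_n$, shows that $\mathbb{P}(|V(\mathcal{C})|>n)\to0$. So the fallback $E=V^{(2)}$ occurs with vanishing probability and only modifies the kernel on a null sequence of events. Also, $\mathcal{L}$ is defined only up to labels, so it should be read as a law on isomorphism classes. That is precisely the form in which contiguity is used above.
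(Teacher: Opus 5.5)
Your proposal is correct and follows the same route as the paper. You use the permutation invariance of $G(n,\lambda/n)$ to realise the largest 2-core component of $G(V,\lambda/n)$ as the image of the one on $[n]$ under an independent uniformly random injection, and then transfer the contiguity of Claim~\ref{cl:L-contiguous}; you simply make explicit, through the common Markov kernel and the Markov-inequality argument, what the paper compresses into ``by symmetry''. The only difference is minor: the paper asserts directly that $\mathcal{L}$ has fewer than $n$ vertices whp, whereas you derive $\mathbb{P}(|V(\mathcal{C})|>n)\to 0$ from contiguity in the direction ``rare for $\mathcal{C}_n$ $\Rightarrow$ rare for $\mathcal{C}$''. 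That step is not actually needed, because your kernel already encodes the fallback $E=V^{(2)}$, and $\mathcal{C}_n$ never has more than $n$ vertices; this holds in particular for the direction ``rare under $\mathcal{U}$ $\Rightarrow$ rare for the true core'' that the paper uses.
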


\begin{proof}
    It is easy to see that the graph $\mathcal{G}$ generated according to $\mathcal{L}$ from Claim~\ref{cl:L-contiguous} has whp less than $n$ vertices. 
    In particular, in the case $|V(\mathcal{C})| > n$, we can define the edge set $E$ as $E := V^{(2)}$, $E := \emptyset$, or any other edge set.

    By symmetry, the image $\pi$ of $\mathcal{G}(n, \lambda/n)$ coincides with $\mathcal{G}(n, \lambda/n)$ up to a $o(1)$-measure. Since the largest component of the 2-core of $G(n, \lambda/n)$ is contiguous with $\mathcal{L}$, so the largest component of the 2-core of $G(V, \lambda/n)$ is contiguous to $\mathcal{U}$.

\end{proof}

\subsection{Fixed degree sequence random graph}
\label{sc:2.3}

In this subsection, we give several properties of the degree sequence of the kernel of the graph $\mathcal{C}$ from Claim~\ref{cl:contiguous}.

For a given degree sequence $\mathbf{d} := (d_{n,1}, \dots, d_{n,n})$, let $\mathbf{G}(\mathbf{d})$ denote the number of labelled multigraphs with that degree sequence. Define $d_{\max} := \max(d_{n,1}, \dots,d_{n,n})$ and let 
$$
M_r := \sum_{i=1}^{n} [d_i]_r.
$$

Recall that the first step of the model $\mathcal{L}$ described in Claim~\ref{cl:L-contiguous} gives us the degree sequence of the kernel. Wlog, we may assume this degree sequence is non-increasing. 
The following claim estimates the number of multigraphs with given degree sequence (Theorem 1.1 from~\cite{GrMc} by Greenhill and McKay applied to multigraphs with small maximal degree).

\begin{claim}
    Let $\mathbf{d} := (d_1, \ldots, d_n)$ be a degree sequence. Let $M_1 \to \infty$ and $d_{max}^3 = o(M_1)$. Then,
$$\mathbf{G}(\mathbf{d})
= \frac{M_1!}{(M_1/2)!2^{M_1/2}d_1!\ldots d_n!} \exp\left(\frac{M_2}{2M_1} + \left(\frac{M_2}{2M_1}\right)^2 + O\left(\frac{d_{max}^4}{M_1}\right)\right)
$$
\label{cl:graphs_number}
\end{claim}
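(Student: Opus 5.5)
The plan is to obtain the claim as a direct specialisation of Theorem~1.1 of Greenhill and McKay~\cite{GrMc}, with the configuration model as the underlying framework. Associate to each vertex $i$ a cell of $d_i$ points, so that there are $M_1$ points in total. The number of perfect matchings (pairings) of these points is
$$\frac{M_1!}{(M_1/2)!\,2^{M_1/2}}.$$
Every labelled multigraph with degree sequence $\mathbf{d}$ arises from pairings by projecting cells to vertices.

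The first step is to record how many pairings give a fixed multigraph. A multigraph with no loops and no multiple edges arises from exactly $d_1!\cdots d_n!$ pairings. In general, a multigraph with edge multiplicities $m_{ij}$ and $\ell_i$ loops at $i$ arises from
$$\frac{d_1!\cdots d_n!}{\prod_{i<j} m_{ij}!\,\prod_i \ell_i!\,2^{\ell_i}}$$
pairings. Hence $\mathbf{G}(\mathbf{d})$ equals $\frac{M_1!}{(M_1/2)!2^{M_1/2}d_1!\cdots d_n!}$ times the expectation, over a uniform pairing, of the weight $\prod_{i<j} m_{ij}!\,\prod_i \ell_i!\,2^{\ell_i}$. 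Recovering this expectation is exactly the content of Greenhill--McKay's asymptotic formula for multigraphs. Its leading factor is $\exp\big(\tfrac{M_2}{2M_1}+(\tfrac{M_2}{2M_1})^2\big)$: the first term comes from loops, whose expected number is about $M_2/(2M_1)$, and the second from double edges, whose expected number is about $M_2^2/(4M_1^2)$. The plus signs, in contrast to the minus signs in the simple-graph probability, reflect that we are counting multigraphs rather than rejecting them.

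The second step is to check the hypotheses of Theorem~1.1 in~\cite{GrMc}. We need $M_1\to\infty$ and a sparseness condition. Greenhill--McKay state their result under a condition of the form $d_{\max}=o(M_1^{1/3})$, which is precisely our assumption $d_{\max}^3=o(M_1)$. Under this condition their error term is at most $O(d_{\max}^4/M_1)$, possibly after absorbing lower-order corrections involving $M_3/M_1^{3/2}$ and similar quantities. We will bound these using $M_r\le d_{\max}^{r-1}M_1$, so that every correction term beyond the two displayed ones is $O(d_{\max}^4/M_1)$.

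The main obstacle is matching conventions exactly. One must confirm that the paper's multigraphs (with loops allowed, as in the kernel of Claim~\ref{cl:L-contiguous}) are counted with the same weighting as in~\cite{GrMc}, or else adjust the weights. One must also check that all lower-order terms in their exponent really collapse into $O(d_{\max}^4/M_1)$. If a direct citation does not match, the fallback is a switching argument. This would first show that, under $d_{\max}^3=o(M_1)$, pairings with a triple edge or a double loop have negligible weight. It would then show that the numbers of loops and double edges are asymptotically independent Poisson variables with means $M_2/(2M_1)$ and $M_2^2/(4M_1^2)$. Finally, computing the expectation of $2^{L}2^{D}$ (the weights $\ell_i!\,2^{\ell_i}$ and $m_{ij}!$ reduce to $2^{L}$ and $2^{D}$ when there are only single loops and double edges) gives $\exp(\mu_L+\mu_D)$, which is the stated formula.
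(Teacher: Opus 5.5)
Your proposal takes the same approach as the paper, which gives no argument beyond citing Theorem~1.1 of Greenhill and McKay (multigraphs with loops, under exactly the hypotheses $M_1\to\infty$ and $d_{\max}^3=o(M_1)$). The extra explicit terms in their exponent (ratios such as $M_2^2M_3/M_1^4$, $M_2^4/M_1^5$, $M_3^2/M_1^3$), together with their $O(d_{\max}^3/M_1)$ error, are absorbed into $O(d_{\max}^4/M_1)$ exactly via your bound $M_r\le d_{\max}^{r-1}M_1$. Two caveats: a term like your guessed $M_3/M_1^{3/2}$ would \emph{not} be absorbable, since it is only $O(d_{\max}^2/\sqrt{M_1})$, so the absorption depends on the actual shape of their terms; and your Poisson fallback would give at best an $o(1)$ error in the exponent rather than the stated $O(d_{\max}^4/M_1)$, and gives no useful control once $M_2/(2M_1)\to\infty$, so the citation is genuinely needed.
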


The benefit of this claim is that it allows us to bound the probability that a multigraph contains a fixed subgraph. Indeed, let $\mathcal{G}$ be a uniformly random multigraph with degree sequence $\mathbf{d}$, and let $\mathcal{X}$ be a fixed graph with degree sequence $\mathbf{x}$ and the same vertex set. Then, 
$$
\mathbb{P}(\mathcal{X}\subseteq\mathcal{G}) \leq \frac{\mathbf{G}(\mathbf{d - x})}{\mathbf{G}(\mathbf{d})}.
$$

Also, a simple yet useful corollary is as follows.

\begin{claim}
    Let $\mathbf{d} := (d_1, \ldots, d_n)$ be a degree sequence with $M_1 \to \infty$ and $d_{max}^5 = o(M_1)$. Let $\mathcal{G}$ be a multigraph taken uniformly at random from the set of all multigraphs satisfaying the degree seuence $\mathbf{d}$. Then, for every $i, j \in [n]$,
    $$\mathbb{P}(ij \in E(\mathcal{G})) = \frac{d_id_j}{M_1} + O\left(\frac{d_{max}^5}{M_1^2}\right).$$
\label{cl:edge_prob}
\end{claim}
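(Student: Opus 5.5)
We derive Claim~\ref{cl:edge_prob} from Claim~\ref{cl:graphs_number} by an inclusion--exclusion sandwich over the number of $i$--$j$ edges. The plan is to write $\mathbb{P}(ij\in E(\mathcal{G})) = \mathbb{E}[m_{ij}] - \mathbb{E}[m_{ij}\mathbf{1}_{m_{ij}\ge 2}]+\ldots$, where $m_{ij}$ is the multiplicity of the edge $ij$. It suffices to show two things: the expected number of (labelled) copies of a single edge $ij$ equals $d_id_j/M_1 + O(d_{max}^5/M_1^2)$, and the expected number of pairs of parallel $ij$-edges is $O(d_{max}^4/M_1^2)$. Then Bonferroni's inequalities $\mathbb{E}[m_{ij}]-\mathbb{E}\binom{m_{ij}}{2}\le \mathbb{P}(m_{ij}\ge1)\le \mathbb{E}[m_{ij}]$ give the claim. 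Loops (case $i=j$) are handled identically, with $\mathbf{d}-\mathbf{x}$ decreasing $d_i$ by $2$ and the leading term $[d_i]_2/M_1$ absorbed into the stated form up to $O(d_{max}/M_1)$. I will treat $i\neq j$ as the main case. Since the multigraph model is naturally the configuration-type count in Claim~\ref{cl:graphs_number}, I will interpret ``expected number of edges $ij$'' as the ratio of counts of multigraphs with a marked $ij$ edge, which is exactly the ratio $\frac{d_id_j\,\mathbf{G}(\mathbf{d}-\mathbf{x})}{\mathbf{G}(\mathbf{d})}$ up to the combinatorial weight of choosing which half-edges at $i$ and $j$ form the edge.

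The key computation is the ratio $\mathbf{G}(\mathbf{d}-\mathbf{x})/\mathbf{G}(\mathbf{d})$, where $\mathbf{x}$ has $x_i=x_j=1$ and zeros elsewhere. Apply Claim~\ref{cl:graphs_number} to both; its hypotheses still hold since $M_1-2\to\infty$ and $d_{max}^3=o(M_1)$ follows from $d_{max}^5=o(M_1)$. The factorial prefactor ratio is
\[
\frac{(M_1-2)!\,(M_1/2)!\,2^{M_1/2}\,d_i!\,d_j!}{M_1!\,(M_1/2-1)!\,2^{M_1/2-1}\,(d_i-1)!\,(d_j-1)!}=\frac{d_id_j}{M_1-1}=\frac{d_id_j}{M_1}\Bigl(1+O(M_1^{-1})\Bigr).
\]
For the exponential factor, $M_1$ changes by $2$ and $M_2$ changes by $2(d_i-1)+2(d_j-1)=O(d_{max})$. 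Hence $\frac{M_2}{2M_1}$ changes by $O(d_{max}/M_1 + M_2/M_1^2)=O(d_{max}/M_1)$, using $M_2\le d_{max}M_1$, and the squared term changes by $O(d_{max}^2/M_1)$.

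This last point is the main obstacle. The error term $O(d_{max}^4/M_1)$ in Claim~\ref{cl:graphs_number} does not cancel between numerator and denominator, so naively the ratio is only $\frac{d_id_j}{M_1}(1+O(d_{max}^4/M_1))$, giving an error of $O(d_{max}^6/M_1^2)$ instead of $O(d_{max}^5/M_1^2)$. The fix I would try first is to run the argument via a switching/configuration-model comparison rather than two independent asymptotic formulas. In the configuration model, where half-edges are matched uniformly (multigraph counts weighted accordingly), the probability that a specific half-edge of $i$ is matched to some half-edge of $j$ is exactly $d_j/(M_1-1)$, so the expected multiplicity is exactly $d_id_j/(M_1-1)$. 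Conditioning on the resulting multigraph class changes this only through the weights $\exp(\frac{M_2}{2M_1}+(\frac{M_2}{2M_1})^2)$, whose relative perturbation was computed above to be $O(d_{max}^2/M_1)$. This yields an error $\frac{d_id_j}{M_1}\cdot O(d_{max}^3/M_1)=O(d_{max}^5/M_1^2)$, as required. If instead one insists on using only Claim~\ref{cl:graphs_number} as stated, the weaker hypothesis-consistent error still comes out as $O(d_{max}^5/M_1^2)$, provided the $O(d_{max}^4/M_1)$ term is understood as a smooth function of $\mathbf{d}$ whose change under removing one edge is $O(d_{max}^3/M_1)$; I would note this as the point needing care.

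Finally, the double-edge term: take $\mathbf{x}$ with $x_i=x_j=2$. The same prefactor computation gives $\mathbb{E}\binom{m_{ij}}{2}=O\bigl(d_i^2d_j^2/M_1^2\bigr)=O(d_{max}^4/M_1^2)$, which lies within the error. Combining the two estimates with Bonferroni's inequalities completes the proof.
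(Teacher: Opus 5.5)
\textbf{The gap.} The gap is at the step you yourself flag as the main obstacle, and neither of your proposed fixes closes it. You are right that the $O(d_{max}^4/M_1)$ term in Claim~\ref{cl:graphs_number} does not cancel in the ratio. However, the configuration-model fix misreads what the factor $\exp\bigl(\frac{M_2}{2M_1}+(\frac{M_2}{2M_1})^2+\cdots\bigr)$ is. It is not a weight attached to individual multigraphs. It is the asymptotic value of the normalising constant $Z(\mathbf{d}):=\mathbb{E}_{\mathrm{conf}}\bigl[2^{\ell(G)}\prod_e m_e(G)!\bigr]$, where $\ell(G)$ is the number of loops and $m_e(G)$ are the multiplicities. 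The uniform multigraph law is the configuration law reweighted by this graph-dependent factor. Doing the comparison correctly gives
\[
\mathbb{P}(ij\in E(\mathcal{G}))=\frac{d_id_j}{M_1-1}\cdot\frac{Z(\mathbf{d}-\mathbf{x})}{Z(\mathbf{d})},
\]
and $Z(\mathbf{d}-\mathbf{x})/Z(\mathbf{d})$ is exactly the quantity whose uncontrolled error term blocked you. Asserting that its relative perturbation is $O(d_{max}^2/M_1)$ silently drops that error term a second time. The fallback, treating the $O(d_{max}^4/M_1)$ term as a smooth function of $\mathbf{d}$, is an unproved assumption: an $O(\cdot)$ term in an enumeration formula says nothing about how it changes when one edge is removed. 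So, as written, your argument only yields $\frac{d_id_j}{M_1}\bigl(1+O(d_{max}^4/M_1)\bigr)$, that is, error $O(d_{max}^6/M_1^2)$.

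\textbf{What the paper does and what is missing.} The paper presents the claim as an immediate corollary of Claim~\ref{cl:graphs_number}, via the ratio $\mathbf{G}(\mathbf{d}-\mathbf{x})/\mathbf{G}(\mathbf{d})$ mentioned just before it. Deleting one copy of $ij$ is a bijection from multigraphs with degree sequence $\mathbf{d}$ that contain $ij$ onto all multigraphs with degree sequence $\mathbf{d}-\mathbf{x}$. Hence $\mathbb{P}(ij\in E(\mathcal{G}))=\mathbf{G}(\mathbf{d}-\mathbf{x})/\mathbf{G}(\mathbf{d})$ exactly. This ratio equals $\mathbb{P}(m_{ij}\ge 1)$, not $\mathbb{E}[m_{ij}]$, and it carries no extra factor $d_id_j$. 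So the Bonferroni sandwich and the double-edge estimate are unnecessary. Your prefactor computation $d_id_j/(M_1-1)$ is correct, as is your estimate that the explicit exponent changes by $O(d_{max}^2/M_1)$. The missing ingredient is to use the Greenhill--McKay theorem in its original form, whose error term is $O(d_{max}^3/M_1)$; its additional explicit terms change negligibly when one edge is removed. This gives $\frac{d_id_j}{M_1}\bigl(1+O(d_{max}^3/M_1)\bigr)$ and hence the stated $O(d_{max}^5/M_1^2)$. With the simplified $O(d_{max}^4/M_1)$ version one only gets $O(d_{max}^6/M_1^2)$, which would still suffice for every use in the paper.

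\textbf{Loops.} Your loop remark also fails. For $i=j$ the same bijection gives $\frac{d_i(d_i-1)}{M_1-1}\bigl(1+o(1)\bigr)$, and its distance $d_i/M_1$ from $d_i^2/M_1$ is much larger than $O(d_{max}^5/M_1^2)$. For loops the statement therefore holds only as an upper bound, or with $[d_i]_2$ in place of $d_i^2$.
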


\subsection{The properties of random graph}
\label{sc:2.4}

In this subsection, we give several properties that the graph $\mathcal{C}$ from Claim~\ref{cl:contiguous} satisfies whp.

Recall that the first step of the model $\mathcal{L}$ from Claim~\ref{cl:L-contiguous} generates the degree sequence of the kernel $\mathcal{K}$.  Notice that in this degree sequence every entry is either 0 or at least 3. The following claim gives several properties that a typical degree sequence satisfies.

\begin{claim}
    Let $\lambda$ be as in Claim~\ref{cl:L-contiguous}. Let $\mathbf{d} := (d_1, \ldots, d_n)$ be the non-increasing degree sequence generated during the first step of $\mathcal{L}$, Claim~\ref{cl:L-contiguous}.
    There exist constants $c_v, c_a,c_e>0$ that are absolute for all $\lambda \to 1$ such that whp:
\begin{itemize}
    \item[S1] the number of non-zero entries in $\mathbf{d}$ equals $({c_v+o(1)}){(\lambda-1)^3}n$;
    \item[S2] the sum $d_1 + \ldots + d_n$ equals $({c_e+o(1)}){(\lambda-1)^3} n$;
    \item[S3] the maximum degree $d_{\max} := d_1$ satisfies $d_{\max} = o(\ln n)$;
    \item[S4] for every positive integer $i \leqslant n/2$, the following holds:
    $$
    d_1 + \ldots + d_i \leqslant c_a i \frac{\ln n}{\ln i};
    $$
\end{itemize}

    \label{cl:S}
\end{claim}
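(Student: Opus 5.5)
The plan is to analyse directly the first step of the model $\mathcal{L}$ from Claim~\ref{cl:L-contiguous}. Set $\Lambda \sim \mathcal{N}(\lambda-\mu, 1/n)$ and $D_u \sim \textnormal{Poisson}(\Lambda)$ i.i.d.

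First we fix the scale of $\Lambda$. Since $\mu e^{-\mu} = \lambda e^{-\lambda}$, a Taylor expansion around $1$ gives $\mu = 1-(\lambda-1)+O((\lambda-1)^2)$. Hence $\lambda-\mu = (2+o(1))(\lambda-1)$ when $\lambda\to1$, and $\lambda-\mu$ is a constant in the constant-$\lambda$ case. The fluctuation of $\Lambda$ is $O_p(n^{-1/2})$. This is negligible compared to $\lambda-1 = \Omega(1/\ln n)$, so whp $\Lambda = (1+o(1))(\lambda-\mu)$.

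The parity conditioning is on an event of probability bounded away from $0$; in fact it tends to $1/2$ by a local-limit/parity argument for sums of i.i.d. variables. Therefore every whp statement for the unconditioned i.i.d. sequence transfers to the conditioned one. We may also work conditionally on $\Lambda$ and treat the $D_u$ as independent.

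For S1 and S2, conditioning on $\Lambda$ gives
\[
N = \sum_u \mathbf 1_{D_u\ge3} \sim \textnormal{Bin}\bigl(n,\mathbb P(\textnormal{Poi}(\Lambda)\ge3)\bigr).
\]
Here $\mathbb P(\textnormal{Poi}(\Lambda)\ge 3) = (1+o(1))\Lambda^3/6$ for small $\Lambda$. Similarly,
\[
\mathbb E\bigl[D_u\mathbf 1_{D_u\ge3}\bigr] = \Lambda\,\mathbb P(\textnormal{Poi}(\Lambda)\ge2) = (1+o(1))\Lambda^3/2.
\]
Both means are of order $(\lambda-1)^3 n \ge n/\ln^3 n \to\infty$. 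Chebyshev's inequality (the variances are of the same order as the means) then gives concentration up to a factor $1+o(1)$. With $\Lambda\approx 2(\lambda-1)$ this yields $c_v = 8/6$ and $c_e = 4$ in the regime $\lambda\to1$.

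For constant $\lambda$ the quantities are simply linear in $n$. The displayed form with absolute constants should be read in the regime $\lambda\to 1$ (or with $\lambda$-dependent constants otherwise); I would state the limits explicitly there.

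For S3, we use $\mathbb P(\textnormal{Poi}(\Lambda)\ge k)\le \Lambda^k/k! \le (e\Lambda/k)^k$ with $\Lambda=O(1)$. A union bound over $n$ vertices gives $n(eC/k)^k = o(1)$ once $k = \ln n/\ln\ln n$, since then $(k/eC)^k = n^{1-o(1)}\cdot(\ln\ln n)^{\Theta(k)}$. To be careful, choose $k = 2\ln n/\ln\ln n$, for which $n(eC/k)^k\le n\cdot n^{-2+o(1)}=o(1)$. Thus $d_{\max} = O(\ln n/\ln\ln n) = o(\ln n)$.

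S4 is the main obstacle, since it must hold uniformly over all $i\le n/2$. The sum of the $i$ largest degrees exceeds $x$ only if some level set is large. Write
\[
d_1+\dots+d_i \le \sum_{k\ge 1} \min\bigl(i, |\{u: D_u\ge k\}|\bigr).
\]
For each $k$ the count $|\{u:D_u\ge k\}|$ is binomial with mean $n\,\mathbb P(\textnormal{Poi}(\Lambda)\ge k)\le n(eC/k)^k$.

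Chernoff bounds show that, simultaneously for all $k\le 2\ln n/\ln\ln n$, whp
\[
|\{u: D_u\ge k\}| \le \max\bigl(2n(eC/k)^k,\ \ln n\bigr).
\]
Call this value $m_k$. Then
\[
d_1+\dots+d_i \le \sum_k \min(i,m_k) \le k_0\, i + \sum_{k>k_0} m_k,
\]
where $k_0$ is the threshold at which $m_k$ drops below $i$. Solving $n(eC/k_0)^{k_0}\approx i$ gives $k_0 = O(\ln(n/i)/\ln\ln(n/i)+1)$. The tail $\sum_{k>k_0} m_k$ is geometric-ish, so it is $O(i + d_{\max}\ln n)$.

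It then remains to check two elementary estimates, for $1\le i\le n/2$:
\[
i\cdot\frac{\ln (n/i)}{\ln\ln(n/i)} = O\Bigl(i\frac{\ln n}{\ln i}\Bigr) \quad\textnormal{and}\quad \ln^2 n = O\Bigl(i \frac{\ln n}{\ln i}\Bigr)\ \textnormal{for } i\ge \ln^2 n.
\]
For small $i$ we use instead the trivial bound $d_1+\dots+d_i\le i\,d_{\max} = O(i\ln n/\ln\ln n)$, which suffices since $\ln i \le 2\ln\ln n$ there. The delicate part is matching these regimes cleanly, and I would handle it by splitting at $i=\ln^2 n$. That split also fixes the absolute constant $c_a$.
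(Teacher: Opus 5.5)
Your arguments for S1--S3 are fine, including the constants $c_v=4/3$ and $c_e=4$ for $\lambda\to1$ and your caveat about constant $\lambda$; the paper calls these parts trivial. The gap is the last step for S4. The ``elementary estimate'' $i\ln(n/i)/\ln\ln(n/i)=O(i\ln n/\ln i)$ is false. At $i=\lfloor\sqrt n\rfloor$ the left-hand side is $(1/2+o(1))\,i\ln n/\ln\ln n$, while the right-hand side is $(2+o(1))\,i$. The same failure occurs for every $i=n^{\alpha}$ with $0<\alpha<1$. Your level-set reduction is sound up to that point and gives the true order of $d_1+\dots+d_i$, roughly $i\ln(n/i)/\ln\ln(n/i)$ plus lower-order terms. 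However, no splitting into regimes can match this to $c_a i\ln n/\ln i$, because S4 as stated is false. To see this, note that whp $\Lambda\ge c/\ln n$, since $\lambda-1=\Omega(1/\ln n)$. Take $k:=\lfloor \ln n/(8\ln\ln n)\rfloor$; then $\mathbb{P}(D_u\ge k)\ge e^{-\Lambda}\Lambda^k/k!\ge n^{-1/4-o(1)}$. Hence whp at least $\sqrt n$ of the $D_u$ are at least $k\ge 3$. These are nonzero entries of $\mathbf d$, so $d_1+\dots+d_{\lfloor\sqrt n\rfloor}\ge\lfloor\sqrt n\rfloor\,k$. But S4 allows only $c_a\lfloor\sqrt n\rfloor\ln n/\ln\lfloor\sqrt n\rfloor=(2+o(1))c_a\lfloor\sqrt n\rfloor$, and the parity conditioning does not change this.

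The paper's proof, a union bound over $i$-subsets rather than your level sets, fails at the corresponding point. Its step $\exp(N+N\ln(i\Lambda)-i\Lambda-N\ln N)\le\exp(-0.5N\ln N)$ ``for sufficiently large $c_a$'' is essentially equivalent to $N\ge e^2(i\Lambda)^2$, which fails for large $i$. At $i=\sqrt n$, the factor $\binom ni=e^{(1/2+o(1))\sqrt n\ln n}$ dominates the Chernoff bound, which is no smaller than $e^{-O(\sqrt n\ln\ln n)}$. What is true is a weaker uniform bound such as $d_1+\dots+d_i\le c_a\, i\ln(en/i)$ for all $i\le n/2$. Both your level-set argument and the paper's union bound prove this: with $N=c_a i\ln(en/i)$, each term of the union bound is at most $(i/(en))^i$ once $c_a$ is large. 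So the statement itself needs correcting. Later uses of S4 must also be rechecked against the corrected form; for example, the proof of Claim~\ref{cl:kernel1} asserts that all but $\sqrt k$ kernel vertices have degree at most $3c_a$, which is also false.
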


\begin{proof}

The only non-trivial statement is S4. Let us show it.

First, note that, whp, $\Lambda = \lambda - \mu + o(n^{2/3})$ for $\Lambda$ as in $\mathcal{L}$, Claim~\ref{cl:L-contiguous}. Also, observe that the sum of $i$ independent Poisson($\Lambda$) variables follows a Poisson($i\Lambda$) distribution. 

Next, let us recall the Chernoff bound for Poisson distributed variables. For a real $\mu > 0$ and integer $t> \mu$ the following holds.
\begin{equation}
    \mathbb{P}(\textnormal{Poisson}(\mu) > t) \leqslant \exp(t + t \ln \mu - \mu - t\ln t).
    \label{eq:2.4}
\end{equation}
We assume that $c_a$ is large enough to make the Chernoff bound applicable. Also, denote 
$$N:= c_a i \frac{\ln n}{\ln i}.$$

Now, consider $n$ independent Poisson($\Lambda$) variables. If the inequality in S4 fails for some $i$, then there exists an $i$-subset of these variables that violates the inequality. Therefore, we apply a union bound over every $i \in [\lfloor n/2\rfloor]$ and every $i$-subset of the variables:

\begin{align*}
    \mathbb{P} \biggl(\lnot \textnormal{ S4} \biggr ) &\leqslant \sum_{i =1}^{n/2} {n \choose i} \mathbb{P}\biggl(\textnormal{Poisson}(i \Lambda) >  N \biggr ) \\ 
    &\stackrel{\eqref{eq:2.4}}\leqslant \sum_{i =1}^{n/2} {n \choose i} \exp(N + N \ln (i\Lambda) - i\Lambda - N \ln N)\\
    &\stackrel{\textnormal{for sufficiently large } c_a\textnormal{ and }n}\leqslant \sum_{i =1}^{n/2} {n \choose i} \exp(- 0.5 N\ln N)\\
    &\leqslant \sum_{i =1}^{n/2} \left(\frac{ne}{i}\right)^i \exp(- 0.5 N\ln N) \\
    &= \sum_{i =1}^{n/2} \exp\left(i\bigl(1 + \ln n - \ln i - 0.5c_a \frac{\ln n}{\ln i} \ln N\bigr)\right) = o(1),
\end{align*}
for sufficiently large $c_a>0$.

\end{proof}

The following claim consists of connectivity and two bounds on the size of a typical graph generated according to~$\mathcal{L}$, Claim~\ref{cl:L-contiguous}. The claim follows trivially from simple calculations for $\mathcal{L}$.

\begin{claim}
    For $\lambda$ and $\mathcal{C}$ from Claim~\ref{cl:L-contiguous} it holds that
    \begin{itemize}
        \item $\mathcal{C}$ is whp connected;
        \item there exists $c>0$ such that if $\lambda \to 1$ then whp $|V(\mathcal{C})| = (c+o(1))(\lambda - 1)^2n$;
        \item if $\lambda$ is a constant then there exists $c>0$ such that whp $|V(\mathcal{C})| \leq (1-c)n$.
    \end{itemize}
    \label{cl:core_size}
\end{claim}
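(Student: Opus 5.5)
The three items of Claim~\ref{cl:core_size} will be read off the three-step description of $\mathcal{L}$ in Claim~\ref{cl:L-contiguous}, using the degree sequence properties of Claim~\ref{cl:S}.

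\textbf{Size of $\mathcal{C}$.} By construction
\[
|V(\mathcal{C})| = N + \sum_{e\in E(\mathcal{K})}(L_e-1),
\]
where $N$ is the number of kernel vertices and the $L_e$ are i.i.d. $\textnormal{Geom}(1-\mu)$ path lengths. Conditionally on the degree sequence, $|E(\mathcal{K})| = M_1/2$, and $\mathbb{E}(L_e-1)=\mu/(1-\mu)$. Since $L_e$ has an exponential tail, Chebyshev or a Chernoff bound for sums of geometric variables gives concentration. We get
\[
|V(\mathcal{C})| = \Bigl(N + \tfrac{M_1}{2}\cdot\tfrac{\mu}{1-\mu}\Bigr)(1+o(1))
\]
whenever $M_1(1-\mu)\to\infty$.

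For $\lambda\to 1$ we have $1-\mu\sim\lambda-1$. By S1 and S2, $N$ and $M_1$ are of order $(\lambda-1)^3n$, so the dominant term is
\[
\frac{c_e(\lambda-1)^3n}{2(\lambda-1)} = \Theta\bigl((\lambda-1)^2 n\bigr).
\]
This gives the second item with $c=c_e/2$. The hypothesis $(\lambda-1)=\Omega(1/\ln n)$ guarantees that $M_1(1-\mu)\to\infty$, so concentration holds.

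For constant $\lambda$, the same formula gives $|V(\mathcal{C})|=(\kappa+o(1))n$ for an explicit constant $\kappa$. The cleanest way to see $\kappa<1$ is to use contiguity with the 2-core of $G(n,\lambda/n)$: that graph contains whp linearly many isolated vertices (about $e^{-\lambda}n$), and these are never in the 2-core. Alternatively, one can use the explicit formula $(1-\mu/\lambda)(1-\mu)n$ for the 2-core size. Either way, $|V(\mathcal{C})|\le(1-c)n$ follows.

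\textbf{Connectivity.} Replacing edges by paths preserves connectivity, so it suffices to show that $\mathcal{K}$ is connected whp. Here $\mathcal{K}$ is a uniform multigraph with minimum degree $3$, $N\to\infty$ vertices and $d_{\max}=o(\ln n)$ by S3. I would use a union bound in the configuration model over vertex sets $S$ with $|S|\le N/2$ that send no edges out. For $S$ with total degree $s$, the probability that its half-edges pair among themselves is at most roughly $(s/M_1)^{s/2}$. Since every degree is at least $3$, we have $s\ge 3|S|$. We then sum over $\binom{N}{|S|}$ choices, and Claim~\ref{cl:graphs_number} transfers the bound from configurations to multigraphs, since the $\exp(\cdot)$ correction is $O(1)$ given S3.

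\textbf{Main obstacle.} The hardest step is the connectivity union bound when $|S|$ is linear and the degrees are non-uniform. For a set $S$ of total degree $s$ with $3|S|\le s\le M_1/2$, the count
\[
\binom{N}{|S|}\Bigl(\frac{s}{M_1}\Bigr)^{s/2}
\]
is small for small $s$ because $s\ge 3|S|$. For $s$ linear, I would use the standard entropy estimate for matchings in place of the crude bound. That estimate is $\binom{M_1/2}{s/2}^{-1}$-type decay, which beats $2^N$ since $M_1\ge 3N$. These are the standard calculations the paper alludes to.
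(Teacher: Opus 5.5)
Your treatment of connectivity and of the $\lambda\to1$ size estimate is the direct computation in $\mathcal{L}$ that the paper has in mind; the paper's only justification is ``simple calculations for $\mathcal{L}$''. The decomposition $|V(\mathcal{C})|=N+\sum_e(L_e-1)$, concentration conditional on the kernel, $1-\mu\sim\lambda-1$ (so $c=c_e/2$), and a configuration-model union bound for connectivity of $\mathcal{K}$ are all fine in outline.

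The step that does not work as written is your preferred argument for the third item. The contiguity in Claim~\ref{cl:L-contiguous} is taken from Ding--Lubetzky--Peres and Ding--Kim--Lubetzky--Peres. To my knowledge it is formulated there in one direction only: an event of probability $o(1)$ under $\mathcal{L}$ has probability $o(1)$ for the largest component of the 2-core of $G(n,\lambda/n)$. That is also the only direction in which the paper uses it.

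Your argument needs the converse, because it transfers a whp property of $G(n,\lambda/n)$ (linearly many isolated vertices) to $\mathcal{L}$. Quoting the known 2-core density $(1-\mu)(1-\mu/\lambda)$ has the same defect, unless you show that your constant $\kappa$ equals it. That check is a short calculation inside $\mathcal{L}$. Set $x:=\lambda-\mu$ and $q:=e^{-x}$. The relation $\mu e^{-\mu}=\lambda e^{-\lambda}$ says exactly $q=\mu/\lambda$, so $\lambda=x/(1-q)$ and $\mu=xq/(1-q)$. For $D\sim\textnormal{Poisson}(x)$ one has $\mathbb{P}(D\ge3)=1-q(1+x+x^2/2)$ and $\mathbb{E}[D\mathbf{1}_{D\ge3}]=x(1-q-xq)$. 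Hence
\[
\kappa=\mathbb{P}(D\ge3)+\frac{\mu}{2(1-\mu)}\,\mathbb{E}\bigl[D\mathbf{1}_{D\ge3}\bigr]=1-q(1+x)=\mathbb{P}(D\ge2)=(1-\mu)\Bigl(1-\frac{\mu}{\lambda}\Bigr)<1 .
\]
Your concentration argument then gives $|V(\mathcal{C})|\le(1-c)n$ whp for any fixed $c<1-\kappa$, without any appeal to $G(n,\lambda/n)$.
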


Below, we give three claims that describe the properties typically satisfied by the kernel $\mathcal{K}$, which is generated in the first two steps of the model~$\mathcal{L}$, Claim~\ref{cl:L-contiguous}. 

\begin{claim}
    Let $\mathcal{K}$ be the kernel generated in the first two steps of Claim~\ref{cl:L-contiguous}. For every $\varepsilon > 0$ there exists $c>0$ such that whp for every $U \subseteq V(\mathcal{K})$ with $|U| \leqslant c|V(\mathcal{K})|$ it holds that 
    \begin{equation}
        |E(\mathcal{K}|_{U})| \leqslant (1 + {\varepsilon})|U|.
        \label{eq:kernel1}
    \end{equation}
    \label{cl:kernel1}
\end{claim}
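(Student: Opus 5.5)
We prove Claim~\ref{cl:kernel1} with a first-moment computation in the configuration model, conditioned on a typical degree sequence. Condition on the outcome of the first step of $\mathcal{L}$ and assume S1--S4 of Claim~\ref{cl:S}. Write $N:=|V(\mathcal{K})|$ and $M_1:=\sum_v d_v$, so that $M_1=\Theta(N)$ with the ratio bounded uniformly in $\lambda$ by S1 and S2. The uniform multigraph with this degree sequence differs from the configuration model by a factor $\exp(O(M_2/M_1+(M_2/M_1)^2+d_{\max}^4/M_1))$ by Claim~\ref{cl:graphs_number}. Since only exponential-in-$|U|$ precision is needed, we may work directly with the pairing model, absorbing this factor or using the ratio bound $\mathbf{G}(\mathbf{d}-\mathbf{x})/\mathbf{G}(\mathbf{d})$ stated after Claim~\ref{cl:graphs_number}.

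Fix $U$ with $|U|=u$, let $D_U:=\sum_{v\in U}d_v$, and let $m:=\lceil(1+\varepsilon)u\rceil$. We bound the probability that $\mathcal{K}|_U$ contains $m$ edges. First choose $2m$ of the $D_U$ half-edges at $U$, then a perfect matching on them. The probability that a prescribed matching of $m$ pairs appears is $\prod_{j<m}(M_1-2j-1)^{-1}\le (M_1/2)^{-m}$ for $m\le M_1/4$. This gives
\[
\mathbb{P}\bigl(|E(\mathcal{K}|_U)|\ge m\bigr)\le \binom{D_U}{2m}\frac{(2m)!}{m!\,2^m}\Bigl(\frac{2}{M_1}\Bigr)^{m}\le \Bigl(\frac{C D_U^2}{m M_1}\Bigr)^{m}
\]
for an absolute constant $C$. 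Now apply S4 in a form adapted to $U$. Because the sum of degrees over any $u$ vertices is at most the sum of the $u$ largest, S4 gives $D_U\le c_a u\ln n/\ln u$.

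Union bound over $u$, with $\binom{N}{u}\le (eN/u)^u$ choices of $U$:
\[
\sum_{u\le cN}\Bigl(\frac{eN}{u}\Bigr)^u\Bigl(\frac{C' u\,(\ln n/\ln u)^2}{N}\Bigr)^{(1+\varepsilon)u}=\sum_{u}\Bigl[\frac{e C'^{1+\varepsilon}(\ln n/\ln u)^{2+2\varepsilon}\,u^{\varepsilon}}{N^{\varepsilon}}\Bigr]^u .
\]
We split into ranges. For $u\ge N^{1/2}$ the logarithmic factor is $O(1)$ because $\ln u=\Theta(\ln n)$ (here $N\ge n^{1-o(1)}$ since $(\lambda-1)^3n\ge n/\ln^3 n$). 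The bracket is then at most $C''(u/N)^{\varepsilon}\le C''c^{\varepsilon}<1/2$ once $c$ is small, so these terms are summable and tiny. For small $u$, say $u\le N^{1/2}$, we instead use $D_U\le u\,d_{\max}=o(u\ln n)$ from S3. The bracket becomes $N^{-\varepsilon}u^{\varepsilon}\,\mathrm{polylog}(n)\le N^{-\varepsilon/2+o(1)}$, and the sum over $u\ge1$ is $o(1)$.

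The main obstacle is the smallest $u$. There the bound $(N^{-\varepsilon/2})^u$ must beat the $u=1,2,\ldots$ terms uniformly, which it does since $N\to\infty$ polynomially. A related subtlety is that multigraph loops and multi-edges count toward $|E(\mathcal{K}|_U)|$. The pairing-model computation already includes them, so no separate treatment is needed. Finally, the contiguity and degree-sequence conditioning (Claims~\ref{cl:S} and~\ref{cl:graphs_number}) carry the whp statement back to $\mathcal{K}$.
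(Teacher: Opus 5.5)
Your proof is correct in substance and has the same skeleton as the paper's. Both condition on a degree sequence satisfying S1--S4 and then take a first-moment union bound over $U$ and over $\lceil(1+\varepsilon)|U|\rceil$ edges inside $U$. The probabilistic bookkeeping is genuinely different, though.

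The paper stays in the uniform multigraph. It enumerates multigraphs $\mathcal{G}$ on $U$ with $\lceil(1+\varepsilon)|U|\rceil$ edges and $O(1)$ loops/multi-edges. It then bounds $\mathbb{P}(\mathcal{G}\subseteq\mathcal{K})$ by exposing edges one at a time via Claim~\ref{cl:edge_prob}. The weak bound $c_a^2\ln^2 k/k$ is used only for the at most $3c_a\sqrt{k}$ edges touching the $\sqrt{k}$ highest-degree vertices, and $9c_a^2/k$ for the rest.

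You instead count partial matchings of the $D_U$ half-edges at $U$ in the pairing model. This gives the compact bound $(eD_U^2/(mM_1))^m$, with loops and multi-edges included automatically. You handle high-degree vertices by splitting on $|U|$ (S3 below $N^{1/2}$, S4 above) rather than by splitting edges. I checked both ranges and the arithmetic goes through.

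Your route is cleaner, but it moves the loop/multi-edge issue into the transfer back to the uniform multigraph, and there your justification is too quick.
\begin{itemize}
\item Claim~\ref{cl:graphs_number} only controls the ratio of normalising constants (number of multigraphs versus number of pairings divided by $\prod d_i!$).
\item The pointwise likelihood ratio of the uniform measure to the pairing measure is $\Theta(1)\cdot 2^{\ell(G)}\prod_e m_e!$, where $\ell(G)$ counts loops and $m_e$ are edge multiplicities.
\item This ratio depends on all loops and multi-edges anywhere in $G$ and is not bounded uniformly over $G$. For small $|U|$, your event is witnessed exactly by such structures.
\item Summing the ratio bound $\mathbf{G}(\mathbf{d}-\mathbf{x})/\mathbf{G}(\mathbf{d})$ over multigraphs $\mathcal{X}$ on $U$ runs into the same issue: it overcounts half-edge matchings by $2^{\ell(\mathcal{X})}\prod_e m_e(\mathcal{X})!$.
\end{itemize}

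The repair is routine. One option is to intersect with the whp event that $\mathcal{K}$ has only $O(1)$ (or slowly growing) loops and double edges and no edge of multiplicity at least $3$; this is precisely the restriction the paper imposes. Another is Cauchy--Schwarz against a bounded second moment of $2^{\ell}\prod_e m_e!$ under the pairing model, which holds here since $d_{\max}=o(\ln n)$ and $M_2=O(M_1)$. Either way, your pairing-model bound is polynomially small in $N$ and absorbs the loss. The point is that this transfer step, not the pairing computation, is where loops and multi-edges actually have to be dealt with.
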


\begin{proof}
    Let $k := |V(\mathcal{K})|$ and let $V := V(\mathcal{K})$. Let us expose the first step of $\mathcal{L}$ form Claim~\ref{cl:L-contiguous} and let the exposed degree sequence $\mathbf{k}$ satisfy S1---S4 from Claim~\ref{cl:S}, which it satisfies whp. So, now, we can assume that $\mathcal{K}$ is a uniformly random multigraph with degree sequence $\mathbf{k}$. 

    We want to prove using a union bound that no subset $U \subseteq V$ violates~\eqref{eq:kernel1}. We will first note that if $U \subseteq V$ violates~\eqref{eq:kernel1} then it induces a subgraph with at least $\lceil(1+\varepsilon)|U|\rceil$ edges and bounded number of loops and multiedges. Then, in order to show the union bound, we will bound the number of graphs on $U$ with $\lceil(1+\varepsilon)|U|\rceil$ and bounded of loops and multiedges. Then, we will show that none of this graphs appear in $\mathcal{K}$ whp.
    
    Note that by S4 of Claim~\ref{cl:S}, whp all but $\sqrt{k}$ vertices have degree at most $3c_a$ and the remaining $\sqrt{k}$ vertices have degree at most $\ln k$. By S1 and S2 the number of edges in $\mathcal{K}$ is linear in $k$, and so, for some $C>0$, by Claim~\ref{cl:edge_prob} whp the number of multiedges and loops in $\mathcal{K}$ is at most $C$. 
    
    For every subset of vertices $U \subseteq \mathcal{K}$, the number of graphs on $U$ with $\lceil(1+\varepsilon)|U|\rceil$ edges and at most $C$ loops and multiple edges is at most $$|U|^{O(1)}{|U|^2/2 \choose (1+\varepsilon)|U|} \leqslant |U|^{O(1)}\left(10|U|\right)^{(1+\varepsilon)|U|}.$$

    Additionally, the number of ways to choose $U \subseteq V$ with $|U| = m$, $m>0$, is at most
    $${k \choose m} \leqslant \left(\frac{ek}{m}\right)^m.$$

    For a fixed graph $\mathcal{G}$ on $U$ with at least $(1+\varepsilon)|U|$ edges let us bound the probability that $\mathcal{G} \subseteq \mathcal{K}$. Consider an arbitrary order on the edges of $\mathcal{G}$. Sequentially we expose every edge $e \in E(\mathcal{G})$ in $\mathcal{K}$. If $e \not \in E(\mathcal{K})$, the process fails. Hence, at every step of the process the remaining graph is uniformly random among the graphs with the remaining degree sequence. This allows us to use Claim~\ref{cl:edge_prob} to upper bound $\mathbb{P}(e \in E(\mathcal{K}))$ at every step of the process. We give two upper bounds on $\mathbb{P}(e \in E(\mathcal{K}))$. The weaker bound $$\mathbb{P}(e \in E(\mathcal{K})) \leqslant\frac{c_a^2\ln^2 k}{k}$$
    holds for every edge, and the stronger bound
    $$\mathbb{P}(e \in E(\mathcal{K})) \leqslant\frac{9c_a^2}{k}$$
    holds for all but $3c_a\sqrt{k}$ edges that are adjacent to $\sqrt{k}$ vertices with the largest degrees in $\mathcal{K}$.
    
    Let us take a union bound over $m \in [ \lfloor c k \rfloor]$, $c >0$ is small enough, over all subsets $U \subseteq V(\mathcal{K})$ of size $m$ and all multigraphs $\mathcal{G}$ on $U$ with $\lceil(1+\varepsilon)|U|\rceil$ edges and at most $C$ loops and multiedges of the probability that $\mathcal{G}$ is a subgraph of $\mathcal{K}$. Our bound is
    \begin{align*}
        \mathbb{P}(\textnormal{there exists }U &\subseteq V\textnormal{ violating~\eqref{eq:kernel1}}) \leq\\
        &\leq\sum_{m \leq c k }\left(\frac{ek}{m}\right)^m \cdot m^{O(1)}\left(10m\right)^{(1+\varepsilon)m} \cdot (\ln ^2k)^{\min(m, 3c_a\sqrt{k})}\left(\frac{9c_a^2}{k}\right)^{(1+\varepsilon)m}\\
        &\leqslant \sum_{m \leq c k }\left(\frac{O(1)\cdot m}{k}\right)^{\varepsilon m/10} = o(1),
    \end{align*}
    for small enough $c > 0$.
    
\end{proof}

\begin{claim}
    Let $\mathcal{K}$ be the kernel generated in the first two steps of Claim~\ref{cl:L-contiguous} and let $k:= |V(\mathcal{K})|$. Whp, 
    the ball of radius $\lfloor \ln \ln k \rfloor$ centred at $v \in V(\mathcal{K})$ is
    \begin{itemize}
        \item a tree, for all but $o(k)$ vertices $v$;
        \item a tree plus at most one additional edge, for every $v$.
    \end{itemize}
    \label{cl:kernel2}
\end{claim}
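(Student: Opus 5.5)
We condition on the first step of $\mathcal{L}$ and assume the exposed degree sequence $\mathbf{k}$ satisfies S1--S4 of Claim~\ref{cl:S}; this holds whp. Then $\mathcal{K}$ is a uniformly random multigraph with degree sequence $\mathbf{k}$, with $M_1=\Theta(k)$ and $d_{\max}=o(\ln k)$. Set $r:=\lfloor \ln\ln k\rfloor$. We explore the ball $B(v,r)$ by breadth-first search, revealing half-edges one at a time in the configuration model. Claim~\ref{cl:graphs_number} lets us transfer configuration-model estimates to uniform multigraphs at a cost of a factor $e^{O(d_{\max}^4/M_1)}=1+o(1)$. Alternatively, we can argue directly with Claim~\ref{cl:edge_prob} as in the proof of Claim~\ref{cl:kernel1}.

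The key quantity is the size of the explored region. By S4, the sum of the $i$ largest degrees is at most $c_a i\ln k/\ln i$. Hence a BFS that has touched $j$ vertices has revealed at most $O(j\ln k/\ln j)$ half-edges. This gives a crude deterministic bound $|B(v,r)|\le d_{\max}^{r+1}$. That bound is $\exp(O(\ln\ln k\cdot\ln\ln k))=k^{o(1)}$, so at most $k^{o(1)}$ half-edges are revealed. Each newly revealed half-edge pairs with a half-edge already in the explored set with probability at most $k^{o(1)}/M_1$. Such a pairing is exactly what closes a cycle.

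For the first bullet, the expected number of cycle-closing pairings inside $B(v,r)$ is at most $k^{o(1)}\cdot k^{o(1)}/k=k^{-1+o(1)}$. So the expected number of vertices whose ball is not a tree is $k^{o(1)}$, which is $o(k)$. Markov's inequality then gives the first bullet whp.

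For the second bullet, we bound the probability that at least two cycle-closing pairings occur within one ball. This probability is at most $(k^{o(1)}/k)^2=k^{-2+o(1)}$. A union bound over all $k$ vertices gives $k^{-1+o(1)}=o(1)$. Therefore every ball is a tree plus at most one edge. Loops and multiedges are handled by the same computation, since each counts as a closing pairing.

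The main obstacle is making the $k^{o(1)}$ bound rigorous when high-degree vertices are present. The crude bound $d_{\max}^{r+1}$ with $d_{\max}=o(\ln k)$ gives $\exp(o(\ln\ln k)\cdot\ln\ln k)$. This is indeed $k^{o(1)}$ because $(\ln\ln k)^2=o(\ln k)$, so the argument goes through. If a finer estimate were needed, I would instead dominate the exploration by a branching process using the size-biased degree distribution. That distribution has bounded mean under S4, and the process could be stopped early on exceeding $\ln^{3}k$ vertices.
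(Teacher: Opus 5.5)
Your proposal is correct and is essentially the paper's argument: expose the ball by BFS, bound its size by $d_{\max}^{r+1}=\exp(O((\ln\ln k)^2))=k^{o(1)}$ using S3, bound each cycle-closing step by $k^{-1+o(1)}$, and finish with Markov for the first bullet and, for the second, a union bound over $v$ using the $o(k^{-1})$ bound on two closings; if anything, you handle the size of the explored set in the per-step probability more carefully than the paper does. One small correction: passing from the configuration model to the uniform multigraph does not cost a uniform $1+o(1)$ factor, since the likelihood ratio depends on the numbers of loops and multiple edges, so you should either invoke contiguity or use your fallback via Claim~\ref{cl:edge_prob}, which is exactly what the paper does.
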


\begin{proof}
Consider a vertex $v \in V(\mathcal{K})$. We expose the ball of radius $\lfloor \ln \ln k \rfloor$ around $v$ level by level: at the first step, we expose the neighbours of $v$, at the second step, the vertices at distance $2$, and so on. At each step, we reveal the edges connecting the current boundary to new vertices, one edge at a time.

By S1 and S3 from Claim~\ref{cl:S}, we can assume that the degree of every vertex in $\mathcal{K}$ is $o(\ln k)$. Therefore, the total number of vertices and edges exposed in the process is at most
$$
o\left((\ln k)^{\ln \ln k}\right) = o(k^{1/3}).
$$
When we reveal a new edge, the probability that it connects to a vertex already exposed (i.e., forms a cycle) is $O(\ln^2 k / k)$, by Claim~\ref{cl:edge_prob} and by the upper bound on the maximal degree of $\mathcal{K}$. Hence, the probability that such a cycle appears at least twice during the exposure of the ball is $o(k^{-1})$.

So, the first part of Claim~\ref{cl:kernel2} follows from Markov's inequality, and the second part follows form taking the union bound over all $v \in V(\mathcal{K})$.
\end{proof}

Let $\mathcal{G}$ be a (multi-)graph, and let $U \subseteq V(\mathcal{G})$. We define the set of neighbours of $U$ in $\mathcal{G}$ as
$$
N_{\mathcal{G}}(U) := \{ v \in V(\mathcal{G}) \setminus U : vu \in E(\mathcal{G}) \text{ for some } u \in U\}.
$$

Given parameters $c \in (0,1)$ and $\alpha > 0$, we say that $\mathcal{G}$ is a \emph{$(c, \alpha)$ vertex expander} if, for every subset $U \subseteq V(\mathcal{G})$, the following holds:
$$
\textnormal{if } |U| \leqslant c|V(\mathcal{G})|,\ \textnormal{then } |N_{\mathcal{G}}(U)| \geqslant \alpha|U|.
$$

The expansion properties  of the kernel were studied in Benjamini, Gady and Nicholas~\cite{BGN} and in Ding, Kim, Lubetzky and Peres~\cite{DKLP}. For further reading about expansion see, for example, Alon and Spencer~\cite{AlSp}. 

The following claim states that the kernel of $G(n, \lambda/n)$ exhibits good vertex expansion properties.

\begin{claim}
Let $\mathcal{K}$ be the random multigraph generated in the first two steps of $\mathcal{L}$ from Claim~\ref{cl:L-contiguous}. Then, for any $c \in (0,1)$, there exists $\alpha > 0$ such that, whp, $\mathcal{K}$ is a $(1-c, \alpha)$ vertex expander.

\label{cl:expansion}
\end{claim}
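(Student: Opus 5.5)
We plan to prove Claim~\ref{cl:expansion} by a first-moment (union bound) argument in the configuration/uniform multigraph model, conditioned on a typical degree sequence. First we expose step 1 of $\mathcal{L}$ and assume the degree sequence $\mathbf{k}$ satisfies S1--S4 of Claim~\ref{cl:S}, so that $k:=|V(\mathcal{K})|\to\infty$, $M_1=\Theta(k)$, all degrees are at least $3$, and $d_{\max}=o(\ln k)$. If $\mathcal{K}$ fails to be a $(1-c,\alpha)$ vertex expander, there are disjoint sets $U,W$ with $|U|=u\leq(1-c)k$, $|W|=\lfloor\alpha u\rfloor$, and $N_{\mathcal{K}}(U)\subseteq W$. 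In other words, all $d(U):=\sum_{x\in U}d_x\geq 3u$ half-edges of $U$ are matched inside $U\cup W$. We bound the probability of this event for fixed $U,W$ and sum over all choices.

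We split into two ranges of $u$. For small $u\leq \delta k$ we deduce the claim from Claim~\ref{cl:kernel1} applied to $U\cup W$, which has size at most $(1+\alpha)u\leq c'k$. Every edge incident to $U$ lies in $\mathcal{K}|_{U\cup W}$, and there are at least $d(U)/2\geq 1.5u$ of them. Hence $|E(\mathcal{K}|_{U\cup W})|\geq 1.5u$. On the other hand Claim~\ref{cl:kernel1} with $\varepsilon=0.1$ gives $|E(\mathcal{K}|_{U\cup W})|\leq 1.1(1+\alpha)u$, which is a contradiction once $\alpha<0.3$. So small sets expand automatically; this uses the minimum degree $3$ in an essential way.

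For linear sizes $\delta k\leq u\leq(1-c)k$ we use the configuration model directly. By Claim~\ref{cl:graphs_number} the multigraph model and the configuration model are comparable up to a constant factor $\exp(O(1))$, since $M_2/M_1=O(1)$ by S4. The probability that the $d(U)$ half-edges of $U$ are all matched into the $d(U\cup W)$ half-edges of $U\cup W$ is at most roughly
$$\left(\frac{d(U\cup W)}{M_1}\right)^{d(U)/2}.$$
The complement $V\setminus(U\cup W)$ has at least $(c-\alpha u/k)k\geq ck/2$ vertices, each of degree at least $3$, so $d(U\cup W)\leq M_1-3ck/2$. This gives a bound of $(1-\eta)^{\Theta(k)}=e^{-\Theta(k)}$ with the constant depending on $c$ and $\delta$, but not on $\alpha$. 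The number of choices of $(U,W)$ is at most $2^{k}\binom{k}{\alpha k}$.

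This count is the main obstacle: the factor $2^k$ is not obviously beaten by $e^{-\eta' k}$. We handle it by a finer calculation. We weight the count by the degrees and use that S4 forces most vertices to have bounded degree. Then we compare $\binom{k}{u}$ to $\bigl(d(U\cup W)/M_1\bigr)^{d(U)/2}$ via entropy, namely $\binom{k}{u}\leq e^{kH(u/k)}$. Since $d(U)\geq 3u$ and the mass outside $U$ is at least $3(k-u)$, the exponent behaves like $kH(x)+\tfrac32 xk\ln x$ for $x=u/k$, up to $O(\alpha)$ corrections. This is the standard computation showing that random graphs with minimum degree $3$ have linear edge expansion for sets of size at most $(1-c)k$.

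If this direct estimate proves too tight near $x\to1$, the fallback is to apply the same argument to the complement $V\setminus(U\cup W)$, which must also send no edges to $U$. That is, we bound the probability of an edge cut of size zero between the two large sets $U$ and $V\setminus(U\cup W)$. This probability is $\exp(-\Theta(k\ln k))$-small relative to the entropy term once both sides have linear degree mass. Choosing $\alpha$ small enough in terms of $c$ and $\delta$ then makes the total sum $o(1)$.
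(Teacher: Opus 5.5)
Your small-set step is correct, and it is a cleaner route than the paper's. The paper treats $|A|<\sqrt{k}$ by a separate first-moment bound over triples $(A,B,C)$. Applying Claim~\ref{cl:kernel1} with $\varepsilon=0.1$ to $U\cup W$, together with minimum degree $3$, disposes of every $u\le\delta k$ deterministically. The linear range $\delta k\le u\le(1-c)k$ is different: it cannot come from local sparsity at all, since a disjoint union of two random cubic graphs satisfies Claim~\ref{cl:kernel1} and is not an expander. So everything rests on your configuration-model count there, and that count does not close as written.

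The bound $\bigl(d(U\cup W)/M_1\bigr)^{d(U)/2}$ replaces every conditional probability by its initial value. Take $W=\emptyset$ and $D:=d(U)$. At step $j$ the current half-edge of $U$ stays inside $U$ with probability $\frac{D-2j-1}{M_1-2j-1}$, which decays, and dropping this decay is fatal. Write $H(x)=-x\ln x-(1-x)\ln(1-x)$. In the cubic case your exponent is $k\bigl[H(x)+\tfrac{3}{2}x\ln x\bigr]=k\bigl[\tfrac{1}{2}x\ln x-(1-x)\ln(1-x)\bigr]$. This is positive for all $x$ above roughly $0.16$; at $x=\tfrac{1}{2}$ it is about $0.17k$. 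So the union bound diverges on most of $[\delta,1-c]$, not only near $x=1$, and this is not the standard computation.

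Your fallback (an empty cut between $U$ and $V\setminus(U\cup W)$) has the right shape, but its claimed size is wrong. Such a cut has probability only $e^{-\Theta(k)}$, not $e^{-\Theta(k\ln k)}$. For a bisection of a cubic pairing it is $\Theta(2^{-3k/2})$, against $\binom{k}{k/2}\approx 2^k$ choices. So the constants must actually be compared, and the proposal never does this.

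The missing computation is the following.
\begin{itemize}
\item The exact closure probability satisfies $\prod_{j<D/2}\frac{D-2j-1}{M_1-2j-1}=\binom{M_1/2}{D/2}\big/\binom{M_1}{D}\le (M_1+1)e^{-\frac{M_1}{2}H(D/M_1)}$.
\item The function $a\ln\frac{a+b}{a}+b\ln\frac{a+b}{b}$ is increasing in each of $a$ and $b$. Minimum degree $3$ on both sides gives $D\ge 3u$ and $M_1-D\ge 3(k-u)$, so the exponent is at most $-\tfrac{3}{2}kH(x)$.
\item This beats $\binom{k}{u}\le e^{kH(x)}$ by a factor $e^{-\frac{1}{2}kH(x)}=e^{-\Omega_{\delta,c}(k)}$, uniformly on $[\delta,1-c]$.
\end{itemize}
Finally, $W$ must be absorbed. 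One way is to first expose the partners of the $d(W)=O(\alpha k)$ half-edges at $W$ (using S4), then apply the same bound to the remaining uniform pairing. This degrades the exponent by $O_{\delta,c}(\alpha k)$. Together with the $\binom{k}{\alpha u}$ choices of $W$, this is dominated once $\alpha$ is small in terms of $\min(H(\delta),H(c))$.
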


\begin{proof}
    Let $k := |V(\mathcal{K})|$. Recall that every vertex in $\mathcal{K}$ is of degree at least $3$. So, by S1 and S4 from Claim~\ref{cl:S} and Claim~\ref{cl:kernel1}, for every $\varepsilon > 0$ there exists $\alpha > 0$ such that whp for every $A \subseteq V(\mathcal{K})$ with $|A| \in \{\lfloor k^{\varepsilon}\rfloor, \ldots, \lceil(1-c)k\rceil\}$ it holds that $|N_{\mathcal{K}}(A)| \geq \alpha |A|$. So, it remains to show that
    \begin{equation}
        \textnormal{for every $A \subseteq V(\mathcal{K})$ with $|A| < \sqrt{k}$ it holds that $|N_{\mathcal{K}}(A)| = \Omega( |A|)$.}
        \label{eq:expan}
    \end{equation}
    
    Let us recall that the first step of the model $\mathcal{L}$, Claim~\ref{cl:L-contiguous}, gives us the degree sequence of the random multigraph. Fix a degree sequence $\mathbf{k}$ of length $k$ satisfying 
    \begin{equation}
        \textnormal{the degrees from }\mathbf{k}\textnormal{ lie in }\{3, \ldots, \lfloor\ln k\rfloor\}.
        \label{eq:deg_pr}
    \end{equation}
    Below, we assume that $\mathcal{K}$ is the uniformly random multigraph with degree sequence $\mathbf{k}$.
    Notice that the degree sequence generated in the first step of the model $\mathcal{L}$, Claim~\ref{cl:L-contiguous}, satisfies~\eqref{eq:deg_pr} whp due to S1 and S3 of Claim~\ref{cl:S}. So, it remains for us to show that~\eqref{eq:expan} holds for $\mathcal{K}$ whp.
    
    Let $\mathcal{A}_{\alpha}$ be the set of all partitions of the vertex set $A \sqcup B \sqcup C = V(\mathcal{K})$ with $1 \leqslant |A| \leqslant \sqrt{k}$ and $|B| \leqslant \alpha |A|$.  If~\eqref{eq:expan} does not hold for some $\alpha>0$, then for some triple $(A, B, C) \in \mathcal{A}_{\alpha}$ there is no edge between $A$ and $C$. 

    Consider a triple $(A, B, C) \in \mathcal{A}_{\alpha}$, let $a := |A|$. Let us show that
    \begin{equation}
        \mathbb{P}(\textnormal{there is no edge between }A \textnormal{ and }C) =  \left(O\left(\frac{a\ln^2 k}{k}\right)\right)^{1.5 a}.
        \label{eq:2.6}
    \end{equation}
    
    In order to show this bound we iteratively expose edges from $A$ one by one and bound the probability that the other end of the exposed edge lies in $A \cup B$. At every step of the process we arbitrarily choose a vertex from $A$ that has at least one unexposed edge and expose an edge from it. So, at every step we fixed a graph $\mathcal{G}$ with degree sequence $\mathbf{g}$ and the remaining graph is a uniformly random multigraph with degree sequence $\mathbf{k}-\mathbf{g}$.  
    
    Consider an arbitrary step in the process. Let us choose a vertex $v \in A$ and expose its edge $vu$ according to the process. Notice that $|A\sqcup B| \leq (1+\alpha)a$. So, by Claim~\ref{cl:edge_prob}, conditioned on the history of the process that never had an edge between $A$ and $C$, the probability that $u \in A \cup B$ is at most $O(a\ln^2 k/k)$, due to~\eqref{eq:deg_pr}.
    By~\eqref{eq:deg_pr}, we make at least $1.5a$ steps in this process, so multiplying the bounds over the steps we conclude~\eqref{eq:2.6}.

    So, the union bound gives us
    \begin{align*}  
        \mathbb{P}(\eqref{eq:expan}\textnormal{ fails})&\leqslant \sum_{1 \leqslant a \leqslant \lfloor\sqrt{k}\rfloor} \sum_{(A, B, C) \in \mathcal{A}_{\alpha},\ |A| = a} \mathbb P (\textnormal{there is no edge between }A \textnormal{ and }C) \\
        &\stackrel{\eqref{eq:2.6}}\leqslant \sum_{1 \leqslant a \leqslant \lfloor\sqrt{k}\rfloor} \sum_{(A, B, C) \in \mathcal{A}_{\alpha},\ |A| = a} \left(O\left(\frac{a\ln^2 k}{k}\right)\right)^{1.5 a} \\
        &\leqslant \sum_{1 \leqslant a \leqslant \lfloor\sqrt{k}\rfloor} \left(O\left(\frac{k}{a}\right)\right)^{(1+\alpha)a} \left(O\left(\frac{a\ln^2 k}{k}\right)\right)^{1.5 a} = o(1),
    \end{align*}
    for small enough $\alpha > 0$.
\end{proof}

\subsection{$\mathcal{G}$-rigid map properties}
\label{sc:2.5}

In this subsection we will give two simple remarks and a claim that are useful in several steps of the proofs of Theorem~\ref{th:1} and Theorem~\ref{th:2}.

\begin{remark}
    Let $\mathcal{G}$ be a connected graph and let $\pi: V(\mathcal{G}) \to \mathbb{R}$ be an injection. To each $\pi(\mathcal{G})$-rigid map $\varphi:\pi(V(\mathcal{G})) \to \mathbb{R}$, we can associate a binary representation $\sigma \in \{-1, 1\}^{E(\mathcal{G})}$ defined as follows:
\begin{equation*}
    \textnormal{For every edge }uv \in E(\mathcal{G})\textnormal{, set }\sigma(uv) = \frac{\varphi(\pi(u)) - \varphi(\pi(v))}{\pi(u) - \pi(v)} \in \{-1, 1\}.
    \label{rm:sigma}
\end{equation*}
Recall that we call $\pi(\mathcal{G})$-rigid maps equivalent if they differ by the application of a trivial $\pi(\mathcal{G})$-rigid map (i.e. adding a constant and/or symmetry). Note that for the equivalence classes of $\pi(\mathcal{G})$-rigid maps $[\varphi]$ the map $[\varphi] \mapsto \pm\sigma$ is injective.
In particular, given $\sigma$, one can reconstruct the corresponding $\pi(\mathcal{G})$-rigid map $\varphi: \pi(V(\mathcal{G})) \rightarrow \mathbb{R}$ (if it exists) up to a trivial $E$-rigid map.
\end{remark}

\begin{remark}
    Let $\mathcal{G}$ be a tree and let $\pi: V(\mathcal{G}) \to \mathbb{R}$ be an injection. Then, for a $\pi(\mathcal{G})$-rigid map $\varphi: \pi(V(\mathcal{G})) \to \mathbb{R}$, the binary representation $\sigma$ uniquely (up to an additive constant) defines $\varphi(v) - v$, for every $v \in \pi(V(\mathcal{G}))$. This also implies that a connected graph $\mathcal{G}'$ on the real line has at most $2^{V(\mathcal{G}')-1}$ $\mathcal{G}$-rigid maps, up to isometry.
    \label{rm:1}
\end{remark}

Suppose we have a short random path on the real line. How large is the probability that we can move the vertices of the path but preserve both the distances along edges and the positions of the end vertices? A bound is given in the following Claim~\ref{cl:3.4}, which is a variant of Lemma~2.3 from Gir\~ao, Illingworth, Michel,  Powierski and Scott~\cite{GIMPS}.

\begin{claim}
    Let $V \subseteq \mathbb{R}$ with $|V| = n$. Let $u, v \in V$, and let $\varphi: \{u, v\} \to \mathbb{R}$. Let $s \leqslant \ln n / 50$, and let $\mathcal{P}$ be a path of length $s$ from $u$ to $v$, with internal vertices chosen uniformly at random from the vertices of $V$. 
    
    {\bf 1.} If $|u - v| \neq |\varphi(u) - \varphi(v)|$ then the probability that $\varphi$ can be extended to a $\mathcal{P}$-rigid map $\varphi:V(\mathcal{P}) \to \mathbb{R}$ is at most $1/\sqrt{n}$;

    {\bf 2.} If $|u - v| = |\varphi(u) - \varphi(v)|$ then the probability that $\varphi$ can be extended to a $\mathcal{P}$-rigid map $\varphi:V(\mathcal{P}) \to \mathbb{R}$ in a non-trivial way is at most $1/\sqrt{n}$.

    \label{cl:3.4}
\end{claim}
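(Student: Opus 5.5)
Our approach is to reduce both parts to counting sign patterns and then bounding, for each sign pattern, the probability that the random internal vertices satisfy a nondegenerate linear equation. Write the path as $u = x_0, x_1, \ldots, x_s = v$, where $x_1,\ldots,x_{s-1}$ are the random internal vertices. By Remark~\ref{rm:sigma}, any extension of $\varphi$ to a $\mathcal{P}$-rigid map is determined by signs $\sigma \in \{-1,1\}^{s}$ together with the relation $\varphi(x_i)-\varphi(x_{i-1}) = \sigma_i (x_i - x_{i-1})$. Summing over $i$ yields the necessary condition
$$\varphi(v)-\varphi(u) = \sum_{i=1}^{s} \sigma_i (x_i - x_{i-1}).$$
Conversely, given $\sigma$ and the positions, the extension is determined. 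There are at most $2^{s} \leq n^{1/50}$ sign patterns, so it suffices to show that for each fixed admissible $\sigma$ the probability that this identity holds is at most about $n^{-0.9}$. A union bound over all $\sigma$ then gives $n^{1/50-0.9} \leq 1/\sqrt{n}$. (Injectivity of the extended map is not needed for an upper bound.)

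The sum $\sum_i \sigma_i(x_i - x_{i-1})$ is a linear form in $x_0,\ldots,x_s$. Its coefficient on the internal vertex $x_i$, for $1 \leq i \leq s-1$, equals $\sigma_i - \sigma_{i+1} \in \{0, \pm 2\}$. The coefficient is nonzero exactly when the sign changes at position $i$. Part 1 splits into two cases according to whether $\sigma$ has a sign change.

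If $\sigma$ has no sign change, then $\sigma$ is constant and the identity reads $\varphi(v)-\varphi(u)=\pm(v-u)$. This contradicts the hypothesis of part 1, so such $\sigma$ contribute nothing. If $\sigma$ has a sign change at some $i$, we condition on all internal vertices other than $x_i$. The identity then pins $x_i$ to a single real value. Since $x_i$ is uniform over $V$, or over $V$ minus boundedly many points if internal vertices are distinct (at worst $n - s$ points), the conditional probability is at most $1/(n-s) \leq 2/n$. Summing over all $\sigma$ gives at most $2^{s}\cdot 2/n \leq 1/\sqrt n$.

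For part 2, a non-trivial extension is one that is not the restriction of an isometry agreeing with $\varphi$ on $\{u,v\}$. When $u \ne v$, the constant sign patterns give exactly the trivial extensions (or those extensions are impossible), so a non-trivial extension must come from a $\sigma$ with at least one sign change. The same conditioning argument then applies verbatim and gives the same bound. The degenerate case $u=v$ is treated the same way.

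The main subtlety is the sampling model. If the internal vertices may coincide with $u$, $v$ or each other, we must make sure that the conditioning on the other $x_j$ still leaves $x_i$ (close to) uniform on at least $n-s$ values. We also must check that "non-trivial" in part 2 is exactly captured by the non-constant sign patterns; here constant $\sigma$ forces $\varphi(x_j)-x_j$ constant or $\varphi(x_j)+x_j$ constant, which is an isometry. Both checks are routine, and the bound $s\leq \ln n/50$ is used only to absorb the $2^{s}$ factor.
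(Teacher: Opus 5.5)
Your proof is correct, and it organises the union bound differently from the paper.

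\textbf{The paper's route.} The paper reveals $v_1,\dots,v_s$ in order. It takes the smallest $t$ for which the truncated equation $(v-v_t)\pm d_1\pm\cdots\pm d_t=\varphi(v)-\varphi(u)$ has a solution with not all signs $+$; this is the full equation with all later signs set to $+$. By minimality the sign of $d_t$ is $-$, so $2v_t-v_{t-1}$ must lie in a set of at most $2^{t-1}$ values determined by the history. The paper bounds this by $2^{t-1}/(n/2-t)$ and sums over $t$.

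\textbf{Your route.} You fix the entire sign vector first. For any non-constant $\sigma$, the telescoped linear form has coefficient $\sigma_i-\sigma_{i+1}=\pm 2$ on an internal vertex $x_i$ at a sign change. Conditioning on all the other internal vertices then pins $x_i$ to a single point.

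\textbf{Comparison.} Both arguments give a bound of order $2^s/n$. The paper's version never conditions on vertices revealed later, which matches the exposure-style arguments used elsewhere in the paper. Yours conditions on all other internal vertices at once, which is harmless for a uniform injection or for i.i.d.\ sampling. In exchange your argument is shorter and makes the non-degeneracy transparent.

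It also treats every sign pattern uniformly, including the case where the paper's minimal index is $t=s$. There $v_s=v$ is deterministic, so the step ``$v_t$ lands in one of $2^{t-1}$ positions'' provides no randomness. For example, take $s=4$, $\varphi(v)-\varphi(u)=v-u$ and signs $(-,+,-,-)$. The extension exists if and only if $x_2-x_1=v-u$. Under injectivity no truncated equation with $t<4$ has a non-trivial solution, so the relevant randomness sits in $x_2$, not in the last revealed vertex. In your argument this pattern has sign changes at $i=1$ and $i=2$, and conditioning on $x_1,x_3$ pins $x_2$ directly.
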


\begin{proof}

We prove the second statement of Claim~\ref{cl:3.4}. The proof of the first statement is similar up to omitting the word non-trivial in the proof.
    
Wlog, let $u-v = \varphi(u) - \varphi(v)$. Let $\mathcal{P} := u=v_0v_1\ldots v_s=v$ be a random path. For each $i \in [s]$, define the random variable $d_i := v_i - v_{i-1}$. We reveal and fix the vertices $v_i$ one by one.

Let $t\geq0$ be the first index such that there is a non-trivial solution (i.e. a solution with not all $\pm$ being pluses) of
\begin{equation}
    (v - v_t)\pm d_1 \pm d_2 \pm \cdots \pm d_t = \varphi(v) - \varphi(u).
    \label{eq:pm_prev}
\end{equation}

Note that for $t=0$ the equation does not have non-trivial solutions, so $t > 0$. If no such $t$ exists, then $\varphi$ cannot be extended to $V(\mathcal{P})$ in a non-trivial way, since this is equivalent to a non-trivial solution of $\pm d_1 \ldots \pm d_t = \varphi(v) - \varphi(u)$. Suppose such $t$ does exist and consider the smallest $t$ with non-trivial solution of~\eqref{eq:pm_prev}. Then, by minimality of $t$, the non-trivial solution of~\eqref{eq:pm_prev} has a minus before $d_t$. Then $v_t$ must satisfy
\begin{equation}
    v_t + d_t = v - (\varphi(v) - \varphi(u)) \pm d_1 \pm \cdots \pm d_{t-1}.
    \label{eq:pm}
\end{equation}
Since $v_{t-1}$ is already fixed, the left hand-side $v_t + d_t$ equals $2v_t - v_{t-1}$ and, hence, is linear as a function of $v_t$. Therefore, there are at most $2^{t-1}$ possible positions for $v_t$ on the real line that satisfy~\eqref{eq:pm}, and, hence, the probability that $v_t$ lands at some suitable point is at most 
$
\frac{2^{t-1}}{n/2 - t}.
$
Note that $t \leqslant \ln n / 50$, so taking a union bound over all such $t$ yields an overall upper bound of, say, $1/\sqrt{n}$.

\end{proof}

\section{Reconstructing a linearly-sized equivalence-dependent subset}
\label{sc:3}

In this section we shall show that for $\pi(\mathcal{C})$ the following event holds whp.

\begin{definition} {\bf Event $A$.}
    Let $\mathcal{C}$ be a graph with $V(\mathcal{C}) \subseteq \mathbb{R}$ and minimum degree at least $2$. Let $\mathcal{K}$ be its kernel. Let $c > 0$. Let $A:= A(\mathcal{C}, c)$ denote the event that, for every $\mathcal{C}$-rigid map $\varphi:V(\mathcal{C}) \to \mathbb{R}$, there exists a subset $U \subseteq V(\mathcal{K})$ with $|U| \geq c|V(\mathcal{K})|$ such that
    $$|u - v| = |\varphi(u) - \varphi(v)|\textnormal{ for every }u, v \in U.$$
    \label{df:A(G, c)}
\end{definition}

 The formal statement is as follows.

\begin{lemma}
There exists $c>0$ such that, for $\lambda$ and $\mathcal{G} \sim \mathcal{U}$ from Claim~\ref{cl:contiguous}, if $\lambda$ satisfies $\lambda \geq 1+\frac{1}{c\ln n}$ for every large enough $n$, then $A(\mathcal{G} , c)$ holds whp.

\label{lm:1}
\end{lemma}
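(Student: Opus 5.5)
We prove Lemma~\ref{lm:1} by lifting the toy argument of Proposition~\ref{pr:toy_main} from edges of a regular expander to 2-paths of the kernel. Expose the model $\mathcal{L}$ of Claim~\ref{cl:L-contiguous}, and restrict to the whp event on which the kernel $\mathcal{K}$ satisfies S1--S4 of Claim~\ref{cl:S}, Claim~\ref{cl:kernel1}, Claim~\ref{cl:kernel2} and the expansion of Claim~\ref{cl:expansion}. Fix a $\mathcal{C}$-rigid map $\varphi$. For each 2-path $P$ of $\mathcal{C}$ with kernel endpoints $u,v$, call the corresponding kernel edge \emph{good} if $\varphi$ restricted to $P$ is an isometry of $P$, i.e.\ all signs $\sigma$ (Remark~\ref{rm:sigma}) along $P$ agree. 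Split the good edges into $F_+$ and $F_-$ according to this common sign. On each connected component of $(V(\mathcal{K}),F_+)$ we have $\varphi(x)-x$ constant, and on each component of $(V(\mathcal{K}),F_-)$ we have $\varphi(x)+x$ constant. So every such component $U$ satisfies $|\varphi(u)-\varphi(v)|=|u-v|$ for all $u,v\in U$. It therefore suffices to show that some component has at least $c|V(\mathcal{K})|$ vertices.

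The first step is a deterministic statement. Suppose that all but a small fraction $\delta$ of kernel edges are good. Then one of $F_\pm$, say $F_+$, contains at least roughly half of $E(\mathcal{K})$. If every component of $(V(\mathcal{K}),F_+)$ had size at most $c|V(\mathcal{K})|$, Claim~\ref{cl:kernel1} would bound the edges inside each component by $(1+\varepsilon)|U|$. Summing over components gives $|F_+|\le(1+\varepsilon)|V(\mathcal{K})|$. Since every kernel degree is at least $3$, we have $|E(\mathcal{K})|\ge 1.5|V(\mathcal{K})|$.

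This alone is not enough, because $|F_+|\ge |E(\mathcal{K})|/2$ only forces $|F_+|\ge 0.75|V(\mathcal{K})|$. This is the point where more structure is needed, matching the remark that the general proof requires more care with the partition. I would combine the two signs. Take the partition $\mathfrak{P}$ into components of $F_+$ and the partition $\mathfrak{Q}$ into components of $F_-$; every good edge is internal to one of them. I would then bound the number of edges internal to some part of $\mathfrak{P}$ or of $\mathfrak{Q}$, when all parts are small, by $(2+2\varepsilon)|V(\mathcal{K})|$.

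That bound is still too weak when the average degree is near $3$. To sharpen it I would use the randomness of the positions instead. A kernel vertex $x$ lying in a part $P\in\mathfrak{P}$ and a part $Q\in\mathfrak{Q}$ with $|P|,|Q|\ge2$ has both $\varphi(x)-x$ and $\varphi(x)+x$ pinned, so $x$ is a fixed point of coincidences. Such mixed configurations should be rare by a union bound over small connected sets, in the style of the toy proof's bound~\eqref{eq:bound9}.

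The second step, and the main obstacle, is to show that whp, simultaneously for every $\varphi$, all but a $\delta$-fraction of kernel edges are good. A bad path $P$ of length $s$ either has endpoints whose distance is not preserved, or has a non-trivial extension. In either case Claim~\ref{cl:3.4} bounds the probability by $n^{-1/2}$, provided $s\le \ln n/50$. The path lengths are Geom$(1-\mu)$, and the condition $\lambda\ge 1+1/(c\ln n)$ is exactly what keeps the typical length $1/(1-\mu)\approx 1/(\lambda-1)$ below $c\ln n$. This is why the constant $c$ appears in both places.

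The difficulty is that this bound must hold uniformly over the $\varphi$'s. I would take a union bound over the pair (partition of $\mathcal{K}$ into connected parts, sign pattern on the kernel edges). Claim~\ref{cl:partitions_number} bounds the number of such partitions by $k^{o(k)}$, which is admissible by S3, and Remark~\ref{rm:1} bounds the number of sign patterns. For each fixed pattern, the bad paths form a set of at least $\delta|E(\mathcal{K})|$ disjoint random paths, which behave independently under the random injection $\pi$. The probability that all of them are realised is then at most $n^{-\delta|E(\mathcal{K})|/2}$, which beats the $k^{o(k)}\,2^{O(k)}$ count.

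The subtle point is to expose $\pi$ on the kernel first and then on the path interiors, so that Claim~\ref{cl:3.4} applies conditionally to each path. One must also handle the few paths longer than $\ln n/50$ by simply discarding them, since there are at most $\delta|E(\mathcal{K})|/2$ of them whp.
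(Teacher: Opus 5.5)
Your proposal splits the proof into ``most 2-paths are isometric'' and ``some translation class is large''. Both halves have a genuine gap.

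\textbf{Step 1.} Your first step does not go through, and the mechanism you propose to rescue it is mistaken. A kernel vertex $x$ in a nontrivial part $P\in\mathfrak{P}$ and a nontrivial part $Q\in\mathfrak{Q}$ is not a coincidence. The relations $\varphi(x)-x=a_P$ and $\varphi(x)+x=b_Q$ only say $2x=b_Q-a_P$, and $a_P,b_Q$ are themselves unknowns. For example, kernel edges $yx$, $xz$ whose 2-paths carry all-$+$ and all-$-$ signs respectively are realisable for any positions, since on a tree every sign pattern is realisable. So such vertices are everywhere, and no union bound will make them rare.

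What actually costs probability is the cycle structure. The $k$ equations $2\pi(x)=b_{Q(x)}-a_{P(x)}$ have $|\mathfrak{P}|+|\mathfrak{Q}|$ unknowns. They therefore impose at least $k-|\mathfrak{P}|-|\mathfrak{Q}|$ independent alternating-sum relations among kernel positions, one for each non-forest edge of the $\mathfrak{P}$--$\mathfrak{Q}$ incidence graph. Each relation pins a fresh position to a single point. Meanwhile, Claim~\ref{cl:3_rprove} applied to both partitions gives $|F_+|+|F_-|\le 1.078(2k-|\mathfrak{P}|-|\mathfrak{Q}|)+o(k)$. Hence almost all edges being good forces linearly many such relations. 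An argument of this kind, which uses the randomness of $\pi$ through cycles rather than through individual vertices, is missing; the edge counts you tried cannot close the gap by themselves.

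\textbf{Step 2.} The second step has a gap exactly where you say the point is subtle. Claim~\ref{cl:3.4} needs $\varphi$ fixed at both endpoints before the interior is exposed. Exposing $\pi$ on the kernel pins $\varphi$ there only across good paths; across a bad path, $\varphi(\pi(v))-\varphi(\pi(u))$ depends on the interior positions and signs. So you must fix signs and expose positions along a spanning tree of 2-paths. Any bad path in that tree can then no longer be charged. Unless the good edges alone span $\mathcal{K}$, the tree must contain bad paths, and you give no argument that linearly many bad paths remain outside it. Also, the signs along tree paths cost up to $2^{|V(\mathcal{C})|}=n^{\Theta(ck)}$ when $\lambda\to1$, not $2^{O(k)}$. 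Via R2 this is affordable only if the number of charged paths is large compared to $ck$.

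\textbf{How the paper proceeds.} It never separates the two stages. It takes a union bound over triples: the translation classes $\mathfrak{P}$ (P1--P3, with the majority sign fixed by P4); a spanning tree $\mathcal{T_K}$ connected inside each class and crossing between classes via $-$ edges where possible (P5, P6); and signs $\sigma$ on $\mathcal{T_C}$. The total count is $k^{5\varepsilon k}$. Claims~\ref{cl:3_r5} and~\ref{cl:3_r4_r5} then use R5, P4 and R4 to find many cross-class non-tree kernel edges. Revealing $\mathcal{T_C}$ in a suitable order, each such edge keeps its endpoint distance only if some $\pi(u)$ hits one prescribed point. This yields $30\varepsilon k$ non-tree edges with endpoint distance broken. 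Only then is Claim~\ref{cl:3.4} applied, to those still-unexposed paths.
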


Observe that a weaker version of this result --- for small enough constant $\lambda$ --- follows from Theorem~1.1.9 of the PhD Thesis by Julien Portier obtained in joint work with Julian Sahasrabudhe~\cite{PoSa}. Their result relies on a variant of Garamv{\"o}lgyi~\cite{Ga}, which, roughly speaking, describes a structural property that arises when some edge in a graph on the line cannot be reconstructed. The main similarity between their methods and ours can be summarised as follows. For a graph $G$ on the real line $\mathbb{R}$, a $G$-rigid mapping $\varphi$ describes a natural partition of $V(G)$ into equivalence classes with nice properties: the vertices $u\in V \subseteq \mathbb{R}$ and $v\in V\subseteq \mathbb{R}$ are equivalent iff $|\varphi(u) - \varphi(v)| = |u - v|$. However, our subsequent developments of this idea differ significantly from those of~\cite{PoSa}.

We will deduce Lemma~\ref{lm:1} from the following result, which gives the same conclusion for an appropriate fixed graph $\mathcal{C}$, 
after randomly permuting its vertices.

\begin{theorem}
    Let $V \subseteq \mathbb{R}$ with $|V| = n$. Let $\mathcal{C}$ be a connected graph with minimum degree $2$ of size at most $n$ whose kernel $\mathcal{K}$ satisfies the following properties: for $\varepsilon\in (0, 10^{-9})$, $c \in (0, 1)$, and $k := |V(\mathcal{K})|$,
    \begin{itemize}

         \item[R1] $\Delta(\mathcal{K}) \leqslant k^\varepsilon$;

        \item[R2] $|V(\mathcal{C})| \leqslant  \varepsilon k\ln k$ and $\ln k = (1- o(1))\ln n$;
        
        \item[R3] $|\{e\in E(\mathcal{K}): \textnormal{the corresponding path }\mathcal{P}_e\textnormal{ in }\mathcal{C}\textnormal{ satisfies }|V(\mathcal{P}_e)| > \ln n/100\}| \leqslant  \varepsilon k$;
        
        \item[R4] For all $U \subseteq V(\mathcal{K})$ with $|U| \leqslant \varepsilon k$, the number of edges incident to at at least one vertex of $U$ is at most $10^{-9}k$;

         \item[R5] For any $s>0$ and any disjoint sets $V_1, \ldots, V_s \subseteq V(\mathcal{K})$ with $|V_i| \leqslant ck$ for all $i$ we have
         $$\sum_{i=1}^s|E(\mathcal{K}|_{V_i})| \leqslant 1.078\left(\sum_{i=1}^s|V_i| - s\right) + o(k).$$

    \end{itemize}
    Let $\pi: V(\mathcal{C}) \to V$ be a uniformly random injection. Then, $A(\pi(\mathcal{C}), c)$ holds whp.

    \label{th:super1}
\end{theorem}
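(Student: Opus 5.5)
The plan is to follow the toy proof of Proposition~\ref{pr:toy_main}: a union bound over the combinatorial "shape" of a $\pi(\mathcal{C})$-rigid map, with the edge budget supplied by R5. Fix a $\pi(\mathcal{C})$-rigid map $\varphi$ and take its sign vector $\sigma$ from Remark~\ref{rm:sigma}. Call a kernel edge $e$ \emph{trivial} if $\varphi$ restricted to $\pi(V(\mathcal{P}_e))$ is an isometry, i.e. all signs of $\sigma$ along $\mathcal{P}_e$ are equal. Let $\mathfrak{P} = \{V_1,\dots,V_s\}$ be the partition of $V(\mathcal{K})$ into connected components of the graph of trivial kernel edges. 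On each part $\varphi$ is a single isometry, so if some part has $|V_i|\ge ck$ then $A(\pi(\mathcal{C}),c)$ holds for $\varphi$. Hence it suffices to show that whp there is no $\varphi$ all of whose parts have size $<ck$.

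\emph{Counting the shapes.} By R1 and Claim~\ref{cl:partitions_number}, the number of partitions of $V(\mathcal{K})$ into connected parts is at most $k^{\varepsilon k}$. Given the partition, fix a spanning tree $\mathcal{T}$ of $\mathcal{C}$ that consists of
\begin{itemize}
\item spanning trees of the parts, using only trivial edges, together with
\item $s-1$ nontrivial kernel paths joining the parts.
\end{itemize}
By Remark~\ref{rm:1}, $\varphi$ is determined up to isometry by the partition, by $\mathcal{T}$, and by the signs on $E(\mathcal{T})$. The signs inside parts are constant on each part, giving $2^{s}$ choices. The signs on the connecting paths give at most $2^{|V(\mathcal{C})|}\le k^{\varepsilon k}$ choices by R2. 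Choosing the $s-1$ connecting kernel edges costs at most $\binom{|E(\mathcal{K})|}{s}$, which is again $e^{O(k)}$ or $k^{O(\varepsilon k)}$ using R4 and the fact that $|E(\mathcal{K})| = O(k)$. So the total number of shapes is $k^{O(\varepsilon k)}$.

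\emph{Probability of one shape.} Since every kernel vertex has degree at least $3$, we have $|E(\mathcal{K})|\ge 1.5k$. By R5, at most $1.078(k-s)+o(k)$ kernel edges lie inside parts. Thus at least $0.42k+0.078s-o(k)$ kernel edges cross between parts. Discard the $s-1$ crossing edges used in $\mathcal{T}$, and by R3 discard the at most $\varepsilon k$ edges whose paths are long. This leaves at least $0.41k$ short, non-tree crossing paths.

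Expose $\pi$ on $V(\mathcal{K})$ and on the internal vertices of the tree paths. This fixes $\varphi$ on all kernel vertices for the given shape. Then expose the internal vertices of the remaining short crossing paths one path at a time. These vertex sets are disjoint, so each path's internal vertices are uniform among the still-unused points, with at least $n - |V(\mathcal{C})|$ of them available; I adapt Claim~\ref{cl:3.4} to sampling without replacement. For each such path $\mathcal{P}_e$ with $e = uv$ there are two cases:
\begin{itemize}
\item if $|\varphi(u)-\varphi(v)|\ne|u-v|$, part~1 of Claim~\ref{cl:3.4} applies;
\item otherwise, since $e$ is nontrivial by construction, the extension along $\mathcal{P}_e$ must be non-trivial, and part~2 applies.
\end{itemize}
Either way the conditional probability is at most $n^{-1/2}$. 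The product over the remaining paths is therefore at most $n^{-0.2k}$. Since $\ln k=(1-o(1))\ln n$ by R2 and $\varepsilon<10^{-9}$, this beats the $k^{O(\varepsilon k)}$ count of shapes, and the union bound gives $o(1)$.

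\emph{Main obstacle.} The crux is making the union bound legitimate. The shape has to be chosen before the internal vertices of the non-tree crossing paths are revealed, and the event that these paths are "crossing and nontrivial" must be phrased purely in terms of the revealed data: the partition, $\mathcal{T}$, and the signs. I will handle this exactly as in the toy example, by defining the bad event $B(\text{shape})$ as "$\varphi$, reconstructed from the tree signs, extends over every designated crossing path in the required way". This event depends only on the unrevealed vertices in a sequential manner. A secondary technical point is to check, using R4 if necessary, that the discarded edges (long paths and the few tree edges) really cost only $o(k)+\varepsilon k$ from the $0.42k$ surplus.
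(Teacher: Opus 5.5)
Your reduction fails at its first step. Calling a kernel edge trivial when all signs along $\mathcal{P}_e$ agree puts translations and reflections in the same class. A connected union of such edges need not carry a single isometry. Identify vertices with their positions, and suppose $\varphi(x)=x+a$ on $\mathcal{P}_{uv}$ and $\varphi(x)=-x+b$ on $\mathcal{P}_{vw}$. Then $b=2v+a$ and $\varphi(u)-\varphi(w)=u+w-2v\neq\pm(u-w)$. Consequences:
\begin{itemize}
\item a part of size at least $ck$ does not give $A$;
\item a map violating $A$ may therefore have a giant mixed part, and R5 (which needs $|V_i|\le ck$) cannot be invoked;
\item the count of ``$2^s$ signs inside parts'' fails for the same reason.
\end{itemize}
The natural repair is to let the parts be translation classes, as the paper does with $\sim$ (normalised by P4). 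This only moves the problem. A crossing edge $uv$ can then satisfy $\varphi(\pi(u))-\varphi(\pi(v))=-(\pi(u)-\pi(v))$ with every sign along $\mathcal{P}_{uv}$ equal to $-1$. That is an isometric extension, so neither part of Claim~\ref{cl:3.4} applies. Nothing in your argument prevents all of your $0.41k$ crossing paths from being of this kind, in which case you collect no factor $n^{-1/2}$ at all.

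The missing idea is that reflected crossing edges must be paid for by the positions of kernel vertices, not by path interiors. So you cannot expose all of $\pi|_{V(\mathcal{K})}$ at the outset. In the paper one fixes $\pi$ only on the tree paths joining different parts (the set $E_1$). The offset $\delta(x)=\varphi(\pi(x))-\pi(x)$ is constant on parts by P2, and its jumps across $E_1$ are then fixed by $\sigma$. Hence every difference $\delta(u)-\delta(v)$ is determined. A reflected non-tree crossing edge $uv$ then forces $\pi(u)=\pi(v)-\frac{\delta(u)-\delta(v)}{2}$, which has probability $O(1/(\varepsilon k))$ if $\pi(u)$ is revealed after $\pi(v)$.

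Securing enough such edges is the real work of Claims~\ref{cl:3_r5} and~\ref{cl:3_r4_r5}:
\begin{itemize}
\item The spanning tree is chosen to use reflected edges between parts (Q2) and to satisfy P6. So either few tree edges have $|\varphi(\pi(u))-\varphi(\pi(v))|\neq|\pi(u)-\pi(v)|$, or many edges satisfy $D$ outright.
\item R5 together with P4 then gives $s<0.305k$. This makes the set $E_2$ of non-tree crossing edges with an endpoint avoiding $E_1$ linearly large.
\item R4 turns this into at least $100\varepsilon k$ endpoints revealed after their $E_2$-neighbour.
\end{itemize}
Only then does one get, with probability $1-o(k^{-10\varepsilon k})$, at least $30\varepsilon k$ crossing edges whose endpoint distances are genuinely distorted. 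It is to these, after discarding long ones via R3, that Claim~\ref{cl:3.4}(1) is applied.

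A smaller slip: $n-|V(\mathcal{C})|$ may be $0$. The number of free points should instead be bounded, as in the paper, by the unexposed interior vertices of the paths still to be processed.
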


Let us give a few comments on Theorem~\ref{th:super1}. Notice that $V(\mathcal{C}) \leq n$ is compulsory for $\pi$ to be defined. Property R1 is only used to achieve the conclusions of Claim~\ref{cl:partitions_number} and Claim~\ref{cl:num_of_spanning_trees}. 
The properties R2 and R3 are important for the proof of Theorem~\ref{th:super1}. The property R5 is only used to prove Claim~\ref{cl:3_r5}, while R4 and Claim~\ref{cl:3_r5} are only needed to show Claim~\ref{cl:3_r4_r5}. So, instead of property R5, we can require the conclusion of Claim~\ref{cl:3_r5} to hold. Also, both requirements R4 and R5 can be substituted by the conclusion of Claim~\ref{cl:3_r4_r5}.

The proof of Theorem~\ref{th:super1} can be summarised as follows. Roughly speaking, Claim~\ref{cl:3.4} says that if there is a short random path $\mathcal{P}$ with fixed end vertices the probability that it has a non-trivial $\mathcal{P}$-rigid map is at most $n^{-\Omega(1)}$. Thus, our aim is to show that if the conclusion of Theorem~\ref{th:super1} does not hold, then we are typically able to fix some values of $\pi$ on $V(\mathcal{C})$ in such a way that, for every $\pi(\mathcal{C})$-rigid map $\varphi: \pi(V(\mathcal{C})) \to \mathbb{R}$, there are many disjoint paths satisfying the requirements of Claim~\ref{cl:3.4}. More formally we do the following. We define a set of events covering the negation of $A$. Every event roughly says that we fixed $\pi$ and a binary representation $\sigma$ of $\varphi$ on some subtree $\mathcal{T_C}$. More precisely, the above-mentioned events $A_{\mathfrak{P}, \mathcal{T_K}, \sigma}$ are defined for every tuple $(\mathfrak{P}, \mathcal{T_K}, \sigma)$, where $\mathfrak{P}$ is a partition  of $V(\mathcal{K})$, $\mathcal{T_K} \subseteq \mathcal{K}$ is a spanning tree and $\sigma$ is a binary function on some subtree $\mathcal{T_C}(\mathcal{T_K}) =: \mathcal{T_C}\subseteq\mathcal{C}$ satisfying some good properties. We want to prove Theorem~\ref{th:super1} using a union bound, so we bound the number of tuples $(\mathfrak{P}, \mathcal{T_K}, \sigma)$ from above by, say, $k^{5\varepsilon k}$. Then, we consider an event $A_{\mathfrak{P}, \mathcal{T_K}, \sigma}$, which restricts the set of allowed $\pi(\mathcal{C})$-rigid maps $\varphi$ (we call them {\it proper}). We show that after fixing $\pi|_{V(\mathcal{T_C})}$, typically, for every proper $\pi(\mathcal{C})$-rigid map $\varphi$ we have linearly many, say $30\varepsilon k$, pairs $uv \in E(\mathcal{K})$ such that $u, v \in V(\mathcal{T_C})$, $\pi$ is not fixed for the inner vertices of the 2-path of $uv$, and $|\varphi(\pi(u)) - \varphi(\pi(v))| \neq |\pi(u) - \pi(v)|$. So, excluding the paths that are longer than $\ln n/100$, the paths are edge-disjoint and satisfy Claim~\ref{cl:3.4}, as required.

The rest of Section~\ref{sc:3} is split into three subsections. In Subsection~\ref{sc:3.1} we show that the random graph $\mathcal{C}\sim\mathcal{L}$ whp satisfies the requirements of Lemma~\ref{lm:1}.  
The remaining two subsections are dedicated to the proof of Theorem~\ref{th:super1}. In Subsection~\ref{sc:3.2} we define the event $A_{\mathfrak{P}, \mathcal{T_K}, \sigma}$, we show that the negation of $A$ implies $A_{\mathfrak{P}, \mathcal{T_K}, \sigma}$ for some tuple $(\mathfrak{P}, \mathcal{T_K}, \sigma)$, and we bound the number of tuples from above. So, it will remain to show that $\mathbb{P}(A_{\mathfrak{P}, \mathcal{T_K}, \sigma}) = o(k^{-5\varepsilon k})$ --- we do that in Subsection~\ref{sc:3.3}.

\subsection{$\mathcal{C}$ satisfies R1-R5}
\label{sc:3.1}

In this subsection we show that, provided $c>0$ is small enough, the random graph $\mathcal{C}\sim\mathcal{L}$, Claim~\ref{cl:L-contiguous}, satisfies the requirements of Theorem~\ref{th:super1} whp as long as $\lambda$ from Claim~\ref{cl:L-contiguous} satisfies $\lambda \geq 1+\frac{1}{c\ln n}$ for every large $n$. Note that $\mathcal{C}$ is indeed connected, has minimum degree $2$ and has size at most $n$ by Claim~\ref{cl:core_size}. R1 follows from S3, Claim~\ref{cl:S}. Also, taking $c>0$ small enough in the equation $\lambda \geq 1+\frac{1}{c\ln n}$, R2 follows from S1, Claim~\ref{cl:S}, and Claim~\ref{cl:core_size}.  Next, R3 holds whp since, by S2 from Claim~\ref{cl:S}, $|E(\mathcal{K})|$ is linear in $k$ and, for every edge $e \in E(\mathcal{K})$, independently, $\mathbb{P}(|\mathcal{P}_e|> \ln n/100) = o(1)$. 
Also, R4 is a corollary of S4, Claim~\ref{cl:S}. So, we only need to prove that R5 holds whp for $\mathcal{C}$. 

The following claim, together with Claim~\ref{cl:kernel1} and Claim~\ref{cl:kernel2}, shows that R5 holds whp for $\mathcal{C}$.

\begin{claim}
    Let a graph sequence $\mathcal{K} := \{\mathcal{K}_n\}$ with $k:=|V(\mathcal{K})|\to \infty$ satisfy the following properties:
    \begin{itemize}
        \item for some $c > 0$ and every $U \subseteq V(\mathcal{K})$ with $|U| \leqslant ck$, it holds that $|E(\mathcal{K}|_{U})| \leqslant 1.078|U|$;
        \item for all but $o(k)$ vertices $v \in V(\mathcal{K})$, the ball of radius $\lfloor \ln \ln k \rfloor$ centred at $v$ is a tree.
    \end{itemize}
    Then, R5 from Theorem~\ref{th:super1} holds for $\mathcal{K}$. In other words, for any $s>0$ and any disjoint sets $V_1, \ldots, V_s \subseteq V(\mathcal{K})$ with $|V_i| \leqslant ck$, $i \in [s]$, we have
         \begin{equation}
             \sum_{i=1}^s|E(\mathcal{K}|_{V_i})| \leqslant 1.078\left(\sum_{i=1}^s|V_i| - s\right) + o(k).
             \label{eq:3_rprove}
         \end{equation}
    \label{cl:3_rprove}
\end{claim}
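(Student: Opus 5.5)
The plan is to split the parts $V_i$ into those that are small and those that are large, and to bound the two groups differently. Write $e_i := |E(\mathcal{K}|_{V_i})|$ and $n_i := |V_i|$. Fix a slowly growing threshold $L := \lfloor \ln\ln k\rfloor$ and let $B$ be the set of ``bad'' vertices, whose ball of radius $L$ is not a tree; by the second hypothesis, $|B| = o(k)$.

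\textbf{Large and bad parts.} Call a part \emph{large} if $n_i > L$, and \emph{bad} if it meets $B$. For a part that is small and disjoint from $B$, we may take any vertex $v \in V_i$. If $\mathcal{K}|_{V_i}$ is connected, it lies inside the ball of radius $L$ around $v$, which is a tree. Hence $\mathcal{K}|_{V_i}$ is a forest, so $e_i \le n_i - 1 \le 1.078(n_i-1)$. If $\mathcal{K}|_{V_i}$ is disconnected, each component is a tree by the same argument, and again $e_i \le n_i - 1$. The case $n_i = 0$ can be excluded, or handled by allowing $s$ to count only nonempty parts. (Empty parts would make the bound false, so I assume nonempty sets.)

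For every other part we use the first hypothesis, $e_i \le 1.078 n_i$, so that
$$e_i \le 1.078(n_i - 1) + 1.078.$$
The total error is then at most $1.078$ times the number of parts that are large or bad.

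\textbf{Counting the exceptional parts.} The parts are disjoint, so the number of large parts is at most $k/L = o(k)$. The number of bad parts is at most $|B| = o(k)$. Summing the bounds over all parts gives $\sum e_i \le 1.078\left(\sum n_i - s\right) + o(k)$, which is \eqref{eq:3_rprove}.

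\textbf{Main subtlety.} The one real step is the forest argument for a small part. A part $V_i$ with $n_i \le L$ whose induced graph contains a cycle must lie within distance $n_i - 1 \le L$ of any of its vertices, since each component of $\mathcal{K}|_{V_i}$ has diameter less than $n_i$. A cycle inside that ball would contradict the tree property of the ball, so every small part containing a good vertex is a forest. Everything else is bookkeeping. Taking $L = \lfloor \ln\ln k\rfloor \to \infty$ is exactly what makes $k/L = o(k)$.
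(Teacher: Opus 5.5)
Your proof is correct and follows essentially the same route as the paper. Parts of size at most $\lfloor\ln\ln k\rfloor$ containing a vertex with a tree ball induce forests and contribute at most $|V_i|-1$; the remaining parts number at most $k/\lfloor\ln\ln k\rfloor + o(k) = o(k)$ and are handled by the $1.078|V_i|$ bound. The differences are cosmetic: the paper first reduces to connected parts and then sums using $x\ge|I|$, whereas you treat disconnected small parts directly as forests and bound term by term, and your caveat that the parts must be nonempty is correct and only implicit in the paper.
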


\begin{proof}
     Consider arbitrary disjoint sets $V_1, \ldots, V_s \subseteq V(\mathcal{K})$ with $|V_i| \leqslant ck$, $i \in [s],$ and let us prove~\eqref{eq:3_rprove}. Wlog we can assume that, for every $i \in [s]$, $V_i$ induces a connected subgraph.
     
     Let $I \subseteq [s]$ be the set of indices $i \in [s]$ such that $V_i$ induces a tree in $\mathcal{K}$. Let $x := \sum_{i\in I} |V_i|,$ and let $y:= \sum_{i \in [s]\setminus I }|V_i|$. For $i \in I$, $|E(\mathcal{K}|_{V_i})|  = |V_i| -1$. Also, due to the first assumption from Claim~\ref{cl:3_rprove}, $|E(\mathcal{K}|_{V_i})|  \leqslant 1.078|V_i|$, for $i \notin I$. Applying these bounds we get $$\sum_{i=1}^s|E(\mathcal{K}|_{V_i})| \leqslant 1.078y + x - |I| = 1.078\sum_{i=1}^s|V_i|- 0.078x - |I| \stackrel{x \geqslant |I|}\leqslant 1.078\sum_{i=1}^s|V_i|- 1.078|I|.$$
    So, it remains for us to show that $|I| \geqslant s - o(k)$.

    All but $o(k)$ of the sets $V_1, \ldots, V_s$ have size less than $\lfloor \ln \ln k \rfloor$. Hence, by the second assumption from Claim~\ref{cl:3_rprove}, all but $o(k)$ of the sets $V_1, \ldots, V_s$ are trees, implying $|I| \geqslant s - o(k)$.
\end{proof}

\subsection{Reduction of Theorem~\ref{th:super1} to $\mathbb{P}(A_{\mathfrak{P}, \mathcal{T_K}, \sigma}) = o(k^{-5\varepsilon k})$}
\label{sc:3.2}

    In this section we define a collection $\mathcal{A}$ of events that covers the complement of the event $A$. We also bound from above the size of $\mathcal{A}$. First, we give several remarks on notation that simplify the explanation in both this and next subsections. Next, we describe the collection $\mathcal{A}$ in Lemma~\ref{lm:A_cover}. Then, we establish the bound on $|\mathcal{A}|$ in Claim~\ref{cl:A_bound}.

    Let $\mathfrak{P}$ be a partition of $V(\mathcal{K})$. For $u, v \in V(\mathcal{K})$ let us write $u \sim v$ when $u$ and $v$ belong to the same set of the partition and $u \nsim v$ otherwise.

    Recall that a  2-path is a path with every inner vertex of degree 2. Recall that every edge in the kernel $\mathcal{K}$ corresponds to a 2-path in the graph $\mathcal{C}$. Let us now define the events that make up $\mathcal{A}$.

    \begin{definition}[Event $A_{\mathfrak{P}, \mathcal{T_K}, \sigma}$]
        
        Recall that $\pi: V(\mathcal{C}) \to V$ is a uniformly random injection. For a tuple $(\mathfrak{P},\mathcal{T_K}, \sigma)$ that consists of
         \begin{itemize}
             \item a partition  $\mathfrak{P} = \{V_1, \ldots, V_s\}$ of $V(\mathcal{K})$,
             \item a spanning tree $\mathcal{T_K} \subseteq \mathcal{K}$, and
             \item a binary function $\sigma \in \{-1, 1\}^{E(\mathcal{T_C})}$, where $\mathcal{T_C}\subseteq\mathcal{C}$ is the tree we get from $\mathcal{T_K}$ after substituting every edge $uv \in E(\mathcal{T_K})$ by the corresponding 2-path $\mathcal{P}_{uv} \subseteq \mathcal{C}$,
         \end{itemize}
         let $A_{\mathfrak{P}, \mathcal{T_K}, \sigma}$ denote the event that there exists a $\pi(\mathcal{C})$-rigid map $\varphi: \pi(V(\mathcal{C})) \to \mathbb{R}$ with binary representation $\sigma$ on $\mathcal{T_C}$ such that the following holds:
         \begin{itemize}
             \item[P1] for every $i\in [s]$, $|V_i| < ck$ and $V_i$ induces a connected subgraph in $\mathcal{K}$;
             \item[P2] for every $u, v \in V(\mathcal{K})$, we have $u \sim v \implies \varphi(\pi(u)) - \varphi(\pi(v)) = \pi(u) - \pi(v)$;
            \item[P3] for every $uv \in E(\mathcal{K})$ we have $u\sim v \iff \varphi(\pi(u)) - \varphi(\pi(v)) = \pi(u) - \pi(v)$;
            \item[P4] $|\{uv \in E(\mathcal{K}): \varphi(\pi( u)) - \varphi(\pi (v)) = \pi (u) - \pi (v) \}| $

            \hspace*{\fill} $\geqslant | u v\in E(\mathcal{K}): \varphi(\pi (u)) - \varphi(\pi (v)) = -(\pi (u) - \pi( v)) \}|$;
            \item[P5] for every $i \in [s]$, $\mathcal{T_K}|_{V_i}$ is connected;
            \item[P6] if we remove from $\mathcal{T_K}$ every edge $uv$ such that $
            |\varphi(\pi(u)) - \varphi(\pi(v))| \neq |\pi(u) - \pi(v)|,
            $ then $\mathcal{T_K}$ splits into connected components such that, for every $w, r$ from different components, $wr\in E(\mathcal{K}) \implies |\varphi(\pi(w)) - \varphi(\pi(r))| \neq |\pi(w) - \pi(r)|.$
         \end{itemize}
         \label{df:3.5}
    \end{definition}

    The somewhat complicated condition P6 is illustrated in Figure~\ref{fig:8}.

    \begin{figure}[!h]
    \centering
    \includegraphics[scale=0.2]{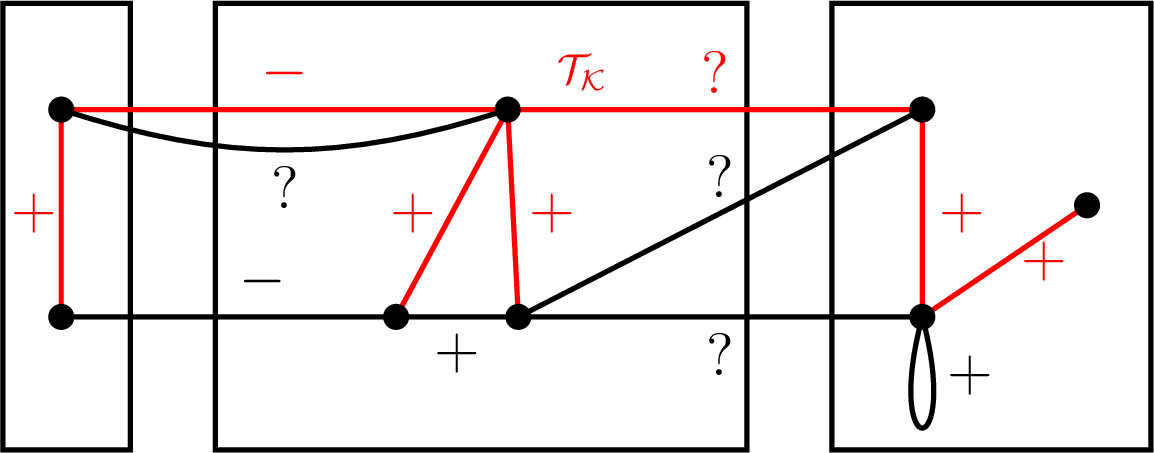}
    \caption{The figure illustrates the structure of $\mathcal{T_K}$ in $\mathcal{K}$ given by P6. We assign to each edge $E(\mathcal{K})$ a sign from $\{+, -, ?\}$ as follows. For an edge $uv \in E(\mathcal{K})$, we assign either ``$+$'' or ``$-$'' if $\frac{\varphi(\pi(u)) - \varphi(\pi(v))}{\pi(u)-\pi(v)}$ equals to $1$ or $-1$ respectively, otherwise we assign ``$?$''.
    In the figure, the boxes represent the equivalence classes of $\mathfrak{P}$ and the red edges make up the tree $\mathcal{T_K}$. Notice that, in the picture, the tree $\mathcal{T_K}$ should contain 1 edge connecting the middle box to the left-hand one, and this edge should be chosen among the edges with ``$-$''. Also, $\mathcal{T_K}$ should contain 1 edge connecting the middle box to the right-hand box and this edge can be chosen arbitrarily, since all edges between the middle and the right-hand boxes carry the sign ``$?$''.
}
    \label{fig:8}
\end{figure}

    \begin{lemma} 
         Suppose that the event $A$ defined in  does not hold. Then, there exists a tuple $(\mathfrak{P}, \mathcal{T_K}, \sigma)$ such that $A_{\mathfrak{P}, \mathcal{T_K}, \sigma}$ holds.
         \label{lm:A_cover}
    \end{lemma}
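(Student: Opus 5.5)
Fix a $\pi(\mathcal{C})$-rigid map $\varphi$ witnessing the failure of $A(\pi(\mathcal{C}),c)$: every $U\subseteq V(\mathcal{K})$ on which all pairwise distances are preserved has $|U|<ck$. The tuple $(\mathfrak{P},\mathcal{T_K},\sigma)$ will be built directly from $\varphi$, possibly after composing $\varphi$ with a reflection. Since reflections are trivial rigid maps, this changes neither the failure of $A$ nor rigidity.

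\emph{Normalisation for P4.} Call an edge $uv\in E(\mathcal{K})$ a ``$+$'' edge if $\varphi(\pi(u))-\varphi(\pi(v))=\pi(u)-\pi(v)$, a ``$-$'' edge if this holds with the opposite sign, and a ``$?$'' edge otherwise. Replacing $\varphi$ by $-\varphi$ swaps ``$+$'' and ``$-$'' and leaves ``$?$'' unchanged. So after a possible reflection we may assume P4.

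\emph{The partition $\mathfrak{P}$.} Let $\mathfrak{P}=\{V_1,\dots,V_s\}$ be the vertex sets of the connected components of the spanning subgraph of $\mathcal{K}$ whose edges are the ``$+$'' edges. Along a path of ``$+$'' edges the quantity $\varphi(\pi(x))-\pi(x)$ is constant. Hence it is constant on each $V_i$, which gives P2 and shows that each $V_i$ preserves all pairwise distances. By our assumption on $\varphi$, $|V_i|<ck$, and each $V_i$ is connected by construction, so P1 holds. For P3, if $uv$ is a ``$+$'' edge then $u\sim v$. Conversely, if $u\sim v$ then P2 gives $\varphi(\pi(u))-\varphi(\pi(v))=\pi(u)-\pi(v)$, so every kernel edge inside a part is a ``$+$'' edge.

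\emph{The tree $\mathcal{T_K}$.} Set $\mathcal{H}$ to be the subgraph of $\mathcal{K}$ consisting of all ``$+$'' and ``$-$'' edges, i.e.\ the edges whose length is preserved up to sign. Build $\mathcal{T_K}$ in three stages.
\begin{enumerate}
\item Take a spanning forest of each $\mathcal{K}|_{V_i}$ using only ``$+$'' edges; this is possible since $V_i$ is connected through such edges.
\item Extend this to a spanning forest $F$ of $\mathcal{H}$ by adding ``$-$'' edges between distinct parts.
\item Extend $F$ to a spanning tree of $\mathcal{K}$ using ``$?$'' edges. This is possible because $\mathcal{K}$ is connected, $\mathcal{C}$ being connected.
\end{enumerate}
P5 holds because the forests from stage 1 stay connected inside each part. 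Removing the ``$?$'' edges from $\mathcal{T_K}$ leaves exactly $F$, whose components are the components of $\mathcal{H}$. Any kernel edge $wr$ between different components of $\mathcal{H}$ is therefore not in $\mathcal{H}$, so it is a ``$?$'' edge, and $|\varphi(\pi(w))-\varphi(\pi(r))|\neq|\pi(w)-\pi(r)|$. This is P6.

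\emph{The signature $\sigma$.} Define $\sigma$ as the binary representation of $\varphi$ on $\mathcal{T_C}$ from Remark~\ref{rm:sigma}, where $\mathcal{T_C}$ is obtained from $\mathcal{T_K}$ as in Definition~\ref{df:3.5}. Then $\varphi$ witnesses $A_{\mathfrak{P},\mathcal{T_K},\sigma}$.

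\emph{Main obstacle.} The delicate point is making P3, P5 and P6 hold simultaneously. This is why the tree has to be built in the ordered stages above rather than taken as an arbitrary spanning tree, and why the parts must be components of the ``$+$'' graph. Classes of the equivalence ``distance preserved'' would not work here, since they need not induce connected subgraphs.
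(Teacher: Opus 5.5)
Your proof is correct and follows essentially the same approach as the paper's. Your parts of $\mathfrak{P}$ (components of the ``$+$'' graph) are exactly the paper's equivalence classes defined via paths in $\mathcal{K}$ on which $\varphi$ acts as a translation. Your staged forest $F$ coincides with the paper's maximal forest satisfying Q1 and Q2, whose extension to a spanning tree of $\mathcal{K}$ necessarily uses only ``$?$'' edges, so P5 and P6 follow in the same way.
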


    \begin{proof}

    Throughout the proof we fix $\pi$ such that $A$ does not hold. Let $\varphi$ be a $\pi(\mathcal{C})$-rigid map disproving $A$. Since both $\varphi$ and $-\varphi$ satisfy $A$, wlog we can assume that P4 holds. Let us construct a tuple $(\mathfrak{P}, \mathcal{T_K}, \sigma)$ such that $\varphi$ verifies $A_{\mathfrak{P}, \mathcal{T_K}, \sigma}$.

    \begin{claim}
        There exists $\mathfrak{P}$ that verifies P1, P2, and P3.\label{cl:find_p}
    \end{claim}

    \begin{proof}

    For every $v \in V(\mathcal{K})$ let us call $u$ {\emph{equivalent}} to $v$ if the following holds: there is a path $\mathcal{P}\subseteq\mathcal{K}$ such that 
    \begin{itemize}
        \item $\mathcal{P}$ connects $u$ and $v$;
        \item $\varphi$ behaves as a translation on $\pi(V(\mathcal{P}))$ (i.e. for some $r \in \mathbb{R}$ and every $w \in \pi(V(\mathcal{P}))$, $\varphi(w) = w+r$).
    \end{itemize}

    Note that equivalence is indeed an equivalence relation. So, it creates a natural partition of the vertex set into equivalence classes $V_1 \sqcup \ldots \sqcup V_s = V(\mathcal{K})$. Note that every equivalence class $V_i$ is connected. Also, for every $u, v \in V_i$, it holds that $\varphi(\pi(u)) - \varphi(\pi(v)) = \pi(u) - \pi(v)$, due to the definition. So, $\max_{i\in[s]} |V_i| < ck$ as we assumed that $\varphi$ disproves $A$. Hence, P1 holds. Finally, both P2 and P3 hold by the definition of sets $V_1, \ldots, V_s$.

    \end{proof}

    Fix an arbitrary partition $\mathfrak{P}$ satisfying P1, P2, P3. Let $V_1 \sqcup \ldots \sqcup V_s = V(\mathcal{K})$ be the parts of ${\mathfrak{P}}$.

    \begin{claim}
        There exists a spanning tree $\mathcal{T_K}$ of $\mathcal{K}$ satisfying P5 and P6.
        \label{cl:F_T}
    \end{claim}
    
    \begin{proof}
        We construct a forest $\mathcal{F} \subseteq \mathcal{K}$ such that an arbitrary spanning tree $\mathcal{T_K} \subseteq \mathcal{K}$ containing $\mathcal{F}$ satisfies P5 and P6 with $\varphi$. Let $\mathcal{F}$ be a maximal forest among the forests such that
    \begin{itemize}
        \item[Q1] $\mathcal{F}|_{V_i}$ is a tree, for every $i$;
        \item[Q2] $\forall uv \in E(\mathcal{F}), u\nsim v \implies \varphi(\pi(u)) - \varphi(\pi(v)) = -(\pi(u) - \pi(v))$,
    \end{itemize}
    see Figure~\ref{fig:1}.
    
\begin{figure}[h]
    \centering

    \includegraphics[scale=0.1]{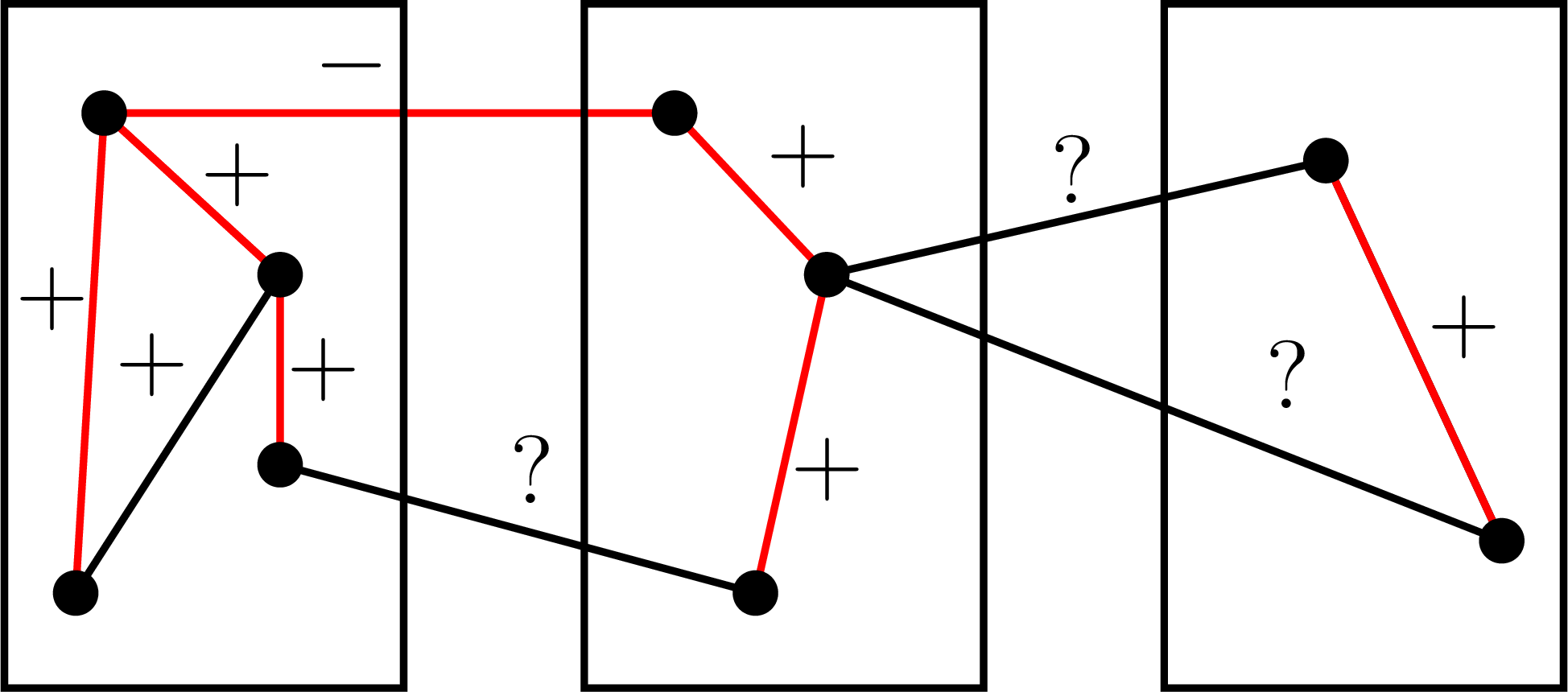}
    \caption{The figure illustrates the  structure of $\mathcal{F}$ in $\mathcal{K}$. For an edge $uv \in E(\mathcal{K})$, we assign either ``$+$'' or ``$-$'' if $\frac{\varphi(\pi(u)) - \varphi(\pi(v))}{\pi(u)-\pi(v)}$ equals to $1$ or $-1$ respectively, otherwise we assign ``$?$''.
    The boxes represent the equivalence classes of $\mathfrak{P}$ and the red edges make up the forest $\mathcal{F}$. 
}
    \label{fig:1}
\end{figure}
    Then, $\mathcal{F}$ suits the role described above. Indeed, let $\mathcal{T_K}$ be an arbitrary spanning tree containing $\mathcal{F}$. Then, P5 holds trivially from Q1. In order to verify P6, consider an edge $uv \in E(\mathcal{T_K})$ satisfying $|\varphi(\pi(u)) - \varphi(\pi(v))| \neq |\pi(u) - \pi(v)|$. By P3, $u \nsim v$. So, due to Q2, $uv \notin E(\mathcal{F})$. In addition, since $\mathcal{F}$ is maximal satisfying Q2, every edge $uv \in E(\mathcal{T_K})\setminus E(\mathcal{F})$ satisfies $|\varphi(\pi(u)) - \varphi(\pi(v))| \neq |\pi(u) - \pi(v)|$. So, the connected components after the split described in P6 are precisely the connected components of $\mathcal{F}$.

    Let us now show that for $w$ and $r$ from different connected components of $\mathcal{F}$, $$wr\in E(\mathcal{K}) \implies |\varphi(\pi(w)) - \varphi(\pi(r))| \neq |\pi(w) - \pi(r)|.$$
    Indeed, Q1 gives us $w \nsim r$. So, by P3, $wr\in E(\mathcal{K})$ implies $\varphi(\pi(w)) - \varphi(\pi(r)) \neq \pi(w) - \pi(r)$. Also, if $wr\in E(\mathcal{K})$ then $\varphi(\pi(w)) - \varphi(\pi(r)) \neq -(\pi(w) - \pi(r))$, since otherwise we can add $wr$ to $\mathcal{F}$ without breaking Q1 and Q2 --- a contradiction with maximality of $\mathcal{F}$.
    \end{proof}

    Now, we fix an arbitrary tree verifying Claim~\ref{cl:F_T} and a binary representation $\sigma$ of $\varphi|_{\mathcal{T_C}}$. Doing this, we conclude Lemma~\ref{lm:A_cover}.

    \end{proof}

    Recall that $k = |V(\mathcal{K})|$. Let us now bound the number of tuples $(\mathfrak{P},\mathcal{T_K}, \sigma)$ that have positive probability to satisfy the event $A_{\mathfrak{P},\mathcal{T_K}, \sigma}$, Definition~\ref{df:3.5}.
    
    \begin{claim}
        The number of tuples $(\mathfrak{P},\mathcal{T_K}, \sigma)$ from Definition~\ref{df:3.5} with $\mathfrak{P}$ satisfying P1 is at most $k^{5\varepsilon k}$. \label{cl:A_bound}
    \end{claim}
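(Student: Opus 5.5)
We bound the three components of the tuple separately and multiply the three bounds.

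\emph{Partitions.} By P1 every part of $\mathfrak{P}$ induces a connected subgraph of $\mathcal{K}$. By R1, $\Delta(\mathcal{K}) \le k^{\varepsilon} \le k^{2\varepsilon}/10$ for large $k$. Claim~\ref{cl:partitions_number} with $c := 2\varepsilon$ therefore gives at most $k^{2\varepsilon k}$ choices of $\mathfrak{P}$. Equivalently, its proof gives the bound $(10\Delta)^k \le (10k^{\varepsilon})^k$.

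\emph{Spanning trees.} By Claim~\ref{cl:num_of_spanning_trees} and R1, the number of spanning trees $\mathcal{T_K}$ is at most $e(e\Delta/2)^{k-1} \le (e k^{\varepsilon})^{k}$. This is $k^{\varepsilon k + o(k)}$, which is at most $k^{(3/2)\varepsilon k}$ for large $k$.

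\emph{Sign functions.} The number of choices of $\sigma$ is $2^{|E(\mathcal{T_C})|}$. The tree $\mathcal{T_C}$ is a subgraph of $\mathcal{C}$, so $|E(\mathcal{T_C})| \le |V(\mathcal{C})| - 1 \le \varepsilon k \ln k$ by R2. Hence the number of $\sigma$ is at most $2^{\varepsilon k\ln k} = k^{(\ln 2)\varepsilon k}$.

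\emph{Combining.} Multiplying the three bounds gives
$$k^{2\varepsilon k}\cdot k^{(3/2)\varepsilon k}\cdot k^{0.7\varepsilon k} \le k^{5\varepsilon k}.$$
There is no real obstacle here. The only points needing care are making the $o(k)$ and constant factors absorbed into the slack of the exponent, and choosing the exponent $c$ in Claim~\ref{cl:partitions_number} so that its degree hypothesis $\Delta \le k^{c}/10$ is satisfied under R1. Taking $c = 2\varepsilon$ handles that hypothesis.
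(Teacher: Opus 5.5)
Your proposal is correct and follows essentially the same route as the paper. You bound the partitions via Claim~\ref{cl:partitions_number} and R1, the spanning trees via Claim~\ref{cl:num_of_spanning_trees} and R1, and the sign functions via $2^{|V(\mathcal{C})|}$ and R2, then multiply. Your exponents $2+\tfrac{3}{2}+\ln 2$ are just slightly tighter bookkeeping than the paper's $2+2+1$.
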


    \begin{proof}
        Recall that the maximum degree of the graph $\mathcal{K}$ is bounded by R1. So, for large enough $n > 0$, the number of partitions $\mathfrak{P}$ satisfying P1 does not exceed $k^{2\varepsilon k}$ by Claim~\ref{cl:partitions_number}.
        Also, for large enough $n$, the number of spanning trees of $\mathcal{K}$ is at most $k^{2\varepsilon k}$ by Claim~\ref{cl:num_of_spanning_trees}. Finally, the number of binary functions $\sigma$ is at most
        \begin{equation}2^{|E(\mathcal{T_{C}})|} \leqslant 2^{|V(\mathcal{T_C})|} \leqslant 2^{|V(\mathcal{C})|} \stackrel{R2}\leqslant \exp(\varepsilon k\ln k),
        \end{equation}
        which concludes the proof.
\end{proof}

\subsection{Proof of $\mathbb{P}(A_{\mathfrak{P}, \mathcal{T_K}, \sigma}) = o(k^{-5\varepsilon k})$}
\label{sc:3.3}

    Combining Lemma~\ref{lm:A_cover} and Claim~\ref{cl:A_bound}, the following lemma is sufficient to conclude Theorem~\ref{th:super1}.

    \begin{lemma}
        For a tuple $(\mathfrak{P},\mathcal{T_K}, \sigma)$ from Definition~\ref{df:3.5} we have
        \begin{equation}
        \mathbb{P}(A_{\mathfrak{P}, \mathcal{T_K}, \sigma}) = o(|V(\mathcal{K})|^{-5\varepsilon |V(\mathcal{K}|)}).
        \label{eq:3.5}
    \end{equation}
        \label{lm:A_formal}
    \end{lemma}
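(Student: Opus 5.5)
The plan is to condition on $\pi|_{V(\mathcal{T_C})}$ and then use the still-unexposed inner vertices of many kernel edges outside $\mathcal{T_K}$ as independent "random paths" to which Claim~\ref{cl:3.4} applies.

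\textbf{Step 1: expose the tree.} Fix the tuple $(\mathfrak{P},\mathcal{T_K},\sigma)$ and expose $\pi|_{V(\mathcal{T_C})}$. By Remark~\ref{rm:1}, $\sigma$ together with $\pi|_{V(\mathcal{T_C})}$ determines $\varphi$ on $\pi(V(\mathcal{T_C}))$ up to a trivial map. In particular, for every kernel vertex $u$ (all of which lie in $\mathcal{T_C}$) the value $\varphi(\pi(u))$ is determined. Hence, for every $uv\in E(\mathcal{K})$, the label $+$, $-$ or $?$ of Figure~\ref{fig:8} is now a deterministic function of the exposed data. Consequently the set $Q$ of kernel edges $uv\notin E(\mathcal{T_K})$ labelled $?$ (that is, $|\varphi(\pi(u))-\varphi(\pi(v))|\neq|\pi(u)-\pi(v)|$) is also determined.

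\textbf{Step 2: the probabilistic bound.} For $e\in Q$, the inner vertices of $\mathcal{P}_e$ are disjoint from $V(\mathcal{T_C})$ and from the inner vertices of every other path, so they are still unexposed. If $e$ has no inner vertices, the event already fails, since $\varphi$ cannot be $\pi(\mathcal{C})$-rigid. Otherwise, discard the at most $\varepsilon k$ paths longer than $\ln n/100$ (R3). For each remaining $e\in Q$, expose its inner vertices in turn. Conditioned on the history, the inner vertices are uniform among at least $n-|V(\mathcal{C})|$ remaining points. Since $|V(\mathcal{C})|\leq\varepsilon k\ln k=o(n)$ for $k$ below $n/\ln n$ (and otherwise we adapt the argument of Claim~\ref{cl:3.4} with the available $n/2$ slack), part 1 of Claim~\ref{cl:3.4} applies. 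It gives probability at most $n^{-1/2}$ that $\varphi$ extends rigidly along $\mathcal{P}_e$. Multiplying over the edges yields
\[
\mathbb{P}(A_{\mathfrak{P},\mathcal{T_K},\sigma})\leq \max\bigl(\mathbb{P}(|Q|<31\varepsilon k),\; n^{-15\varepsilon k}\bigr),
\]
and $n^{-15\varepsilon k}=o(k^{-5\varepsilon k})$ because $\ln k=(1-o(1))\ln n$ by R2. So it suffices to show deterministically that every $\varphi$ obeying P1--P6 has $|Q|\geq 31\varepsilon k$.

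\textbf{Step 3: counting $?$ edges (main obstacle).} This is where R4, R5 and P4, P6 must be combined. The $+$ edges are exactly the edges inside the parts $V_i$ (P3), so by R5 their number is at most $1.078(k-s)+o(k)$. For the $-$ edges, note that a set of kernel vertices connected by a path on which $\varphi$ acts as a single reflection $x\mapsto r-x$ also preserves all pairwise distances. Such "reflection classes" therefore also have size $<ck$, since we are on the complement of $A$. Applying R5 to them, together with P4 ($\#-\leq\#+$), bounds the $-$ edges. By P6, all kernel edges between different components of the forest $\mathcal{F}$ are $?$. The tree $\mathcal{T_K}$ uses only $k-1$ edges, of which $k-s$ lie inside parts.

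I would then compare $|E(\mathcal{K})|$, which is at least $1.5k$ since the minimum degree is $3$ and which in the model is a definite constant factor larger, against these bounds. Vertices of high degree, where the crude counts are lossy, would be handled by removing a set $U$ of at most $\varepsilon k$ vertices and using R4 to say that at most $10^{-9}k$ edges are affected. The resulting inequality, which is presumably the content of Claims~\ref{cl:3_r5} and~\ref{cl:3_r4_r5}, should give $|Q|\geq 31\varepsilon k$ once $\varepsilon<10^{-9}$.

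The delicate point is that the constant $1.078$ leaves little room: the naive bound $\#+ + \#-\leq 2.156(k-s)$ is too weak. I expect to need the finer structure that, inside a component of $\mathcal{F}$, the $-$ edges form a tree-like bipartite pattern between parts with few surplus edges. That would make the $\pm$ edges roughly $1.078(k-s)+(s-\#\text{components})+o(k)$. First I would try to prove this refined bound by applying R5 simultaneously to the parts and to the unions of parts merged via $-$ edges.
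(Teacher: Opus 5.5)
\textbf{Gap: the deterministic reduction in Step 3 is false.} Steps 1--2 match the paper's endgame. The trouble is the sentence ``it suffices to show deterministically that every $\varphi$ obeying P1--P6 has $|Q|\geq 31\varepsilon k$'': this statement fails, so no refinement of the R5/P4 counting can prove it. Write $\delta(x)=\varphi(x)-x$ and $\tau(x)=\varphi(x)+x$. A ``$+$'' edge means equal $\delta$ at its ends, and a ``$-$'' edge means equal $\tau$.

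Suppose the edges of $\mathcal{K}$ split into two families whose components are small trees. For cubic kernels, which is exactly the case used in Section~\ref{sc:ad}, Thomassen's theorem gives a $2$-edge-colouring whose monochromatic components are paths of length at most $5$.
\begin{itemize}
\item Make $\delta$ constant (and generic) on the components of the first family and $\tau$ constant on those of the second.
\item Take $V$ and $\pi$ with $\pi(x)=(\tau-\delta)/2$. Injectivity fails only where a component of each family shares two vertices, which needs one of the $O(1)$ short cycles.
\end{itemize}
Then every kernel edge is ``$+$'' or ``$-$'', P1--P6 hold, and R5 is respected because the classes are trees. So $Q=\emptyset$. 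In this configuration there are about $k/2$ non-tree ``$-$'' edges between distinct parts, so your hoped-for bound (``$-$'' edges $\approx s-$ number of components) fails. What rules such configurations out is the randomness of $\pi$ on kernel vertices. You discard it by conditioning on all of $\pi|_{V(\mathcal{T_C})}$ before counting.

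\textbf{The missing idea: use that randomness before exposing everything.}
\begin{itemize}
\item Claim~\ref{cl:3_r5} gives a deterministic dichotomy, using R5, P4 and P6. Let $E_2$ be the set of non-tree edges between distinct parts at vertices not incident to the cross-part tree edges $E_1$. Either $|E_2|\geq 10^{-7}k$, or there are at least $30\varepsilon k$ ``$?$'' edges anyway.
\item In the first case (Claim~\ref{cl:3_r4_r5}), expose $\pi$ only on the 2-paths of $E_1$ first. By P2 and $\sigma$, this already determines $\delta(u)-\delta(v)$ for all kernel pairs.
\item Then expose the remaining vertices one at a time. An $E_2$-edge $uv$, with $u$ exposed after $v$, can be ``$-$'' only if $\pi(u)=\pi(v)-(\delta(u)-\delta(v))/2$. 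This is a single point, hit with probability at most $1/(50\varepsilon k)$.
\item R4 supplies at least $100\varepsilon k$ such vertices $u$. A $2^{50\varepsilon k}(50\varepsilon k)^{-20\varepsilon k}$ union bound then gives at least $30\varepsilon k$ ``$?$'' edges with probability $1-o(k^{-10\varepsilon k})$.
\end{itemize}

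\textbf{Minor fixes to Step 2.} The $\max$ should be a sum. If $|V(\mathcal{C})|$ is close to $n$, process only all but $\varepsilon k$ of the good paths, as the paper does. This keeps at least $\varepsilon k$ points free, so each path costs $(\varepsilon k)^{-1/2}$, and the total $(\varepsilon k)^{-14\varepsilon k}$ is still $o(k^{-5\varepsilon k})$.
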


    In this subsection we prove Lemma~\ref{lm:A_formal}.

    Fix an arbitrary tuple $(\mathfrak{P},\mathcal{T_K}, \sigma)$ from Definition~\ref{df:3.5} satisfying P1 and P5.

    \begin{remark}
        Notice that we can assume that P1 and P5 hold for $\mathfrak{P}$ and $\mathcal{T_K}$, since otherwise $\mathbb{P}(A_{\mathfrak{P}, \mathcal{T_K}, \sigma}) = 0$. In particular, given $\pi|_{\mathcal{T_C}}$, $\sigma$ uniquely determines the $\pi(\mathcal{T_C})$-rigid map $\varphi:\pi(V(\mathcal{T_C})) \to \mathbb{R}$ up to translation.
    \end{remark}
    
    Let us now explain the way we want to show~\eqref{eq:3.5}. Recall that $\pi$ is a random injection. Below, we give definitions and claims with respect to the fixed $\mathfrak{P}$, $\mathcal{T_K}$, $\mathcal{T_C}$, $\sigma$ and the random $\pi$.

    In order to conclude the desired bound~\eqref{eq:3.5}, we want to fix $\pi|_{V(\mathcal{T_C})}$ and find $29\varepsilon k$ edge-disjoint 2-paths such that the condition of Claim~\ref{cl:3.4} holds for each of them. We find the paths in two steps. First, we show that there are many candidates for the paths. Second, we prove that there are indeed $30\varepsilon k$ edge-disjoint 2-paths satisfying every condition of Claim~\ref{cl:3.4} except, perhaps, for the requirement that the path is not too long. So, R3 instantly gives the desired set of $29\varepsilon k$ paths. Below, we formalise the two steps as two individual claims Claim~\ref{cl:3_r5} and Claim~\ref{cl:3_r4_r5} and prove the claims. Then, we finally derive~\eqref{eq:3.5} from Claim~\ref{cl:3.4} and Claim~\ref{cl:3_r4_r5}. However, first, we need to give some definitions.

    \begin{definition}
        For a given injection $\pi: V(\mathcal{C}) \to \mathbb{R}$, let us call a $\pi(\mathcal{T_C})$-rigid map $\varphi$ {\it proper} if it has binary representation $\sigma$ and satisfies P2, P3, P4, and P6. 
        
        \label{df:proper}
    \end{definition}

    Recall that for every $\pi(\mathcal{C})$-rigid map its restriction to $\pi(\mathcal{T_C})$ is also a $\pi(\mathcal{T_C})$-rigid map. Recall that our aim is to prove~\eqref{eq:3.5}, i.e. give an upper bound of $k^{-5\varepsilon k}$ on the probability that there exists a $\pi(\mathcal{C})$-rigid map whose restriction to $\pi(\mathcal{T_C})$ is proper.

    \begin{definition}
        For $uv \in E(\mathcal{K})$ let $D= D(uv)$ be the event that

        \begin{itemize}
            
            \item $u\nsim v$ and $uv \not\in E(\mathcal{T_K})$;
            \item $|\varphi(\pi(u)) - \varphi(\pi(v))| \neq |\pi(u) - \pi(v)|$ holds for {\it every  proper} $\pi(\mathcal{T_C})$-rigid map $\varphi$.
        \end{itemize}
        \label{df:3.}
    \end{definition}

    \begin{remark}
        Note that if $D(uv)$ holds for {\it some} proper $\pi(\mathcal{T_C})$-rigid map $\varphi$, then it holds for every proper $\pi(\mathcal{T_C})$-rigid map $\varphi$, since given $\pi$ all proper $\pi(\mathcal{T_C})$-rigid maps are equivalent on $V(\mathcal{T_C}) \supseteq V(\mathcal{K})$ up to translations.
        \label{rm:some_every}
    \end{remark}

    \begin{remark}
        Everywhere below (including Claim~\ref{cl:3_r5} and Claim~\ref{cl:3_r4_r5} stated below) the probability space is described as follows: we have fixed the values of $\pi|_{U}$ for some $U \subseteq V(\mathcal{C})$, and the set $U$ grows as the proof proceeds. Hence, the probability space is described by a uniformly random injection $\pi|_{V(\mathcal{C})\setminus U}: V(\mathcal{C})\setminus U \to V \setminus\pi(U).$
    \end{remark}

    \begin{claim}
    Let
    $$E_1 := \{uv \in  E(\mathcal{T_K}) : u\nsim v\},$$
    and let
    $$E_2 := \{uv \in E(\mathcal{K})\setminus E(\mathcal{T_K}) : u \nsim v\textnormal{ and }u\textnormal{ is not incident to any edge from }E_1\}.$$
    
    Then, either $|E_2| \geqslant 10^{-7} k$ or, for every choice of $\pi$, the number of edges from $E(\mathcal{K})$ satisfying~$D$ is at least $30 \varepsilon k$.
    \label{cl:3_r5}
\end{claim}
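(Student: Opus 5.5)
The plan is to translate $D$ into a sign count and then use the edge-counting properties R4 and R5. By Remark~\ref{rm:some_every}, for fixed $\pi$ all proper maps agree on $\pi(V(\mathcal{K}))$ up to translation. So an edge $uv\in E(\mathcal{K})\setminus E(\mathcal{T_K})$ satisfies $D$ exactly when it is a ``$?$'' edge in the sign colouring of Figure~\ref{fig:8}. Note that $u\nsim v$ is automatic for ``$?$'' edges by P3.

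For a proper $\varphi$, partition $E(\mathcal{K})$ into ``$+$'', ``$-$'' and ``$?$'' edges. By P3 the ``$+$'' edges are exactly the edges inside parts of $\mathfrak{P}$. Hence by R5 (parts have size $<ck$ by P1),
\[
\#(+)\le 1.078(k-s)+o(k).
\]
By P4, $\#(-)\le \#(+)$. Also $|E(\mathcal{K})|\ge 1.5k$, since every kernel vertex has degree at least $3$. The number of $D$-edges is then at least
\[
\#(?) - |E(\mathcal{T_K})| \ge |E(\mathcal{K})| - \#(+) - \#(-) - (k-1).
\]
The task is to show this is at least $30\varepsilon k$ whenever $|E_2|<10^{-7}k$.

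Next I would record the structure of $E_1$. Since each $\mathcal{T_K}|_{V_i}$ is a tree (P5), $\mathcal{T_K}$ has exactly $k-s$ edges inside parts, so $|E_1|=s-1$. Let $W$ be the set of vertices incident to $E_1$; then $|W|\le 2(s-1)$. Every cross edge $uv\notin E(\mathcal{T_K})$ with an endpoint outside $W$ lies in $E_2$. So, assuming $|E_2|<10^{-7}k$, all cross non-tree edges except fewer than $10^{-7}k$ are spanned by $W$. I would then split into two cases according to $s$.

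\emph{Case $s\le \varepsilon k/2$.} Then $|W|\le \varepsilon k$, and R4 bounds the edges touching $W$ by $10^{-9}k$. Consequently the number of cross edges is at most $10^{-9}k+10^{-7}k+(s-1)$. On the other hand, the number of cross edges is at least
\[
|E(\mathcal{K})|-1.078(k-s)-o(k)\ge 0.42k-o(k).
\]
This is a contradiction, so this case cannot occur when $E_2$ is small.

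\emph{Case $s$ large.} Here I would strengthen the bound on $\#(-)$ using P6. A ``$-$'' edge outside $\mathcal{T_K}$ must join two vertices of the same P6-component. The P6-components are unions of parts glued by ``$-$'' edges of $E_1$, and within one component $\varphi$ is a fixed reflection composed with per-part translations. The aim is to show that the ``$-$'' edges, together with the tree edges, are again few compared with $|E(\mathcal{K})|$. To do this I would apply R5 to suitable groupings: either regroup the parts of a P6-component so that each group has size at most $ck$, or treat each component's ``$-$'' edges as cycle-closing edges over a tree on parts and count them with R5. Combined with $\#(+)\le 1.078(k-s)$, this should leave at least a small linear number of ``$?$'' non-tree edges, hence at least $30\varepsilon k$ edges satisfying $D$, since $\varepsilon<10^{-9}$.

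The main obstacle is this second case: bounding $\#(-)$ sharply enough, because the crude bound $\#(-)\le\#(+)$ from P4 alone loses roughly a factor $2$. I expect the argument to need P6 together with the hypothesis $|E_2|<10^{-7}k$, which forces non-tree cross edges to be concentrated on $W$. If a direct R5 bound on ``$-$'' edges fails, I would try bounding them via the vertex set $W$ (size about $2s$) and the forest structure of $E_1$ on the quotient by $\mathfrak{P}$.
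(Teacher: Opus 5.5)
\textbf{There is a genuine gap in the case of large $s$.} Your case $s\le\varepsilon k/2$ is correct: there R4 forces $|E_2|\ge 10^{-7}k$ outright. For larger $s$, however, you give no proof, and the route you sketch cannot close it. Write $\#D$ for the number of edges satisfying $D$. The inequality $\#D\ge\#(?)-|E(\mathcal{T_K})|$ is too lossy ever to work. Indeed $|E(\mathcal{K})|$ can be $1.5k$ and $\#(+)$ can be as large as $1.078(k-s)$, so even with $\#(-)=0$ the right-hand side is about $1.078s-0.578k$, which is negative for all $s<0.53k$.

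Sharpening the bound on ``$-$'' edges is a dead end. P4 is the only control you have on them, and it is all that is needed. What you should subtract is only the set $E_3$ of tree edges that are ``$?$'', and P6 is what controls $E_3$:
\begin{itemize}
\item Deleting $E_3$ splits $\mathcal{T_K}$ into $|E_3|+1$ classes, and by P6 only ``$?$'' edges join different classes.
\item All but $O(1/c)$ classes have size at most $ck$. Minimum degree $3$ and R5 then yield at least $1.5|E_3|-o(k)$ ``$?$'' edges leaving the small classes.
\item At least $0.5|E_3|-o(k)$ of these lie outside $\mathcal{T_K}$. They are cross edges by P3, so they satisfy $D$.
\end{itemize}
Hence $|E_3|\le 2\#D+o(k)$. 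Combined with R5 and P4 this gives $3\#D\ge|E(\mathcal{K})|-2\cdot1.078(k-s)-o(k)$, which is a positive linear quantity only once $s\ge 0.305k$.

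\textbf{The intermediate range needs a degree count you are missing.} For $\varepsilon k/2<s<0.305k$ the argument runs on the complement of $W$, not on $W$. Let $U=V(\mathcal{K})\setminus W$, so $|U|\ge k-2s$. No edge of $E_1$ touches $U$, so every edge at a vertex of $U$ either lies inside a part or belongs to $E_2$. Let $T_1,T_2$ be the numbers of in-part edges with two, respectively one, endpoint in $U$. Then $3|U|\le 2T_1+T_2+2|E_2|$. R5 applied to $\mathfrak{P}$ and to $\{U\cap V_i\}$ gives $T_1+T_2\le1.078(k-s)+o(k)$ and $T_1\le1.078|U|+o(k)$. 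Therefore
\[
2|E_2|\ge 1.922|U|-1.078(k-s)-o(k)\ge 0.844k-2.766s-o(k),
\]
which exceeds $2\cdot10^{-7}k$ for every $s<0.305k$. R4 only covers $|W|\le\varepsilon k$, so it cannot replace this count.

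With these two ingredients your case split goes through, using threshold $0.305k$ in place of $\varepsilon k/2$. The paper uses the same two ingredients in the opposite order. It assumes fewer than $30\varepsilon k$ edges satisfy $D$, bounds $|E_3|$ via P6, deduces $s<0.305k$ from P4 and R5, and then runs the degree count to get $|E_2|\ge10^{-7}k$.
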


\begin{figure}[!h]
    \centering
    \includegraphics[scale=0.2]{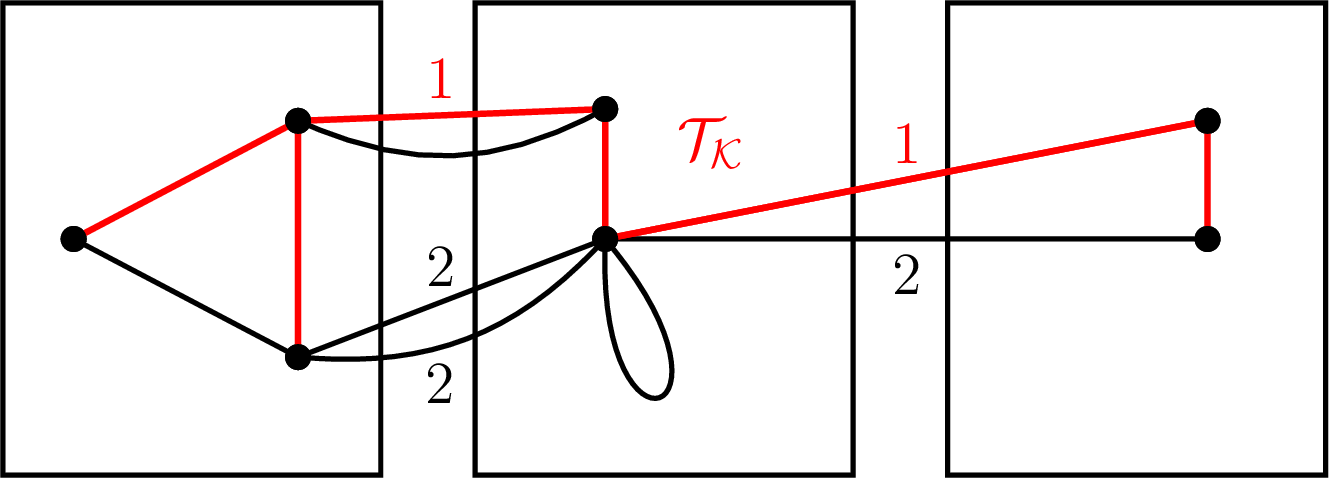}
    \caption{The figure illustrates the structure of $\mathcal{T_K}, E_1,$ and $E_2$ in $\mathcal{K}$ in the proof of Claim~\ref{cl:3_r5}.  In the figure, the boxes represent the equivalence classes of $\mathfrak{P}$ and the red edges make up the tree $\mathcal{T_K}$. The edges lying in $E_1$ and $E_2$ are labelled $1$ and $2$ respectively. 
} 
    \label{fig:6}
\end{figure}
    
    \begin{proof}

    Let us give a brief plan of the proof. We suppose that, for some choice of $\pi$, the number of edges from $E(\mathcal{K})$ satisfying~$D$ is less than $30 \varepsilon k$. We first show that the number of edges $uv \in E(\mathcal{T_K})$ that satisfy $|\varphi(\pi(u)) - \varphi(\pi(v))| \neq |\pi(u) - \pi(v)|$ is small. This allows us to conclude, by P4, that many edges lie inside the equivalence classes of $\mathfrak{P}$ (see Figure~\ref{fig:6}). Then, we conclude from R5 that $|\mathfrak{P}|$ is small and $|E_1| = |\mathfrak{P}| - 1$ is small too. Lastly, this allows us to deduce that $E_2$ is big enough.
    
    Suppose for a contradiction that Claim~\ref{cl:3_r5} does not hold. Recall that $\varepsilon \leqslant 10^{-9}$ and hence $30\varepsilon k \leqslant 10^{-7}k$. 
    Choose and fix $\pi:V(T_\mathcal{C})\to \mathbb{R}$ and $\varphi:\pi(V(\mathcal{T_C}))\to \mathbb{R}$ such that
    \begin{itemize}
        \item $\varphi$ is a proper $\pi(\mathcal{T_C})$-rigid map with respect to $\pi$;
        \item for $\varphi$ and $\pi$, $D$ holds for at most $30\varepsilon k < 10^{-7} k$ edges.
    \end{itemize}
    Our aim is to show that $|E_2| \geqslant 10^{-7}k$.

    Let us define
    $$E_3 :=\{uv \in E(\mathcal{T_K}): |\varphi(\pi(u)) - \varphi(\pi(v))| \neq |\pi(u) - \pi(v)|\}.$$
    First, we claim that, for large $k>0$, 
    \begin{equation}
        |E_3| \leqslant 4 \cdot 10^{-7}\cdot k.
        \label{eq:3.3}
    \end{equation}
    Suppose for a contradiction that $|E_3| > 4 \cdot 10^{-7}\cdot k$. Recall that $\varphi$ is proper and consider the partition $\mathfrak{Q}$ described in P6. Namely, $\mathfrak{Q}$ is the natural partition of $\mathcal{T_K}$ into connected components after removing all edges $uv \in E(\mathcal{T_K})$ satisfying $|\varphi(\pi(u))- \varphi(\pi(v))| \neq |\pi(v)-\pi(u)|$. Notice that, by P6, every edge $uv \in E(\mathcal{K})$ with $u$ and $v$ from different classes of $\mathfrak{Q}$ satisfies $|\varphi(\pi(u)) - \varphi(\pi(v))| \neq |\pi(u) - \pi(v)|$. Note that every edge that we delete from $\mathcal{T_K}$ when constructing the partition $\mathfrak{Q}$ increases the number of classes in $\mathfrak{Q}$ by 1. So,  $|\mathfrak{Q}| = |E_3|+1$. Note that $\mathfrak{Q}$ contains at most $c^{-1} = O(1)$ classes of size at least $ck$. Let $\mathfrak{Q}' \subseteq\mathfrak{Q}$ be the subset of classes that contain at most $ck$ vertices, hence $|\mathfrak{Q}'| \geqslant |E_3| - O(1)$. Due to R5 applied to $\mathfrak{Q'}$,
    \begin{equation}
        \sum_{U \in \mathfrak{Q}'}|E(\mathcal{K}|_{U})| \leqslant 1.078\left(\sum_{U \in \mathfrak{Q}'}|U| - |\mathfrak{Q}'|\right) + o(k).
        \label{eq:3.33}
    \end{equation}
    Let us define
    $$E_4 :=\{uv \in E(\mathcal{K}): \exists U \in \mathfrak{Q}'\text{ that }u \in U,\ v \notin U \}.$$
    So, since every vertex has degree at least $3$,~\eqref{eq:3.33} implies
    \begin{align*}
        |E_4| &\geqslant \frac{3}{2}\sum_{U \in \mathfrak{Q}'}|U| - \sum_{U \in \mathfrak{Q}'}|E(\mathcal{K}|_{U})| \\
        &\geqslant \frac{(3 - 2 \cdot 1.078)}2\left(\sum_{U \in \mathfrak{Q}'}|U|\right) + 1.078|\mathfrak{Q}'| + o(k) \\
        &\geqslant \frac{(3 - 2 \cdot 1.078)}2|\mathfrak{Q}'|+ 1.078|\mathfrak{Q}'| + o(k)\\
        &\geqslant \frac{(3 - 2 \cdot 1.078)}2|E_3| + 1.078|E_3| + o(k)\\
        &\geqslant \frac{(3 - 2 \cdot 1.078)}2 \cdot 4 \cdot 10^{-7} \cdot k + 1.078|E_3| + o(k).
    \end{align*}
    Let us show that $D$ holds for every $uv \in E_4\setminus E_3$.
    Recall that $\mathfrak{Q}' \subseteq \mathfrak{Q}$ and, so, every $uv \in E_4$ satisfies $|\varphi(\pi(u)) - \varphi(\pi(v))| \neq |\pi(u) - \pi(v)|$ by P6. By Remark~\ref{rm:some_every}, it remains to show that $u\nsim v$ and $uv \notin E(\mathcal{T_K})$. The former is a corollary of $P3$ and the latter holds since $uv \notin E_3$.
    
    Since $D$ holds for every $uv \in E_4\setminus E_3$, we get that the number of edges satisfying $D$ is at least
    $|E_4| - |E_3| \geqslant \frac{(3 - 2 \cdot 1.078)}2 \cdot 4 \cdot 10^{-7}k + o(k) > 10^{-7}\cdot k$  --- a contradiction with assumption that the number does not exceed $10^{-7}\cdot k$. This concludes the proof of~\eqref{eq:3.3}.

    Second, recall that, by the definition of $\varphi$ and $\pi$, $D$ holds for at most $10^{-7} k$ edges. Together with P3 and~\eqref{eq:3.3}, this gives us that
    \begin{equation}
        |\{uv \in E(\mathcal{K}): |\varphi(\pi(u)) - \varphi(\pi(v))| \neq |\pi(u) - \pi(v)|\}| \leqslant 5 \cdot 10^{-7}k.
        \label{eq:3.4}
    \end{equation}
    Recall that $V_1 \sqcup \ldots \sqcup V_s = V(\mathcal{K})$ is the partition $\mathfrak{P}$ and that $|E(\mathcal{K})| \geqslant 1.5 k$. So, by P4 and~\eqref{eq:3.4},
    \begin{equation}
        \sum_{i \in [s]}|E(\mathcal{K}|_{V_i})| \stackrel{P3}=|\{uv \in E(\mathcal{K}): \varphi(\pi(u)) - \varphi(\pi(v)) = \pi(u) - \pi(v)\}| \geqslant 0.5(1.5- 5\cdot 10^{-7})k.
        \label{eq:3.6}
    \end{equation}
    Combining R5 with \eqref{eq:3.6}, we get
    $$1.078(k-s) +o(k)\geqslant (0.75- 2.5\cdot 10^{-7})k,$$
    and hence
    \begin{equation}
        s < 0.305k,
        \label{eq:3.7}
    \end{equation}
    for large enough $k$. From the definition of $E_1$ from Claim~\ref{cl:3_r5} and the fact that $\mathcal{T_K}$ is a tree, we get $|E_1| \leq |\mathfrak{P}| -1 = s - 1$. Hence, the number of vertices covered by $E_1$ is at most $2s$. Let $U$ denote the set of vertices $u \in V(\mathcal{K})$ such that $u$ is not incident to any edge of $E_1$. So, 
    \begin{equation}
        |U| \geqslant k -2s.
        \label{eq:3.9}
    \end{equation}

    \begin{figure}[!h]
    \centering
    \includegraphics[scale=0.2]{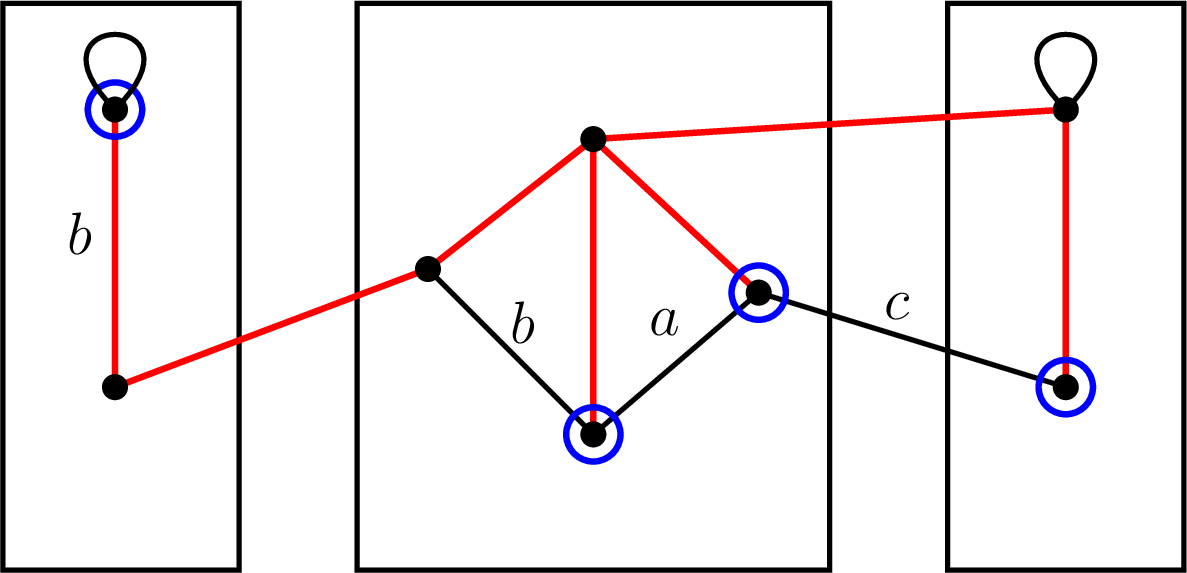}
    \caption{The figure illustrates the  four types of edges, $U$, and $\mathcal{T_K}$ in $\mathcal{K}$. In the figure, the boxes represent the equivalence classes of $\mathfrak{P}$ and the red edges make up the tree $\mathcal{T_K}$. The vertices of $U$ are encircled in blue. The edges of the first, second, and third type have labels $a$, $b$, and $c$ respectively in the figure. The edges of the forth type do not exist.
}
    \label{fig:7}
\end{figure}

    The edges incident to at least one vertex of $U$ are our candidates to be in $E_2$. There are four types of edges $uw \in E(\mathcal{K})$ that are incident to a vertex $u \in U$ (see Figure~\ref{fig:7}). The first type: $w\sim u$ and $w\in U$. The second type: $w\sim u$ and $w \notin U$. The third type: $u \nsim w$ and $uw \notin E(\mathcal{T_K})$. The edges of the third type make up the set $E_2$. The fourth type: $u \nsim w$ and $uw \in E(\mathcal{T_K})$. There are no edges of the fourth type, since $u \in U$ and $uw \in E_1$ --- that contradicts the definition of $U$. Let $T_1$ and $T_2$ be the number of edges of the first and the second type respectively.

    Note that the sum of the degrees of the vertices $u$ from $U$ is at least $3|U|$. Also, note that every edge of the second type counts once towards the sum, and every other edge connected to some $u \in U$ counts at most twice towards the sum. So, we get

    \begin{equation}
      3|U| \leqslant 2T_1 + T_2 + 2|E_2|.
      \label{eq:3.99}
    \end{equation}

    Next, $T_1 + T_2$ does not exceed the number of edges $uw$ with $u\sim w$. So,

    \begin{equation}
      T_1 + T_2 \leqslant|E(\mathcal{K}|_{V_1}) \sqcup \ldots \sqcup E(\mathcal{K}|_{V_s})| \stackrel{R5}\leqslant 1.078(k -s) +o(k).
      \label{eq:3.999}
    \end{equation}

    Finally, every edge that counts towards $T_1$ is contained in $E(\mathcal{K}|_{U \cap V_i})$, for some $i \in [s]$. So,
    \begin{equation}
      T_1 \stackrel{R5}\leqslant 1.078|U| + o(k).
      \label{eq:3.9999}
    \end{equation}

    Altogether, equations~\eqref{eq:3.9},~\eqref{eq:3.99},~\eqref{eq:3.999} and~\eqref{eq:3.9999} give us $$2|E_2| \geqslant  3(k -2s) - 1.078(k - s + o(k)) - (1.078(k-2s) + o(k)) = 0.844 k- 2.766 s +o(k),$$ which exceeds $2 \cdot 10^{-7}k$, as $n \to \infty$, due to~\eqref{eq:3.7}.

    \end{proof}

\begin{claim}
    
    For the random $\pi|_{V(\mathcal{T_C})}$ the following event happens with probability $1 - o(k^{-10\varepsilon k})$. 
    
    For any choice of $\pi|_{V(\mathcal{C})\setminus V(\mathcal{T_C})}$, at least $30\varepsilon k$ edges $uv \in E(\mathcal{K})$ satisfy $D$.

    \label{cl:3_r4_r5}
\end{claim}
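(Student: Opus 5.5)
The plan is to combine Claim~\ref{cl:3_r5} with a sequential exposure argument. If $|E_2| < 10^{-7}k$, Claim~\ref{cl:3_r5} already says that for every choice of $\pi$ at least $30\varepsilon k$ edges satisfy $D$, so there is nothing to prove. Hence we assume $|E_2|\geqslant 10^{-7}k$.

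For an edge $uv\in E_2$ we have $u\nsim v$ and $uv\notin E(\mathcal{T_K})$. By P3 and Remark~\ref{rm:some_every}, $D(uv)$ can fail only if the ``minus'' relation $\varphi(\pi(u))-\varphi(\pi(v))=-(\pi(u)-\pi(v))$ holds. So it suffices to show that, except with probability $o(k^{-10\varepsilon k})$, the minus relation holds for fewer than $|E_2|-30\varepsilon k$ of a suitable subfamily of $E_2$. Note that $D(uv)$ only depends on $\pi|_{V(\mathcal{T_C})}$, since $u,v\in V(\mathcal{K})\subseteq V(\mathcal{T_C})$; this is why the conclusion holds for any choice of $\pi$ outside $V(\mathcal{T_C})$.

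First, we extract from $E_2$ a large family $E_2'$ of edges $u_jv_j$ whose distinguished endpoints $u_j$ (the endpoints not incident to $E_1$) are distinct. We also want each $u_j$ to be distinct from all $v_{j'}$ and not adjacent in $\mathcal{T_K}$ to the other chosen $u$'s. By R4, any set of at most $\varepsilon k$ vertices covers at most $10^{-9}k$ edges, so the edges of $E_2$ cannot be concentrated on few vertices. Combined with R1 ($\Delta(\mathcal{K})\leqslant k^{\varepsilon}$) and a greedy selection, this gives $|E_2'|\geqslant \delta k$ for some absolute $\delta>0$; a crude estimate gives $\delta\approx 10^{-8}$ after losing factors of the degree. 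Let $S=\{u_j\}$.

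Next, we fix $\pi$ on $V(\mathcal{T_C})\setminus S$ and then reveal $\pi(u_j)$ one at a time. The key point is that $u_j$ is not incident to $E_1$, so every $\mathcal{T_K}$-edge at $u_j$ stays inside its class $V_i$, where $\varphi$ acts as a translation by P2. Fixing $\pi$ everywhere else and $\sigma$ on $\mathcal{T_C}$, the quantity $\varphi(\pi(u_j))-\varphi(\pi(v_j))+\pi(u_j)-\pi(v_j)$ should be an affine function of $\pi(u_j)$ alone, and it should not be identically zero. Its slope should be $2$, coming from $\varphi(\pi(u_j))=\pi(u_j)+r_i$, where $r_i$ is determined by the rest of the class and by the other end of $u_j$'s tree paths. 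Hence the minus relation pins $\pi(u_j)$ to $O(1)$ values. Conditioned on the history, each occurs with probability $O(1/n)$, since $|V(\mathcal{C})|\leqslant \varepsilon k\ln k = o(n)$ values are used. Having $|E_2'|-30\varepsilon k\geqslant \delta k/2$ failures then has probability at most $2^{|E_2'|}(C/n)^{\delta k/2}$. By R2 ($\ln k=(1-o(1))\ln n$) this is $o(k^{-10\varepsilon k})$, because $\varepsilon<10^{-9}\ll\delta$. Averaging over the fixed part of $\pi$ gives the claim.

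The main obstacle is the genericity step: checking that, with everything else fixed, the relation really is a nondegenerate linear equation in $\pi(u_j)$. Changing $\pi(u_j)$ moves $u_j$ and possibly, through the signs of $\sigma$ on the 2-paths at $u_j$, the images of other tree vertices. These may include $v_j$ or the anchors of the class translation $r_i$. I would handle this by choosing $u_j$ and the root of $\mathcal{T_C}$ so that the effect on $v_j$ is a translation with slope $\pm1$, which differs from the slope $+1$ of $\varphi(\pi(u_j))$ relative to the class. If that fails, I would instead reveal the first inner vertex of a tree 2-path at $u_j$, as in the proof of Claim~\ref{cl:3.4}, and use the injectivity of Remark~\ref{rm:sigma}.
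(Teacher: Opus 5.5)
Your reduction to $|E_2|\geqslant 10^{-7}k$ via Claim~\ref{cl:3_r5} matches the paper, and so does your use of P3 to turn $\lnot D(uv)$ for $uv\in E_2$ into the ``minus'' relation. The gap is at the step you yourself call the main obstacle: you never show that the minus relation pins $\pi(u_j)$ to a point determined by earlier data. Your two remedies (choosing the root so that slopes do not cancel, or revealing an inner vertex as in Claim~\ref{cl:3.4}) are not carried out, and the first cannot work in general. For example, if every edge of $\mathcal{T_C}$ at $u_j$ has $\sigma=-1$, then with all other positions frozen the quantity $\varphi(\pi(u_j))-\varphi(\pi(v_j))+\pi(u_j)-\pi(v_j)$ does not depend on $\pi(u_j)$, for any root. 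What actually constrains things is P2, and the missing idea is the observation~\eqref{eq:data_uniqely_determines}:
\begin{itemize}
\item Fix $\pi$ only on the vertices of the 2-paths $\mathcal{P}_e$, $e\in E_1$, and set $\delta(w):=\varphi(\pi(w))-\pi(w)$ for $w\in V(\mathcal{K})$.
\item For every extension of $\pi$ to $V(\mathcal{T_C})$ and every proper $\varphi$, P2 makes $\delta$ constant on each class of $\mathfrak{P}$.
\item Across each edge of $\mathcal{T_K}$ joining two classes (exactly the edges of $E_1$), the jump of $\delta$ is determined by $\sigma$ and the already fixed positions on that 2-path.
\item Since $\mathcal{T_K}$ is spanning, every difference $\delta(u)-\delta(v)$ is fixed data. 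So for $uv\in E_2$ the minus relation reads $\pi(u)=\pi(v)-\tfrac{\delta(u)-\delta(v)}{2}$, a single point once $\pi(v)$ is known.
\end{itemize}
No genericity question remains. Your extra selection conditions ($u_j$ pairwise non-adjacent in $\mathcal{T_K}$, $u_j\neq v_{j'}$) are also unnecessary. It suffices to order the remaining vertices arbitrarily and let $W$ be the vertices $u\in U$ with an $E_2$-neighbour $v(u)$ earlier in the order. Then $|W|\geqslant 100\varepsilon k$ by R4, since every edge of $E_2$ is incident to $W$.

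Two quantitative steps are also unjustified.
\begin{itemize}
\item \emph{No absolute $\delta\approx 10^{-8}$.} R1 allows degrees up to $k^{\varepsilon}$, so losing degree factors in a greedy selection destroys linearity. Moreover, R1 and R4 do not prevent the $U$-endpoints of all edges of $E_2$ from lying in a set of only about $100\varepsilon k$ vertices. Then $|E_2'|$ may be of order $\varepsilon k$, and your estimate $|E_2'|-30\varepsilon k\geqslant\delta k/2$ (with $\varepsilon\ll\delta$) has no basis. The count must be done at scale $\varepsilon k$, as in the paper: having at least $20\varepsilon k$ failures among the first $50\varepsilon k$ vertices of $W$ has probability at most $2^{50\varepsilon k}(50\varepsilon k)^{-20\varepsilon k}=o(k^{-10\varepsilon k})$.
\item \emph{The per-step bound $O(1/n)$.} This rests on $\varepsilon k\ln k=o(n)$, which fails when $k=\Theta(n)$; the hypotheses only give $|V(\mathcal{C})|\leqslant n$. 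The paper instead uses that while one of the first $50\varepsilon k$ vertices of $W$ is being placed, at least $50\varepsilon k$ vertices of $W$ are still unplaced. Hence at least $50\varepsilon k$ points of $V$ are free at each such step.
\end{itemize}
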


    \begin{proof}

    Let us give a brief plan of the proof. Define $E_1$ and $E_2$ as in Claim~\ref{cl:3_r5}, notice that the sets $E_1$ and $E_2$ do not depend on $\pi$. By Claim~\ref{cl:3_r5}, we can assume that $E_2$ is large. We fix $\pi$ for every vertex of $V(\mathcal{T_C})$ one by one in some order. Let $uv \in E_2$ with $u$ after $v$ in the order. We choose the order in a way that the moment before we fix $u$, the random $\pi(u)$ has at most one solution for $|\varphi(\pi(u)) - \varphi(\pi(v))| = |\pi(u) - \pi(v)|$. Choosing one such edge for every vertex $u$, when possible, we make up the set $E_5$. We then show that $|E_5|$ is large. Next, we conclude that the probability that $D(uv)$ does not hold is small, conditioned on the whole history of the preceding process. This allows us to conclude that many edges satisfy $D$.
    
    Let $E_1$ and $E_2$ be as in Claim~\ref{cl:3_r5}. Note that if $|E_2|< 10^{-7}k$ then Claim~\ref{cl:3_r5} completes the proof deterministically. So, let $|E_2| \geqslant 10^{-7}k$.
    
    Recall that, for $e\in E(\mathcal{K})$, $\mathcal{P}_e$ is the corresponding 2-path in $\mathcal{C}$.

    We first fix the random positions $\pi$ of a carefully chosen subset of $V(\mathcal{T_C})$. Then, we describe the properties that this choice gives us. Lastly, we sequentially fix the random positions of remaining vertices from $V(\mathcal{T_C})$.
    
    For every $e \in E_1$, fix $\pi$ for every vertex $v$ lying on the path $\mathcal{P}_e$. Let $U \subseteq V(\mathcal{K})$ denote the set of $v\in V(\mathcal{K})$ such that we did not fix $\pi(v)$. Due to the definition of $U$ and of sets $E_1, E_2$, the following holds.
    \begin{equation}
        E_2 \subseteq \{uv \in E(\mathcal{K}): u  \in U\}.
        \label{eq:3.18}
    \end{equation}
    In fact, the set $U$ coincides with $U$ from the proof of Claim~\ref{cl:3_r5}.
    
    Notice that we fixed $\pi$ just on a subset of ${\pi(V(\mathcal{T_C}))}$. Let us now introduce the following notation: for an arbitrary extension of $\pi$ to $V(\mathcal{T_C})$ and arbitrary proper $\pi(\mathcal{T_C})$-rigid map $\varphi$ let us denote
    $$\delta(u) := \varphi(\pi(u)) - \pi(u), \textnormal{ for every }u\in V(\mathcal{K}).$$
    Notice that it is not instantly clear whether $\delta$ is defined, i.e. whether there exists a proper $\pi(\mathcal{T_C})$-rigid map. However, throughout the proof we can assume that $\delta$ is defined: if the final fixed positions $\pi|_{V(\mathcal{T_C})}$ do not have a proper $\pi(\mathcal{T_C})$-rigid map, then the conclusion of Claim~\ref{cl:3_r4_r5} follows deterministically. Indeed, we simply need to show that there are at least $30\varepsilon k$ edges $uv \in E(\mathcal{K})\setminus E(\mathcal{T_K})$ such that $u \nsim v$, which, in turn, is an instant corollary of R5 and P1 as, for large $n$, 
    \begin{equation*}
        |E(\mathcal{K})| - |\{uv \in E(\mathcal{K}): u\sim v\}| - |E(\mathcal{T_K})|  \stackrel{P1, R5}\geq(1.5k - 1.078)k - o(k) \geq 30\varepsilon k.
    \end{equation*}
        
    We claim that, regardless of the choices of both the extension of $\pi$ to $V(\mathcal{T_C})$ and  the proper $\pi(\mathcal{T_C})$-rigid $\varphi$, the value of $\delta(u)-\delta(v)$ is already determined for every $u, v \in V(\mathcal{K})$:
    \begin{equation}
        \textnormal{the data fixed so far uniquely determines }\delta(u) - \delta(v)\textnormal{, for every }u, v\in V(\mathcal{K}).\label{eq:data_uniqely_determines}
    \end{equation}
    Note that every $u,v \in V(\mathcal{K})$ are connected by a path in $\mathcal{T_K}$. So, it is clearly sufficient for us to show that $\delta(u) - \delta(v)$ is determined for the case $uv \in E(\mathcal{T_K})$. If $u\sim v$, then, by P2, 
    $$\delta(u) - \delta(v) = (\varphi(\pi(u)) - \varphi(\pi(v))) - (\pi(u) - \pi(v)) = 0.$$
    If $u\nsim v$, then $uv \in E_1$ and so $\delta(u) - \delta(v)$ is determined by $\sigma$, since we fixed $\pi$ for the whole path $\mathcal{P}_{uv} \subseteq\mathcal{C}$.

    Now, consider an arbitrary order on the remaining vertices of $V(\mathcal{T_C})$. Below, we start a process that sequentially fixes $\pi$ for the remaining unfixed vertices $v \in V(\mathcal{T_C})$ in this order. 

    Let $W \subseteq U$ be the subset of vertices $u \in U$ such that there is a vertex $v := v(u)$ where $uv \in E_2$ and $v$ has a smaller order than $u$. Let 
    $$E_5:=\{uv: u  \in W,\ v:=v(u) \}.$$
    We will show that 
    $$|E_5| \geq 100 \varepsilon k.$$
    As $|E_5| = |W|$, we shall show $|W| \geq 100 \varepsilon k$. Instead, let us prove that every edge from $E_2$ is incident to at least one vertex of $W$, since then $W$ has at least $|E_2| \geqslant 10^{-7}k$ incident edges in total and hence, by R4,
    $|W| \geqslant 100 \varepsilon k$. Consider $uv \in E_2$. By~\eqref{eq:3.18}, wlog $u \in U$. Also, if $v \in U$ wlog we can assume that $v$ has a smaller order than $u$. Hence, regardless of whether $v \in U$, $\pi(u)$ is fixed after $\pi(v)$. Hence, $uv$ is the edge that proves $u \in W$, as desired.
    
    Below, for every edge $uv \in E_5$, we show that the negation of $D(uv)$ implies the equation $\pi(u) = \pi(v) - \frac{\delta(u)-\delta(v)}{2}$. Then, we bound from above the probability that the equation holds conditioned on any values of the same equation for the previous in $E_5$. This will allow us to conclude the desired bound on probability in Claim~\ref{cl:3_r4_r5}.
    
    Consider the time step when we fix $\pi(u)$ for $u \in W$ and let $v:=v(u)$. Note that $u\nsim v$ and $ uv \in E(\mathcal{K})\setminus E(\mathcal{T_K})$, by the definition of $E_2$. So, the negation of $D(uv)$ implies that there exists a proper $\pi(\mathcal{T_C})$-rigid map $\varphi$ satisfying
    $$|\varphi(\pi(u)) - \varphi(\pi(v))| = |\pi(u) - \pi(v)|.$$ As $u \nsim v$ and $uv \in E(\mathcal{K})$, P3 implies
    $$
        \varphi(\pi(u)) - \varphi(\pi(v)) = -(\pi(u) - \pi(v)).
        $$
    
    This is equivalent to
    \begin{equation}
        \pi(u) = \pi(v) - \frac{\delta(u)-\delta(v)}{2} \stackrel{\eqref{eq:data_uniqely_determines}}= {\textnormal{const}},
    \label{eq:3.21}    
    \end{equation} 
    as $\pi(v)$ is already determined. 
    
    Recall that $|W| \geq 100\varepsilon k$. Let us call a vertex from $W$ {\it early} if it is among the first $50\varepsilon k$ vertices from $W$ in the order. At every step of the process, when we fix the position $\pi$ of an early vertex $w \in W$, the number of remaining unoccupied points of $V$ is always at least $50\varepsilon k$. So, at every step, the probability that $\pi(w)$ hits any particular vertex is at most $1/50\varepsilon k$. So, for every early vertex $w \in W$, we defined the equation~\eqref{eq:3.21}, which holds with probability at most $1/50\varepsilon k$. Furthermore, for every early vertex $w \in W$, conditioned on any values of $\pi(w')$ for the previous vertices $w' \in W$ in the order, the probability  that~\eqref{eq:3.21} holds for $w$ is at most $1/50\varepsilon k$. As a consequence, the probability that among the first $50\varepsilon k$ of the edges $E_5$ there are at least $20\varepsilon k$ of edges that do not satisfy $D$ is at most $2^{50\varepsilon k}(50\varepsilon k)^{-20\varepsilon k} \stackrel{R2}= o(k^{-10\varepsilon k})$,
    so we have at least $30\varepsilon k$ edges satisfying $D$ with probability $1 - o(k^{-10\varepsilon k})$, as desired.

    \end{proof}

    \begin{proof}[Proof of Lemma~\ref{lm:A_formal}]
        Let us show that probability that there exists an $\pi(\mathcal{C})$-rigid map with proper restriction to $\pi(\mathcal{T_C})$ is at most $o(k^{-5\varepsilon k})$. Fix $\pi|_{V(\mathcal{T_C})}$ satisfying the conclusion of Claim~\ref{cl:3_r4_r5}. By R3, there are $u_1v_1, \ldots, u_{29\varepsilon k}v_{29\varepsilon k} \in E(\mathcal{K})$ edges that
        \begin{itemize}
            \item satisfy $D$, Definition~\ref{df:3.};
            \item the corresponding $2$-path in $\mathcal{C}$ has order less than $\ln n /100$.
        \end{itemize}
    
    Note that the remaining probability space is described by a uniformly random injection $$\pi|_{V(\mathcal{C})\setminus V(\mathcal{ T_C})}:V(\mathcal{C})\setminus V(\mathcal{T_C}) \to V\setminus\pi(V(\mathcal{T_C})).$$

    Let us explain the way we can apply Claim~\ref{cl:3.4} to the paths $\mathcal{P}_{uv}$, for $uv \in \{u_1v_1, \ldots, u_{29\varepsilon k}v_{29\varepsilon k}\}$, independently from each other.

    Note that we did not fix the position $\pi$ of the inner vertices of $\mathcal{P}_{uv}$. 
    Also, note that we fixed $\sigma$ and $\pi|_{V(\mathcal{T_C})}$. So, for every proper $\pi(\mathcal{T_C})$-rigid map $\varphi$, we know the exact distance $|\varphi(\pi(v)) - \varphi(\pi(u))|$, which in turn does not equal $|\pi(v) - \pi(u)|$ due to $D(uv)$, Definition~\ref{df:3.}. In particular, if for some $uv \in \{u_1v_1, \ldots, u_{29\varepsilon k}v_{29\varepsilon k}\}$ the path $\mathcal{P}_{uv}$ does not contain inner vertices then $uv \in E(\mathcal{C})$ and $|\varphi(\pi(u)) - \varphi(\pi(v))| \neq |\pi(u) - \pi(v)|$ contradicts to the event in~\eqref{eq:3.5} deterministically. So, we can assume that, for every $uv \in \{u_1v_1, \ldots, u_{29\varepsilon k}v_{29\varepsilon k}\}$, the path $\mathcal{P}_{uv}$ contains at least one inner vertex. 
    Hence, as we sequentially fix the positions $\pi$ of the paths $\mathcal{P}_{u_1v_1}, \ldots, \mathcal{P}_{u_{29\varepsilon k}v_{29\varepsilon k}}$, the moment we do that for the first $28\varepsilon k$ paths there are at least $\varepsilon k$ vertices unoccupied  by $\pi$. Thus, Claim~\ref{cl:3.4} is applicable and gives us an upper bound of $(\varepsilon k)^{-1/2}$ on the probability of the event described in~\eqref{eq:3.5} for each of them independent of the previous paths. So, multiplying the bound over the paths
    we get an altogether upper bound of $(\varepsilon k)^{-28\varepsilon k/2}$, which implies~\eqref{eq:3.5}.
    \end{proof}

\section{Describing the reconstructible subset}
\label{sc:4}

In this section we define a property $D$ over vertices of $V(\mathcal{K})$ and then state and prove in Lemma~\ref{lm:2} that the set of the vertices satisfying $D$ is reconstructible whp. Let us start with the necessary definitions.

Recall that a \emph{2-path} is a path in which all internal vertices have degree $2$. Also, recall that for a vertex subset $S \subseteq V(\mathcal{K})$, $N_{\mathcal{K}}(S)$ denotes the neighbourhood set of $S$.

Let us define a quantity $\mathbf{E}(S)$. Since edges represent known distances between vertices, our intuition for this quantity is that $\mathbf{E}(S)$, in a certain sense, bounds the number of $\pi(S)$-rigid maps.

\begin{definition}[$\mathbf{E}(S)$]
    Let $\mathcal{C}$ be a connected graph with minimum degree $2$, and let $\mathcal{K}$ be its kernel. Let $S \subseteq V(\mathcal{K})$ induce a connected multigraph. Define $\mathbf{E}(S)$ to be the number of edges $e \in E(\mathcal{C})$ such that $e$ lies on a 2-path in $\mathcal{C}$ that has at least one end-vertex from $S$.
    \label{df:E}
\end{definition}

\begin{definition}[Event D]
    Let $\beta >0$, let $\mathcal{C}$ be a connected graph with minimum degree $2$, and let $\mathcal{K}$ be its kernel. For $v \in V(\mathcal{C})$, let $D:= D_{\beta}(v)$ denote the event that
    \begin{itemize}
        \item $v \in V(\mathcal{K})$,
        \item for all $S \subseteq V(\mathcal{K})$ inducing a connected subgraph in $\mathcal{K}$ with $v \in S$ and $|S| \leq (1- \beta)|V(\mathcal{K})|$ it holds that:
            $$\mathbf{E}(S) \leqslant \beta|S| \ln n \quad\textnormal{ and }\quad |N_{\mathcal{K}}(S)| \geqslant 3.$$
    \end{itemize} 

    \label{df:A}
\end{definition}

When it is clear from the context, we will omit writing $\beta$ for the event $D$.

Section~\ref{sc:4} is dedicated to the following lemma.

\begin{lemma}
There exists $\beta>0$ such that, for $\lambda$ and $\mathcal{G} \sim \mathcal{U}$ from Claim~\ref{cl:contiguous}, if $\lambda$ satisfies $\lambda \geq 1+\frac{1}{\beta\ln n}$ for every large enough $n$, the set $\{v \in V(\mathcal{G}): D_{\beta}(v)\textnormal{ holds}\}$ is reconstructible whp.
\label{lm:2}
\end{lemma}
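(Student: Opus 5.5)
Following the toy example (Proposition~\ref{pr:toy_main}), I would reduce Lemma~\ref{lm:2} to a union bound over ``bad'' connected sets, combined with Lemma~\ref{lm:1}. First pass to the model $\mathcal{U}$ of Claim~\ref{cl:contiguous}, expose $\mathcal{C}$, and assume it satisfies whp the properties from Subsection~\ref{sc:2.4} (S1--S4, Claim~\ref{cl:kernel1}, the expansion Claim~\ref{cl:expansion}) together with the conclusion $A(\pi(\mathcal{C}),c)$ of Lemma~\ref{lm:1}. Only the random injection $\pi$ remains. Suppose the set $R:=\{v: D_\beta(v)\}$ is not reconstructible, witnessed by a rigid map $\varphi$. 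By Lemma~\ref{lm:1} there is $W\subseteq V(\mathcal{K})$ with $|W|\ge c k$ on which $\varphi$ is an isometry; after composing with an isometry, $\varphi=\mathrm{id}$ on $\pi(W)$.

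Since $R$ is not reconstructed, some $v\in R$ has $\varphi(\pi(v))\ne\pi(v)$. Let $S\ni v$ be the component of $v$ in $\mathcal{K}$ restricted to kernel vertices moved by $\varphi$. Choosing $\beta<c$ gives $S\cap W=\emptyset$, hence $|S|\le(1-c)k\le(1-\beta)k$. As $v\in R$, the event $D_\beta(v)$ yields $\mathbf{E}(S)\le\beta|S|\ln n$ and $|N_{\mathcal{K}}(S)|\ge 3$. Mimicking B3 in the toy proof, I would redefine $\varphi$ to be the identity outside $S$ and the interiors of the 2-paths touching $S$. This needs care, because 2-paths from $N(S)$ to other unmoved vertices must still be rigid. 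The cleanest route is to keep the event as ``there is a rigid assignment on $S$ and its incident 2-paths, fixing all of $N_{\mathcal{K}}(S)$ pointwise and moving every vertex of $S$''. Call this the event $B(S)$.

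To bound $\mathbb{P}(B(S))$, I would pick $u^*\in N(S)$ and a spanning tree of $S\cup\{u^*\}$ together with its 2-paths, and fix $\pi$ on these. The sign choices number at most $2^{\mathbf{E}(S)+1}\le n^{\beta|S|\ln 2}$, up to a few extra factors. For each further neighbour $w\in N(S)\setminus\{u^*\}$ there is an edge $e$ of $\mathcal{K}$ from $S$ to $w$ whose 2-path joins a moved vertex to the fixed vertex $\pi(w)$. If the path is a single edge, $\pi(w)$ must equal a midpoint, an event of probability $O(1/n)$. If it is longer, the endpoint distances are wrong, so part 1 of Claim~\ref{cl:3.4} gives probability $\le n^{-1/2}$ once the path is short (length $\le\ln n/50$). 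Each unexposed path in the spanning subgraph of $S$ that is not in the tree contributes the same way via Claim~\ref{cl:3.4}.

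The main obstacle is making the probability beat both the number of connected sets $S$ and the sign entropy $n^{O(\beta|S|)}$. The number of connected $S$ of size $t$ containing $v$ is at most $(e\Delta)^{t}\le e^{o(t\ln n)}$ by Claim~\ref{cl:graph_bound} and S3. So I need roughly $n^{-\Omega(|S|)}$ from about $\Omega(|S|)$ short independent 2-paths leaving or inside $S$. These come from the expansion of Claim~\ref{cl:expansion} ($|N(S)|\ge\alpha|S|$, using $|S|\le(1-c)k$) for large $S$, and from $|N(S)|\ge3$ for small $S$. Two cases need extra work: paths longer than $\ln n/50$, where I would discard them using $\mathbf{E}(S)\le\beta|S|\ln n$ (at most $50\beta|S|$ such paths) before choosing $\beta\ll\alpha$; and the union over $v$, where $|N(S)|\ge3$ gives at least two independent factors of $n^{-1/2}$, which I would boost to $o(1/n)$ using the single-edge midpoint bound or by arguing that three boundary paths give $n^{-3/2}$. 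Summing over $t$ and $v$ then yields $o(1)$.
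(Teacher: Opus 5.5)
Your overall architecture matches the paper's. You reduce via Lemma~\ref{lm:1} to a connected moved set $S$ with $N_{\mathcal{K}}(S)$ pointwise fixed, pay $2^{\mathbf{E}(S)}\le n^{\beta|S|\ln 2}$ for signs, anchor at one vertex, and use expansion for large $S$. The gap is that the per-neighbour probability you extract is too weak when $|S|$ is bounded.

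The union over $S$ carries a factor of order $k$, already from the $k$ singletons, and $k=\Theta(n)$ for constant $\lambda$. For $S=\{x\}$ with $x$ of degree $3$ there are exactly three boundary 2-paths. One is consumed by the anchor (equivalently, by the free translation of $\varphi$ on $S$), so only two independent constraints remain. With the $n^{-1/2}$ bound of Claim~\ref{cl:3.4} for each, you get $k\cdot n^{-1}\cdot n^{O(\beta)}$, which does not tend to $0$. The same deficit persists for all $|S|$ up to about $3/\alpha$.

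Neither proposed repair closes it. The boundary 2-paths of a given $S$ need not be single edges. The claim that ``three boundary paths give $n^{-3/2}$'' counts the anchor path as a constraint, which it is not.

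The missing idea is to use the signs on the boundary 2-paths themselves. You are already paying for them, since those edges are counted in $\mathbf{E}(S)$.

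For the chosen 2-path $\mathcal{P}$ from $v\in N_{\mathcal{K}}(S)$ to $v'\in S$, let $u=u(\mathcal{P})$ be the vertex closest to $v$ that is incident to an edge with $\sigma=-1$. It exists because $\varphi$ fixes $\pi(v)$ but moves $\pi(v')$. By the choice of $u$, all edges between $u$ and $v$ have sign $+1$, so $\varphi(\pi(u))=\pi(u)$. The neighbour $w$ of $u$ towards $v'$ has $\sigma(uw)=-1$.

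Now fix $\pi$ on four pieces: $S$, the interiors of 2-paths inside $S$, each subpath from $w$ to $v'$, and one of the vertices $u(\mathcal{P})$ as anchor. This determines $\varphi$ on all fixed vertices. Then $\pi(u)=\tfrac12\bigl(\pi(w)+\varphi(\pi(w))\bigr)$ is forced for every non-anchor $u(\mathcal{P})$. That costs probability $O(1/n)$ per non-anchor neighbour, regardless of the length of $\mathcal{P}$.

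This gives $\Omega(n)^{-\max(2,\Omega(|S|))}$, which beats the factor $k\le n$. It also makes both Claim~\ref{cl:3.4} and the discarding of long paths unnecessary. Alternatively, you could salvage your route by taking from the proof of Claim~\ref{cl:3.4} the sharper bound $O(2^{L}/n)\le n^{-0.98}$ for a path of length $L\le\ln n/50$, so that two boundary paths suffice. The stated $1/\sqrt n$ bound does not.

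Two smaller points:
\begin{enumerate}
\item Non-tree 2-paths inside $S$ give no gain in general. Both ends may be shifted equally, and then a trivial extension exists.
\item Rather than conditioning on $A(\pi(\mathcal{C}),c)$ and then treating $\pi$ as still uniform, bound $\mathbb{P}(\lnot A)+\sum_S\mathbb{P}(B(S))$.
\end{enumerate}
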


The strength of this lemma is that, for any $\beta>0$, in a slightly more restricted context ($\lambda = 1+ \Omega_{\beta}\left(\frac{\ln \ln n}{\ln n}\right)$), whp $D_{\beta}(v)$ holds for almost all $v \in V(\mathcal{K})$ --- this will be shown in the next section.

Lemma~\ref{lm:2} will be an instant corollary of the following Lemma~\ref{lm:super2}. 
\begin{lemma}
     Let $V \subseteq \mathbb{R}$, $|V| = n$. Let a connected graph  $\mathcal{C}$ with minimum degree $2$ of size at most $n$, its kernel $\mathcal{K}$, and a uniformly random injection $\pi: V(\mathcal{C}) \to V$ satisfy the following properties: for some $\alpha > 0$, some $c>0$, and $k:= |V(\mathcal{K})|$
    \begin{itemize}
        \item[R1$'$] $\mathcal{K}$ is $(1-c, \alpha)$ vertex expander with $\ln k = (1-o(1)) \ln n$;
        \item[R2$'$] for $s\geqslant 1$, $|\{S \subseteq V(\mathcal{K}) \textnormal{  induces connected graph},\  |S| = s\}| \leqslant k\exp((s-1)\cdot o(\ln n))$;
        \item[R3$'$] Whp, the random injection $\pi$ satisfies the following property:
        for every $\pi(\mathcal{C})$-rigid map $\varphi:V \to \mathbb{R}$ there is a set $U \subseteq V(\mathcal{K})$ with $|U| \geqslant ck$  such that
        $$|u - v| = |\varphi(u) - \varphi(v)|\textnormal{ for every $u, v \in \pi(U)$.}$$
    \end{itemize}
    Then, for small enough $\beta:=\beta(\alpha, c) \in (0, c)$,  whp
$$\pi(\{v \in V(\mathcal{K})\mid D_{\beta}(v)\textnormal{ holds}\}),$$
    is reconstructible in $\pi(\mathcal{C})$.
    \label{lm:super2}
\end{lemma}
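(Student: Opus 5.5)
The proof will generalise the second half of the proof of Proposition~\ref{pr:toy_main}, with the 2-paths and the quantity $\mathbf{E}(S)$ replacing the plain edge count. Suppose that the set $\pi(\{v : D_\beta(v)\})$ is not reconstructible, and work on the whp event of R3$'$. Let $\varphi$ be a $\pi(\mathcal{C})$-rigid map witnessing non-reconstructibility. By R3$'$ there is a set $U\subseteq V(\mathcal{K})$ with $|U|\ge ck$ on which $\varphi$ is an isometry. After composing with an isometry we may assume that $\varphi$ is the identity on $\pi(U)$. Let
$$W := \{x\in V(\mathcal{K}): \varphi(\pi(x))=\pi(x)\}\supseteq U.$$
Some vertex $v$ satisfying $D_\beta(v)$ has $\varphi(\pi(v))\neq\pi(v)$; otherwise we would pass to $-\varphi$ or argue symmetrically, noting that distinct points of $W$ keep their distances.

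Starting from $v$, we take an inclusion-maximal set $S\ni v$ of kernel vertices that is connected in $\mathcal{K}$ and avoids $W$. Then $|S|\le k-ck\le(1-\beta)k$, because $\beta<c$. By the definition of $D_\beta(v)$ we therefore get $\mathbf{E}(S)\le\beta|S|\ln n$ and $|N_{\mathcal{K}}(S)|\ge 3$. As in the toy example, we may replace $\varphi$ by the identity outside $S$ and the 2-paths hanging on $S$, by maximality of $S$. Here there is one subtlety: the neighbours of $S$ need not lie in $W$ and are only joined to $S$ through 2-paths. I would handle this by letting $N$ be the set of kernel neighbours in $W$, and defining the bad event $B(S)$ exactly as B1--B4, with ``edges'' replaced by 2-paths. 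By R1$'$ we get $|N_{\mathcal{K}}(S)|\ge\max(\alpha|S|,3)$. I would use expansion only for $|S|\le(1-c)k$, which holds here, and use the bound $3$ from $D$ when $|S|$ is small.

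The union bound runs as follows. Fix a connected $S$ of size $s$ and a vertex $u^*\in N$. Expose $\pi$ on $S$, on $u^*$, and on the interiors of the 2-paths from $S$ to $u^*$ along a spanning tree. The number of sign patterns $\sigma$ on the tree formed by all edges of 2-paths touching $S$ is at most $2^{\mathbf{E}(S)}\le n^{\beta s\ln 2}$. Given $\sigma$, the map $\varphi$ is determined on those vertices. For each further neighbour $w\in N\setminus\{u^*\}$, joined by a 2-path $\mathcal{P}$ from some $u\in S$ with $\varphi(\pi(u))\neq\pi(u)$, the required condition is that $\varphi$ equals the identity at $w$ and the path is rigid in the prescribed way. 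This is exactly the setting of Claim~\ref{cl:3.4}, which gives probability at most $n^{-1/2}$ when the path is short, or at most $1/(\Omega(n))$ when $w$ is a direct neighbour (the midpoint argument). If $\mathcal{P}$ is long, $\mathbf{E}(S)$ already pays for it. So the probability is at most
$$n^{\beta s}\cdot n^{-\Omega(\max(\alpha s,2))/2}.$$
By R2$'$, the number of choices of $S$ is $k\exp(s\cdot o(\ln n))$. Summing over $s$, the choice $\beta\ll\alpha$ gives a total of $o(1)$ for large $s$. For small $s$ we need at least two independent short paths giving $n^{-1}$ together with the factor $k$. This is the delicate regime, and I would handle it by using the direct-edge midpoint bound $1/\Theta(n)$ where available, or by requiring three neighbours so that two remain besides $u^*$, each contributing $n^{-1/2}$.

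The main obstacle I expect is this small-$s$ regime, and more generally the need for independence between the path events. The 2-paths to different neighbours must be disjoint and not yet exposed; long paths break Claim~\ref{cl:3.4}; and the exponent $\Omega(1)$ per neighbour has to beat the factor $k$ coming from R2$'$. The length condition $s\le\ln n/50$ in Claim~\ref{cl:3.4} holds for each neighbour path, since its length is bounded by $\mathbf{E}(S)$. If the $n^{-1/2}$ bound is insufficient for $s=1$, I would sharpen Claim~\ref{cl:3.4} to $n^{-1+o(1)}$ for paths of length $o(\ln n)$, using that the last inner vertex must land at one of $2^{t}$ prescribed points.
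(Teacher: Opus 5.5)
Your reduction follows the paper. You take an unfixed $D_\beta$-vertex, let $S$ be its component in $\mathcal{K}-W$, and reset $\varphi$ to the identity away from $S$ and the 2-paths touching it (so the bad event must allow non-injective maps). You then union bound over connected $S$ via R2$'$ and over the $2^{\mathbf{E}(S)}\le n^{\beta s\ln 2}$ sign patterns, spend one neighbour as an anchor, and use $|N|\ge\max(3,\alpha s)$. By maximality of $S$, every $\mathcal{K}$-neighbour of $S$ does lie in $W$, so the ``subtlety'' you raise does not arise.

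The gap is the per-neighbour probability, which is the crux of the argument. With the $n^{-1/2}$ of Claim~\ref{cl:3.4}, the $s=1$ term is $k\cdot n^{\beta\ln 2}\cdot n^{-1}\not\to 0$, as you notice. Your repair does not hold as stated, for three reasons. A neighbour path is only bounded in length by $\mathbf{E}(S)\le\beta s\ln n$, not by $o(\ln n)$. For $s>1/(50\beta)$ a single path may even exceed $\ln n/50$, so your claim that the length hypothesis of Claim~\ref{cl:3.4} holds for every neighbour path is false. Claim~\ref{cl:3.4} also needs both endpoints exposed, and $\pi(w)$ is not.

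More fundamentally, Claim~\ref{cl:3.4} is the wrong tool here. It bounds the probability that \emph{some} sign pattern on the path is realisable. You have already fixed $\sigma$ on every edge of every 2-path touching $S$ --- that is what the factor $2^{\mathbf{E}(S)}$ paid for --- so you pay for the signs twice.

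The missing idea, which is what the paper does, is to use the fixed $\sigma$ directly. For the 2-path $\mathcal{P}$ from $w\in N$ to $S$, let $u(\mathcal{P})$ be the vertex closest to $w$ that is incident to an edge with $\sigma=-1$. It exists because $\varphi$ fixes $w$ but not the endpoint in $S$. Then $\varphi$ fixes every vertex from $w$ to $u(\mathcal{P})$. The next vertex $x$ towards $S$ has $\sigma(u(\mathcal{P})x)=-1$, which forces $\pi(u(\mathcal{P}))=\frac{\pi(x)+\varphi(\pi(x))}{2}$.

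So expose $\pi$ on:
\begin{itemize}
\item $S$;
\item the interiors of the 2-paths inside $S$;
\item each neighbour path strictly between $u(\mathcal{P})$ and $S$;
\item a single $u(\mathcal{P}^*)$, which serves as the anchor (there $\varphi=\mathrm{id}$).
\end{itemize}
Then $\varphi$ is determined on the exposed set. Each of the other $|N|-1$ vertices $u(\mathcal{P})$ must land on one prescribed point, with probability at most $1/(ck-i)$ at step $i$. This gives $\Omega(n)^{-\max(2,\Omega(s))}$ with no condition on path lengths. That beats $k\,e^{(s-1)o(\ln n)}\,n^{\beta s\ln 2}$ for every $s$ once $\beta$ is small in terms of $\alpha$ and $c$; at $s=1$ the total is $n^{-1+O(\beta)}$.

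Your sharpening can be made to work, as a $2^{t}/\Omega(k)$ bound per path with $t\le\beta s\ln n$. You would split into bounded $s$, where all paths are short once $\beta\ll\alpha$, and large $s$, where at most $50\beta s$ paths are long, and handle the exposure of $\pi(w)$ separately. But this is a lossy detour around the same observation.
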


Notice that $V(\mathcal{C}) \leq n$ is compulsory for $\pi$ to be defined. Also, we will show that R2$'$ follows from the $o(\ln n)$ upper bound on the maximal degree of $\mathcal{K}$ and, hence, R2$'$ can be substituted by that property.

Let us show that Lemma~\ref{lm:super2} implies Lemma~\ref{lm:2}. The size of $\mathcal{C}$ is at most $n$ by Claim~\ref{cl:core_size}, R1$'$ is a corollary of Claim~\ref{cl:expansion}, R2$'$ follows from Claim~\ref{cl:graph_bound} and S3 of Claim~\ref{cl:S}. R3$'$ is shown in Lemma~\ref{lm:1}.

So, in what remains, we show Lemma~\ref{lm:super2}.

The proof of Lemma~\ref{lm:super2} is organised similarly to the proof of Theorem~\ref{th:super1}. We first describe an event $B$ that implied by the failure of Lemma~\ref{lm:super2}. Next, we prove the implication. Lastly, we prove that $\mathbb{P}(B) = o(1)$.

\begin{definition}[Event $B$]
    There exists a subset $S \subseteq V(\mathcal{K})$ and a (not necessarily injective) mapping $\varphi: \pi(V(\mathcal{C})) \to \mathbb{R}$ that preserves distances over the edges of $\pi(\mathcal{C})$ such that
\begin{itemize}
    \item[B1] $S$ induces a connected multigraph in $\mathcal{K}$;
    \item[B2]$\bar V := \{v \in V(\mathcal{K}) \mid \varphi(\pi(v)) = \pi(v)\}$ contains at least $ck$ vertices;
    \item[B3] $\bar V \cap S = \emptyset$;
    \item[B4] $N:= N_{\mathcal{K}}(S) \subseteq \bar V$;
    \item[B5] $\mathbf{E}(S) \leqslant \beta|S| \ln n$;
    \item[B6] $|N| \geqslant 3$.
\end{itemize}
\end{definition} 

\begin{claim}
    Let $\pi$ satisfy the property given in R3$'$. If the set
    $\pi(\{v \in V(\mathcal{K})\mid D(v)\textnormal{ holds}\})$
    is not reconstructible, then $B$ holds.
    \label{cl:B_implication}
\end{claim}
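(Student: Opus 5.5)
The plan is to mimic the argument from the proof of Proposition~\ref{pr:toy_main}: start from a rigid map witnessing non-reconstructibility, normalise it so that it fixes a large set pointwise, and then take $S$ to be a maximal connected set of kernel vertices that are moved.

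\textbf{Step 1: normalise the map.} Suppose $R:=\pi(\{v\in V(\mathcal{K}) : D(v)\})$ is not reconstructible. Let $\varphi$ be a $\pi(\mathcal{C})$-rigid map whose restriction to $R$ is not an isometry. By R3$'$ there is a set $U\subseteq V(\mathcal{K})$ with $|U|\ge ck$ on which $\varphi$ preserves all pairwise distances. Composing $\varphi$ with an isometry of $\mathbb{R}$ preserves rigidity, so I may assume $\varphi(\pi(u))=\pi(u)$ for all $u\in U$. (If $|U|\ge2$, then $\varphi|_{\pi(U)}$ is a translation or a reflection composed with a translation.) Hence $\bar V\supseteq U$ and B2 holds. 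Since $\varphi|_R$ is not an isometry, it is not the identity on $R$, so some $v_0\in V(\mathcal{K})$ with $D(v_0)$ has $\varphi(\pi(v_0))\neq\pi(v_0)$.

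\textbf{Step 2: build $S$ and modify $\varphi$.} Let $S$ be the vertex set of the connected component containing $v_0$ of the subgraph of $\mathcal{K}$ induced on $V(\mathcal{K})\setminus\bar V$. This gives B1 and B3. By maximality, every $\mathcal{K}$-neighbour of $S$ lies in $\bar V$, which is B4.

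Since $S\cap\bar V=\emptyset$ and $|\bar V|\ge ck$, we get $|S|\le(1-c)k\le(1-\beta)k$, because $\beta<c$. So $S$ is admissible in the definition of $D(v_0)$. That definition then yields $\mathbf{E}(S)\le\beta|S|\ln n$ (B5) and $|N_{\mathcal{K}}(S)|\ge3$ (B6).

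It remains to note that $\varphi$ need not be altered for B1--B6 to hold. The event $B$ only asks for a (not necessarily injective) distance-preserving map, and $\varphi$ already is one. All the conditions are satisfied by the original normalised $\varphi$. So no modification as in the toy example is needed here, although one may optionally reset $\varphi$ to the identity outside $S$ and its 2-paths.

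\textbf{Main obstacle.} The delicate point is Step 1 when $|U|\ge ck$ but $\varphi|_{\pi(U)}$ is a reflection rather than a translation. In that case I apply the reflection to $\varphi$ globally, which is harmless because trivial maps compose. I also need to check that $\varphi$ is still non-trivial on $R$ after this normalisation, which holds since isometries preserve the property ``$\varphi|_R$ is not an isometry.''
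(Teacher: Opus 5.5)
Your proposal is correct and follows essentially the same route as the paper. You normalise $\varphi$ via R3$'$ so that it fixes $\pi(U)$ pointwise, pick a moved vertex $v_0$ with $D(v_0)$, take $S$ to be its component in $\mathcal{K}-\bar V$, and read off B5 and B6 from $D(v_0)$ using $|S|\le(1-c)k\le(1-\beta)k$; as you note, the paper likewise leaves $\varphi$ unmodified.
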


\begin{proof}
    Consider the $\pi(\mathcal{C})$-rigid map $\varphi:V\to\mathbb{R}$ disproving that $\pi(\{v \in V(\mathcal{K})\mid D(v)\textnormal{ holds}\})$ is reconstructible. Let us show that the event $B$ holds.
    
    By the property given in R3$'$, there is a linearly large subset $U \subseteq V(\mathcal{K})$ such that $\varphi$ preserves the pairwise distances between vertices from $\pi(U)$. Let us apply an isometry (i.e. a symmetry and/or adding a constant) to $\varphi$ in such a way that $\varphi|_{\pi(U)} = id$. Note that the resulting $\pi(\mathcal{C})$-rigid map $\varphi$ also disproves that $\pi(\{v \in V(\mathcal{K})\mid D(v)\textnormal{ holds}\})$ is reconstructible. Therefore, we can assume that
    \begin{equation}
        |\{v \in \pi(V(\mathcal{K})): \varphi(v) = v\}| \geq ck.
        \label{eq:B_event}
    \end{equation}

\begin{figure}[!h]
    \centering
    \includegraphics[scale=0.2]{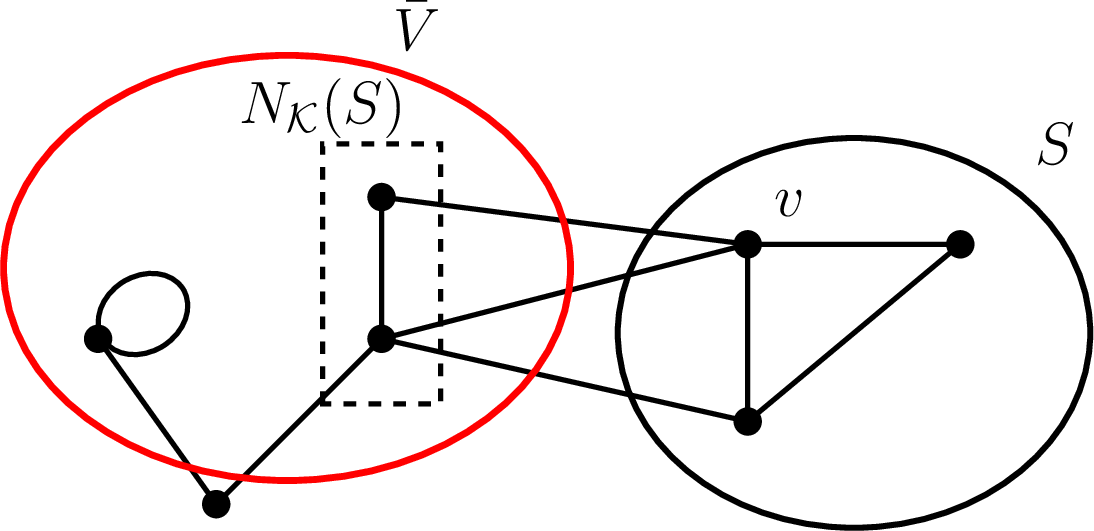}
    \caption{The figure illustrates the structure of $S, N_{\mathcal{K}}(S)$, and $\bar V$ in $\mathcal{K}$.
}
    \label{fig:3}
\end{figure}

Let us set $\bar V = \{v \in V(\mathcal{K}) \mid \varphi(\pi(v)) = \pi(v)\}$, so B2 holds due to~\eqref{eq:B_event}.
Since $\varphi$ disproves that $\pi(\{v \in V(\mathcal{K})\mid D(v)\textnormal{ holds}\})$ is reconstructible, there exists a vertex $v \in V(\mathcal{K})$ such that $D(v)$ holds but $\varphi(\pi(v)) \neq \pi(v)$. Let $S \subseteq V(\mathcal{K})$ be the vertex set of the component of $\mathcal{K} - \bar V$ containing $v$ (see Figure~\ref{fig:3}). Then, B1 holds trivially. By definition of $S$, the equation $\varphi(\pi(u)) = \pi(u)$ holds for every vertex $u \in N_{\mathcal{K}}(S)$ and, hence, the properties B3 and B4 hold. Also, B5 and B6 are instant corollaries of the event $D(v)$ applied to the set $S$, recalling $\beta \leq c$.

\end{proof}

Due to Claim~\ref{cl:B_implication}, in order to conclude Lemma~\ref{lm:super2} it remains to show the following lemma.

\begin{lemma}
    $\mathbb{P}(B) = o(1)$.
    \label{lm:4.b}
\end{lemma}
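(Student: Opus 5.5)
We follow the toy argument of Proposition~\ref{pr:toy_main} and take a union bound over the set $S$ appearing in $B$. Fix $S\subseteq V(\mathcal{K})$ inducing a connected subgraph with $|S|=s$, satisfying B5 and B6. First we bound $|S|$. By B2 and B3, $|S|\le k-ck$. If $|S|\le (1-c)k$, then R1$'$ gives $|N|\ge \alpha s$. Combining this with B6 yields $|N|\ge\max(3,\alpha s)$. By R2$'$, the number of admissible sets $S$ of size $s$ is at most $k\exp((s-1)o(\ln n))$. It therefore suffices to show that, for each fixed $S$,
$$\mathbb{P}(B\textnormal{ holds with this }S)\le n^{-\Omega(\max(3,\alpha s))}\exp(O(\beta s\ln n)),$$
and then to choose $\beta$ small compared with $\alpha$. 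The sum over $s$ is then $\sum_s k\, n^{-(\alpha/2) s-1.5}\exp(s\cdot o(\ln n))=o(1)$, using $\ln k=(1-o(1))\ln n$.

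To bound this probability for a fixed $S$, let $\mathcal{H}\subseteq\mathcal{C}$ be the union of all 2-paths with at least one end in $S$. By B5, $\mathcal{H}$ has at most $\beta s\ln n$ edges, and it is connected. Pick $u^*\in N$ and a spanning tree $\mathcal{T}$ of $\mathcal{H}$. First expose $\pi(u^*)$ together with $\pi$ on the vertices of $\mathcal{H}$ that are not in $N$; this includes $S$ and the inner vertices of the relevant 2-paths. By B4, $\varphi(\pi(u^*))=\pi(u^*)$, so by Remark~\ref{rm:1} the sign vector on $\mathcal{T}$ determines $\varphi$ on all of $\pi(V(\mathcal{H}))$. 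This gives at most $2^{\beta s\ln n}=\exp(O(\beta s\ln n))$ candidate maps, and we fix one of them. By B3, $\varphi(\pi(u))\ne\pi(u)$ for every $u\in S$.

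Now take $w\in N\setminus\{u^*\}$, which is still unexposed, and a 2-path $\mathcal{P}$ from $w$ to some $u\in S$. If $\mathcal{P}$ is a single edge, then $\varphi(\pi(w))=\pi(w)$ forces $\pi(w)$ to be the midpoint of $\pi(u)$ and $\varphi(\pi(u))$. This holds with probability at most $O(1/n)$, since the paper's setting $|V(\mathcal{C})|\le n$ together with R2-type size bounds leaves order $n$ free positions. If $\mathcal{P}$ is longer, the values $\varphi$ on its last inner vertex $x$ are already fixed, and $x\neq$ fixed point generally, but we cannot guarantee $\varphi(\pi(x))\ne\pi(x)$. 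To handle this, redefine the exposure so that the inner vertices of such paths are only exposed as we walk toward $w$. Then take the first vertex $y$ along $\mathcal{P}$ (from $u$) with $\varphi(\pi(y))=\pi(y)$; its position is then pinned to one point relative to its already-exposed predecessor. Each distinct $w\in N\setminus\{u^*\}$ therefore contributes an independent factor $O(1/n)$, conditioned on the history. Exposing at most, say, $\min(|N|-1,\varepsilon n)$ of them yields $n^{-(|N|-1)(1-o(1))}$, and $|N|-1\ge\max(2,\alpha s-1)$. A path of length at most $\beta s\ln n$ has few branching choices (at most $2^{\text{length}}$ pinned points for $y$), and this is absorbed into the $\exp(O(\beta s\ln n))$ factor.

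The main obstacle is exactly this interaction. The 2-paths to the neighbours in $N$ may share the sign choices, the fixed point may occur at an inner vertex rather than at $w$, and the number of pinned positions per path grows like $2^{\text{length}}$. All of this must be charged against B5, so that the total entropy $\exp(O(\beta s\ln n))$ stays below the gain $n^{-\Omega(\max(2,\alpha s))}$. For small $s$, the gain $n^{-2}$ must beat the entropy with constant slack; this works because $\beta s\ln n$ with $s$ bounded gives only $n^{O(\beta)}$, and the factor $k\le n$ from R2$'$ is covered by $n^{-2+O(\beta)}$.
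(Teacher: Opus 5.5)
Your proposal is essentially the paper's proof: a union bound over connected $S$ via R2$'$ and over the at most $2^{\mathbf{E}(S)}\le n^{\beta s\ln 2}$ sign patterns on the 2-paths touching $S$ (B5), anchoring at $u^*\in N$ and pinning one vertex per other neighbour to a midpoint, with $|N|\ge\max(3,\alpha s)$ from B6 and R1$'$. The paper locates the pinned vertex directly from the fixed $\sigma$ (the end nearer to $N$ of the $-1$ edge closest to $N$ on each chosen 2-path), which avoids your extra union over positions along the walk.

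One step needs fixing: the number of free positions need not be of order $n$. It is at least $ck-1-i$, because the unexposed kernel vertices outside $S\cup\{u^*\}$ must occupy free points. Together with $\ln k=(1-o(1))\ln n$, this gives the $n^{-1+o(1)}$ per pin you actually use. The number of pins should be capped near $ck/2$, so $\beta$ must also be small relative to $c$.
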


\begin{proof}

    We bound $\mathbb{P}(B)$ using a union bound over subsets $S$ and mappings $\varphi$ in the event. More precisely, for a vertex set $\tilde S \subseteq V(\mathcal{K})$ and $\sigma \in \{-1, 1\}^{E(\mathcal{K})}$, let $B(\tilde S, \sigma)$ denote the event that there is a pair $(S, \varphi)$ verifying
    $B$ such that \begin{itemize}
        \item $S = \tilde S$;
        \item $\varphi$ has binary representation $\sigma$.
    \end{itemize}
    First, in Claim~\ref{cl:4_prob}, we will show that $\mathbb{P}(B(\tilde S, \sigma))$ is sufficiently small. Next, in Claim~\ref{cl:4_union}, we will show the upper bound on $\mathbb{P}(B)$ using the described union bound. Lastly, we will verify that Lemma~\ref{lm:4.b} indeed follows from the bounds shown in Claim~\ref{cl:4_prob} and Claim~\ref{cl:4_union}.

    \begin{claim}
    For every $1 \leq s \leq (1-c)k$, every $\tilde S \subseteq V(\mathcal{K}) ,\  |\tilde S| = s$, and every $\sigma \in \{-1, 1\}^{E(\mathcal{K})}$ it holds that
    $$ \mathbb{P}(B(\tilde S, \sigma)) \leqslant \frac{1}{\Omega(n)^{\max(2, \Omega(s))}}.$$
    \label{cl:4_prob}
\end{claim}

\begin{proof}[Proof of Claim~\ref{cl:4_prob}]

Consider arbitrary $s \in [\lfloor(1-c)k\rfloor]$ and arbitrary $\tilde S \subseteq V(\mathcal{K}) ,\  |\tilde S| = s$, and arbitrary $\sigma \in \{-1, 1\}^{E(\mathcal{K})}$. Let $N := N_{\mathcal{K}}(\tilde S)$. Let us assume that $\mathbb{P}(B(\tilde S, \sigma)) > 0$, since otherwise Claim~\ref{cl:4_prob} follows instantly. Let us show that $\mathbb{P}(B(\tilde S, \sigma)) \leqslant \frac{1}{\Omega(n)^{\max(2, \Omega(s))}}.$

The proof is structured as follows. First, we describe a set of vertices $F \subseteq V(\mathcal{C})$ in such a way that after fixing the random positions $\pi|_{F}$ the positions of the remaining vertices become highly constrained. More precisely, we will next show that there are $\Omega(s)$ of vertices $u \in V(\mathcal{C})\setminus F$ such that $u$ has just one position on the real line where the value of $\pi(u)$ can land. Lastly, we will conclude from that the desired bound $\mathbb{P}(B(\tilde S, \sigma)) \leqslant \frac{1}{\Omega(n)^{\max(2, \Omega(s))}}$.

We now want to describe the set $F \subseteq V(\mathcal{C})$. In the course of doing so, we will also do some preparatory work for finding a linearly large subset of vertices $u \in V(\mathcal{C})\setminus F$ with constrained positions $\pi(u)$.

Consider an arbitrary 2-path $\mathcal{P}$ connecting a vertex $v \in N$ to a vertex $v' \in \tilde S$. Define $u := u(\mathcal{P}) \in V(\mathcal{P})$ to be the closest vertex to $v$ (in $\mathcal{P}$) among the vertices incident to an edge $e \in E(\mathcal{P})$ satisfying $\sigma(e) = -1$. If no such edge exists, we say that $u(\mathcal{P})$ is not defined.

Let us show that $u(\mathcal{P})$ is always defined. Recall, that we assumed that $\mathbb{P}(B(\tilde S, \sigma)) > 0$. Consider here the mappings $\pi:\mathcal{C} \to V$ and $\varphi:\pi(\mathcal{C}) \to \mathbb{R}$ that verify $B(\tilde S, \sigma)$. Observe that every edge of $\mathcal{P}$ is assigned a sign by $\sigma$. Let $\varphi$ be the mapping that witnesses $B(\tilde S, \sigma)$ and has binary representation $\sigma$. By properties B2 and B3, we know that $\varphi(v) = v$ and $\varphi(v') \neq v'$. If $\sigma(e) = 1$ for all $e \in E(\mathcal{P})$, then it clearly follows by induction that $\varphi(w) = w$ for every $w \in V(\mathcal{P})$, and, in particular, $\varphi(v') = v'$, contradicting the assumption. Therefore, for every 2-path $\mathcal{P}$ connecting some $v \in N$ to some $v' \in S$, there must exist an edge $e \in E(\mathcal{P})$ with $\sigma(e) = -1$.

We now sequentially construct two sets of vertices, $U$ and $\delta U$, which, in particular, will be used for constructing $F$. For each $v \in N$, choose an arbitrary 2-path $\mathcal{P}$ from $v$ to some arbitrary $v' \in \tilde  S$. We add $u(\mathcal{P})$ to $\delta U$, and we add every internal vertex of the subpath from $u$ to $v'$ to $U$.

\begin{figure}[!h]
  \centering
  \raisebox{+20mm}{\begin{subfigure}[b]{0.38\textwidth}\includegraphics[width=\textwidth]{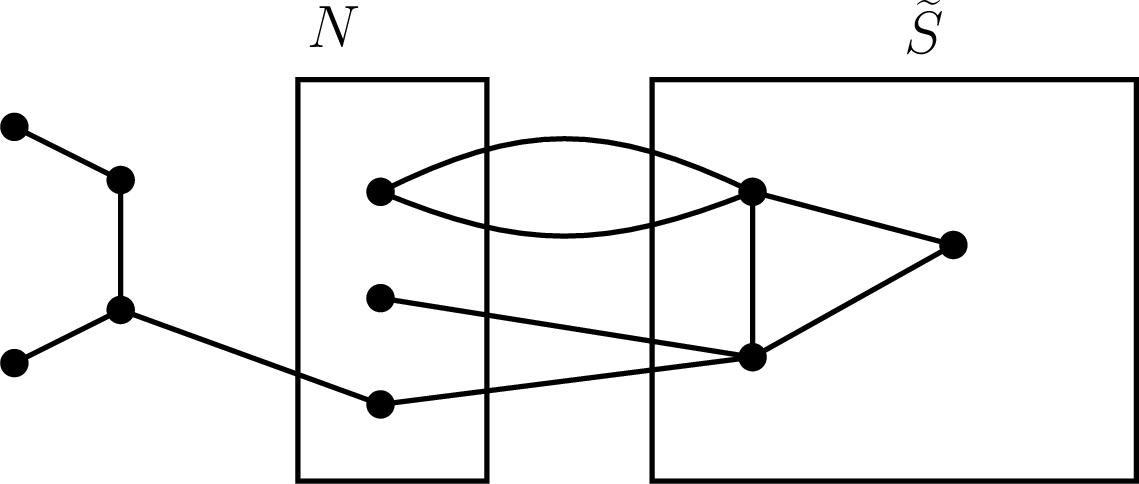}
  \end{subfigure}}
  \hspace{+5pt}
  \begin{subfigure}[b]{0.58\textwidth}\includegraphics[width=\textwidth]{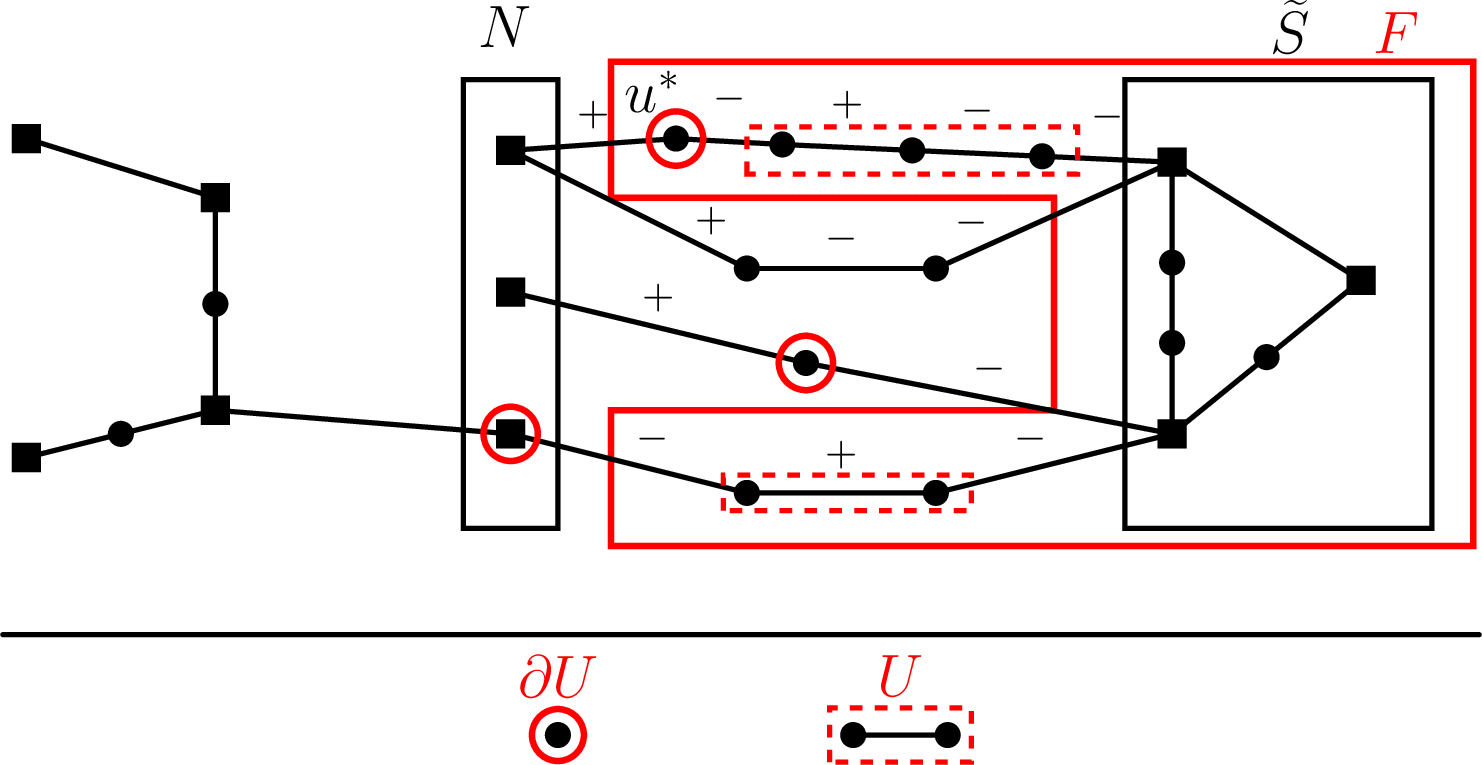}
  \end{subfigure}
  \caption{The figure illustrates the structure of $\tilde S, N, \delta U, U$, and $F$ in $\mathcal{K}$ and $\mathcal{C}$. The kernel $\mathcal{K}$ in the left figure is similar to $\mathcal{K}$ from the toy example, Figure~\ref{fig:2}, up to the following facts. Now, $u^*$ can lie outside of the kernel and the set $F$ of vertices $v \in \mathcal{C}$ with fixed value of $\pi(v)$ has a more complex description --- see the image on the right. The image on the right shows the graph $\mathcal{C}$. The common vertices of $\mathcal{K}$ and $\mathcal{C}$ are indicated by filled squares and the other vertices of $\mathcal{C}$ are indicated by filled dots. For every $e \in E(\mathcal{K})$ and every edge $uv \in \mathcal{P}_e$, we assign a symbol from $\{+, -\}$, which corresponds to the sign of the formula $\frac{\varphi(\pi(v))-\varphi(\pi(u))}{\pi(v)-\pi(u)}$.
  The set $U$ consists of the vertices lying in dashed red rectangles. The set $\delta U$ consists of individual vertices encircled in red.}
  \label{fig:5}
\end{figure}

Note that, as $\mathbb{P}(B(\tilde S, \sigma))>0$,
\begin{equation}
    |\delta U| = |N| \stackrel{\text{B6}}{\geqslant} 3.
    \label{eq:4.3}
\end{equation}

The set $F$ is defined as follows:
\begin{itemize}
    \item First, include all vertices of $\tilde S$;
    \item Next, include every internal vertex of each 2-path whose endpoints lie in $\tilde S$;
    \item Then, include all vertices in the set $U$;
    \item Finally, if $\delta U \neq \emptyset$, add one arbitrary vertex $u^* \in \delta U$ to $F$,
\end{itemize}
see Figure~\ref{fig:5}.

The following properties of the set $F$ will be important for us:

\begin{itemize}
    \item $F$ is connected in $\mathcal{C}$ and contains $\tilde S$;
    \item For every $u \in \delta U \setminus\{u^*\}$, there is a vertex $w \in F$ adjacent to $u$.
\end{itemize}

The first property follows from B1 and the second property is trivial from the definition of $F$.

Now, let us show the bound $ \mathbb{P}(B(\tilde S, \sigma)) \leqslant \frac{1}{\Omega(n)^{\max(2, \Omega(s))}}$.

Fix $\pi|_{F}$. So, the remaining probability space is described by the uniformly random injection
$$\pi|_{V(\mathcal{C})\setminus F}:V(\mathcal{C})\setminus F \to V\setminus\pi(F).$$

Below, we prove that, for $B(\tilde S, \sigma)$ to hold, every vertex $u \in \delta U \setminus\{u^*\}$ must satisfy $\pi(u) = r_{u}$, for some constant $r_u \in \mathbb{R}$. Thus, by fixing sequentially the values of $\pi(\delta U \setminus\{u^*\})$ we will get the desired upper bound on the probability.

Suppose that $B(\tilde S, \sigma)$ holds and let $\varphi$ be the mapping with binary representation $\sigma$ such that the pair $(\tilde S, \varphi)$ witnesses $B$. Let us show that $\varphi(\pi(u)) = \pi(u)$ for every $u \in \delta U$. Recall that $u$ lies on a 2-path connecting a vertex $v \in N$ to a vertex $v' \in S$. Also, $u$ was chosen that way that $\sigma(e) = 1$ for every edge $e$ in the subpath from $u$ to $v$. Also, by B2 and B4, $\varphi(\pi(v)) = \pi(v)$. Therefore, $\varphi(\pi(w)) = \pi(w)$ holds for every vertex $w$ on the 2-path between $u$ and $v$ by induction, as desired. 

The property we just proved gives us, in particular, $\varphi(\pi(u^*)) = \pi(u^*)$. Hence, for every $v \in F$ both $\pi(u)$ is fixed and, via $\sigma$, the value of $\varphi(\pi(u))$ is uniquely determined on the real line.

Recall that every $u \in \delta U$, $u \neq u^*$ has a neighbour $w \in F$, due to the definition of $F$. Recall, that $\sigma(uw) = -1$, due to the definition of $u$. So, first, this means that $\varphi(\pi(w)) \neq \pi(w)$, as $\varphi(\pi(u)) = \pi(u)$. Next, this means that the only position that $\pi(u)$ can have is the middle point between $\pi(w)$ and $\varphi(\pi(w))$, since we have 
$$|\varphi(\pi(w)) - \pi(u)|=|\varphi(\pi(w)) - \varphi(\pi(u))| \stackrel{\textnormal{the definition of }\varphi}= |\pi(w)- \pi(u)|.$$ 
So, the probability that $\pi(u)$ is exactly equal to $\frac{\pi(w) + \varphi(\pi(w))}{2}$ is at most the inverse of the number of unoccupied positions of $V$.

So, let us start the process fixing the values $\pi(u)$ sequentially for $u \in \delta U\setminus\{u^*\}$ and apply the above mentioned bound on $\mathbb{P}\left(\pi(u) = \frac{\pi(w) + \varphi(\pi(w))}{2}\right)$. Note that, at the start of the process, the only vertices of $\mathcal{K}$ with fixed position $\pi$ are $\tilde S$ and, perhaps $u^*$, in case it lies inside the kernel. As $|\tilde S| \leq (1-c)k$, there are at least $ck-1$ unoccupied vertices in $V$. Next, at the step $1 \leq i \leq \min(|\delta U \setminus \{u^*\}|, ck-1)$ the number of unoccupied positions of $V$ is always at least $ck - i$, so $\mathbb{P}\left(\pi(u) = \frac{\pi(w) + \varphi(\pi(w))}{2}\right) \leq \frac{1}{ck - i}$. Also, by~\eqref{eq:4.3}, $|\delta U| = |N|$ and $|N| \geqslant \alpha|\tilde S| = \alpha s$ by R1$'$. So, we get the overall upper bound
\begin{align}
    \max_{\sigma \in \{-1, 1\}^{E(\mathcal{K})}} \mathbb{P}(B(\tilde S, \sigma)) &\leqslant \frac{1}{ck-1} \cdot \frac{1}{ck-2} \ldots \cdot \frac{1}{\max(1, \lceil ck - |\delta U \setminus \{u^*\}| \rceil) } \notag \\
    &\stackrel{\eqref{eq:4.3}}=\frac{1}{\Omega(k)^{\max(2, \Omega(s))}} \stackrel{R1'}= \frac{1}{\Omega(n)^{\max(2, \Omega(s))}},
    \label{eq:4.4}
\end{align}
as desired.

\end{proof}

Now, we explain our union bound. It is given and proven in the following claim.

\begin{claim}
    $$\mathbb{P}(B) \leq \sum_{s = 1}^{(1-c)|V(\mathcal{K})|} |V(\mathcal{K})|\cdot  e^{(|S|-1) \cdot o(\ln n)} \cdot n^{\beta \ln 2 \cdot |S|} \cdot  \max_{S \subseteq V(\mathcal{K}),\ |S| = s,\ \sigma \in \{-1, 1\}^{E(\mathcal{K})}} \mathbb{P}(B(\tilde S, \sigma)).$$
    \label{cl:4_union}
\end{claim}

\begin{proof}

For a set $\tilde S \subseteq V(\mathcal{K})$, let us denote by $B(\tilde S)$ the event $B$ holds with on $S:= \tilde S$. Note that B2 and B3 imply $|\tilde S| \leqslant (1-c)|V(\mathcal{K})|$. So, we have the following bound

\begin{align}
    \mathbb{P}(B) &\leqslant \sum_{s =1}^{(1-c)|V(\mathcal{K})|}\sum_{\tilde S \subseteq V(\mathcal{K}) \textnormal{ induces connected graph},\  |\tilde S| = s} \mathbb{P}(B(\tilde S)) \nonumber \\
    &\leqslant \sum_{s =1}^{(1-c)|V(\mathcal{K})|} |\{\tilde  S \subseteq V(\mathcal{K}) \textnormal{ induces connected graph},\  |\tilde S| = s\}|\max_{\tilde S \subseteq V(\mathcal{K}) ,\  |\tilde S| = s}\mathbb{P}(B(\tilde S)) \nonumber \\
    &\stackrel{R2'}\leqslant \sum_{s =1}^{(1-c)|V(\mathcal{K})|} |V(\mathcal{K})|\exp((s-1) \cdot o(\ln n))\max_{\tilde S \subseteq V(\mathcal{K}) ,\  |\tilde S| = s}\mathbb{P}(B(\tilde S)).
\label{eq:4.1}
\end{align}

Consider an arbitrary $\tilde S \subseteq V(\mathcal{K})$ that induces connected subgraph and let us now bound $\mathbb{P}(B(\tilde S))$.

Let $H$ be the set of edges that contribute towards the sum $\mathbf{E}(S)$. Suppose that $B(\tilde S, \sigma)$ holds for some $\sigma \in \{-1, +1\}^{E(\mathcal{C})}$. Then, for $\tilde \sigma \in \{-1, +1\}^{E(\mathcal{C})}$ such that $\tilde \sigma|_{H} = \sigma|_{H}$ and $\tilde \sigma|_{E(\mathcal{C})\setminus H} = 1$ the event $B(\tilde S, \tilde \sigma)$ holds as well, since that does not break any properties from B1-B6. Thus, it is sufficient for us to consider a set of $2^{\mathbf{E}(\tilde S)}$ binary representations. Hence, we get

\begin{align}
\mathbb{P}(B(\tilde S)) 
&\leqslant 2^{\mathbf{E}(\tilde S)} \max_{\sigma \in \{-1, 1\}^{E(\mathcal{C})}} \mathbb{P}(B(\tilde S, \sigma)) \nonumber\\
&\stackrel{B5}\leqslant n^{\beta \ln 2 \cdot |\tilde S|} \max_{\sigma \in \{-1, 1\}^{E(\mathcal{C})}} \mathbb{P}(B(\tilde S, \sigma)). \label{eq:4.2}
\end{align}

So,~\eqref{eq:4.1} and~\eqref{eq:4.2} conclude the desired bound.

\end{proof}

We can now complete the proof of Lemma~\ref{lm:4.b}: by combining Claim~\ref{cl:4_prob} and Claim~\ref{cl:4_union} we get
\begin{align*}
    \mathbb{P}(B) 
    &\leqslant \sum_{s = 1}^{(1-c)|V(\mathcal{K})|} |V(\mathcal{K})|\exp((s-1) \cdot o(\ln n)) \cdot n^{\beta s \ln2} \cdot \frac{1}{\Omega(n)^{\max(2, \Omega(s))}} \\
    &\stackrel{|V(\mathcal{K})| \leq |V(\mathcal{C})|\leq n}\leqslant  \sum_{s =1}^{(1-c)|V(\mathcal{K})|} \exp((s-1) \cdot o(\ln n)) \cdot n^{\beta s \ln2}
    \frac{1}{\Omega(n)^{\max(1, \Omega(s))}}= o(1),
\end{align*}
for small enough $\beta > 0$.

This completes the proof of Lemma~\ref{lm:4.b}.

\end{proof}

\section{Bounding the size of the reconstructible subset}
\label{sc:5}

In this section, we shall prove Lemma~\ref{lm:3} and Lemma~\ref{lm:4}, which are given below. Roughly speaking, Lemma~\ref{lm:3} says that almost all vertices of the kernel of $\mathcal{C}$ satisfy the event $D$, Definition~\ref{df:A}. So, using Claim~\ref{cl:contiguous}, Lemma~\ref{lm:2}, and Lemma~\ref{lm:3}, we will conclude that almost all vertices of the kernel are reconstructible. Lemma~\ref{lm:3}, combined with the contiguity model Claim~\ref{cl:contiguous} and Lemma~\ref{lm:2}, implies the conjecture by Gir\~ao, Illingworth, Michel, Powierski and Scott. Meanwhile, Lemma~\ref{lm:4} will extend the statement to almost all vertices of $\mathcal{C}$, which completes the proof of Theorem~\ref{th:1} 
due to Claim~\ref{cl:contiguous}. 

Let us note that in this section we work in a more restricted context compared to Section~\ref{sc:3} and Section~\ref{sc:4}. Both Lemma~\ref{lm:1} and Lemma~\ref{lm:2} were shown for $\lambda = 1 + \Omega(1/\ln n)$. Both Lemma~\ref{lm:3} and Lemma~\ref{lm:4}, however, are given for $\lambda>1$ being constant. This is because the proof of the analogues of Lemma~\ref{lm:3} and Lemma~\ref{lm:4} for $\lambda \to 1$ will be shown in Section~\ref{sc:ad}. Let us however mention that one could easily generalise the lemmas to the case of $\lambda = 1 + \Omega(\ln \ln n/\ln n)$, as was described in Section~\ref{sc:1}.

\begin{lemma}
Let $\beta > 0$. Let $\lambda$ and $\mathcal{K}$ be from Claim~\ref{cl:L-contiguous}. Let $\lambda>1$ be constant. Whp 
$$|\{v \in V(\mathcal{K}) : \lnot D_{\beta}(v)\}| = o(|V(\mathcal{K})|).$$
\label{lm:3}
\end{lemma}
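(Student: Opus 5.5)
We need to show that almost all kernel vertices $v$ satisfy $D_\beta(v)$. The event $D_\beta(v)$ can fail in two ways: there is a connected $S\ni v$ with $|S|\le(1-\beta)k$ and $|N_{\mathcal{K}}(S)|\le 2$, or there is one with $\mathbf{E}(S)>\beta|S|\ln n$. We bound the number of bad vertices of each type separately, working in the model $\mathcal{L}$ of Claim~\ref{cl:L-contiguous}. We first expose the degree sequence, so that S1--S4 of Claim~\ref{cl:S} hold, and then treat $\mathcal{K}$ as a uniform multigraph with that degree sequence. The path lengths are i.i.d.\ $\textnormal{Geom}(1-\mu)$ and independent of $\mathcal{K}$. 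Throughout, $k=\Theta(n)$ since $\lambda$ is constant.

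\textbf{Neighbourhood condition.} By Claim~\ref{cl:expansion} with $c=\beta$, whp $\mathcal{K}$ is a $(1-\beta,\alpha)$ vertex expander. Hence any $S$ with $|S|\le(1-\beta)k$ and $|N_{\mathcal{K}}(S)|\le 2$ has $|S|\le 2/\alpha=O(1)$. Such a bounded-size connected set, separated from the rest of $\mathcal{K}$ by at most two vertices, with all degrees at least $3$, must span more edges than a tree: counting degrees gives $e(S\cup N)\ge |S\cup N|$ up to constants. Its radius is $O(1)$, so it lies inside the ball of radius $\lfloor\ln\ln k\rfloor$ around $v$. For all but $o(k)$ vertices this ball is a tree by Claim~\ref{cl:kernel2}. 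A tree ball cannot contain such a configuration: in a tree, the set of vertices cut off from the rest of the ball by at most two vertices reaches leaves of the ball, which lie at distance $\ln\ln k$, contradicting $|S|=O(1)$. So only $o(k)$ vertices fail the neighbourhood condition.

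\textbf{Edge condition.} Write $\mathbf{E}(S)\le\sum_{u\in S}\sum_{e\ni u}L_e$, where $L_e$ is the length of the path $\mathcal{P}_e$. The lengths are i.i.d.\ geometric with constant mean. A connected set $S$ of size $s$ containing $v$ is incident to at most $\sum_{u\in S}d_u$ kernel edges, which by S4 is $O(s\ln n/\ln s)$ and, for most $S$, is $O(s)$. I would split according to the size of $S$.

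For $s\ge s_0$ (a large constant, or better a slowly growing function), the required bound $\mathbf{E}(S)\le\beta s\ln n$ holds for all $S$ simultaneously whp. For a fixed $S$ with $O(s\ln n/\ln s)$ incident edges, a Chernoff bound for sums of geometrics gives $\mathbb{P}(\mathbf{E}(S)>\beta s\ln n)\le e^{-\Omega(\beta s\ln n)}$, provided $\beta s\ln n$ exceeds the mean $O(s\ln n/\ln s)$, which it does once $\ln s\gg 1/\beta$. A union bound over at most $k\,e^{O(s\ln\ln n)}$ connected sets (Claim~\ref{cl:graph_bound} with $\Delta=o(\ln n)$) then works. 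The count is fine because the union bound needs $s\ln\ln n\ll\beta s\ln n$.

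For $s<s_0$ the union bound is unnecessary; instead we count bad vertices. Call $v$ bad if some edge at distance at most $s_0$ from $v$ has $L_e\ge\beta\ln n/(s_0\Delta')$, where $\Delta'$ is a typical degree bound. Each edge has this property with probability $n^{-\Omega(1)}$. Using $d_{\max}=o(\ln n)$ and S4 to control ball sizes, the expected number of bad $v$ is $k\cdot(\ln n)^{O(s_0)}n^{-\Omega(1)}=o(k)$, and Markov's inequality finishes this case. I expect the main obstacle to be reconciling the two ranges of $s$ when high-degree vertices are present. Either take $s_0=\exp(C/\beta)$ constant and use S4 so that sets of size at least $s_0$ have average degree bounded relative to $\ln n$, or discard $o(k)$ vertices close to high-degree vertices, using S4 and Claim~\ref{cl:kernel2}. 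Combining the two conditions gives $|\{v:\lnot D_\beta(v)\}|=o(k)$ whp.
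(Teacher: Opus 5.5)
Your neighbourhood argument is correct but takes a different route from the paper. You use the first bullet of Claim~\ref{cl:kernel2} (tree balls around all but $o(k)$ vertices). Inside a tree the contradiction is simply $3|S|\le\sum_{u\in S}\deg u=2(|S|-1)+|N_{\mathcal{K}}(S)|\le 2|S|$. The paper instead uses the second bullet (tree plus at most one edge around \emph{every} vertex) and counts loops and multi-edges, which gives the stronger bound $O(\ln^2 n)$. Either suffices for Lemma~\ref{lm:3}. Your large-$s$ range of the edge condition is also fine.

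\textbf{The gap.} It is in the small-$s$ range of the edge condition, where you yourself expected trouble, and it is not closed. You claim: if no edge within distance $s_0$ of $v$ has $L_e\ge\beta\ln n/(s_0\Delta')$, then $\mathbf{E}(S)\le\beta|S|\ln n$ for all connected $S\ni v$ with $|S|<s_0$. This needs $\mathbf{D}(S)\le s_0\Delta'$, i.e.\ control of all degrees near $v$, and no single choice of $\Delta'$ delivers it:
\begin{itemize}
\item If $\Delta'$ is a constant (which is what makes an edge bad with probability $n^{-\Omega(1)}$), then a positive proportion of kernel vertices themselves have degree exceeding $s_0\Delta'$, since the degrees are Poisson-like. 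For those vertices your criterion does not even control $\mathbf{E}(\{v\})$, so you do not get $o(k)$.
\item If instead $\Delta'=d_{\max}$, an edge is bad with probability about $\mu^{\beta\ln n/(s_0 d_{\max})}$. This can be as large as $(\ln n)^{-\Theta(\beta/s_0)}$, which does not beat the up to $d_{\max}^{O(s_0)}$ edges in the ball. So the estimate $k(\ln n)^{O(s_0)}n^{-\Omega(1)}$ is unjustified.
\end{itemize}
Your first suggested remedy concerns sets of size at least $s_0$, not this range. The second could be made to work with a slowly growing $\Delta'$. It would then need a per-edge bound of the form $n^{-\Omega(1/\Delta')}$ and a count of vertices near high-degree vertices, neither of which you have given.

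\textbf{The missing observation.} This is essentially the paper's whole proof of Lemma~\ref{lm:5_ESleq}, and it removes the need for any split. By S3, $\mathbf{D}(S)\le|S|\,d_{\max}=o(|S|\ln n)$ for \emph{every} $S$. Conditionally on $\mathcal{K}$ one has exactly $\mathbf{E}(S)\sim\mathrm{NB}(\mathbf{D}(S),\nu)$. So Claim~\ref{cl:nb_chernoff} gives $\mathbb{P}(\mathbf{E}(S)>\beta|S|\ln n)\le\exp(-(1-o(1))\nu\beta|S|\ln n/2)$ uniformly in $|S|$. Now take the union bound only over connected sets containing a \emph{fixed} $v$; by Claim~\ref{cl:graph_bound} there are $(o(\ln n))^{s-1}$ of size $s$. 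Hence the probability that some connected $S\ni v$ violates $\mathbf{E}(S)\le\beta|S|\ln n$ is $n^{-\Omega(1)}$, and Markov's inequality over $v$ finishes. It was the extra factor $k$ in your global union bound over all connected sets that forced you to restrict to $s\ge s_0$. With the per-vertex union bound plus Markov, you need neither the split, nor S4, nor the condition $\ln s\gg 1/\beta$.
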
 

\begin{lemma}
    Let $\lambda$ and $\mathcal{G} \sim \mathcal{U}$ be from Claim~\ref{cl:contiguous}. Let $\lambda>1$ be constant. Whp there exists a reconstructible subset in $\mathcal{G}$ of size $(1-o(1))|V(\mathcal{G})|$.
    \label{lm:4}
\end{lemma}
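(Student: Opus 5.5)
We prove Lemma~\ref{lm:4} by starting from the reconstructible set of kernel vertices and extending it along 2-paths. Let $R := \pi(\{v\in V(\mathcal{K}) : D_\beta(v)\})$. By Lemma~\ref{lm:2} and Lemma~\ref{lm:3}, whp $R$ is reconstructible and contains all but $o(k)$ kernel vertices. Call an edge $e=uv\in E(\mathcal{K})$ \emph{good} if $u,v\in R$ and the 2-path $\mathcal{P}_e$ has at most $\ln n/100$ vertices. Our candidate set is $U := R \cup \bigcup_{e \text{ good}} \pi(V(\mathcal{P}_e))$. We need two things: that $U$ covers $(1-o(1))|V(\mathcal{G})|$ vertices, and that $U$ is reconstructible.

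\textbf{Size of $U$.} Every vertex of $\mathcal{G}=\pi(\mathcal{C})$ lies on some 2-path, so it suffices to bound the total number of vertices on bad 2-paths by $o(|V(\mathcal{C})|)$. In the model $\mathcal{L}$, path lengths are i.i.d.\ $\textnormal{Geom}(1-\mu)$ and independent of $\mathcal{K}$. By Claim~\ref{cl:kernel1}, or directly from S4 of Claim~\ref{cl:S}, the $o(k)$ kernel vertices outside $R$ are incident to only $o(k)$ kernel edges. Conditioned on $\mathcal{K}$ and on the set of bad edges, the sum of lengths over any fixed set of $o(k)$ edges is $o(k)$ whp. There is a subtlety: the set $R$ depends on the path lengths through $\mathbf{E}(S)$. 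I would handle this with a uniform bound, namely that whp every set of $m=o(k)$ kernel edges has total path length $O(m+ k\cdot o(1))$. For constant $\lambda$ this follows from a Chernoff and union bound over sets, since geometric tails are exponential and $\binom{|E(\mathcal{K})|}{m}\le e^{O(m\ln(k/m))}$. Edges with long paths contribute $o(k)$ vertices in expectation, since $\mathbb{E}[L\mathbf 1_{L>\ln n/100}]=o(1)$, so Markov applies. Because $|V(\mathcal{C})|=\Theta(k)$ for constant $\lambda$, this gives $|U|=(1-o(1))|V(\mathcal{G})|$.

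\textbf{Reconstructibility of $U$.} Let $\varphi$ be any $\pi(\mathcal{C})$-rigid map. After an isometry we may assume $\varphi|_R=\mathrm{id}$. For a good edge $uv$, the end-vertices satisfy $\varphi(\pi(u))-\varphi(\pi(v))=\pi(u)-\pi(v)$. Part~2 of Claim~\ref{cl:3.4} says that for a random short path the probability of a non-trivial extension is at most $1/\sqrt n$. However, a union over $\Theta(k)$ paths cannot be taken naively, because $1/\sqrt n\cdot k$ does not tend to $0$. So instead I would define $U$ using only \emph{rigid} good edges: those whose 2-path admits no non-trivial rigid extension with fixed endpoints. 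Whether an edge is rigid depends only on $\pi$ restricted to $\mathcal{P}_e$. The expected number of non-rigid good edges is $O(k/\sqrt n)=o(k)$, so by Markov whp only $o(k)$ edges are lost, and by the uniform path-length bound above they carry $o(k)$ vertices. For a rigid good edge, $\varphi|_R=\mathrm{id}$ forces $\varphi=\mathrm{id}$ on the whole path, so $U$ is reconstructible. Contiguity (Claim~\ref{cl:contiguous}) then transfers the result to $G(V,\lambda/n)$.

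\textbf{Main obstacle.} The hardest step is the dependence between the random set $R$ (through $\mathbf{E}$ and $\pi$) and the path lengths, which is why the uniform path-length bound over all small edge sets is needed. The main effort goes into establishing that bound.
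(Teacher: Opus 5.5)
Your proposal is correct and is essentially the paper's argument. The paper also takes the $D_\beta$-set from Lemma~\ref{lm:2} and Lemma~\ref{lm:3}, using S4 so that only $o(|E(\mathcal{K})|)$ incident edges are lost. It discards long 2-paths and, via Claim~\ref{cl:3.4}(2) and Markov, the $o(|E(\mathcal{K})|)$ short ones admitting a non-trivial extension of the identity on their endpoints. It then bounds the vertices on all discarded paths by the uniform negative-binomial union bound of Claim~\ref{cl:cl59}. The only difference is cosmetic: the paper, through Lemma~\ref{lm:super4}, argues with a maximal reconstructible superset of $\pi(U)$ rather than writing down the extended set explicitly as you do.
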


Below we give Lemma~\ref{lm:super4} that makes the same conclusions as Lemma~\ref{lm:4} for every appropriate sequence of graphs. We will prove Lemma~\ref{lm:super4} and deduce Lemma~\ref{lm:4} from it. Note that in the following lemma the set $U$ only depends on the graph $\mathcal{C}$ and $\lambda$ and, in contrast to $U$ from Definition~\ref{df:A(G, c)}, does not depend on the injection $\pi: V(\mathcal{C}) \to \mathbb{R}$ and on the $\pi(\mathcal{C})$-rigid map $\varphi: V \to \mathbb{R}$, see R3$'$ from Lemma~\ref{lm:super2}.

\begin{lemma}
    Let $V \subseteq \mathbb{R}$ with $|V| = n$. Let a connected graph $\mathcal{C}$ with minimum degree $2$ of size at most $n$, its kernel $\mathcal{K}$, a subset $U \subseteq V(\mathcal{K})$, and a uniformly random injection $\pi: V(\mathcal{C}) \to V$ satisfy the following properties:
    \begin{itemize}
        \item[R1$^*$] whp, $\pi(U)$ is a reconstructible subset in $\pi(\mathcal{C})$;

        \item[R2$^*$] the graph induced by $U$ contains all but $o(|E(\mathcal{K})|)$ edges of $\mathcal{K}$;

        \item[R3$^*$] $|\{e\in E(\mathcal{K}): \textnormal{the corresponding 2-path }\mathcal{P}_e\textnormal{ in }\mathcal{C}\textnormal{ satisfies }|V(\mathcal{P}_e)| > \ln n/100\}| = o(|E(\mathcal{K})|)$;

        \item[R4$^*$] for every $F \subseteq E(\mathcal{K})$ with $|F| = o(|E(\mathcal{K})|)$, we have $|\cup_{e \in F} V(\mathcal{P}_e)| = o(|V(\mathcal{C})|)$.
    \end{itemize}
    Then, whp, there exists a reconstructible subset containing all but $o(|V(\mathcal{C})|)$ of vertices from $\pi(\mathcal{C})$.
    \label{lm:super4}
\end{lemma}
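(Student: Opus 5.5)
The approach is to enlarge $\pi(U)$ by adding the interiors of almost all 2-paths whose endpoints both lie in $U$. Let $F_0\subseteq E(\mathcal{K})$ be the set of kernel edges $e=uv$ such that either $u\notin U$ or $v\notin U$, or $|V(\mathcal{P}_e)|>\ln n/100$. By R2$^*$ and R3$^*$, $|F_0|=o(|E(\mathcal{K})|)$. Every vertex of $\mathcal{C}$ lies on some $\mathcal{P}_e$, since $\mathcal{C}$ is connected with minimum degree $2$; we may assume $\mathcal{C}$ is not a cycle, as $\mathcal{K}$ is non-empty by R2$^*$. Hence, by R4$^*$, the set $W:=U\cup\bigcup_{e\in E(\mathcal{K})\setminus F}V(\mathcal{P}_e)$ satisfies $|W|=|V(\mathcal{C})|-o(|V(\mathcal{C})|)$ for any $F\supseteq F_0$ with $|F|=o(|E(\mathcal{K})|)$. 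The goal is to show that, whp, there is such an $F$ for which $\pi(W)$ is reconstructible.

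To do this, I would condition on the event in R1$^*$ and, for each path, apply Claim~\ref{cl:3.4}(2). For a rigid map $\varphi$, first compose with an isometry so that $\varphi|_{\pi(U)}=\mathrm{id}$. For $e=uv\notin F_0$, the endpoints then satisfy $|\varphi(\pi(u))-\varphi(\pi(v))|=|\pi(u)-\pi(v)|$. If $\varphi$ restricted to $\pi(\mathcal{P}_e)$ is the trivial extension, it is a translation (in fact the identity) there, so $\pi(V(\mathcal{P}_e))$ is fixed pointwise and is automatically consistent with $\pi(U)$. The only failure mode is therefore a non-trivial rigid extension along $\pi(\mathcal{P}_e)$ with fixed endpoints. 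This is a property of $\pi$ alone, not of $\varphi$: call $e$ \emph{bad} if the path $\pi(\mathcal{P}_e)$ admits a non-trivial rigid map fixing its two endpoints. Then $\pi(W)$ is reconstructible whenever $F=F_0\cup\{\text{bad edges}\}$, because any rigid map of $\pi(\mathcal{C})$ restricted to a good path with endpoints in $U$ must be the identity there.

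It remains to show that whp the number of bad edges is $o(|E(\mathcal{K})|)$. Fix $\pi|_{V(\mathcal{K})}$ and expose the interiors of the paths $\mathcal{P}_e$, $e\notin F_0$. Conditioned on the endpoints and on everything previously exposed, the interior vertices of $\mathcal{P}_e$ form a uniformly random injection into the unused points of $V$. The proof of Claim~\ref{cl:3.4} only uses that each new vertex lands on a specific set of at most $2^{t-1}$ points with probability $O(2^{t-1}/(\text{number of unused points}))$. Here I need a lower bound on the number of unused points. Assuming $|V(\mathcal{C})|\le (1-c)n$ (true for constant $\lambda$ by Claim~\ref{cl:core_size}), at least $cn$ points remain unused throughout, so each path is bad with probability $O(n^{-1/2})$. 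Hence the expected number of bad edges is $O(|E(\mathcal{K})|n^{-1/2})$, and Markov's inequality gives $o(|E(\mathcal{K})|)$ whp.

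The main obstacle is that Lemma~\ref{lm:super4} as stated does not assume $|V(\mathcal{C})|\le(1-c)n$, only $|V(\mathcal{C})|\le n$. Near the end of the exposure, few points may remain unused, and the per-vertex probability bound degrades. To handle this, I would expose the paths in a random order and apply the bound only to the first $(1-o(1))$ fraction of them. For those paths, at least $\Omega(n\cdot o(1))$ points remain unused; choosing the unexposed fraction to be $n^{-1/4}$ still gives bad probability $O(n^{-1/4})$ per path. The remaining $o(|E(\mathcal{K})|)$ paths are simply added to $F$; this requires that the unexposed paths can be chosen to carry few vertices, which follows from R4$^*$ applied to that set of edges. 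With $F$ so chosen, $|W|=(1-o(1))|V(\mathcal{C})|$ and $\pi(W)$ is reconstructible whp, which proves Lemma~\ref{lm:super4}. Lemma~\ref{lm:4} then follows by taking $U=\{v: D_\beta(v)\}$ and using Lemmas~\ref{lm:2} and~\ref{lm:3} together with the Geometric path-length distribution to verify R1$^*$--R4$^*$.
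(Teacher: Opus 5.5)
Your architecture is the paper's. You call a kernel edge $e\notin F_0$ bad if $\pi(\mathcal{P}_e)$ admits a non-trivial rigid extension of the identity on its endpoints. Part~2 of Claim~\ref{cl:3.4} and Markov's inequality then give $o(|E(\mathcal{K})|)$ bad edges, and R2$^*$--R4$^*$ absorb $F_0$ together with the bad edges. Building $W$ explicitly, instead of taking a maximal reconstructible superset of $\pi(U)$ as the paper does, is an equivalent and slightly cleaner packaging. The problem is your final paragraph: the obstacle you describe there is not real, and the workaround you propose for it does not hold up.

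The obstacle arises only because you condition on all of $\pi|_{V(\mathcal{K})}$ and on all previously exposed paths. Markov's inequality needs only the expectation, and by linearity of expectation you only need the marginal probability that one fixed edge $e=uv\notin F_0$ is bad. Condition on $\pi(u),\pi(v)$ alone. Then the interior of $\mathcal{P}_e$ is a uniformly random injection into $V\setminus\{\pi(u),\pi(v)\}$, so each interior vertex has at least $n-\ln n/100$ available points. Claim~\ref{cl:3.4} therefore gives $\mathbb{P}(e\textnormal{ bad})=O(n^{-1/2})$, whatever the ratio $|V(\mathcal{C})|/n$ is. Hence the expected number of bad edges is $O(n^{-1/2}|E(\mathcal{K})|)$, and Markov finishes. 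You need neither $|V(\mathcal{C})|\le(1-c)n$ nor a random exposure order; this is exactly the argument in Claim~\ref{cl:th1_1}.

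Your workaround, by contrast, rests on the assertion that $\Omega(n\cdot o(1))$ points remain unused while the first paths are exposed, and nothing in the hypotheses supports this. When $|V(\mathcal{C})|=n$, the unused points are exactly the not-yet-exposed vertices of $\mathcal{C}$. The held-back paths may have few or no interior vertices. R4$^*$ only bounds their vertex count from above, which is the opposite of what you need at that point. Replace that paragraph by the marginal computation and the proof is complete.

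A minor point: R2$^*$ is vacuous when $E(\mathcal{K})=\emptyset$, so it does not show $\mathcal{K}\neq\emptyset$. Like the paper, you are implicitly assuming that $\mathcal{K}$ is non-empty.
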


The rest of Section~\ref{sc:5} is split into three subsections. In Subsection~\ref{sc:5.1} we prove Lemma~\ref{lm:3}. In Subsection~\ref{sc:5.2} we deduce Lemma~\ref{lm:4} from Lemma~\ref{lm:super4}. Lemma~\ref{lm:super4} is proved in Subsection~\ref{sc:5.3}.

\subsection{Proof of Lemma~\ref{lm:3}}
\label{sc:5.1}

In order to prove Lemma~\ref{lm:3}, we must show that, for $\beta > 0$, almost all vertices of the kernel $\mathcal{K}$ satisfy $D_{\beta}$, Definition~\ref{df:A}. If a vertex does not satisfy $D_{\beta}$ then it belongs to a vertex subset $S \subseteq V(\mathcal{K})$ which induces a connected subgraph, has size at most $(1-\beta)|V(\mathcal{K})|$ and, also, the set $S$ violates either $|N_{\mathcal{K}}(S)| \geqslant 3$ or $\mathbf{E}(S) \leqslant \beta|S|\ln n$. The later gives us two separate cases --- Lemma~\ref{lm:3} follows directly from the following two statements.

\begin{claim}
        Let $\beta > 0$. Let $\lambda$ and $\mathcal{C}$ be from Claim~\ref{cl:L-contiguous}. Let $\lambda$ satisfy $\lambda = 1+\Omega(\frac{1}{\ln n})$. Let $\mathcal{K}$ be the kernel of $\mathcal{C}$. Then, the number of vertices $v \in V(\mathcal{K})$ such that there exists $S \subseteq V(\mathcal{K})$ satisfying the following properties
        \begin{itemize}
            \item $v \in S$,
            \item $S$ induces a connected subgraph in $\mathcal{K}$,
            \item $|S| \leq (1- \beta)|V(\mathcal{K})|$, and
            \item $|N_{\mathcal{K}}(S)| < 3$
        \end{itemize}
        is $O(\ln^2 n)$ whp.
        \label{cl:5_NSgeq}
\end{claim}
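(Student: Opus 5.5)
Fix the degree sequence $\mathbf{k}$ from the first step of $\mathcal{L}$ and assume it satisfies S1--S4 of Claim~\ref{cl:S}, so that $\mathcal{K}$ is a uniformly random multigraph with degree sequence $\mathbf{k}$, minimum degree $3$ and $\Delta(\mathcal{K}) = o(\ln k)$. Any bad vertex $v$ lies in a connected set $S$ with $|S| \le (1-\beta)k$ and $|N_{\mathcal{K}}(S)| \le 2$. Removing the at most two vertices of $N_{\mathcal{K}}(S)$ separates $S$ from the rest of $\mathcal{K}$. The rest has at least $\beta k - 2$ vertices, since $|S| \le (1-\beta)k$. So $S$ is a union of components of $\mathcal{K} - N(S)$, and it is cut off by a vertex cut of size at most $2$ from a linearly large remainder.

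The first step is to control small sets. By Claim~\ref{cl:expansion}, applied with $c := \beta/2$, whp every $S$ with $|S| \le (1-\beta/2)k$ has $|N(S)| \ge \alpha|S|$. Hence every bad set has $|S| < 2/\alpha = O(1)$. It then suffices to show that whp the total number of vertices lying in connected sets $S$ of bounded size $s \le s_0 := \lfloor 2/\alpha\rfloor$ with $|N(S)| \le 2$ is $O(\ln^2 n)$. Such a set, together with its at most two neighbours, spans a connected subgraph $H$ on $s + |N(S)| \le s+2$ vertices. Every vertex of $S$ has all of its at least $3$ edges inside $H$, because $S$ has no neighbours outside $N(S)$. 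Counting edges in $H$, we get $|E(H)| \ge \lceil (3s + |N(S)|)/2 \rceil$ edge-slots, and in fact at least $|V(H)| + \lceil (s - |N(S)|)/2 \rceil$ once we account for degrees. The next step is to check that $H$ has excess $|E(H)| - |V(H)| \ge 0$, with equality only in degenerate configurations, namely $s \le 2$ with $|N(S)| = 2$. Even in these cases $H$ contains a cycle or a double edge through $S$.

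The main estimate is a first-moment bound. The expected number of connected subgraphs $H$ of $\mathcal{K}$ on $h \le s_0 + 2$ vertices with $h + j$ edges is at most
$$k^{h}\,(\Delta^2/k)^{h+j}\,O(1)^{h} = O(\Delta^{2h+2j}k^{-j}),$$
using Claim~\ref{cl:edge_prob} sequentially as in the proofs of Claims~\ref{cl:kernel1} and~\ref{cl:kernel2}. For $j \ge 1$ this is $o(1)$, so whp no bad configuration with positive excess exists. For $j = 0$, i.e.\ $H$ unicyclic, the expected count is $O(\Delta^{2h}) = O(\ln^{2h} n)$ crudely. The refined version uses S4, under which a typical vertex has bounded degree, so $\sum_i [d_i]_2 = O(k)$ and the expected number of short cycles is $O(1)$. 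We then apply Markov's inequality at level $\ln^2 n$. Each such $H$ contributes at most $s_0 = O(1)$ bad vertices, so whp the number of bad vertices is $O(\ln^2 n)$.

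The main obstacle is the careful case analysis showing that $|N(S)| \le 2$ forces an excess-$\ge 0$ subgraph through $S$ of size $O(1)$, together with the bound on the expected number of such unicyclic configurations. This is delicate in multigraphs, where loops and multiedges are allowed, and when $\lambda \to 1$ with $\Delta = o(\ln n)$. There I would use the moment bound $\sum_i [d_i]_2 / M_1 = O(1)$ from S4, rather than the crude $\Delta^2$, so that the unicyclic count stays $O(\ln^2 n)$.
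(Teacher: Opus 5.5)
Your proposal is correct and follows essentially the paper's route. Expansion (Claim~\ref{cl:expansion}) confines bad sets to bounded size. For $|S|\ge 3$ the graph on $S\cup N_{\mathcal{K}}(S)$ has excess at least $1$, which local sparsity rules out (the paper cites Claim~\ref{cl:kernel2}, while you redo that first moment directly). The remaining excess-$0$ configurations are then handled by a first moment plus Markov.

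The one real difference is in that last step. The paper lists the $|S|\le 2$ cases explicitly and gets an $O(\Delta^2)$ expectation, because only the at most two vertices of $N_{\mathcal{K}}(S)$ can have large degree. Your moment version needs $\sum_i [d_i]_r=O(k)$ for $r$ up to $4$, not just $r=2$; this does follow from S3 and S4. Your list of degenerate configurations also omits the excess-$0$ case $|S|=|N_{\mathcal{K}}(S)|=1$, though your uniform treatment of unicyclic $H$ already covers it.
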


One can actually prove a stronger bound of $O(1)$ instead of $O(\ln^2n)$ in Claim~\ref{cl:5_NSgeq} but we do not need it here and we give a weaker bound in order to simplify the proof.

\begin{lemma}
    Let $\beta > 0$. Let $\lambda$ and $\mathcal{C}$ be from Claim~\ref{cl:L-contiguous}. Let $\lambda >1$ be constant. Let $\mathcal{K}$ be the kernel of $\mathcal{C}$. Then, the number of vertices $v \in V(\mathcal{K})$ such that there exists $S \subseteq V(\mathcal{K})$ satisfying the following properties
        \begin{itemize}
            \item $v \in S$,
            \item $S$ induces a connected subgraph in $\mathcal{K}$,
            \item $|S| \leq (1- \beta)|V(\mathcal{K})|$, and
            \item $\mathbf{E}(S) > \beta|S|\ln n$
        \end{itemize}
    is $o(|V(\mathcal{K})|)$ whp.
    \label{lm:5_ESleq}
\end{lemma}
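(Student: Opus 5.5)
We work in the model $\mathcal{L}$ of Claim~\ref{cl:L-contiguous} with $\lambda>1$ constant, so that $\mu<1$ is a constant. First we expose the degree sequence $\mathbf{k}$ and assume it satisfies S1--S4 of Claim~\ref{cl:S}, and that $\mathcal{K}$ is a $(1-\beta,\alpha)$ vertex expander as in Claim~\ref{cl:expansion}. Write $L_e:=|E(\mathcal{P}_e)|$ for the i.i.d.\ $\textnormal{Geom}(1-\mu)$ path lengths, which are independent of $\mathcal{K}$. Since every kernel edge is counted in $\mathbf{E}(S)$ when at least one endpoint lies in $S$, we have
$$\mathbf{E}(S)\le \sum_{e\in E_S} L_e,\qquad E_S:=\{e\in E(\mathcal{K}): e\cap S\neq\emptyset\}.$$
Moreover $|E_S|\le \sum_{v\in S}d(v)$, and by S4 this is at most $c_a|S|\ln n/\ln|S|$. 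The point is that a violation $\mathbf{E}(S)>\beta|S|\ln n$ needs the lengths on about $|E_S|=O(|S|)$ edges (for typical degrees) to sum to order $|S|\ln n$. Such an event has probability roughly $\mu^{\beta|S|\ln n}=n^{-\beta\ln(1/\mu)|S|}$. This competes with the $k\,(e\Delta)^{|S|}$ connected sets from Claim~\ref{cl:graph_bound}.

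We split according to the size of $S$.

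\emph{Large $S$, say $|S|\ge \ln n$ (or $|S|\ge k^{\delta}$).} We do not need a union bound here. The total length $\sum_{e}L_e=O(k)$ whp, while a violation requires $\mathbf{E}(S)>\beta|S|\ln n$. Hence $|S|\le O(k/\ln n)$. A sharper observation bounds the union of the bad sets directly. Let $B$ be the set of kernel edges with $L_e> M$ for a large constant $M$. We have $\sum_{e\in B}L_e\le \eta k$ whp, with $\eta\to0$ as $M\to\infty$. Any violating $S$ satisfies $\beta|S|\ln n<M|E_S|+\sum_{e\in B\cap E_S}L_e$. For $|S|$ with $\ln|S|\gg c_aM/\beta$, the first term is at most $\tfrac{\beta}{2}|S|\ln n$ by S4, so $|S|\le 2\eta k/(\beta\ln n)$. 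Thus every violating set of size $\ge n^{\delta}$ has size $o(k)$. However, we need the union of all violating sets to be small, not each one. We handle this by taking a maximal family of disjoint violating sets, or more simply by noting that the union $W$ of the violating $S$ with $|S|\ge n^{\delta}$ satisfies $\sum_{e\in E_W}L_e\ge \tfrac{\beta}{2}|W|\ln n$ up to overlaps. A greedy disjointification (Vitali-type: pick a maximal disjoint subfamily and enlarge each member by a factor using connectivity) should give $|W|=O(\eta k/\ln n)\cdot O(1)=o(k)$.

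\emph{Small $S$, $|S|\le n^{\delta}$.} Here we use the first-moment method. For a fixed vertex $v$, we bound $\mathbb{P}(\exists S\ni v \text{ violating})$ by
$$\sum_{s}(e\Delta)^{s}\,\mathbb{P}\Bigl(\textnormal{sum of }\le \Delta s\textnormal{ i.i.d. Geom}(1-\mu)>\beta s\ln n\Bigr).$$
The path lengths are independent of $\mathcal{K}$, so this conditioning costs nothing. By a Chernoff bound for geometric sums, the probability is $\le \exp(\Delta s\,O(1)-\beta\ln(1/\mu)s\ln n\,(1-o(1)))$. With $\Delta=d_{\max}=o(\ln n)$ by S3, the sum is $o(1)$, indeed $n^{-\Omega(\beta)}$. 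Hence the expected number of bad $v$ from small $S$ is $o(k)$, and Markov's inequality finishes this case. Near-tree structure (Claim~\ref{cl:kernel2}) is not even needed.

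The main obstacle is the large-$S$ regime, where a union bound over connected sets fails and the union of the violating sets must be controlled. I would first try the cleaner route: whp only $o(k)$ kernel vertices are incident to an edge of $B$, and for $M$ large the total $\sum_{e\in B}L_e\le\eta k$. Every violating $S$ must then be dense in $B$-length, in the sense $\sum_{e\in B\cap E_S}L_e\ge\tfrac{\beta}{2}|S|\ln n$. Summing over a disjoint cover of the bad region then gives size $O(\eta k/\ln n)$. The disjointification of overlapping connected sets is the delicate step. If it resists, I would fall back on a union bound over connected sets with the stronger tail $\mu^{\beta s\ln n}$ beating $(e\Delta)^s$ for all $s$, since $\Delta=o(\ln n)$. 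This already makes the expected number of vertices in violating sets of every size $o(k)$ for constant $\lambda$.
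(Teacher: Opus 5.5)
Your small-$S$ first-moment computation is the paper's proof, and it already works for every $s\le(1-\beta)|V(\mathcal{K})|$, not only for $s\le n^{\delta}$. Fix $v$. By Claim~\ref{cl:graph_bound} there are at most $(e\Delta)^{s-1}=e^{O(s\ln\ln n)}$ connected $S\ni v$ with $|S|=s$. For each such $S$, $\mathbf{E}(S)$ is a sum of at most $\Delta s=o(s\ln n)$ independent $\mathrm{Geom}(1-\mu)$ lengths, so $\mathbb{P}(\mathbf{E}(S)>\beta s\ln n)=n^{-\Omega(s)}$ uniformly in $s$.

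The paper obtains the tail $\exp(-\nu\beta s\ln n(1-o(1))/2)$ from Claim~\ref{cl:nb_chernoff}. Your bound $\mu^{\beta s\ln n(1-o(1))}$ is also valid. Its correction term is $\Delta s\ln(O(\ln n/\Delta))$ rather than $\Delta s\cdot O(1)$, but that is still $o(s\ln n)$. Summing over $s$ gives $\mathbb{P}(F(v))=o(1)$, and Markov's inequality finishes. This is exactly the fallback you state at the end, and it is the whole argument.

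The ``main obstacle'' you identify therefore does not exist. The union bound does not fail for large $S$: both the count and the tail are exponential in $s$, and the per-vertex ratio $e^{O(\ln\ln n)}\cdot n^{-\Omega(\beta)}$ does not depend on $s$.

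You should drop the large-$S$ regime rather than try to complete it, because as sketched it is not a proof. A Vitali-type disjointification needs a bounded-factor enlargement step. Connected sets in an expander like $\mathcal{K}$ have no such doubling property, so the size of a union of overlapping violating sets cannot be controlled by a disjoint subfamily in this way.

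Similarly, you do not need the expansion property of $\mathcal{K}$, S4, or the near-tree structure. The only inputs are $d_{\max}=o(\ln n)$ (S3) and the independence of the path lengths from $\mathcal{K}$.
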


We first prove Claim~\ref{cl:5_NSgeq}. Then we give auxiliary definition and claims that will be helpful for the proof of Lemma~\ref{lm:5_ESleq} and, lastly, we prove the Lemma~\ref{lm:5_ESleq}. 

\begin{proof}[Proof of Claim~\ref{cl:5_NSgeq}]

Let us note that, by the expansion properties, Claim~\ref{cl:expansion}, we have that, whp, every subset $S \subseteq V(\mathcal{K})$ with $|N_{\mathcal{K}}(S)| \leqslant 2$ has size either $O(1)$ or $n - O(1)$. Indeed, either $S$ or $V(\mathcal{K}) \setminus (S \cup N_{\mathcal{K}})$ have size at most $|V(\mathcal{K})|/2$ and, hence, the set is of size $O(1)$ by expansion. Thus, it suffices to prove that, whp, the number of subsets $S \subseteq V(\mathcal{K})$ with $|N_{\mathcal{K}}(S)| \leqslant 2$ is
\begin{itemize}
    \item zero, for $|S| \in \{3, \ldots, O(1)\}$;
    \item $O(\ln^2n)$, for $|S| \leqslant 2$.
\end{itemize}

Since $\mathcal{K}$ has minimum degree at least $3$, Claim~\ref{cl:kernel2} immediately implies the former property. Also, for the latter property, Claim~\ref{cl:kernel2} shows that there are just two possible options:
\begin{itemize}
    \item $S= \{v\}$, where $\deg v \leqslant 4$ and $v$ is incident to a loop or a multiedge;
    \item $S = \{u, v\}$, where $\deg u = \deg v = 3$, $u$ and $v$ are adjacent, and there is a loop or multiedge incident to either $u$ or $v$.
\end{itemize}

By S1 and S3 of Claim~\ref{cl:S}, whp $$\max_{v \in V(\mathcal{K})} \deg v = O(\ln |V(\mathcal{K})|) = O(\ln n).$$
So, Claim~\ref{cl:edge_prob}
and Markov's inequality together imply that whp at most $O(\ln^2n)$ vertices $v \in V(\mathcal{K})$ satisfy any of the options listed above. 

\end{proof}

In order to show Lemma~\ref{lm:5_ESleq}, let us state an auxiliary definition and a claim.

Let $\mu$ and $\mathcal{K}$ be from the model $\mathcal{L}$, Claim~\ref{cl:L-contiguous}. Let $\nu := 1-\mu$. Recall that, for each edge $e \in E(\mathcal{K})$, the corresponding 2-path $\mathcal{P}_e$ generated in the third step of the model $\mathcal{L}$ has length of $\mathrm{Geom}(\nu)$, independently of other edges. Let $\mathrm{NB}(r, \nu)$ denote the negative binomial distribution with parameters $r$ and $\nu$, i.e., the sum of $r$ independent $\mathrm{Geom}(\nu)$ random variables. In particular, 
$$\mathbb{E} NB(r, \nu) = r\nu^{-1}.$$ We will need a standard Chernoff-type concentration bound for this distribution (see~\cite{Br}, Brown) given below.

\begin{claim}
    For any $\gamma > 1$,
\begin{equation}
    \mathbb{P}(\mathrm{NB}(r, \nu) > \gamma r \nu^{-1}) \leqslant \exp\left(-\frac{\gamma r (1 - 1/\gamma)^2}{2}\right).
    \label{eq:5.1}
\end{equation}
    \label{cl:nb_chernoff}.
\end{claim}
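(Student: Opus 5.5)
We plan to reduce the negative binomial tail to a lower tail of a binomial distribution and then apply the standard multiplicative Chernoff bound. Realise $\mathrm{NB}(r,\nu)$ as the number of independent $\mathrm{Bernoulli}(\nu)$ trials needed to obtain $r$ successes. This matches the paper's convention, since $\mathrm{Geom}(\nu)$ has support $\mathbb{Z}_{>0}$ and counts trials up to and including the first success. Put $x:=\gamma r\nu^{-1}$ and $t:=\lfloor x\rfloor$. Since $\mathrm{NB}(r,\nu)$ is integer-valued, $\mathrm{NB}(r,\nu)>x$ holds iff $\mathrm{NB}(r,\nu)\geq t+1$. That in turn holds iff the first $t$ trials contain fewer than $r$ successes. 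Hence
$$\mathbb{P}(\mathrm{NB}(r,\nu)>x)=\mathbb{P}(\mathrm{Bin}(t,\nu)\leq r-1)\leq \mathbb{P}(\mathrm{Bin}(t,\nu)\leq r).$$

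Next we apply the lower-tail Chernoff bound $\mathbb{P}(X\leq(1-\delta)m)\leq\exp(-\delta^2m/2)$, valid for a binomial $X$ with mean $m$ and $\delta\in(0,1)$. Ignoring rounding, $m=t\nu\approx\gamma r$ and $r=(1-\delta)m$ with $\delta=1-1/\gamma$. This gives exactly $\exp\bigl(-\gamma r(1-1/\gamma)^2/2\bigr)$, the bound in~\eqref{eq:5.1}. The same bound can be written as $\exp(-(m-r)^2/(2m))$, which is increasing in $m$ for $m>r$; this is the monotonicity used below.

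The main obstacle is the rounding: $m=\lfloor x\rfloor\nu$ may be slightly below $\gamma r$ (by at most $\nu\leq 1$), which loses a factor of at most roughly $e^{(1-1/\gamma)}$. To avoid this we would first try bypassing the binomial and bounding the moment generating function directly. A geometric variable satisfies $\mathbb{E}e^{\theta\,\mathrm{Geom}(\nu)}=\nu e^\theta/(1-(1-\nu)e^\theta)$ for $e^\theta<(1-\nu)^{-1}$. By Markov's inequality,
$$\mathbb{P}(\mathrm{NB}>x)\leq e^{-\theta x}\left(\frac{\nu e^\theta}{1-(1-\nu)e^\theta}\right)^r.$$
We would then choose $\theta$ optimally and verify, via elementary inequalities such as $\ln(1+u)\leq u-u^2/(2(1+u))$, that the exponent is at most $-\gamma r(1-1/\gamma)^2/2$.

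If the direct computation is messy, we fall back on the binomial argument. There we absorb the rounding either by a continuity and monotonicity argument in the mean $m$, or by noting that the claim is only applied with $r\to\infty$, where a constant-factor loss is harmless. The proof is then finished by citing Brown~\cite{Br} for the precise constant.
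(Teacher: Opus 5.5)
Your reduction is correct, and it is the argument of Brown's note, which the paper cites in place of a proof. With $x=\gamma r\nu^{-1}$ and $t=\lfloor x\rfloor$ you do have $\mathbb{P}(\mathrm{NB}(r,\nu)>x)=\mathbb{P}(\mathrm{Bin}(t,\nu)\le r-1)$.

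\textbf{The gap.} The rounding step, which you flag, is never actually closed. After you relax the threshold from $r-1$ to $r$, Chernoff gives $\exp\bigl(-(m-r)^2/(2m)\bigr)$ with $m=t\nu\in(\gamma r-\nu,\gamma r]$. The exponent $(m-r)^2/(2m)$ is increasing in $m$ for $m>r$, so the resulting bound is \emph{weaker} than the target whenever $m<\gamma r$. Monotonicity in $m$ therefore works against you, and no continuity argument reverses that. The other fallbacks do not rescue it either:
\begin{itemize}
\item appealing to $r\to\infty$ proves a different, weaker inequality;
\item citing Brown for the constant is not an argument;
\item the moment generating function route is only sketched, with no computation.
\end{itemize}

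\textbf{First repair: keep the $-1$ you discarded.} Apply the lower-tail Chernoff bound to $\mathbb{P}(\mathrm{Bin}(t,\nu)\le r-1)$ with $\delta=(m-r+1)/m\in(0,1]$, which gives $\exp\bigl(-(m-r+1)^2/(2m)\bigr)$. Since $t>x-1$, you have $m>\gamma r-\nu\ge\gamma r-1$, hence $m-r+1>(\gamma-1)r>0$. You also have $m\le\gamma r$. Therefore
$(m-r+1)^2/(2m)\ge((\gamma-1)r)^2/(2\gamma r)=\gamma r(1-1/\gamma)^2/2,$
which is exactly the claimed bound.

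\textbf{Second repair: finish your MGF route.} For $\nu<1$ (the case $\nu=1$ is trivial), optimise at $e^{\theta}=(x-r)/((1-\nu)x)$. This value lies in $(1,(1-\nu)^{-1})$ precisely because $\gamma>1$. The optimised bound equals $\exp\bigl(-x\,D(\nu/\gamma\,\|\,\nu)\bigr)$, where $D$ is the Bernoulli relative entropy. Now $D(a\,\|\,p)\ge(p-a)^2/(2p)$ for $a\le p$, because $D(p\,\|\,p)$ and its first derivative in the first argument vanish at $p$, and $\frac{d^2}{ds^2}D(s\,\|\,p)=\frac{1}{s(1-s)}\ge\frac1p$ for $s\le p$. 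With $a=\nu/\gamma$ and $p=\nu$ this gives the exponent $x\nu(1-1/\gamma)^2/2=\gamma r(1-1/\gamma)^2/2$, with no rounding at all.
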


\begin{definition}
    Let $\mathcal{C}$ be a connected graph with minimum degree 2 and let $\mathcal{K}$ be its kernel. For a subset $S \subseteq V(\mathcal{K})$ inducing a connected multigraph, let $\mathbf{D}(S)$ denote the number of edges in $E(\mathcal{K})$ that are incident to at least one vertex from $S$.
    \label{df:D_defined}
\end{definition}

This quantity is significant because $\mathbf{E}(S)$ can be naturally reformulated using $\mathbf{D}(S)$. Namely, 
\begin{equation}
    \mathbf{E}(S) = \sum_{e \in E(\mathcal{K}),\ e \textnormal{ contirbutes towards the sum }\mathbf{D}(S)}|E(\mathcal{P}_e)|.
    \label{eq:6.35}
\end{equation}

\begin{proof}[Proof of Lemma~\ref{lm:5_ESleq}]
    
    Let $\mathcal{K}$ be generated in the first two steps of the model $\mathcal{L}$, Claim~\ref{cl:L-contiguous}. So, the remaining probability space is described by the third step of $\mathcal{L}$.
    
    Assume that the maximal degree of $\mathcal{K}$ is $o(\ln n)$, which holds whp due to S3, Claim~\ref{cl:S}. As $\lambda$ is constant, there exists $\varepsilon \to 0$ such that
\begin{equation}
        \mathbf{D}(S) \leqslant |S| \cdot  \varepsilon(\lambda-1) \ln n.
        \label{eq:DS}
\end{equation}  
    
    Consider an arbitrary vertex $v \in V(\mathcal{K})$. Let $F(v)$ be the event that there exists a subset $S \subseteq V(\mathcal{K})$ inducing a connected subgraph and containing $v$ such that 
$$
\mathbf{E}(S) > \beta |S| \ln n.
$$
In what remains we show that, for every vertex $v \in V(\mathcal{K})$, 
it holds that $\mathbb{P}(F(v)) = o(1)$, since then Lemma~\ref{lm:3} follows from Markov's inequality. We will prove $\mathbb{P}(F(v)) = o(1)$ using a union bound. 

First, since the maximal degree of $\mathcal{K}$ is $o(\ln n)$, by Claim~\ref{cl:graph_bound}, for every $v \in V(\mathcal{K})$ and every integer $s \geq 0$ we have
\begin{equation}
    |\{S \subseteq V(\mathcal{\mathcal{K}}): v \in S,\ |S| = s,\ S\textnormal{ induces connected subgraph in }\mathcal{K}\}| = (o(\ln n))^{s-1}. 
    \label{eq:5.2.S_num}
\end{equation}

Now, consider a vertex $v \in V(\mathcal{K})$ and
let us show that $\mathbb{P}(F(v)) = o(1)$. If $F(v)$ holds then there exists a subset $S$ inducing a connected subgraph and containing $v$ such that 
\begin{equation}
    \mathbf{E}(S) > \beta |S| \ln n \quad \textnormal{and}\quad  \mathbf{D}(S) \le |S| \cdot  \varepsilon(\lambda-1) \ln n.
    \label{eq:5.2_implication}
\end{equation}
Notice that $\mathbf{D}(S)$ is a fixed number and $\mathbf{E}(S)$ is a random one. Recalling~\eqref{eq:6.35}, 
$$\mathbf{E}(S) = \sum_{e \in E(\mathcal{K}),\ e \textnormal{ contributes towards the sum }\mathbf{D}(S)}|E(\mathcal{P}_e)|,$$
and that every path $\mathcal{P}_e$ has a random number of edges distributed according to the third step of~$\mathcal{L}$, Claim~\ref{cl:L-contiguous}, for $\nu = 1 - \mu$, where $\mu$ is from Claim~\ref{cl:L-contiguous}, we have
\begin{equation}
    \mathbf{E}(S) = \textnormal{NB}(\mathbf{D}(S), \nu).
    \label{eq:E_D_relation}
\end{equation}
So, for every $S$ containing $v$ and inducing a connected graph we have an upper bound: the probability that it satisfies $F(v)$ is at most  
\begin{align}
    \mathbb{P}(S\textnormal{ satisfies~\eqref{eq:5.2_implication}}) &\stackrel{\eqref{eq:5.2_implication}}\leq \mathbb{P}(\mathbf{E}(S) > \beta |S| \ln n)\notag \\
    &\stackrel{\eqref{eq:E_D_relation}}= \mathbb{P}(\textnormal{NB}\biggl(\mathbf{D}(S), \nu) > \beta |S| \ln n\biggr)\notag \\    &\stackrel{\eqref{eq:5.2_implication}}\leq \mathbb{P}\biggl(\textnormal{NB}(|S| \cdot  \varepsilon(\lambda-1) \ln n, \nu) > \beta |S| \ln n\biggr) \notag \\
    &\stackrel{\textnormal{Claim}~\ref{cl:nb_chernoff}}\leq \exp\left(\frac{-\nu \beta |S| \ln n(1 - o(1))}{2}\right) \notag \\
    &= \exp(-\omega(|S| \ln \ln n)),
    \label{eq:5.2_main}
\end{align}
where the last equality holds as $\nu$ is constant.

Let us take a union bound of the event that $S$ satisfies~\eqref{eq:5.2_implication} over all $s \leq (1-\beta)|V(\mathcal{K})|$ and over all subsets $S$ inducing connected subgraph of size $s$ that contain $v$. We conclude that

\begin{align*}
    \mathbb{P}(F(v)) &\leqslant \sum_{s = 1}^{(1-\beta)|V(\mathcal{K})|}\sum_{S \subseteq V(\mathcal{K}) \textnormal{ induces connected subgraph},\ |S| = s,\ v \in S}\mathbb{P}(S \textnormal{ satisfies~\eqref{eq:5.2_implication}}) \\ &\stackrel{\eqref{eq:5.2.S_num}}\leqslant \sum_{s = 1}^{(1-\beta)|V(\mathcal{K})|}(o(\ln n))^{s-1}\max_{S \subseteq V(\mathcal{K}) \textnormal{ induces connected subgraph},\ |S| = s,\ v \in S}\mathbb{P}(S \textnormal{ satisfies~\eqref{eq:5.2_implication}}) \\
    &\stackrel{~\eqref{eq:5.2_main}}\leqslant \sum_{s = 1}^{(1-\beta)|V(\mathcal{K})|}(o(\ln n))^{s-1} \exp\left(-\omega(s \ln \ln n)\right) = o(1).
    \end{align*}

\end{proof}

\subsection{Concluding Lemma~\ref{lm:4}}
\label{sc:5.2}

Let $\mathcal{C}$ and $\lambda$ be from Claim~\ref{cl:contiguous}, let $\mathcal{K}$ be the kernel of $\mathcal{C}$ and let $\lambda>1$ be constant. Let $\beta >0$ be from Lemma~\ref{lm:2}. In order to prove Lemma~\ref{lm:4} we shall show R1$^*$---R4$^*$ from Lemma~\ref{lm:super4} for $\mathcal{C}$. The size of $\mathcal{C}$ is at most $n$ by Claim~\ref{cl:core_size}. Let us set $U : =\{v \in V(\mathcal{K}) : D_{\beta}(v)\}$, so R1$^*$ follows from Lemma~\ref{lm:2}. We now need to show R2$^*$, R3$^*$, and R4$^*$. Let us show slightly stronger statements as we will need them in Section~\ref{sc:ad}. First, R2$^*$ follows from the following claim applied to $U$.

\begin{claim}
    Let $\lambda$, $\mathcal{K}$, and $\mathcal{C}$ be from Claim~\ref{cl:L-contiguous}. If $\lambda = 1 + \omega(\ln^{-1}n)$ then, whp, for every $U \subseteq V(\mathcal{K})$ with $|U| = (1-o(1))|V(\mathcal{K})|$, the graph induced by $U$ contains all but $o(|E(\mathcal{K})|)$ edges of $\mathcal{K}$;
    \label{cl:triv1}
\end{claim}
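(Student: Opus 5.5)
The edges of $\mathcal{K}$ not induced by $U$ are exactly those incident to the removed set $W := V(\mathcal{K})\setminus U$. Since $|W| = o(|V(\mathcal{K})|)$, it suffices to prove the following uniform statement: whp, every set $W$ of at most $\delta k$ vertices of $\mathcal{K}$ (with $k := |V(\mathcal{K})|$) is incident to at most $f(\delta)|E(\mathcal{K})|$ edges, where $f(\delta) \to 0$ as $\delta \to 0$. Given this, for any $W$ with $|W| = o(k)$ we can let $\delta \to 0$ slowly and conclude that $W$ meets only $o(|E(\mathcal{K})|)$ edges. This is the claim.

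The number of edges incident to $W$ is at most $\sum_{w \in W} \deg w$. This sum is at most the sum of the $|W|$ largest degrees, $d_1 + \dots + d_{|W|}$. We can therefore work purely with the degree sequence from the first step of $\mathcal{L}$ and apply Claim~\ref{cl:S}.

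I would use S4, which bounds the sum of the $i$ largest degrees. S4 was stated relative to the length-$n$ Poisson sequence, but the nonzero entries are exactly the kernel degrees, so the top $i$ entries coincide. This gives
$$d_1 + \dots + d_i \le c_a\, i\, \frac{\ln n}{\ln i}.$$
By S1 and S2, $k$ and $|E(\mathcal{K})|$ are both $\Theta((\lambda-1)^3 n)$. Under the assumption $\lambda - 1 = \omega(\ln^{-1} n)$ this gives $\ln k = (1-o(1))\ln n$; in fact $\ln k \geq \ln n - 3\ln\ln n - O(1)$ is all that is needed. We split into two cases.
\begin{itemize}
\item If $i = \delta k \ge \sqrt{k}$, then $\ln i \ge \tfrac12 \ln k = (\tfrac12 - o(1))\ln n$. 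Hence the sum of the top $i$ degrees is $O(i) = O(\delta k) = O(\delta |E(\mathcal{K})|)$.
\item If $i < \sqrt{k}$, then S3 ($d_{\max} = o(\ln n)$) bounds the sum by $\sqrt{k}\, o(\ln n) = o(k)$.
\end{itemize}
Either way we get the required $f(\delta) = O(\delta) + o(1)$.

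The only step needing care is checking that S4's $\ln n / \ln i$ factor really is $O(1)$ in the relevant range. This relies on $\ln k \sim \ln n$, which holds because $(\lambda-1)^3 n \ge n/\operatorname{polylog} n$ when $\lambda \ge 1 + \omega(\ln^{-1} n)$ (for constant $\lambda$ it is immediate). The argument is deterministic once S1–S4 hold, so it holds whp.
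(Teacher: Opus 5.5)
Your reduction is the paper's own. The edges of $\mathcal{K}$ not induced by $U$ are exactly those meeting $W=V(\mathcal{K})\setminus U$, there are at most $d_1+\dots+d_{|W|}$ of them, and $|E(\mathcal{K})|\ge 3k/2$. The paper then just cites S1 and S4, and your split at $i=\sqrt k$ is a more explicit version of that citation. You also rightly take $|W|=o(k)$ from the hypothesis rather than from Lemma~\ref{lm:3}.

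\textbf{The gap.} The problem is the input: S4 of Claim~\ref{cl:S} is false as stated. So your step ``for $i\ge\sqrt k$ the sum of the top $i$ degrees is $O(i)$'' fails, and the paper's one-line proof has the same defect.
For constant $\lambda$ the mean $\Lambda$ is a positive constant. Take $t:=\lceil \ln n/(4\ln\ln n)\rceil$; the expected number of $u$ with $D_u\ge t$ is at least $ne^{-\Lambda}\Lambda^t/t!\ge n^{3/4-o(1)}$. Hence whp the $\lfloor\sqrt k\rfloor$ largest kernel degrees are all at least $t$, and they sum to $\Omega(\sqrt k\,\ln n/\ln\ln n)$. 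S4 asserts at most $(2+o(1))c_a\sqrt k$.
The same happens in the range you actually use, $i=\delta k$ with $\delta\to0$: the top $i$ degrees all exceed roughly $\ln(1/\delta)/\ln\ln(1/\delta)\to\infty$, so their sum is $\omega(i)$. The faulty step in the proof of S4 is bounding the exponent by $-\tfrac12 N\ln N$. That bound needs $N\ge e^2 i^2\Lambda^2$, which fails once $i$ is much larger than $\ln n$.

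\textbf{The fix.} The claim is still true, because you only need $o(k)$, not $O(i)$.
Condition on $\Lambda$ and on the kernel vertex set. The kernel degrees are then i.i.d.\ $\mathrm{Poisson}(\Lambda)$ conditioned to be at least $3$; the parity conditioning only costs a constant factor in probabilities. Using $j!\ge 3!\,(j-3)!$, each such degree has moment generating function at most $e^{3\theta+\Lambda e^{\theta}}$.
Chernoff then gives, for any fixed $i$-set of kernel vertices, $\mathbb{P}\bigl(\text{degree sum}>3i+N\bigr)\le (ei\Lambda/N)^N$. Take $N:=Ci\ln(ek/i)$, with $C$ a large constant depending only on an upper bound for $\Lambda$. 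The union bound over the $\binom{k}{i}\le(ek/i)^i$ sets and over $i\le k$ is then at most $\sum_{i}e^{-(2C-1)i\ln(ek/i)}=o(1)$.
So whp $d_1+\dots+d_i\le 3i+Ci\ln(ek/i)$ for every $i\le k$. Consequently every $W$ with $|W|\le\delta k$ meets at most $\bigl(3\delta+C\delta\ln(e/\delta)\bigr)k=o(|E(\mathcal{K})|)$ edges.
This is your conclusion with $f(\delta)=O(\delta\ln(1/\delta))$ in place of $O(\delta)$. It holds uniformly for constant $\lambda$ and for $\lambda\to1$. It also removes the need for both the case split at $\sqrt k$ and the comparison $\ln k\sim\ln n$.
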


\begin{proof}
    Recall that, by S1 and S2 from Claim~\ref{cl:S}, $|E(\mathcal{K})|$ is linear in $|V(\mathcal{K})|$ whp. So, in order to conclude the claim, we need to show that the graph induced by $U$ contains all but $o(|V(\mathcal{K})|)$ edges. This indeed holds as, by Lemma~\ref{lm:3}, $|V(\mathcal{K}) \setminus U| = o(|V(\mathcal{K})|)$ whp so, combined with S1 and S4 from Claim~\ref{cl:S} the number of edges incident to $V(\mathcal{K}) \setminus U$ is $o(|V(\mathcal{K})|)$ whp.
\end{proof}

 Next, the following claim implies R3$^*$.

 \begin{claim}
    Let $\lambda$, $\mathcal{K}$, and $\mathcal{C}$ be from Claim~\ref{cl:L-contiguous}. If $\lambda = 1 +\omega(\ln ^{-1}n) $, then, whp,
     $$|\{e\in E(\mathcal{K}): \textnormal{the corresponding 2-path }\mathcal{P}_e\textnormal{ in }\mathcal{C}\textnormal{ satisfies }|V(\mathcal{P}_e)| > \ln n/100\}| = o(|E(\mathcal{K})|).$$
     \label{cl:triv2}
 \end{claim}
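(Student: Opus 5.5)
We condition on the kernel $\mathcal{K}$ produced by the first two steps of $\mathcal{L}$ from Claim~\ref{cl:L-contiguous}, and assume it satisfies S1 and S2 of Claim~\ref{cl:S}. The remaining randomness is then only the third step: the path lengths $|E(\mathcal{P}_e)|$ for $e\in E(\mathcal{K})$ are i.i.d.\ $\mathrm{Geom}(\nu)$ with $\nu=1-\mu$. The plan is a first-moment bound followed by Markov's inequality. Note that $|V(\mathcal{P}_e)|=|E(\mathcal{P}_e)|+1$, so it suffices to show that
$$p:=\mathbb{P}\bigl(\mathrm{Geom}(\nu)\ge \ln n/100\bigr)=o(1).$$
Then the expected number of long paths is $p\,|E(\mathcal{K})|=o(|E(\mathcal{K})|)$, and Markov's inequality gives the claim whp. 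One could add concentration via a Chernoff bound over independent indicators, but that is unnecessary.

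The key computation is
$$p=\mu^{\lceil \ln n/100\rceil-1}\le \exp\bigl(-(1-\mu)(\ln n/100-1)\bigr),$$
using $\ln(1/\mu)\ge 1-\mu$. So we need $(1-\mu)\ln n\to\infty$. For constant $\lambda>1$ the conjugate $\mu<1$ is a constant, and the bound is immediate. For $\lambda\to1$ we use the standard expansion of the conjugate: from $\mu e^{-\mu}=\lambda e^{-\lambda}$ and the fact that $xe^{-x}$ has a nondegenerate maximum at $x=1$, we get $1-\mu=(1+o(1))(\lambda-1)$. Hence $(1-\mu)\ln n=(1+o(1))(\lambda-1)\ln n=\omega(1)$, because $\lambda=1+\omega(\ln^{-1}n)$.

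The only step needing any care is the asymptotic $1-\mu\sim\lambda-1$. It follows from a Taylor expansion of $f(x)=\ln x - x$ around $1$: we have $f(1-a)=f(1+b)$ with $a,b\to0$, and $f(1\pm t)=-1-t^2/2+O(t^3)$, so $a^2=b^2(1+O(a+b))$, which gives $a\sim b$. The rest is routine. We also need $|E(\mathcal{K})|\to\infty$ so that Markov's inequality applies; this holds by S2 together with $(\lambda-1)^3 n\to\infty$.
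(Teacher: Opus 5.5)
Your proposal is correct and follows the same route as the paper. You condition on $\mathcal{K}$, note that under the third step of $\mathcal{L}$ each edge independently produces a long 2-path with probability $o(1)$, and conclude with Markov's inequality. The paper only asserts the per-edge $o(1)$ bound, and your computation $\mu^{\lceil \ln n/100\rceil-1}\le e^{-(1-\mu)(\ln n/100-1)}$ together with $1-\mu=(1+o(1))(\lambda-1)$ supplies that omitted justification. (Strictly, $|V(\mathcal{P}_e)|>\ln n/100$ corresponds to $\mathrm{Geom}(\nu)>\ln n/100-1$, which costs only a harmless extra factor $\mu^{-1}$.)
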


 \begin{proof}
     The claim follows instantly from Markov's inequality, as due to the third step of the model~$\mathcal{L}$ from Claim~\ref{cl:L-contiguous}, for every edge $e \in E(\mathcal{K})$, independently, $\mathbb{P}(|\mathcal{P}_e|> \ln n/100) = o(1)$.
 \end{proof}
 
  Finally, R4$^*$ follows from the following claim.

\begin{claim}
    Let $\lambda$, $\mathcal{K}$, and $\mathcal{C}$ be from Claim~\ref{cl:L-contiguous}. If $\lambda = 1 +\omega(\ln^{-1}n)$ then, whp, for every $F \subseteq E(\mathcal{K})$ with $|F| = o(|E(\mathcal{K})|)$, we have $|\cup_{e \in F} V(\mathcal{P}_e)| = o(|V(\mathcal{C})|)$.
    \label{cl:cl59}
\end{claim}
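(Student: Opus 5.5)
The plan is to condition on the kernel and prove that, whp, \emph{every} set of $o(|E(\mathcal{K})|)$ kernel edges carries few path vertices. The key point is that the worst $F$ of a given size $m$ consists of the $m$ longest 2-paths. So it suffices to show that the total length of the longest $m$ paths is $o(|V(\mathcal{C})|)$ whenever $m=o(|E(\mathcal{K})|)$.

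\textbf{Setup.} Expose the first two steps of $\mathcal{L}$, so that $\mathcal{K}$ is fixed with $K:=|E(\mathcal{K})|=\Theta((\lambda-1)^3 n)$ by S1 and S2 of Claim~\ref{cl:S}. The lengths $L_e=|E(\mathcal{P}_e)|$ are then i.i.d.\ $\mathrm{Geom}(\nu)$ with $\nu=1-\mu$. Note that $\nu=\Theta(\lambda-1)$ as $\lambda\to1$, since $\mu\to1$ with $1-\mu\approx\lambda-1$; for constant $\lambda$, $\nu$ is a constant. Also $|V(\mathcal{C})|\ge \sum_e (L_e-1)$ and $\mathbb{E}\sum_e L_e=K/\nu=\Theta((\lambda-1)^2n)$, which matches Claim~\ref{cl:core_size}. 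Since $|V(\mathcal{P}_e)|=L_e+1$, we have $|\cup_{e\in F}V(\mathcal{P}_e)|\le |F|+\sum_{e\in F}L_e$, and $|F|=o(K)=o(K/\nu)$. Hence it suffices to bound $\sum_{e\in F}L_e$ by $o(K/\nu)$ uniformly over $|F|\le\delta K$ as $\delta\to0$.

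\textbf{Truncation.} Fix $t\ge1$ and split $L_e=\min(L_e,t/\nu)+(L_e-t/\nu)^+$. For every $F$,
$$\sum_{e\in F}L_e\le |F|\,t/\nu+\sum_{e\in E(\mathcal{K})}(L_e-t/\nu)^+.$$
The summands $(L_e-t/\nu)^+$ are i.i.d.\ with mean $\mathbb{P}(L_e> t/\nu)\cdot\nu^{-1}\le e^{-t}/\nu$, by memorylessness of the geometric distribution. Their variance is $O(e^{-t}\nu^{-2})$. Chebyshev then gives
$$\sum_e (L_e-t/\nu)^+\le 2e^{-t}K/\nu \quad\text{with probability } 1-O(e^{t}/K).$$
Since $K\to\infty$ (indeed $K=\omega(n/\ln^3 n)$ under $\lambda=1+\omega(\ln^{-1}n)$), this event holds whp even for a slowly growing $t=t(n)\to\infty$.

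\textbf{Conclusion.} With such a $t$ fixed, take any $F$ with $|F|=o(K)$ whose rate satisfies $|F|t=o(K)$. Then $\sum_{e\in F}L_e\le o(K/\nu)+2e^{-t}K/\nu=o(K/\nu)$. The statement ``for every $F$ with $|F|=o(K)$'' is a sequence statement, so it is handled by a diagonal argument: we have, for each fixed $t$, whp $\sum_{F}L_e\le (\delta t+2e^{-t})K/\nu$ for all $|F|\le\delta K$, and we let $t\to\infty$ slowly. Finally, the Chernoff bound of Claim~\ref{cl:nb_chernoff}, or Claim~\ref{cl:core_size}, gives whp $|V(\mathcal{C})|\ge \sum_e L_e - K\ge (1-o(1))K/\nu - K=\Omega(K/\nu)$, because $\nu\to0$ or is a constant below $1$. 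Hence $|\cup_{e\in F}V(\mathcal{P}_e)|=o(|V(\mathcal{C})|)$.

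The main obstacle is making the quantifier over all $F$ uniform. The truncation above resolves this without any union bound over $F$. The only delicate point is checking the tail computation for $\nu\to0$ uniformly; the geometric memoryless bound $\mathbb{P}(L>t/\nu)=(1-\nu)^{\lfloor t/\nu\rfloor}\le e^{-t+\nu}$ handles it.
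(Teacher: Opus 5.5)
Your proof is correct, but it takes a different route from the paper.

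\textbf{The paper's route.} The paper also conditions on $\mathcal{K}$. It sets $r=\lfloor\alpha^2|E(\mathcal{K})|\rfloor$ with $\alpha\to0$ slowly and takes a union bound over all $\binom{|E(\mathcal{K})|}{r}\le\exp\bigl((1+2\ln\alpha^{-1})\alpha^2|E(\mathcal{K})|\bigr)$ sets $F$ of size $r$. For each fixed $F$ it applies the negative binomial tail of Claim~\ref{cl:nb_chernoff} with $\gamma=1/\alpha$. This gives probability $\exp(-\Omega(\alpha|E(\mathcal{K})|))$ that $\sum_{e\in F}|E(\mathcal{P}_e)|$ exceeds $\alpha(1-\mu)^{-1}|E(\mathcal{K})|$, which beats the entropy factor.

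\textbf{Your route.} You use the deterministic inequality
\[
\sum_{e\in F}L_e\le |F|\,t/\nu+\sum_{e\in E(\mathcal{K})}(L_e-t/\nu)^+ .
\]
This removes the quantifier over $F$ entirely, so only one aggregate sum needs controlling. A second-moment bound via memorylessness then suffices.

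\textbf{Comparison.} Your argument is more elementary and more robust: it only needs the rescaled lengths $\nu L_e$ to have uniformly bounded second moment (essentially uniform integrability plus concentration). It also makes transparent that the worst $F$ are the longest paths. The price is a failure probability of only $O(e^{t}/|E(\mathcal{K})|)$, rather than the exponentially small one from the union-bound-plus-Chernoff approach. Since the claim only asks for whp, that is enough. Both proofs choose their parameter ($t$ or $\alpha$) according to the rate at which $|F|/|E(\mathcal{K})|\to0$, so the quantifier handling is the same.

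\textbf{Minor slip.} The tail mean is $\mathbb{P}(L_e>t/\nu)\,\nu^{-1}\le e^{-t+\nu}/\nu$, as you note at the end. So the constant $2$ in your Chebyshev conclusion should be something like $e+1$; this changes nothing.
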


\begin{proof}
    Let us fix $\mathcal{K}$ generated in the first two steps of the model $\mathcal{L}$, Claim~\ref{cl:L-contiguous}. So, the remaining probability space is described by the third step of Claim~\ref{cl:L-contiguous}.

    Let $\alpha \to 0$ arbitrarily slowly and set $r := \lfloor\alpha^2|E(\mathcal{K})|\rfloor$. Let $\mu$ be as in Claim~\ref{cl:L-contiguous}. It is clear from the description of the third step of $\mathcal{L}$, $|V(\mathcal{C})| = \Omega((1-\mu)^{-1}|E(\mathcal{K})|)$ whp. 
    
    Let us show by a union bound that whp for all $F \subseteq E(\mathcal{K})$ with $|F| =r$ we have
    \begin{equation}
        |\cup_{e \in F} E(\mathcal{P}_e)|  \leq \alpha (1-\mu)^{-1}|E(\mathcal{K})|.
        \label{eq:6.10_main}
    \end{equation} 
    This is sufficient to conclude Claim~\ref{cl:cl59}, since then 
    $$|\cup_{e \in F} V(\mathcal{P}_e)| \leq |\cup_{e \in F} E(\mathcal{P}_e)| + |F| = o(|V(\mathcal{C})|).$$
    
    The number of subsets of $E(\mathcal{K})$ of size $r$ is at most 
    \begin{equation}
        {|E(\mathcal{K})| \choose \lfloor \alpha^2|E(\mathcal{K})| \rfloor} \leq \exp\biggl(\left(1+2\ln\left(\alpha^{-1}\right)\right)\alpha^2|E(\mathcal{K})|\biggr).
        \label{eq:6.10.1}
    \end{equation}
    Also, for every $F \subseteq E(\mathcal{K})$ with $|F| =r$, due to the description of the third step of $\mathcal{L}$ from Claim~\ref{cl:L-contiguous} we have
    \begin{align}
    \mathbb{P}\biggl(\textnormal{the set }F \textnormal{ breaks~\eqref{eq:6.10_main}}\biggr)
    &\leq \mathbb{P}\biggl(\mathrm{NB}(r, (1-\mu)) > \alpha(1-\mu)^{-1}|E(\mathcal{K})|\biggr) \notag \\
    &\stackrel{\text{Claim}~\ref{cl:nb_chernoff}}\leq \exp\biggl(- \frac{\alpha|E(\mathcal{K})|(1 - o(1))}{2}\biggr) \notag \\
    &\stackrel{\alpha \to 0}= \exp\biggl(-\omega\left(\left(1+2\ln\left(\alpha^{-1}\right)\right)\alpha^2|E(\mathcal{K})|\right)\biggr).
        \label{eq:6.10.2}
    \end{align}
    So, taking the union bound over all subsets $F\subseteq V(\mathcal{K})$ with $|F| = r$ we conclude
    $$\mathbb{P}(\textnormal{there exists a set }F \subseteq E(\mathcal{K}) \textnormal{ with }|F| = r\textnormal{ that breaks~\eqref{eq:6.10_main}}) \stackrel{\eqref{eq:6.10.1},~\eqref{eq:6.10.2}}= o(1).$$
\end{proof}

\subsection{Proof of Lemma~\ref{lm:super4}}
\label{sc:5.3}

\begin{proof}[Proof of Lemma~\ref{lm:super4}]    
    Let us show that under the assumption of Lemma~\ref{lm:super4} the following claim holds.
    \begin{claim}
        Whp, for all but $o(|E(\mathcal{K})|)$ edges $uv \in E(\mathcal{K})$ it holds that any maximal reconstructible vertex subset in $\pi(\mathcal{C})$ containing $u$ and $v$ also contains $V(\mathcal{P}_{uv})$.
        \label{cl:th1_1}
    \end{claim}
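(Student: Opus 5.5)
The plan is to reduce Claim~\ref{cl:th1_1} to Claim~\ref{cl:3.4}, applied to the 2-paths one by one, after conditioning on the positions of the kernel vertices.

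First, expose $\pi|_{V(\mathcal{K})}$. The remaining randomness is then a uniformly random injection of the internal vertices of all 2-paths into $V\setminus\pi(V(\mathcal{K}))$. By R3$^*$, all but $o(|E(\mathcal{K})|)$ edges $uv\in E(\mathcal{K})$ have short paths, with $|V(\mathcal{P}_{uv})|\le \ln n/100$. We restrict attention to these edges.

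Fix such an edge $uv$ whose path has at least one internal vertex; if it has none, the claim is trivial. Suppose $W$ is a maximal reconstructible set containing $\pi(u),\pi(v)$. Then for every $\pi(\mathcal{C})$-rigid map $\varphi$ we have $|\varphi(\pi(u))-\varphi(\pi(v))|=|\pi(u)-\pi(v)|$. Composing with an isometry, we may assume $\varphi$ fixes $\pi(u)$ and $\pi(v)$. If $\varphi$ acts nontrivially on $\pi(\mathcal{P}_{uv})$, then its restriction to the path is a non-trivial $\pi(\mathcal{P}_{uv})$-rigid extension of the identity on the endpoints.

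Let $X_{uv}$ be the event that such a non-trivial extension exists for the endpoint data $\varphi(\pi(u))=\pi(u)$, $\varphi(\pi(v))=\pi(v)$. This event depends only on $\pi$ restricted to the path, not on $\varphi$ elsewhere. Whenever $X_{uv}$ fails, every rigid map preserving the $uv$ distance acts as an isometry on the whole path. Hence $W\cup\pi(V(\mathcal{P}_{uv}))$ is reconstructible, and by maximality $W\supseteq \pi(V(\mathcal{P}_{uv}))$.

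One point needs care. Reconstructibility of a union requires every rigid map to be an isometry on the union, not merely on each piece. This holds because a map that is an isometry on $W$ and on the path, where the two share the two points $\pi(u)\ne\pi(v)$, is the same isometry on both: an isometry of $\mathbb{R}$ is determined by its values at two distinct points.

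Next we bound the probability. Condition on the positions of the internal vertices of all previously handled paths. Since $|V(\mathcal{C})|\le n$ and, by Claim~\ref{cl:core_size}, $n-|V(\mathcal{C})|$ is linear for constant $\lambda$ (or, as in the proof of Claim~\ref{cl:3.4}, at least $\Omega(n)$ positions remain free), the internal vertices of the current path are still uniform over $\Omega(n)$ positions. Part~2 of Claim~\ref{cl:3.4} then gives $\mathbb{P}(X_{uv})\le O(n^{-1/2})$. By linearity of expectation, the expected number of short edges with $X_{uv}$ is $O(|E(\mathcal{K})|n^{-1/2})$, and Markov's inequality shows this number is $o(|E(\mathcal{K})|)$ whp.

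Combining this with the $o(|E(\mathcal{K})|)$ long-path edges proves Claim~\ref{cl:th1_1}. Note that the claim does not even need R1$^*$ or R2$^*$; those are used afterwards.

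The main obstacle is the "free positions" hypothesis of Claim~\ref{cl:3.4}. Its proof needs the current path vertex to land uniformly among about $n/2$ points. When $|V(\mathcal{C})|$ is close to $n$, the pool of unused points may be small. To handle this, I would order the paths randomly, or simply process only the first half of the short paths by count. Either way, linearly many unoccupied points remain, giving a $1/\sqrt{\Omega(n)}$ bound per path, which still suffices for Markov's inequality.
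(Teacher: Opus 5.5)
Your argument is correct and is essentially the paper's. If no non-trivial extension of the identity on $\{\pi(u),\pi(v)\}$ to the path exists, then $W\cup\pi(V(\mathcal{P}_{uv}))$ is reconstructible. Part~2 of Claim~\ref{cl:3.4} bounds the probability of such an extension for every edge left by R3$^*$, and linearity of expectation plus Markov's inequality finishes; your two-point isometry check is a detail the paper leaves implicit.

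The ``free positions'' obstacle you single out is self-inflicted. Linearity of expectation only needs the marginal law of $\pi$ on a single $V(\mathcal{P}_{uv})$, which is a uniform injection of at most $\ln n/100$ points into $V$. So Claim~\ref{cl:3.4} applies with $n-O(\ln n)$ free positions, and no conditioning on $\pi|_{V(\mathcal{K})}$ or on earlier paths is needed. Your fallback of processing only the first half of the paths would in fact leave the other half uncontrolled.
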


    \begin{proof}[Proof of Claim~\ref{cl:th1_1}]
        
        Consider an edge $uv \in E(\mathcal{C})$ and a maximal reconstructible subset $W$ such that $u, v \in W$ but $V(\mathcal{P}_{uv}) \nsubseteq W$. Then, for the trivial $\pi(\mathcal{C})$-rigid map $\varphi = id$ of $W$ there exist at least two ways to extend $\varphi$ to $V(\mathcal{P}_{uv})$. Hence, the edge $uv$ satisfies the following property: 
        \begin{equation}
            \textnormal{There is a non-trivial way to extend the trivial }\pi(\mathcal{C})\textnormal{-rigid map }
            \varphi: \{u, v\} \to \mathbb{R}, \varphi = id,
            \textnormal{ to }\mathcal{P}_{uv}.
            \label{eq:6.11_trivial}
        \end{equation}

        It remains to prove that whp the number of edges in $E(\mathcal{K})$ that satisfy~\eqref{eq:6.11_trivial} is $o(|E(\mathcal{K})|)$. 
        Recall that $\pi:V(\mathcal{C})\to V$ is a uniformly random injection. Notice that, by R3$^*$, all but $o(|E(\mathcal{K})|)$ edges of $\mathcal{K}$ satisfy the requirement of Claim~\ref{cl:3.4}. So, due to Claim~\ref{cl:3.4}, for all but $o(|E(\mathcal{K})|)$ edges the probability that the event~\eqref{eq:6.11_trivial} holds is $o(1)$. Therefore, by Markov's inequality we conclude Claim~\ref{cl:th1_1}.
    \end{proof}

    Let $\pi:V(\mathcal{C})\to V$ be an arbitrary injection that satisfies the two properties that hold whp by R1$^*$ and Claim~\ref{cl:th1_1}. Consider a maximal subset $W \subseteq V(\mathcal{C})$ such that $\pi(W)$ is reconstructible in $\mathcal{C}$. The set $W$ is correctly defined as $\pi(U)$ is reconstructible in $\mathcal{C}$. Let us set 
    \begin{align*}
        F_1&=\{uv \in E(\mathcal{K}): v \notin W \cap V(\mathcal{K})\},\\
        F_2&=\{uv \in E(\mathcal{K}): u, v \in  W \cap V(\mathcal{K})\textnormal{ and }V(\mathcal{P}_{uv})\nsubseteq W\}.
    \end{align*}
    Then, $|F_1| = o(|E(\mathcal{K})|)$ by R2$^*$, as $U \subseteq W$. Also, $|F_2| = o(|E(\mathcal{K})|)$ as $\pi$ satisfies the conclusion of Claim~\ref{cl:th1_1}. Now, note that every vertex $v \in V(\mathcal{C})$, $v \notin W$, lies on some path $\mathcal{P}_e$ for some $e \in E(\mathcal{K})$. So, 
    $$V(\mathcal{C}) \setminus W \subseteq \cup_{e \in F_1 \cup F_2} V(\mathcal{P}_e),$$
    so we conclude the desired upper bound
    $$|V(\mathcal{C}) \setminus W| \leq \sum_{e \in F_1 \cup F_2} |V(\mathcal{P}_e)| \stackrel{|F_1|, |F_2| = o(|E(\mathcal{K
    })|),\ R4^*}= o(|V(\mathcal{K})|).$$ 
    
\end{proof}

\section{Proof of Theorem~\ref{th:2}}
\label{sc:ad}

In this section we shall prove Theorem~\ref{th:2}

We plan to show that there exists an induced subgraph $\mathcal{\tilde C} \subseteq \mathcal{C}$ such that $\mathcal{\tilde C}$ and its kernel $\mathcal{\tilde K}$ satisfy some good properties and also have maximum degree at most $3$.
Then, we plan to apply our standard pipeline in order to show  Theorem~\ref{th:2}.  

The rest of the section is organised as follows. First, we define the graphs $\mathcal{\tilde C}$ and $\mathcal{\tilde K}$. Second, we prove a couple of simple properties for the graphs $\mathcal{\tilde C}$ and $\mathcal{\tilde K}$. Lastly, we establish Theorem~\ref{th:2} using Theorem~\ref{th:super1}, Lemma~\ref{lm:super2}, and Lemma~\ref{lm:super4}.

\subsection{Definition of $\mathcal{\tilde K}$ and $\mathcal{\tilde C}$}

In this subsection we describe our primary objects we work with in this section.

\begin{definition}
    Let $\lambda$ and $\mathcal{C}\sim \mathcal{L}$ be from Claim~\ref{cl:L-contiguous} and let $\mathcal{K}$ be the kernel of $\mathcal{C}$.

    The graph $\mathcal{\tilde K}$ is generated as follows.
        \begin{itemize}
        
        \item[P1] We delete every vertex of degree at least $4$ from $\mathcal{K}$;
        \item[P2] We delete vertices of degree at most $1$ as long as they exist;
        \item[P3] For every vertex $w$ of degree $2$ with edges $wu$ and $wv$ attached we delete $w$ and add the edge $uv$;
        \item[P4] We delete everything except for an arbitrary largest component.
        \end{itemize}
        \label{rm:P1-P4}

    Notice that every component at the end of P3 is the kernel of some induced subgraph of $\mathcal{C}$. So, let $\mathcal{\tilde C}$ be the induced subgraph corresponding to $\mathcal{\tilde K}$.
    
    \label{df:C_degree_2-3}
\end{definition}

\begin{remark}
    Notice that both $\mathcal{\tilde K}$ and $\mathcal{\tilde C}$ are connected.
\end{remark}

\subsection{Properties of $\mathcal{\tilde C}$}

    In this subsection we describe several properties of $\mathcal{\tilde C}$ and $\mathcal{\tilde K}$ from Definition~\ref{df:C_degree_2-3} that will be needed in order to show Theorem~\ref{th:2}.

\begin{claim}
    Let $\mathcal{C}$, $\mathcal{K}$, $\mathcal{\tilde C}$, and $\mathcal{\tilde K}$ be as in Definition~\ref{df:C_degree_2-3}. Then, whp,
\begin{itemize}
    \item $|V(\mathcal{\tilde K})| = (1 - o(1))|V(\mathcal{K})|$;
    \item $|E(\mathcal{\tilde K})| = (1 - o(1))|E(\mathcal{K})|$;
    \item $|V(\mathcal{\tilde C})| = (1 - o(1))|V(\mathcal{C})|$;
    \item $|E(\mathcal{\tilde C})| = (1 - o(1))|E(\mathcal{C})|$.
\end{itemize}
    \label{cl:almost_same}
\end{claim}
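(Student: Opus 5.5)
The plan is to show that steps P1–P4 remove only an $o(1)$-fraction of kernel vertices and edges. The statements about $\mathcal{\tilde C}$ then follow by transferring along the 2-paths. Throughout, I condition on the degree sequence from the first step of $\mathcal{L}$ and assume it satisfies S1–S4 of Claim~\ref{cl:S}. With $k:=|V(\mathcal{K})|$, $|E(\mathcal{K})|=\Theta(k)$ by S1 and S2.

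\textbf{Step P1.} Since $\Lambda=\lambda-\mu\to 0$ when $\lambda\to 1$, and $D_u\sim\textnormal{Poisson}(\Lambda)$ conditioned on $D_u\ge 3$, we have
\[
\mathbb{P}(D_u\ge 4\mid D_u\ge 3)=O(\Lambda)=o(1).
\]
A Chernoff bound then shows that whp only $o(k)$ kernel vertices have degree at least $4$. Poisson tails also give that their total degree is $o(k)$, using S4 for the largest degrees. So P1 deletes $o(k)$ vertices and $o(|E(\mathcal{K})|)$ edges. For constant $\lambda$ this fraction is not $o(1)$, but Theorem~\ref{th:2} only concerns $\lambda\to1$.

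\textbf{Steps P2 and P4.} After P1 we have a graph $\mathcal{K}_1$ in which all but a set $X$ of $o(k)$ vertices have degree exactly $3$, where $X$ is the set of degree-$3$ vertices adjacent to deleted ones. Step P3 (suppressing degree-$2$ vertices) deletes no edges of $\mathcal{K}$ in the sense of the corresponding 2-paths of $\mathcal{C}$, so only P2 and P4 can lose substantial mass. I would control both with the expansion of Claim~\ref{cl:expansion}, in the form: for fixed $c$ there is $\alpha>0$ with every $A$ of size $|A|\le(1-c)k$ satisfying $|N_{\mathcal{K}}(A)|\ge\alpha|A|$. Vertices removed in P2, or lying outside the largest component in P4, form a set $A$ all of whose boundary edges in $\mathcal{K}$ go to deleted vertices. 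For P4 this means $N_{\mathcal{K}}(A)$ lies within the P1-deleted set, which has size $o(k)$. For P2 I would use Claim~\ref{cl:kernel1} and the edge-density bound R5 through Claim~\ref{cl:3_rprove}. The peeled set $A$ spans few edges, so with degrees at least $3$ in $\mathcal{K}$ it sends at least roughly $(3-2\cdot1.078)|A|$ edges out of itself. Each peeled vertex loses these in a chain ending in P1-deleted vertices, so $|A|=O(|\textnormal{deleted edges}|)=o(k)$.

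\textbf{Main obstacle.} The P2 cascade is the hardest step; P4 then follows from the expansion bound applied to components with $|A|\le k/2$. I would make the cascade bound precise as follows. Order the peeled vertices by deletion time. Each peeled vertex had degree at least $3$ in $\mathcal{K}$ and at most one remaining edge when it was removed, so it has at least $2$ edges to earlier-peeled or P1-deleted vertices. Hence
\[
2|A|\le |E(\mathcal{K}|_A)|+e(A,\textnormal{deleted}).
\]
If $|A|\le ck$, Claim~\ref{cl:kernel1} with a small $\varepsilon$ gives $|E(\mathcal{K}|_A)|\le(1+\varepsilon)|A|$, hence
\[
|A|\le (1-\varepsilon)^{-1}\, e(A,\textnormal{deleted})=o(k).
\]
The peeling is monotone, so $|A|$ cannot jump past $ck$ without first reaching a size in $(ck/2,ck]$, which this bound forbids. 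The same argument bounds the edges lost, since every removed edge is incident to $A$ or to the P1-deleted set, and total degrees there are $o(k)$ by S4.

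\textbf{Transfer to $\mathcal{\tilde C}$.} The kernel edges lost form a set $F\subseteq E(\mathcal{K})$ with $|F|=o(|E(\mathcal{K})|)$. Every vertex and edge of $\mathcal{C}\setminus\mathcal{\tilde C}$ lies on a path $\mathcal{P}_e$ with $e\in F$, or is a kernel vertex, of which only $o(k)$ are lost. Claim~\ref{cl:cl59} gives $|\cup_{e\in F}V(\mathcal{P}_e)|=o(|V(\mathcal{C})|)$, and the same proof gives the analogous bound for edges. This yields the last two items, and the first two follow from the counts above.
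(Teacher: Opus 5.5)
Your argument is correct, but it handles P2 and P4 differently from the paper.

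\textbf{The paper's route.} It applies Claim~\ref{cl:expansion} immediately after P1 to get a component of $\mathcal{K}_1$ with $(1-o(1))|V(\mathcal{K})|$ vertices. It then controls P2 only inside that component, by a potential argument. The edge boundary in $\mathcal{K}$ of the current component starts at $o(|V(\mathcal{K})|)$. It strictly decreases with every deleted leaf: the leaf had degree $3$ in $\mathcal{K}$, so it carries at least two boundary edges and creates at most one. The same boundary also bounds the number of degree-$2$ vertices suppressed in P3.

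\textbf{Your route.} You bound the entire peeled set $A$ globally through $2|A|\le|E(\mathcal{K}|_A)|+e(A,\textnormal{deleted})$, combined with Claim~\ref{cl:kernel1} and a prefix/continuity argument. You then use expansion once, after P2, to dispose of P4. This is valid. The global bound is what lets you postpone expansion until after P2, whereas the paper never needs to control peeling outside the giant component.

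Claim~\ref{cl:kernel1} and the continuity step are not actually needed. A peeled vertex has at most one edge to later-peeled vertices, so $\mathcal{K}|_A$ is a forest and $|E(\mathcal{K}|_A)|\le|A|-1$. Your own inequality then gives $|A|<e(A,\textnormal{deleted})$ directly. This is the paper's potential bound written as a count.

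There are three small slips to fix.
\begin{enumerate}
\item Your first heuristic says every boundary edge of the peeled set goes to a deleted vertex. That is false, since a peeled vertex may keep one edge to a survivor. Your precise count in the ``main obstacle'' paragraph does not use it, so nothing breaks.
\item In P4, the $\mathcal{K}$-neighbours of a discarded union of components lie in the set removed in P1 \emph{or} P2, not only in the P1-deleted set. This set still has size $o(|V(\mathcal{K})|)$ by your cascade bound.
\item For the first two items you must also count P3, which removes kernel vertices and merges pairs of edges. Each degree-$2$ vertex after P2 has exactly one $\mathcal{K}$-edge to a removed vertex. So by S4 there are only $o(|V(\mathcal{K})|)$ of them.
\end{enumerate}

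Your remark that P1 genuinely needs $\lambda\to1$ is correct; the paper uses this implicitly in its Markov step.
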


\begin{proof}
    
    Let us show that among the four properties of Claim~\ref{cl:almost_same}, each implies the next.

    \begin{itemize}
        \item[$1 \implies 2$.]First, notice that $V(\mathcal{\tilde K}) \subseteq V(\mathcal{K})$ and $E(\mathcal{K}|_{V(\mathcal{\tilde K})}) \subseteq E(\mathcal{\tilde K})$, where the latter inclusion may be strict.    

    By S4 of Claim~\ref{cl:S}, whp, for every $S \subseteq \mathcal{K}$ if $|S| = o(|V(\mathcal{K})|)$ then the total degree of vertices from $S$ is $o(|\mathcal{K}|)$ as well. Hence, the first property from Claim~\ref{cl:almost_same} implies the second property.
    
        \item[$3 \implies 4$.] Second, notice that every vertex of $\mathcal{C}$ is either from $V(\mathcal{K})$ or has degree $2$. So, applying the same reasoning and S4 of Claim~\ref{cl:S}, whp, the third property of Claim~\ref{cl:almost_same} implies the fourth.
        
        \item[$2 \implies 3$.] Finally, we need to show that, for every $S \subseteq E(\mathcal{K})$ satisfying $|S| = o( E(\mathcal{K}))$ we have 
    $$\sum_{e \in S}|V(\mathcal{P}_e)| = o(|V(\mathcal{C})|).$$
    This fact, however, was proved in Claim~\ref{cl:cl59}.
    \end{itemize}

    In order to conclude Claim~\ref{cl:almost_same}, it remains for us to show the first property
    $|V(\mathcal{\tilde K})| = (1 - o(1))|V(\mathcal{K})|$. Recall the process P1---P4 from Definition~\ref{df:C_degree_2-3}. Due to the implications we showed at the start of the proof, we need to prove that, after steps P1---P4, the resulting graph contains $1-o(1)$ proportion of vertices of $\mathcal{K}$. In order to do that, for each of the steps P1, P2, and P3 we show that at the end of that step there is a component of size $(1-o(1))|V(\mathcal{K})|$ whp.
    
    {\bf Step P1.} 
    By Markov's inequality and the degree sequence distribution of $\mathcal{K}$ given by the first step of $\mathcal{L}$ from Claim~\ref{cl:L-contiguous}, whp $$|\{v \in V(\mathcal{K}): \deg v \geq 4\}| = o(|V(\mathcal{K})|).$$
    Recall that, by S4 of Claim~\ref{cl:S}, whp the total sum of the degrees of these vertices is $o(|V(\mathcal{K})|)$. 
    Hence,
    \begin{equation}
        \textnormal{Whp, we deleted at most }o(|V(\mathcal{K})|)\textnormal{ edges during step P1.}
        \label{eq:aux_7}
    \end{equation}
    
    Let $\mathcal{K}_1$ be the resulting graph after step P1. 
    Let us show that, by the end of step P1, there is a component of size $(1-o(1))|V(\mathcal{K})|$ whp. By~\eqref{eq:aux_7}, $|V(\mathcal{K}_1)| = (1-o(1))|V(\mathcal{K})|$ whp. Suppose that there is no component of size $(1 - o(1))|V(\mathcal{K})|$ in $\mathcal{K}_1$. Then, by a standard argument, the graph $\mathcal{K}_1$ can be split into two linearly large disjoint unions $\mathcal{S}_1$ and  $\mathcal{S}_2$ of connected components.
    Due to vertex expansion, Claim~\ref{cl:expansion}, we conclude that, whp, the number of edges between $\mathcal{S}_1$ and $\mathcal{K}\setminus \mathcal{S}_1$ is linear and, also, all these edges were deleted in step P1, which contradicts to~\eqref{eq:aux_7}.

    {\bf Step P2.}  Consider the largest component $\mathcal{ K}_2$ after step P1. During step P2, some vertices of $\mathcal{K}_2$ are deleted, but $\mathcal{K}_2$ will never be split into two unconnected components. So, it is sufficient for us to show that the total number of vertices deleted in step P2 is $o(|V(\mathcal{K})|)$. 
    
    Let $F \subseteq E(\mathcal{K})$ denote the edge boundary of $\mathcal{K}_2$ in $\mathcal{K}$ that is updated every time the process deletes a vertex during step P2. In order to conclude that step P2 deletes at most $o(|V(\mathcal{K})|)$ vertices, it is sufficient for us to show that 
    \begin{itemize}
        \item[Q1] $|F| = o(|V(\mathcal{K})|)$, at the beginning of P3;
        \item[Q2] Every time we delete a vertex during P3, the size of $|F|$ decreases.
    \end{itemize}
    First, $F$ is a subset of the edges adjacent to $V(\mathcal{K}) \setminus V(\mathcal{K}_2)$. So, Q1 whp holds, since whp
    $$|V(\mathcal{K}) \setminus V(\mathcal{K}_2)| = o(|V(\mathcal{K})|)$$and, by S4 of  Claim~\ref{cl:S}, whp the number of edges adjacent to $V(\mathcal{K}) \setminus V(\mathcal{K}_2)$ is $o(|V(\mathcal{K})|)$. 
    
    Second, suppose that at some point during step P2 the current resulting graph $\mathcal{K}_2$ has size at least $2$ and suppose that the process deletes a leaf vertex $v \in V(\mathcal{K}_2)$ this very moment. Then, the very moment before we delete $v$, the number of edges in $F$ adjacent to $v$ is at least $2$. Deleting $v$ from $\mathcal{K}_2$, we remove these edges from $F$ and add at most 1 edge that connects $v$ to $\mathcal{K}_2$. Hence, Q2 holds.
    
    {\bf Step P3.} 
    Consider the largest component $\mathcal{ K}_3$ after step P2.
    During step P3, some vertices of $\mathcal{K}_3$ are deleted, but $\mathcal{K}_3$ will never be split into two unconnected components. So, it is sufficient for us to show that the total number of vertices deleted from $\mathcal{ K}_3$ in step P3 is $o(|V(\mathcal{K})|)$. 
    
    Notice that during step P3 we only delete the vertices of $\mathcal{K}_3$ that have degree 2 at the start of step P3. So, we need to show that the number of such vertices is at most $o(|V(\mathcal{K})|)$. The number of those vertices is actually at most the  size of the boundary $F$ defined in step P2. Indeed, every vertex of degree $2$ in $\mathcal{K}_3$ is incident to a unique edge from the boundary. So, it remains to recall that, at the beginning of step P3, the boundary $F$ satisfies $|F| = o(|V(\mathcal{K})|)$, due to Q1 and Q2. This concludes Claim~\ref{cl:S}.
    
\end{proof}

\begin{claim}
    Let $\lambda$, $\mathcal{L}$, $\mathcal{C}$, $\mathcal{K}$, $\mathcal{\tilde C}$, and $\mathcal{\tilde K}$ be as in Definition~\ref{df:C_degree_2-3} and let $\lambda = 1 +\omega(\ln ^{-1}n)$. Then $\mathcal{\tilde K}$ and $\mathcal{\tilde C}$ are contiguous to the following model generating graphs $\mathcal{\hat K}$ and $\mathcal{\hat C}$:

    \begin{itemize}
        \item[T1] generate $(1-o(1))|V(\mathcal{K})| \le\tilde k \le |V(\mathcal{K})|$ according to some distribution;
        \item[T2] generate $\mathcal{\hat K}$ to be a uniformly random 3-regular multigraph of size $\tilde k$;
        \item[T3] generate $\mathcal{\hat C}$ to be some random graph with kernel $\mathcal{\hat K}$ with distribution such that, for arbitrary $\beta \in (0, 0.1)$ and for every subset $F \subseteq E(\mathcal{\hat K})$ we have
        $$\mathbb{P}\biggl(\sum_{e \in F}|V(\mathcal{P}_e)| \ge\beta|F|\ln n\biggr) = 1 - \exp\biggl(\omega(|F|)\biggr).$$
    \end{itemize}
    \label{cl:distr_bound}
\end{claim}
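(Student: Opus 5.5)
The plan is to establish the three items T1--T3 separately, working throughout in the model $\mathcal{L}$ of Claim~\ref{cl:L-contiguous}. We first expose the degree sequence of $\mathcal{K}$, which satisfies S1--S4 of Claim~\ref{cl:S} whp, and we switch from the uniform multigraph to the configuration model. The switch costs only a contiguity factor, by Claim~\ref{cl:graphs_number}, since the maximum degree is $o(\ln n)$. For T1 we set $\tilde k := |V(\mathcal{\tilde K})|$; the bound $(1-o(1))|V(\mathcal{K})| \le \tilde k \le |V(\mathcal{K})|$ holds whp by Claim~\ref{cl:almost_same}. On the complementary event we may let T1 output anything, since contiguity only concerns events of probability $o(1)$.

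\textbf{Step 1: uniformity of $\mathcal{\tilde K}$ (T2).} This is the main point. In the configuration model, condition on the set $H$ of vertices of degree at least $4$ and on which half-edges of the degree-$3$ vertices are matched to half-edges of $H$. The remaining half-edges of degree-$3$ vertices are then matched by a uniform perfect matching. Steps P1--P3 of Definition~\ref{df:C_degree_2-3} are deterministic operations whose outcome depends on this matching only through the multigraph it forms. I would therefore use a ``peeling'' argument in the spirit of the usual proof that the kernel of the 2-core is uniform given its degree sequence. Fix the final vertex set $\tilde V$ together with the data recording which vertices were pruned in P2 and which were suppressed in P3, that is, the paths of pruned or suppressed vertices and their attachment points. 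Then every 3-regular multigraph on $\tilde V$ arises from the same number of matchings: the suppressed degree-2 chains are inserted into the edges of $\mathcal{\tilde K}$ in a number of ways that does not depend on which 3-regular multigraph we have, and the attached pruned trees are fixed by the conditioning. Hence, conditionally on $\tilde k$ and this auxiliary data, the outcome after P3 is a uniform 3-regular configuration on $\tilde k$ vertices. Such a configuration is whp connected, so P4 changes nothing whp. Passing from configurations back to multigraphs again uses Claim~\ref{cl:graphs_number}. Verifying the equal-count statement is the step I expect to be the main obstacle. The subtle point is that the suppressed degree-2 vertices in P3 may form chains. I would handle this by recording each maximal suppressed chain as a labelled path that is subdivided into an edge of $\mathcal{\tilde K}$, which makes the counting bijective.

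\textbf{Step 2: path lengths (T3).} Here I read T3 as the bound $\mathbb{P}\bigl(\sum_{e\in F}|V(\mathcal{P}_e)| \ge \beta|F|\ln n\bigr) = \exp(-\omega(|F|))$, which is the form used later. Each edge $e$ of $\mathcal{\tilde K}$ corresponds to a 2-path of $\mathcal{\tilde C}$. This 2-path is the concatenation of the 2-paths $\mathcal{P}_{e'}$ of the original kernel edges $e'$ that were merged into $e$ in P3. Crucially, the lengths $|E(\mathcal{P}_{e'})|$ are i.i.d. $\mathrm{Geom}(\nu)$ with $\nu = 1-\mu = \Theta(\lambda-1)$, and they are independent of the whole kernel structure, hence also of $\mathcal{\tilde K}$. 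So, given $\mathcal{\tilde K}$ and the merging pattern, $\sum_{e\in F}|V(\mathcal{P}_e)|$ is at most $m_F + \mathrm{NB}(m_F,\nu)$, where $m_F$ is the number of original kernel edges merged into $F$. We must bound $m_F = O(|F|)$ uniformly, or at least apart from a negligible set of chains. Whp every suppressed chain has length $O(1)$: long degree-2 chains after P1 would force long induced paths through high-degree-adjacent vertices, and Claim~\ref{cl:kernel2} rules these out. Given $m_F = O(|F|)$, Claim~\ref{cl:nb_chernoff} applies with mean $m_F\nu^{-1} = O(|F|/(\lambda-1)) = o(|F|\ln n)$, because $\lambda - 1 = \omega(\ln^{-1} n)$. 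This gives $\gamma = \omega(1)$ in \eqref{eq:5.1}, and hence a tail bound $\exp(-\Omega(\gamma m_F)) = \exp(-\omega(|F|))$, as required.
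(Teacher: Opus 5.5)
Your handling of T1 and T2 matches the paper's argument. You expose the edges at vertices of degree at least $4$, then prune degree-$1$ vertices while revealing their edges, so that what remains is a uniform configuration with degrees in $\{2,3\}$. You then observe that contracting the degree-$2$ vertices of such a pairing gives the uniform $3$-regular pairing on the $\tilde k$ degree-$3$ vertices.

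The gap is in T3. Your bound on $m_F$ relies on the claim that whp every suppressed chain has length $O(1)$. This is false throughout the regime considered ($\lambda-1=o(1)$, $\lambda-1=\omega(\ln^{-1}n)$). A kernel half-edge lands on a vertex of degree at least $4$ with probability $\Theta(\Lambda)=\Theta(\lambda-1)$. So the expected number of paths of $L$ degree-$3$ vertices, each with its third neighbour in $H$, is about $k\,(c(\lambda-1))^{L}$. Since $\ln(1/(\lambda-1))=O(\ln\ln n)$ and $k\ge n^{1-o(1)}$, this expectation still tends to infinity for $L\approx \ln n/(2\ln\ln n)$, so chains of unbounded length are to be expected. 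Claim~\ref{cl:kernel2} does not rule them out: it only says balls of radius $\lfloor\ln\ln k\rfloor$ are (nearly) trees, and such chains are entirely tree-like.

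The problem is worse than an unproved lemma, because you condition on $\mathcal{\tilde K}$ together with the merging pattern. A fixed edge $e$ carrying a chain of length $L$ then has $\mathbb{E}|V(\mathcal{P}_e)|\approx L/(\lambda-1)$. For $\lambda-1=(\ln n)^{-1/2}$ this is of order $\ln^{3/2}n/\ln\ln n\gg\beta\ln n$. So under your conditioning the bound in T3 genuinely fails for $F=\{e\}$, and no deterministic whp bound on $m_F$ can be correct.

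The missing idea is to keep the randomness of where the degree-$2$ vertices are placed and prove a tail bound for each fixed $F$. The paper conditions essentially only on the contracted $3$-regular multigraph $\bar{\mathcal K}=\psi(\mathcal{\tilde K})$ and fixes $F\subseteq E(\bar{\mathcal K})$. It then uses the fact that, in the pairing model, the $r=o(\tilde k)$ degree-$2$ vertices can be inserted one at a time, each subdividing a uniformly random current edge. While at most $|F|/100$ of them have landed on $F$-paths, the next one lands there with probability at most $2.02|F|/(3\tilde k)$. Hence the number $Y$ of degree-$2$ vertices on $F$-paths is dominated by $\mathrm{Bin}(r,2.02|F|/(3\tilde k))$, whose mean is $o(|F|)$, giving $\mathbb{P}(Y>|F|/100)=\exp(-\omega(|F|))$. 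On the complementary event, $\sum_{e\in F}|V(\mathcal{P}_e)|$ is governed by $\mathrm{NB}(|F|+Y,1-\mu)$ with $|F|+Y\le 1.01|F|$. Your final Chernoff step via Claim~\ref{cl:nb_chernoff} then goes through exactly as you wrote it.
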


\begin{proof}
    Fix a typical result of the first step of the model generating $\mathcal{K}$. At this point, we have the degree sequence of $\mathcal{K}$ but the graph itself is not revealed. Now, we run steps P1---P3 from Definition~\ref{rm:P1-P4} in order to generate $\mathcal{\tilde K}$. In order to run them properly we will need to reveal some edges of $\mathcal{K}$. We aim to reveal these edges of $\mathcal{K}$ according to the second step of the model from Claim~\ref{cl:contiguous} in such a way that at the start of each step P1, P2, and P3 none of the edges inside the remaining graph are exposed. During the third step we will show that, if the steps P1, P2, and P3 were typical, then the resulting graphs $\mathcal{\tilde K}$ and $\mathcal{\tilde C}$ are generated according to the random model described in Claim~\ref{cl:distr_bound}.
    
    The rest of the proof is split into the three steps corresponding to P1---P3 of Definition~\ref{rm:P1-P4}. In steps P1 and P2 we shall expose the edges in a way that, at the end of the given step, the remaining graph is a uniformly random multigraph with given degree sequence. Then, in the third step, we shall conclude Claim~\ref{cl:distr_bound}.

    {\bf The first step.} During the first step let us reveal all edges for all vertices of $\mathcal{K}$ with degree at least $4$, according to the second step of $\mathcal{L}$. This allows us to delete these vertices properly. The resulting graph after step P1 is a uniformly random graph given its degree sequence with maximum degree 3.

    {\bf The second step.} In step P2 we shall delete all vertices of degree at most 1. Every time we delete a vertex of degree $1$, we reveal its only unrevealed edge, according to the second step of $\mathcal{L}$. This allows us to update the degree of the remaining vertices and, hence, continue running step P2 properly. This step happens until there are no vertices of degree 1 in the resulting graph. Then, we simply additionally delete all vertices of degree 0 in the resulting graph. So, the final resulting graph of step P2 is uniformly random graph given a degree sequence consisting of values from $\{2, 3\}$. This step is similar to the process applied in~\cite{JaLu} (see Section~3 for details).

    {\bf The third step.} At the beginning of step P3 we have a set of vertices, whose degrees are $2$ or $3$, but none of the edges inside them is revealed. By Claim~\ref{cl:almost_same}, whp the number of vertices of degree $3$ is $\tilde k = (1-o(1))|V(\mathcal{K})|$ and the number of vertices of degree $2$ is $o(|V(\mathcal{K})|)$. Below we work under these conditions and our aim is to show T2 and T3. 

    In order to simplify the argument, we will need a well known contiguity model for the uniformly random multigraph given its degree sequence --- the pairing model.

    \begin{definition}[The pairing model]
        Let $\mathbf{k}_n = (k_{n, 1}, \ldots , k_{n, m(n)})$ be a degree sequence. Consider $m(n)$ bins, where the $i$-th bin, $i\in[m(n)]$, contains $k_{n, i}$ distinct half edges. Consider a uniformly random matching on all half-edges.
        Then, substituting every pair of matched half-edges by an edge generates a distribution over multigraphs with degree sequence $\mathbf{k}$. 
    \end{definition}
        
    A well-known contiguity fact is as follows.
    
    \begin{remark}
        Let $\mathbf{k}_n = (k_{n, 1}, \ldots , k_{n, m(n)})$ whp satisfy $m(n) \to \infty$ and $k_{n, i} \leq 3$ for every $i \in [m(n)]$. 
        Then, a uniformly random multigraph with degree sequence $\mathbf{k}$ and a graph generated according to the pairing model with degree sequence $\mathbf{k}$ are contiguous to each other.
        \label{rm:pairing_model}
    \end{remark}

    Roughly speaking, Remark~\ref{rm:pairing_model} holds as, whp, the random graphs generated in both models have $O(1)$ multiple edges and loops and, for every $3$-regular multigraph $\mathcal{G}$ with $m(n)$ vertices, the ratio of the probabilities of $G$ in two given distributions only depends on the number of loops, double edges and triple edges in $\mathcal{G}$.

    By Remark~\ref{rm:pairing_model}, let us now assume that $\mathcal{\tilde K}$ is generated according to the pairing model.
    
    We now need to show T2 ---i.e. that $\mathcal{\tilde K}$ is contiguous to a random $3$-regular multigraph $\mathcal{K}_3$ of size $\tilde k$ generated according to the pairing model.
    For a graph $G$ let $\psi(G)$ denote the graph after contracting all of its 2-paths. Notice that, in the pairing model, the distribution of $\psi(\mathcal{K})$ is precisely the same as the distribution of $\mathcal{K}_3$, so T2 follows by Remark~\ref{rm:pairing_model}.

    Now, condition on the fact that $\psi(\tilde K) = \mathcal{\bar K}$ for some fixed $3$-regular multigraph $\bar K$ of size $\tilde k$ and let us show T3 under that condition. So, the remaining probability space is described by the conditioned pairing model and independent step three of $\mathcal{L}$. Using this probability space, we need to show that, for every $\beta \in (0, 0.1)$ and for every subset $F \subseteq E(\mathcal{\bar K})$, we have
        \begin{equation}
            \mathbb{P}\biggl(\sum_{e \in F}|V(\mathcal{P}_e)| \ge\beta|F|\ln n\biggr) = 1 - \exp\biggl(\omega(|F|)\biggr).
            \label{eq:paths_7.5}
        \end{equation}
    
    Fix $F \subseteq E(\mathcal{\bar K})$. Let $r=o(|V(\mathcal{K})|)$ be the number of vertices of degree $2$ in $\mathcal{\tilde K}$. For $i \in [r]$, let $Y_i$ be the event that the $i$-th vertex of degree $2$ lies on some path contracting into some edge of $F$. Let $Y = \sum_{i \in [r]} Y_i$ be the total number of vertices of degree $2$ that lie on some 2-path contracting to some edge of $F$. Let us show that
    $$\mathbb{P}\biggl(Y \leq \frac{|F|}{100}\biggr) = 1 -\exp\biggl(\omega(|F|)\biggr).$$ 
    Recall that we work in a pairing model conditioned on $\psi(\tilde K) = \mathcal{\bar K}$. Notice, first, that some vertices of degree $2$ lie on a $2$-path corresponding to an edge of $\mathcal{\bar K}$, while the other vertices of degree $2$ do not contribute towards $Y$. So, let $S$ denote the random set of the former vertices and let us us reveal the subgraph of $\mathcal{\tilde K}$ on the latter vertices. One can easily verify that the remaining distribution on the graphs coincides with the following distribution given by a process on $S$. At the beginning of the step $i$ we have a component with vertex set consisting of  $V(\mathcal{\bar K})$ and first $i-1$ vertices of $S$. In step $i$ we reveal which of the current edges of the component is split by the $i$-th vertex of $S$; this edge is uniformly distributed among the edges that exist in the component at the moment.
    
    Notice that the step $i$ of the above-mentioned process determines the value of $Y_i$. Notice that, conditioned on the fact that $\sum_{j \in [i-1]}Y_j \leq \frac{|F|}{100}$, during step $i$ we have $$\mathbb{P}(Y_i) \leq \frac{2.02|F|}{3\tilde k},$$
    independent of the previous history of the process. Hence,
    $$\mathbb{P}\biggl(Y \leq \frac{|F|}{100}\biggr) \geq \mathbb{P}\biggl(\textnormal{Bin}\left(r, \frac{2.02|F|}{3\tilde k}\right) \leq\frac{|F|}{100} \biggr) \stackrel{\textnormal{Chernoff bound}}= 1 -\exp\biggl(\omega(|F|)\biggr),$$

    Now, by the third step of $\mathcal{L}$,
    $$\sum_{e \in F}|V(\mathcal{P}_e)|\sim \textnormal{NB}(|F|+Y, 1-\mu),$$ 
    where $\mu$ is the conjugate of $\lambda$ and whp satisfies $1 - \mu = \omega(\ln^{-1}n)$. So, supposing $Y \leq 0.01|F|$,~\eqref{eq:paths_7.5} indeed holds by Claim~\ref{cl:nb_chernoff}.  
\end{proof}
    
\subsection{Proof of Theorem~\ref{th:2}}

\begin{proof}[Proof of Theorem~\ref{th:2}]
     Let $\lambda$, $\mathcal{L}$, $\mathcal{C}$, $\mathcal{K}$, $\mathcal{\tilde C}$, and $\mathcal{\tilde K}$ be as in Definition~\ref{df:C_degree_2-3} and let $\lambda$ satisfy $\lambda \ge 1$, $\lambda = 1 +\omega(\ln ^{-1}n)$, and $\lambda = 1 + o(1)$. 
     
     Let us explain our strategy to conclude Theorem~\ref{th:2}. 
     \begin{itemize}
         \item[X1] First, we want to apply Theorem~\ref{th:super1} and then Lemma~\ref{lm:2} to $\mathcal{\tilde C}$. This shows that, whp, there exists $\beta>0$ such that the set of points of $\mathcal{\tilde K}$ satisfying the event $D_{\beta}$, Definition~\ref{df:A}, is reconstructible.\item[X2] Second, we aim to show that, whp, $(1-o(1))|V(\mathcal{\tilde K})|$ of vertices of $\mathcal{\tilde K}$ satisfy $D_{\beta}$. Due to Claim~\ref{cl:almost_same}, this whp implies the existence of a reconstructible subset of $\mathcal{K}$ of size $(1-o(1))|V(\mathcal{K})|$.
         \item[X3] Last, Lemma~\ref{lm:super4}  applied to $\mathcal{C}$ concludes whp the existence of a reconstructible subset of $\mathcal{C}$ of size $(1-o(1))|V(\mathcal{C})|$, as desired.
     \end{itemize}

    Below, we show X1, X2, and X3 consecutively.

    {\bf Proof of X1.} Let us prove that both Theorem~\ref{th:super1} and Lemma~\ref{lm:2} can be applied to $\mathcal{\tilde C}$ whp. In order to do that, we need to verify the properties R1---R5  of Theorem~\ref{th:super1} and R1$'$ and R2$'$ of Lemma~\ref{lm:2} for $\mathcal{\tilde C}$. Notice that R1 and R4 of Theorem~\ref{th:super1} hold since $\mathcal{\tilde K}$ is $3$-regular. Also, R2$'$ of Lemma~\ref{lm:2} holds due to as Claim~\ref{cl:graph_bound} as $\mathcal{\tilde K}$ is $3$-regular. Next, recall that in Section~\ref{sc:3.1} we proved whp the properties for $\mathcal{C}$ that are slightly stronger than the  remaining R2, R3, R5 of Theorem~\ref{th:super1}. So, combined with Claim~\ref{cl:almost_same}, $\mathcal{\tilde C}$ satisfies R2, R3, R5 of Theorem~\ref{th:super1} whp. The remaining property R1$'$ of Lemma~\ref{lm:2} follows from R2 of Theorem~\ref{th:super1} proved above and vertex expansion properties for uniformly random 3-regular multigraph (see~\cite{Bo88}) due to Claim~\ref{cl:distr_bound}.

     {\bf Proof of X2.} Suppose that X1 holds for some $\beta>0$. We prove that whp $(1-o(1))|V(\mathcal{\tilde K})|$ of vertices of $\mathcal{\tilde K}$ satisfy $D_{\beta}$. Instead we prove the same statement for the contiguous model of $\mathcal{\tilde K}$ and $\mathcal{\tilde C }$ as described by T1, T2, and T3 from Claim~\ref{cl:distr_bound}. Below, we assume that $\mathcal{\tilde K}$ and $\mathcal{\tilde C }$ are generated according to that model.
     
    Recall that the proof of Lemma~\ref{lm:3} consisted of the proof of two statements: Claim~\ref{cl:5_NSgeq} and Lemma~\ref{lm:5_ESleq}. The statement of Claim~\ref{cl:5_NSgeq} can be shown for a uniformly random 3-regular multigraph $\mathcal{\tilde K}$ exactly the same way as for $\mathcal{K}$. Indeed, the proof uses the following properties of $\mathcal{K}$:
    \begin{itemize}
        \item $\mathcal{K}$ is a uniformly random multigraph given its degree sequence;
        \item $\max_{v \in V(\mathcal{K})} \deg v = O(\ln |V(\mathcal{K})|) = O(\ln n)$;
        \item $\mathcal{K}$ whp satisfies the expansion property from Claim~\ref{cl:expansion}.
    \end{itemize} 
    In case of $\mathcal{\tilde K}$, the only non-trivial property is  the last one, which however holds due to~\cite{Bo88}.

    It remains for us to make the conclusion of Lemma~\ref{lm:5_ESleq} for $\mathcal{\tilde K}$ and $\mathcal{\tilde C}$. Actually, one can easily verify that the same proof of Lemma~\ref{lm:5_ESleq} holds for $\mathcal{\tilde K}$ and $\mathcal{\tilde C}$ but with the changes described below. The equation
    $$\mathbf{D}(S) \leqslant |S| \cdot  \varepsilon(\lambda-1) \ln n$$
    holds as $\mathbf{D}(S) \leq 3|S|$ and $\lambda - 1 = \omega(\ln ^{-1} n)$.
    We have a stronger upper bound:
    $$|\{S \subseteq V(\mathcal{\mathcal{K}}): v \in S,\ |S| = s,\ S\textnormal{ induces connected subgraph in }\mathcal{K}\}| \leq (2e)^{k-1},$$
    as the maximal degree of $\mathcal{\tilde K}$ is $3$. Notice that~\eqref{eq:E_D_relation} does not hold anymore. Consequently, instead of~\eqref{eq:5.2_main}, the following weaker bound follows from T3 of Claim~\ref{cl:distr_bound}.
\begin{align*}
    \mathbb{P}(S\textnormal{ satisfies~\eqref{eq:5.2_implication}}) = \exp(-\omega(|S|)).
\end{align*}
    The final union bound of Lemma~\ref{lm:5_ESleq} is changed accordingly:
\begin{align*}
    \mathbb{P}(F(v)) &\leqslant \sum_{s = 1}^{(1-\beta)|V(\mathcal{\tilde K})|}\sum_{S \subseteq V(\mathcal{\tilde K}) \textnormal{ induces connected subgraph},\ |S| = s,\ v \in S}\mathbb{P}(S \textnormal{ satisfies~\eqref{eq:5.2_implication}}) \\ &\leqslant \sum_{s = 1}^{(1-\beta)|V(\mathcal{\tilde K})|}(2e)^{s-1} \cdot \exp(-\omega(s)) = o(1).
    \end{align*}

    {\bf Proof of X3.} Suppose that we proved X1 and X2.  Recall that, during X3, $\mathcal{C}$ satisfies R2$^*$, R3$^*$, and R4$^*$ of Claim~\ref{lm:super4} (see Section~\ref{sc:5.2} and, in particular, Claim~\ref{cl:triv1}, Claim~\ref{cl:triv2}, and Claim~\ref{cl:cl59}). Also, X2 implies R1$^*$ of Lemma~\ref{lm:super4} for $\mathcal{C}$. Hence, Lemma~\ref{lm:super4} applied to $\mathcal{C}$ concludes Theorem~\ref{th:2}.

\end{proof}

\section{Discussion}
\label{sc:7}

In this work we studied the asymptotic of the size $\mathsf{R}$ of a largest reconstructible subset of the random graph. In Section~\ref{sc:1} we noted that, for $V \subseteq \mathbb{R}$ linearly independent over $\mathbb{Q}$, one cannot hope to reconstruct a subset larger than the size $\mathsf{S}$ of a largest connected component of the 2-core, for $\mathsf{S}\geq 2$. In general, though, it is easy to construct sets $V \subseteq\mathbb{R}$ such that $\mathsf{R} \geq (1+\varepsilon)\mathsf{S}$ for small $\varepsilon>0$. So, is it true that $\mathsf{R} \geq \mathsf{S}(1-o(1))$? We proved that the equation indeed holds for $p = \frac{1+\omega({\ln^{-1} n})}{n}$ --- in this case one can whp reconstruct all but $o(1)-$proportion of vertices from a largest component of the 2-core.

So, what is the asymptotic of $\mathsf{R}$ in the remaining cases? 
We think that the regime $p:=p(C) = \frac{1+\frac{C}{\ln n}}{n}$ is especially interesting.
We suspect that our methods give some insight about this case. However, our methods will most likely stop working when $C>0$ is small. So, how does the asymptotic value of $\mathsf{R}$ depend on $C$? Does $\frac{\mathsf{R}}{\mathsf{S}}$ converge to some constant (perhaps depending on $C$ and $V$)?  Does it hold that $\mathsf{R} = o(\mathsf{S})$ for small enough $C>0$? We conjecture that the property that $G(n, p)$ has a linearly large reconstructible subset of the core experiences a phase transition at $p(C)$ for some $C>0$.

\begin{conjecture}
    Let $V \subseteq \mathbb{R}$ with $|V|=n$ and let the set of known distances be distributed as in $G(V, \lambda/n)$ for some parameter $\lambda>0$.
    There exists $C>0$ such that, for every $\varepsilon > 0$, 
    \begin{itemize}
        \item for $\lambda = 1 +\frac{C+\varepsilon}{\ln n}$, whp there exists a reconstructible subset of size $\Omega_{\varepsilon}\left(\frac{n}{\ln^2n}\right)$;
        \item  for $\lambda = 1 +\frac{C-\varepsilon}{\ln n}$, whp every reconstructible subset has size at most $o\left(\frac{n}{\ln^2n}\right)$.
    \end{itemize}
\end{conjecture}

Let us also note another interesting study direction. In their work~\cite{MNPS}, Montgomery, Nenadov, Portier and Szab\'o studied the largest reconstructible subset of the random graph in the globally rigid setting --- see below. First, one is given the random graph and then the graph is placed on the real line in a way that minimises the size $\mathsf{R}$ of a largest reconstructible component. In the setting, they proved that $\mathsf{R} = o(n)$ for $pn < 1.1$. They also show that, in this setting, one can reconstruct a linearly large component for $pn>C$ for large enough $C>0$. We repeat the question they ask: what is the infimum among the constants $C > 0$ so that for $pn=C$ there exists a linearly large reconstructible subset in the global rigidity setting? We suspect that the answer should be related to the appearance of the 3-core in the random graph (which is close to $3.35/n$, see~\cite{PSW}).

\section*{Acknowledgments}

We thank Ben Green for useful discussions. We thank Tomasz Przyby{\l}owski for greatly inspiring us on this work. We thank Oliver Riordan for his invaluable assistance in verifying the proof and the text of this article.

\end{document}